\documentclass[a4, 12pt]{amsart}
\oddsidemargin 0mm
\evensidemargin 0mm
\topmargin 0mm
\textwidth 160mm
\textheight 230mm
\tolerance=9999

\usepackage{amssymb,amstext,amscd,amsthm,amsfonts,mathdots,mathrsfs}
\usepackage{pdflscape}
\usepackage{hyperref}
\usepackage{pdflscape}
\usepackage{setspace}
\usepackage[all]{xy}
\usepackage{enumerate}
\usepackage{indentfirst}
\usepackage{ytableau}
\usepackage{color}
\usepackage{colortbl}
\usepackage{amsmath}
\usepackage{tikz}
\usetikzlibrary{shapes.geometric, arrows}
\usetikzlibrary{calc}
\numberwithin{equation}{section}

\usepackage[thicklines]{cancel}

\usepackage{cleveref}
\crefname{theorem}{Theorem}{Theorems}
\crefname{definition}{Theorem}{Definitions}
\crefname{proposition}{Theorem}{Propositions}
\crefname{corollary}{Corollary}{Corollaries}
\crefname{alphatheorem}{Theorem}{Theorems}

\newtheorem{theorem}{Theorem}[section]
\newtheorem{lemma}[theorem]{Lemma}
\newtheorem{proposition}[theorem]{Proposition}
\newtheorem{definition}[theorem]{Definition}

\newtheorem{corollary}[theorem]{Corollary}
\newtheorem{example}[theorem]{Example}

\theoremstyle{remark}
\newtheorem{rem}[theorem]{Remark}


%
\newtheorem{alphatheorem}{\bf Theorem}

\newtheorem{alphaproposition}[alphatheorem]{Proposition}

\newcommand{\clr}{rgb:black,3;blue,2;red,0}

\tikzset{anchorbase/.style={baseline={([yshift=-0.5ex]current bounding box.center)}}}
\tikzset{ 
    centerzero/.style={>=To,baseline={([yshift=-0.5ex](#1))}},
    centerzero/.default={0,0}
}
\tikzset{wipe/.style={white,line width=4pt}}
\newcommand{\cred}{rgb:black,.5;blue,0;red,1.5}

\DeclareFontFamily{OT1}{pzc}{}
\DeclareFontShape{OT1}{pzc}{m}{it}{ <-> s*[1.2] pzcmi7t }{}
\DeclareMathAlphabet{\mathpzc}{OT1}{pzc}{m}{it}
\newcommand{\AH}{\mathpzc{AH}}

\newcommand{\qW}{\mathpzc{WeB}}  
 
\newcommand{\qAW}{\mathpzc{WeB}^\bullet} 
\newcommand{\qAWC}{\mathpzc{WeB}^{\bullet\prime}}
\newcommand{\qSch}{\mathpzc{SchuR}}
\newcommand{\qASch}{\mathpzc{SchuR}^{{\hspace{-.03in}}\bullet}}
\newcommand{\qSchuDJM}{\mathpzc{SchuR}^{{\hspace{-.03in}}\text{DJM}}_{\bfu}}
\newcommand{\qSchu}{\mathpzc{SchuR}_{\bfu}}
\newcommand{\qASchC}{\mathpzc{SchuR}^{{\hspace{-.03in}}\bullet\prime}} 
\newcommand{\bT}{\mathbf{T}}
\newcommand{\bS}{\mathbf{S}}

\newcommand{\Smu}{\Sc_{m,\bfu}}
\newcommand{\Hu}{\mathcal{H}_{m,\bfu}}
\newcommand{\brY}{\blue{\rot{Y}}}
\newcommand{\bY}{\blue{\text{Y}}}
 
\newcommand{\Sym}{\text{Sym}}
\newcommand{\Mat}{\text{Mat}}
\newcommand{\PMat}{\text{ParMat}}
\newcommand{\RPMat}{\text{RParMat}}
\newcommand{\Par}{\text{Par}}

\newcommand{\RPar}{\text{RPar}}
\newcommand{\SST}{\text{SST}}

\newcommand{\x}{\textsc{X}}

\newcommand{\bfu}{\red{\mathbf{u}}}
\newcommand{\cdeg}{\mathrm{deg}^{\times}}
\newcommand{\ddeg}{\mathrm{deg}^{\bullet}}

\newcommand{\Homleqk}{\Hom_{\qASch}(\mu,\lambda)_{\leq k}}
\newcommand{\Homlesk}{\Hom_{\qASch}(\mu,\lambda)_{< k}}

\newcommand\C{\mathbb{C}}
\newcommand\Z{\mathbb{Z}}

\newcommand\N{\mathbb{N}}

\newcommand\eps{(q^{-1}-q)}

\newcommand{\qbinom}[2]{\begin{bmatrix} #1\\#2 \end{bmatrix} }
\newcommand{\qbinomsm}{\textstyle\genfrac{[}{]}{0pt}{}}

\newcommand\kk{\Bbbk}
\newcommand\la{\lambda}
\newcommand{\Hom}{{\rm Hom}}
\newcommand{\End}{{\rm End}}
\newcommand{\rot}{\rotatebox[origin=c]{180}}
\newcommand{\arxiv}[1]{\href{http://arxiv.org/abs/#1}{\tt arXiv:\nolinkurl{#1}}}

\newcommand{\equivc}{\equiv_{\times}}

\def\std{\text{ST}}

\def\s{\mathfrak s}

\def\t{\mathfrak t}
\def\Sc{\mathcal S}


\newcommand{\upliftuip}{\begin{tikzpicture}[baseline = 10pt, scale=.8, color=\clr]
 \draw[-,line width=1.2pt] (-0.3,.3) to (.3,1);
\draw[-,line width=1pt,color=\cred] (0.3,.3) to (-.3,1);
\draw(0.35,.15) node {$\scriptstyle \red{u_{i+1}}$};
\draw (-.3,0.15) node{$\scriptstyle a$};
\draw (.3,1.15) node{$\scriptstyle {a}$};
\end{tikzpicture}}

\newcommand{\downliftuip}{\begin{tikzpicture}[baseline = 10pt, scale=.8, color=\clr]
 \draw[-,line width=1pt,color=\cred] (-0.3,.3) to (.3,1);
 \draw(-.35,.15) node {$\scriptstyle \red{u_{i+1}}$};
\draw[-,line width=1.2pt] (0.3,.3) to (-.3,1);
\draw (-.3,1.15) node{$\scriptstyle a$};
\draw (.3,0.15) node{$\scriptstyle {a}$};
\end{tikzpicture}}

\newcommand{\str}{\begin{tikzpicture}[baseline = 10pt, scale=0.4, color=\clr]
            \draw[-,thick] (0,0.5)to[out=up,in=down](0,1.7);
            \draw (0,0.27) node{$\scriptstyle 1$};
\end{tikzpicture} 
}

\newcommand{\stra}{\begin{tikzpicture}[baseline = 10pt, scale=0.4, color=\clr]
            \draw[-,line width=1.2pt] (0,0.7)to[out=up,in=down](0,1.8);
            \draw (0,0.49) node{$\scriptstyle a$};
\end{tikzpicture} 
}

\newcommand{\stru}{\begin{tikzpicture}[baseline = 10pt, scale=0.4, color=\cred]
\draw[-,line width=1pt] (0,0.4) to (0,1.7);
\draw(0,0.1) node {$\scriptstyle u$};
\end{tikzpicture}
}

\newcommand{\zdot}{ node[circle,fill=white,draw,   
thin, inner sep=0pt, minimum width=3.2pt]{}
} 

\newcommand{\bdot}{ node[circle, draw, fill=\clr, thick, inner sep=0pt, minimum width=2.8pt]{}
}

\newcommand{\xdota}{
\begin{tikzpicture}[baseline = 3pt, scale=0.4, color=\clr]
\draw[-,line width=1.2pt] (0,0) to[out=up, in=down] (0,1.4);
\draw(0,0.6) \bdot;
\node at (0,-.22) {$\scriptstyle a$};
\end{tikzpicture}
}

\newcommand{\zdotgen}{
\begin{tikzpicture}[baseline = 3pt, scale=0.4, color=\clr]
\draw[-,thick] (0,0) to[out=up, in=down] (0,1.4);
\draw(0,0.6) \zdot;
\node at (0,-.3) {$\scriptstyle 1$};
\end{tikzpicture}
}
\newcommand{\zdota}{
\begin{tikzpicture}[baseline = 3pt, scale=0.4, color=\clr]
\draw[-,line width=1.2pt] (0,0) to[out=up, in=down] (0,1.4);
\draw(0,0.6) \zdot;
\node at (0,-.22) {$\scriptstyle a$};
\end{tikzpicture}
}

\newcommand{\merge}
{\begin{tikzpicture}[baseline = -.5mm,scale=.8, color=\clr]
	\draw[-,line width=1pt] (0.28,-.3) to (0.08,0.04);
	\draw[-,line width=1pt] (-0.12,-.3) to (0.08,0.04);
	\draw[-,line width=1.5pt] (0.08,.4) to (0.08,0);
        \node at (-0.18,-.4) {$\scriptstyle a$};
        \node at (0.35,-.4) {$\scriptstyle b$};
        \node at (0.05,.55){$\scriptstyle a+b$};
        \end{tikzpicture} }
        
\newcommand{\splits}
{\begin{tikzpicture}[baseline = -.5mm,scale=.8, color=\clr]
	\draw[-,line width=1.5pt] (0.08,-.3) to (0.08,0.04);
	\draw[-,line width=1pt] (0.28,.4) to (0.08,0);
	\draw[-,line width=1pt] (-0.12,.4) to (0.08,0);
        \node at (-0.2,.5) {$\scriptstyle a$};
        \node at (0.36,.5) {$\scriptstyle b$};
        \node at (0.1,-.41){$\scriptstyle a+b$};
\end{tikzpicture}}

\newcommand{\dotgen}
{\begin{tikzpicture}[baseline = 3pt, scale=0.4, color=\clr]
\draw[-,thick] (0,0) to[out=up, in=down] (0,1.4);
\draw(0,0.6) \bdot;
\node at (0,-.3) {$\scriptstyle 1$};
\end{tikzpicture}}

\newcommand{\crossingpos}{
\begin{tikzpicture}[baseline=-1mm, scale=.8, color=\clr]
	\draw[-,line width=1pt] (0.3,-.3) to (-.3,.4);
	\draw[-,line width=4pt,white] (-0.3,-.3) to (.3,.4);
	\draw[-,line width=1pt] (-0.3,-.3) to (.3,.4);
        \node at (-0.3,-.45) {$\scriptstyle a$};
        \node at (0.26,-.45) {$\scriptstyle b$};
\end{tikzpicture}
}

\newcommand{\crossingneg}{
\begin{tikzpicture}[baseline=-1mm, scale=.8, color=\clr]
 \draw[-,line width=1pt] (-0.3,-.3) to (.3,.4);
	\draw[-,line width=4pt,white] (0.3,-.3) to (-.3,.4);
 \draw[-,line width=1pt] (0.3,-.3) to (-.3,.4);
        \node at (-0.33,-.45) {$\scriptstyle b$};
        \node at (0.3,-.45) {$\scriptstyle a$};
\end{tikzpicture}
}

\newcommand{\rightcrossing}{\begin{tikzpicture}[baseline = 1mm, scale=.8, color=\clr]
 \draw[-,line width=1.2pt] (-0.3,0) to (.3,.7);
\draw[-,line width=1pt,color=\cred] (0.3,0) to (-.3,.7);
\draw(-.3,-0.1) node{$\scriptstyle a$};
\draw (.3, -0.1) node{$\scriptstyle \red{u}$};
\end{tikzpicture}}

\newcommand{\leftcrossing}{\begin{tikzpicture}[baseline = 1mm, scale=.8, color=\clr]
 \draw[-,line width=1pt,color=\cred] (-0.3,0) to (.3,.7);
\draw[-,line width=1.2pt] (0.3,0) to (-.3,.7);
\draw(-.3,-.1) node{$\scriptstyle \red{u}$};
\draw (.3, -.1) node{$\scriptstyle a$};
\end{tikzpicture}}

\newcommand{\upliftui}{\begin{tikzpicture}[baseline = 10pt, scale=.8, color=\clr]
 \draw[-,line width=1.2pt] (-0.3,.3) to (.3,1);
\draw[-,line width=1pt,color=\cred] (0.3,.3) to (-.3,1);
\draw(0.3,.15) node {$\scriptstyle u_i$};
\draw (-.3,0.15) node{$\scriptstyle a$};
\draw (.3,1.15) node{$\scriptstyle {a}$};
\end{tikzpicture}}

\newcommand{\downliftui}{\begin{tikzpicture}[baseline = 10pt, scale=.8, color=\clr]
 \draw[-,line width=1pt,color=\cred] (-0.3,.3) to (.3,1);
 \draw(-.3,.15) node {$\scriptstyle u_i$};
\draw[-,line width=1.2pt] (0.3,.3) to (-.3,1);
\draw (-.3,1.15) node{$\scriptstyle a$};
\draw (.3,0.15) node{$\scriptstyle {a}$};
\end{tikzpicture}}

\newcommand{\wkdota}{\begin{tikzpicture}[baseline = 3pt, scale=0.4, color=\clr]
\draw[-,line width=1.2pt] (0,0) to[out=up, in=down] (0,1.4);
\draw(0,0.6) \bdot; 
\draw (0.7,0.6) node {$\scriptstyle \omega_r$};
\node at (0,-.22) {$\scriptstyle a$};
\end{tikzpicture} }

\newcommand{\wkdotaa}{\begin{tikzpicture}[baseline = 3pt, scale=0.4, color=\clr]
\draw[-,line width=1.2pt] (0,0) to[out=up, in=down] (0,1.4);
\draw(0,0.6) \bdot; 
\node at (0,-.22) {$\scriptstyle a$};
\end{tikzpicture} }

\newcommand{\wkdotr}{
\begin{tikzpicture}[baseline = 3pt, scale=0.4, color=\clr]
\draw[-,line width=1.2pt] (0,0) to[out=up, in=down] (0,1.4);
\draw(0,0.6) \bdot; 
\draw (0.7,0.6) node {$\scriptstyle \omega_r$};
\node at (0,-.22) {$\scriptstyle r$};
\end{tikzpicture}
}

\newcommand{\wkzdotr}{
\begin{tikzpicture}[baseline = 3pt, scale=0.4, color=\clr]
\draw[-,line width=1.2pt] (0,0) to[out=up, in=down] (0,1.4);
\draw(0,0.6) \zdot; 
\draw (0.7,0.6) node {$\scriptstyle \omega_r$};
\node at (0,-.22) {$\scriptstyle r$};
\end{tikzpicture}
}

\newcommand{\blue}[1]{{\color{blue}#1}}
\newcommand{\red}[1]{{\color{red}#1}}


%

%

\newcommand{\Hd}{\widehat{\mathcal{H}}_{d}}

\newcommand{\unit}[1]{1_{#1}}

\begin{document}
\setlength{\baselineskip}{17pt}
\title{Affine and cyclotomic $q$-Schur categories via webs}

\author{Yaolong Shen}
\address{Department of Mathematics, University of Ottawa, Ottawa, ON, K1N 6N5, Canada} \email{yshen5@uottawa.ca}

\author{Linliang Song}
\address{School of Mathematical Science, Tongji University, Shanghai, 200092, China} \email{llsong@tongji.edu.cn}

\author{Weiqiang Wang}
 \address{Department of Mathematics, University of Virginia, Charlottesville, VA 22904, USA} \email{ww9c@virginia.edu}

\subjclass[2020]{Primary 18M05, 20C08.}

\keywords{Affine $q$-web category, cyclotomic $q$-web category, affine $q$-Schur category, cyclotomic $q$-Schur category, cyclotomic $q$-Schur algebras.}

\begin{abstract}
We formulate two new $\mathbb Z[q,q^{-1}]$-linear diagrammatic monoidal categories, the affine $q$-web category and the affine $q$-Schur category, as well as their respective cyclotomic quotient categories. Diagrammatic integral bases for the Hom-spaces of all these categories are established. In addition, we establish the following isomorphisms, providing diagrammatic presentations of these $q$-Schur algebras for the first time: (i)~ the path algebras of the affine $q$-web category to R.~Green's affine $q$-Schur algebras, (ii)~ the path algebras of the affine $q$-Schur category to Maksimau-Stroppel's higher level affine $q$-Schur algebras, and most significantly, (iii)~ the path algebras of the cyclotomic $q$-Schur categories to Dipper-James-Mathas' cyclotomic $q$-Schur algebras. 
\end{abstract}

\maketitle

\setcounter{tocdepth}{1}
\tableofcontents

%
\section{Introduction}

\subsection{The goal}

Let $\kk =\Z[q,q^{-1}]$ or $\C(q)$. In this sequel to \cite{SW24web,SW24Schur}, we shall introduce two new diagrammatic $\kk$-linear monoidal categories: affine $q$-web and affine $q$-Schur categories, denoted by $\qAW$ and $\qASch$, respectively. The category $\qAW$ is a $q$-deformation of the degenerate affine $q$-web category introduced in \cite{DKMZ} and independently in \cite{SW24web}, while $\qASch$ is a $q$-version of the degenerate affine $q$-Schur category initiated in \cite{SW24Schur}. 
Then we construct the cyclotomic quotients of these two categories: cyclotomic $q$-web categories $\qW_\bfu$ and cyclotomic $q$-Schur categories $\qSch_\bfu$, for $\bfu \in (\kk^*)^\ell$ and $\ell \ge 1$. 

For all these affine and cyclotomic categories, we shall establish integral bases for the corresponding Hom-spaces. Furthermore, we show that the path algebras of $\qAW$ are isomorphic to affine $q$-Schur algebras defined in \cite{Gre99}, the path algebras of $\qASch$ are isomorphic to the higher level $q$-Schur algebras defined in \cite{MS21}, and most significantly, the path algebras of $\qSch_\bfu$ are isomorphic to the cyclotomic $q$-Schur algebras introduced by Dipper-James-Mathas in \cite{DJM98}, which are an important family of quasi-hereditary algebras (cf. \cite{R08}). In this way, we shall provide diagrammatic presentations of all these variants of Schur algebras for the first time. 

Diagrammatic categories have provided powerful approaches to constructing important algebras which are well-suited for categorification; see \cite{KL3, Rou, Web17} for some well-known examples. The affine and cyclotomic $q$-Schur categories for $q$ generic and a root of $1$ will also be well suited for categorification; in addition, they will be building blocks for new diagrammatic categories suitable for $\imath$categorification related to $\imath$quantum groups. 

Let us explain our constructions in some detail. 

\subsection{Affine $q$-web and $q$-Schur categories}

The affine $q$-web category $\qAW$ admits generating objects $\stra$ ($a\in \Z_{\ge 1})$ and generating morphisms  
\[
\merge, \quad \splits, \quad \crossingpos, \quad \wkdotaa
\]
(where $\crossingpos$ is invertible with inverse $\crossingneg$ and $\xdota$ is invertible with inverse $\zdota$) subject to relations \eqref{webassoc}--\eqref{intergralballon}. The category $\qAW$ contains several notable monoidal subcategories. The subcategory generated by $\crossingpos$ and $\dotgen$ is the affine Hecke category whose endomorphism algebras are affine Hecke algebras of $GL$ type. The subcategory $\qW$ generated by $\merge, \splits, \crossingpos$ was studied recently by Brundan in \cite{Bru24} (who denoted it by $q$-{\bf Schur}); it is a variant of the Cautis-Kamntizer-Morrison web category \cite{CKM} and a $q$-version of the (polynomial) $\mathfrak{gl}$-web category in \cite{BEEO}. 

From our point of view, the category $\qAW$ is an intermediate step toward the construction of affine $q$-Schur category denoted $\qASch$. 
Besides the above generating morphisms for $\qAW$, the affine $q$-Schur category $\qASch$ admits additional generating morphisms 
\[
\rightcrossing, \quad \leftcrossing 
\qquad (a\in \Z_{\ge 1}, u\in \kk^*)
\]
subject to relations \eqref{webassoc}--\eqref{intergralballon} and \eqref{redslider}--\eqref{redbraid}. This is reminiscent of diagrammatics used by Webster in his diagrammatic algebras \cite{Web17, Web20}. 

Over the field $\C(q)$, the presentations of $\qAW$ and $\qASch$ can be much simplified. Indeed, for $\qAW$ (and respectively, $\qASch$) over $\C(q)$, we only need the generating morphisms $\merge, \splits, \crossingpos$ and $\dotgen$ (and respectively, with additional ones $\rightcrossing$ and $\leftcrossing$) together with the corresponding relations. We denote the $\C(q)$-linear monoidal categories with these generating morphisms and simpler relations by $\qAWC$ and $\qASchC$, respectively (see \cref{def:qAWebC,def:qSchurC}). 

\begin{alphatheorem}[\cref{th:qAWiso,th:qASchiso}]
  \label{thm:AffineC}
  The $\C(q)$-linear monoidal categories $\qAWC$ (respectively, $\qASchC$) and $\qAW$ (respectively, $\qASch$) are isomorphic. 
 \end{alphatheorem}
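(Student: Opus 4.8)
The plan is to exhibit mutually inverse monoidal functors between the two presentations in each case. Since $\qAWC$ (respectively $\qASchC$) is defined using a subset of the generating morphisms of $\qAW$ (respectively $\qASch$) and a subset of relations, there is an evident forgetful-type functor $\Phi \colon \qAWC \to \qAW$ (respectively $\qASchC \to \qASch$) provided one checks that the defining relations of $\qAWC$ hold in $\qAW$; this is immediate because those relations are literally among the relations \eqref{webassoc}--\eqref{intergralballon} imposed on $\qAW$ (and \eqref{redslider}--\eqref{redbraid} for the Schur version). The substance is the functor in the other direction, $\Psi \colon \qAW \to \qAWC$: here one must define $\Psi$ on the extra generators $\splits$, $\wkdotaa$, $\rightcrossing$, $\leftcrossing$ of $\qAW$ that are \emph{not} among the generators of $\qAWC$, using composites of the $\qAWC$-generators, and then verify that every relation of $\qAW$ is sent to a consequence of the relations of $\qAWC$.

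First I would treat $\qAW$. Over $\C(q)$ the thick splits $\splits$ (for strands of width $>1$) and the dots $\wkdotaa$ on thick strands should be expressible via the width-one data: the merges/splits of width one generate all thick merges and splits by the standard $q$-web "explode/implode" formulas (as in \cite{CKM,Bru24}), and a dot $\wkdotaa$ on a strand of width $a$ should be defined as the appropriate symmetrized sum of single dots $\dotgen$ obtained after exploding the strand into $a$ strands of width one, divided by an integer that is invertible over $\C(q)$ — this is exactly the sort of normalization that fails integrally and forces the richer presentation over $\Z[q,q^{-1}]$. With these definitions in hand, checking the relations of $\qAW$ reduces to: (a) the purely "web" relations \eqref{webassoc} and its neighbours, which hold in $\qAWC$ because they already hold in Brundan's $q$-$\mathbf{Schur}$ and its affinization; (b) the relations involving dots — sliding a dot through a merge/split, the "balloon" relation \eqref{intergralballon}, and invertibility of $\xdota$ — which become identities among symmetrized sums of single dots and are verified by a direct but routine computation using the width-one dot-slide and braid relations. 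One then checks $\Psi\Phi = \mathrm{id}$ and $\Phi\Psi = \mathrm{id}$ on generators, which is automatic since each composite fixes the generators of the respective source category.

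For $\qASch$ the same strategy applies, with the red strand (the $u$-labelled "solid" strand encoding the cyclotomic/affine parameter) carried along: on the additional generators $\rightcrossing$ and $\leftcrossing$ of width $a>1$ one defines $\Psi$ by exploding the black strand of width $a$ into width-one strands, applying the corresponding width-one red-black crossings, and imploding again; over $\C(q)$ this is consistent because the only obstruction is again an integer normalization. The relations to check are the red-slider and red-braid relations \eqref{redslider}--\eqref{redbraid} together with their compatibility with merges, splits and dots; each reduces, after exploding, to the width-one case, which is part of the presentation of $\qASchC$.

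The main obstacle I anticipate is step (b): proving that the relations of $\qAW$ (respectively $\qASch$) that mention the thick dots and the thick red crossings are consequences of the $\qAWC$ (respectively $\qASchC$) relations. This is where the combinatorics of symmetrized sums of single dots interacts nontrivially with the web relations, and where one must be careful that every coefficient appearing in the explode/implode and dot-symmetrization formulas is a unit in $\C(q)$ (it is — the coefficients are quantum integers and factorials, invertible over the field but not over $\Z[q,q^{-1}]$, which is precisely why the integral category $\qAW$ needs the larger generating set). I would organize this verification by first establishing the thick dot-slide lemma (a dot passing through a merge or split) and the thick balloon identity as standalone computations, and then deducing the remaining relations formally; once those two identities are in place the rest is bookkeeping.
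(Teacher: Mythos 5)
Your overall strategy is the paper's: the functor from the simpler presentation to the richer one is immediate because its relations are among (or special cases of) those of $\qAW$, and the substance is showing that, over $\C(q)$, the remaining relations of $\qAW$ (resp.\ $\qASch$) involving thick dots and thick red crossings are consequences of the thin ones. This is exactly what the proofs of \cref{th:qAWiso,th:qASchiso} do, via Lemmas~\ref{lem:dotSM}, \ref{lem:dotcross}, \ref{lem:redcross2thick} and \ref{lem:redbraidthick}, by induction on the thickness. There are, however, two points where your plan diverges from what is actually needed. First, you misread the generating sets: $\qAWC$ and $\qASchC$ as defined in \cref{def:qAWebC,def:qSchurC} already contain \emph{all} thick merges, splits, crossings and traverse-up/down morphisms as generators, together with the full web relations \eqref{webassocC}--\eqref{splitbinomialC} in all thicknesses; the only generators of $\qAW$ and $\qASch$ that are absent are the thick dots $\xdota$ (and their inverses) for $a>1$. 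So no ``explode/implode'' reconstruction of thick splits or of thick red crossings is required, and the only new definition needed is \eqref{eq:dota}; the extra work you propose is unnecessary and would itself demand further verification you do not supply.

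Second, and more substantively, your description of the thick dot as ``the appropriate symmetrized sum of single dots'' is ambiguous in a way that matters. The correct morphism \eqref{eq:dota} is the single balloon diagram that splits the thick strand into $a$ thin strands, places one dot on \emph{each} of them, and merges back, the whole divided by $[a]!$; under the polynomial representation of Section~\ref{sec:polyrep} this realizes multiplication by the product $X_1\cdots X_a$. A genuine symmetrized \emph{sum} of single dots would instead realize $X_1+\cdots+X_a$, which is not invertible and does not satisfy the balloon relation \eqref{intergralballon}, so that reading of your construction would fail at precisely the step you identify as the main obstacle. With the product definition in place, your outline of the verification (the thick dot-slide through merges, splits and crossings, the balloon identity, and then the thick versions of \eqref{redcross2} and \eqref{redbraid} by reduction to thickness one) coincides with the paper's sequence of lemmas and is correct.
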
 

Note that the definition of $\qAWC$ is rather straightforward (following the strategy advocated in \cite{SW24web}), as it contains two distinguished subcategories: $\qW$ from \cite{Bru24} and an affine Hecke category $\AH$, and the generating morphisms and relations of $\qAWC$ are those of the two subcategories. The key point here is that the remaining defining relations \eqref{dotmovecrossing} and \eqref{dotmovesplits+merge} in $\qAW$ can be derived over $\C(q)$ from these standard relations of $\qAWC$ once we make a crucial definition of the thick dots
\[
\xdota := \frac{1}{[a]!}
\begin{tikzpicture}[baseline = 1.5mm, scale=.4, color=\clr]
\draw[-, line width=1.2pt] (0.5,2) to (0.5,2.5);
\draw[-, line width=1.2pt] (0.5,0) to (0.5,-.4);
\draw[-,thin]  (0.5,2) to[out=left,in=up] (-.5,1)
to[out=down,in=left] (0.5,0);
\draw[-,thin]  (0.5,2) to[out=left,in=up] (0,1)
 to[out=down,in=left] (0.5,0);      
\draw[-,thin] (0.5,0)to[out=right,in=down] (1.5,1)
to[out=up,in=right] (0.5,2);
\draw[-,thin] (0.5,0)to[out=right,in=down] (1,1) to[out=up,in=right] (0.5,2);
\node at (0.5,.7){$\scriptstyle \cdots$};
\draw (-0.5,1) \bdot; 
\draw (0,1) \bdot; 
\node at (0.5,-.6) {$\scriptstyle a$};
\draw (1,1) \bdot;
\draw (1.5,1) \bdot; 
\node at (-.26,0) {$\scriptstyle 1$};
\node at (.3,0.34) {$\scriptstyle 1$};
\node at (1.25,0) {$\scriptstyle 1$};
\end{tikzpicture}
\qquad (a\ge 1)
\]
in $\qAWC$; see  \eqref{eq:dota}. Similar remarks apply to $\qASch$ versus $\qASchC$.

\subsection{Integral bases}

The basis results for diagrammatic categories are often very challenging to establish or simply unknown (see, e.g., Khovanov-Lauda \cite{KL3}, Cautis-Kamntizer-Morrison \cite{CKM}, Elias \cite{El15}, Bodish-Wu \cite{BWu23}, and Webster \cite{Web17}). 

Recall that bases for the Hom spaces in $\qW$ are parametrized by chicken foot ribbon diagrams \cite{Bru24}. Making an analogy with \cite{SW24web}, we expect that the algebra $\End_{\qAW}(\stra)$ is commutative and isomorphic to the center of the affine Hecke algebra $\widehat{\mathcal H}_a$, and hence admits a suitable dotted strand basis $\RPar_a$ in \eqref{eq:RPar}. Then bases for $\qAW$ are expected to be reduced dotted chicken foot ribbon diagrams where each strand of thickness $a$ is decorated by a dotted strand basis element for $\End_{\qAW}(\stra)$. 
The set of such $\la\times \mu$ reduced dotted chicken foot ribbon diagrams is denoted by $\RPMat_{\lambda,\mu}$; see \eqref{dottedreduced} for $\la,\mu \in \cup_{m\ge 0} \Lambda_{\text{st}}(m)$ and \eqref{RPMatblock}  for $\la,\mu \in \cup_{m\ge 0} \Lambda_{\text{st}}^{1+\ell}(m)$. (Recall here that objects in $\qAW$ can be identified with strict compositions $\cup_{m\ge 0} \Lambda_{\text{st}}(m)$ while objects in $\qASch$ can be identified with strict multi-compositions $\cup_{\ell,m\ge 0} \Lambda_{\text{st}}^{1+\ell}(m)$.)

Our next theorem confirms such expectations. 
 
 \begin{alphatheorem}  [\cref{thm:basisqAW,thm:basisASchur}]
  \label{thm:BasesAffine}
  Let $m\ge 1$. Then
  \begin{enumerate}
      \item 
$\RPMat_{\lambda,\mu}$ in \eqref{dottedreduced} forms a basis for $\Hom_{\qAW}(\mu,\lambda)$, for any $\lambda,\mu \in \Lambda_{\text{st}}(m)$. 
\item 
$\RPMat_{\lambda,\mu}$ in \eqref{RPMatblock} forms a basis for $\Hom_{\qASch}(\mu,\lambda)$, for any $\lambda,\mu \in \Lambda_{\text{st}}^{1+\ell}(m)$.    
\item 
$\qAW$ is a full subcategory of $\qASch$. 
  \end{enumerate}
 \end{alphatheorem}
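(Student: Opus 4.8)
The plan is to establish the three statements of \cref{thm:BasesAffine} in a coordinated fashion, since they are logically intertwined: the spanning half of (1) and (2) is elementary, the linear independence half is the substance, and (3) is largely formal once the bases are in place. First I would prove that $\RPMat_{\lambda,\mu}$ \emph{spans} $\Hom_{\qAW}(\mu,\lambda)$ by a normal-form (straightening) argument. Starting from an arbitrary composite of the generating morphisms $\merge$, $\splits$, $\crossingpos$, $\crossingneg$, $\xdota$, $\zdota$, one uses the defining relations \eqref{webassoc}--\eqref{intergralballon} to push merges below splits (producing chicken-foot shape as in \cite{Bru24}), to resolve all crossings of thick strands into the reduced form recorded by a matrix in $\RPMat$, and finally to slide all dots onto the designated strands and rewrite each resulting local endomorphism of $\stra$ in terms of the dotted strand basis $\RPar_a$ of \eqref{eq:RPar}. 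The relations \eqref{dotmovecrossing} and \eqref{dotmovesplits+merge} are exactly what is needed to move dots past crossings, splits, and merges, so the rewriting terminates; the output is a $\kk$-linear combination of elements of $\RPMat_{\lambda,\mu}$. The analogous argument for $\qASch$ uses in addition \eqref{redslider}--\eqref{redbraid} to move the red (spectral) strands into standard position, reducing to the block form \eqref{RPMatblock}.

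The core of the proof is \emph{linear independence}. Here I would argue by a dimension/specialization comparison rather than a direct diagrammatic manipulation. Over $\C(q)$ one has the isomorphisms of \cref{thm:AffineC} identifying $\qAW$ with $\qAWC$, and $\qAWC$ contains the web subcategory $\qW$ of \cite{Bru24} together with an affine Hecke category $\AH$; combining Brundan's chicken-foot ribbon basis for $\Hom_{\qW}$ with the known basis of affine Hecke algebras (equivalently, the description of $\End_{\qAW}(\stra)$ as the center of $\widehat{\mathcal H}_a$, with basis $\RPar_a$), one sees that $\dim_{\C(q)}\Hom_{\qAWC}(\mu,\lambda)$ is at most $|\RPMat_{\lambda,\mu}|$. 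Since the spanning argument gives the reverse inequality and works integrally, one concludes that $\RPMat_{\lambda,\mu}$ is a $\C(q)$-basis; a standard argument (the spanning set has the right cardinality and the transition matrix to any putative $\kk$-basis is defined over $\kk$ and becomes invertible after base change to $\C(q)$, hence is invertible over $\kk$ once one checks it is also invertible at $q=1$ via the degenerate case \cite{SW24web}) then upgrades this to an integral basis. Part (2) for $\qASch$ proceeds the same way, now comparing with Maksimau--Stroppel's higher-level affine $q$-Schur algebras \cite{MS21} via the isomorphism promised in the introduction (\cref{th:qASchiso}), whose path algebras have known dimensions matching $|\RPMat_{\lambda,\mu}|$ in the block form \eqref{RPMatblock}.

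For part (3), that $\qAW$ is a full subcategory of $\qASch$: there is an evident monoidal functor $\iota\colon \qAW \to \qASch$ sending each generator to itself (the relations \eqref{webassoc}--\eqref{intergralballon} holding in $\qASch$ by definition), and it is faithful because it sends the basis $\RPMat_{\lambda,\mu}$ of $\Hom_{\qAW}(\mu,\lambda)$ from (1) into the basis $\RPMat_{\lambda,\mu}$ of $\Hom_{\qASch}(\mu,\lambda)$ from (2) — concretely, the diagrams with no red strands form the sub-collection of \eqref{RPMatblock} indexed by multi-compositions concentrated in a single component. Thus $\iota$ is fully faithful, i.e. identifies $\qAW$ with a full subcategory.

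I expect the \textbf{main obstacle} to be the linear independence in part (1) --- specifically, making the dimension count over $\C(q)$ rigorous and then descending to $\kk=\Z[q,q^{-1}]$. The descent is delicate because $\kk$ is not a field: one must ensure the spanning set $\RPMat_{\lambda,\mu}$ is not merely a spanning set but a \emph{basis} over $\kk$, which typically requires exhibiting a second, independent estimate --- e.g. a cellular-type filtration of $\Hom_{\qAW}(\mu,\lambda)$, or specialization at both $q$ generic and $q=1$ (landing in the degenerate affine $q$-web category of \cite{SW24web, DKMZ}, whose bases are known) --- to pin down the rank. The thick-dot definition \eqref{eq:dota} entering through $\frac{1}{[a]!}$ is only available over $\C(q)$, so all integral statements must be phrased purely in terms of the generators $\merge, \splits, \crossingpos, \wkdotaa$ of $\qAW$, and one must check that the straightening relations used never require dividing by quantum integers. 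Verifying this termination-and-integrality of the rewriting system is the technical heart of the argument.
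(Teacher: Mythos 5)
Your spanning argument and your deduction of part (3) from (1) and (2) match the paper, but there is a genuine gap in the linear independence step. The ``dimension/specialization comparison'' you propose never actually produces the lower bound $\dim_{\C(q)}\Hom(\mu,\lambda)\ge|\RPMat_{\lambda,\mu}|$ that is needed: the straightening argument only yields the opposite inequality (a spanning set bounds the dimension from \emph{above}), and knowing bases of the two subcategories $\qW$ and $\AH$ inside $\qAWC$ does not bound the Hom spaces of the category they generate from below, since the composition maps could a priori have kernel --- injectivity of those maps is essentially the statement to be proved. Moreover, the two facts you want to feed into the count are, in the paper's logical order, \emph{consequences} of the basis theorem rather than inputs: the identification of $\End_{\qAW}(\stra)$ with symmetric Laurent polynomials with basis indexed by $\RPar_a$ is Corollary~\ref{cor:End1a}, and the isomorphism with the Maksimau--Stroppel higher level affine $q$-Schur algebras (Proposition~\ref{prop:path_MSchur}) is itself proved using the faithfulness statement Corollary~\ref{Cor:faithfulrep} that is read off from the basis proof. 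As written, the argument is circular.

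What the paper does instead --- and what your outline is missing --- is construct an explicit polynomial representation $\mathcal F$ of $\qASch$ on partially symmetric Laurent polynomials (\cref{th:ASchRep}, formulas \eqref{map_SM}--\eqref{map_redcross}), apply each element of the spanning set to a single test monomial $\prod_{k} X_{p_{k-1}+1}^{kN}\cdots X_{p_k}^{kN}$ with $N\gg 0$, and check that the resulting leading terms are pairwise distinct; this proves independence directly over $\kk=\Z[q,q^{-1}]$, so no base change to $\C(q)$, no descent, and no $q=1$ specialization is needed (the formulas for $\mathcal F$ involve no division by quantum integers, so your worry about $1/[a]!$ does not arise). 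The paper also reverses your order of deduction: it proves the basis for $\qASch$ first (Theorem~\ref{thm:basisASchur}) and then obtains the basis for $\qAW$ and fullness (Theorem~\ref{thm:basisqAW}) by observing that the spanning set of $\qAW$ maps into the established basis of $\qASch$ under the evident functor. The minimal repair to your proposal is to replace the dimension count by the construction of some explicit faithful module on which the elements of $\RPMat_{\lambda,\mu}$ visibly act by linearly independent operators.
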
 

It turns out that the categories $\qAW$ and $\qASch$ are closely related to various $q$-Schur algebras in the literature. Denote by $\qASch (m)$ the path algebras of the category $\qASch$ and by $\qAW (m)$ the path algebras of the category $\qAW$, for $m\ge 1$; see \eqref{affineSchur_red} and  \eqref{affineSchur_black}.
Recall that the affine $q$-Schur algebras were defined in \cite{Gre99} and the so-called higher level affine $q$-Schur algebras were defined in \cite{MS21} (also cf. \cite{Web20}).

\begin{alphaproposition} 
[Proposition \ref{prop:path_MSchur}]
\label{cor:isoSchur}
Let $m\ge 1$. Then 
\begin{enumerate}
\item 
The path algebras $\qAW (m)$ are isomorphic to the affine $q$-Schur algebras.
\item 
The path algebras $\qASch (m)$ are isomorphic to the higher level affine $q$-Schur algebras.
\end{enumerate}
\end{alphaproposition}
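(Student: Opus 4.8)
The plan is to deduce Proposition \ref{cor:isoSchur} from the basis theorem (Theorem \ref{thm:BasesAffine}) together with known presentations of the target algebras. First I would recall that the affine $q$-Schur algebra of \cite{Gre99} and the higher level affine $q$-Schur algebra of \cite{MS21} each come with a generators-and-relations presentation (or at least a presentation as idempotented algebras with a cellular-type basis indexed by pairs of semistandard-type objects). Write $S^{\mathrm{aff}}(m)$ for the relevant target. The natural map goes from the path algebra to $S^{\mathrm{aff}}(m)$: every generating morphism of $\qAW$ (respectively $\qASch$) should be sent to a corresponding generator of $S^{\mathrm{aff}}(m)$ — merges/splits to the ``divided power'' or Schur generators, the positive/negative crossings to the Hecke-type generators, and the invertible dot $\xdota$ to the Bernstein-Lusztig generator (or its image in the center of $\widehat{\mathcal H}_a$). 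The first step is to check that all defining relations \eqref{webassoc}--\eqref{intergralballon} (and \eqref{redslider}--\eqref{redbraid} in the $\qASch$ case) hold among these images in $S^{\mathrm{aff}}(m)$; this is a finite, if tedious, verification using the known relations in the target, and it produces a well-defined $\kk$-algebra homomorphism $\Phi\colon \qAW(m) \to S^{\mathrm{aff}}(m)$ (resp. $\qASch(m) \to S^{\mathrm{aff}}(m)$).

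Next I would prove surjectivity of $\Phi$: one shows that the chosen generators of $S^{\mathrm{aff}}(m)$ (divided powers / Schur generators, Hecke generators, and Bernstein-Lusztig elements) are all in the image, which is immediate once the generating set of the target is matched to images of generators of the category; if the target is instead described via a faithful action on a ``tensor space'' $V^{\otimes m}$ with Laurent parameters, one argues surjectivity via the classical surjection from the (affine) Hecke algebra plus the idempotents $1_\lambda$. Then comes the key injectivity argument, which is where the basis theorem does the work: by Theorem \ref{thm:BasesAffine}, $\qAW(m)$ has $\kk$-basis given by the reduced dotted chicken foot ribbon diagrams in $\bigsqcup_{\lambda,\mu}\RPMat_{\lambda,\mu}$, and likewise for $\qASch(m)$. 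I would show that $\Phi$ sends this diagrammatic basis to the known cellular/standard basis of $S^{\mathrm{aff}}(m)$ — or at least to a spanning set of the same cardinality in each matched Hom-space / idempotent truncation — so that a surjective map between free $\kk$-modules of equal (graded) rank is forced to be an isomorphism. Concretely, a reduced dotted chicken foot diagram decomposes as (bottom half)$\circ$(dotted strands)$\circ$(top half), which should map to a product of the form (semistandard column-reading element)$\cdot$(central/Bernstein-Lusztig part)$\cdot$(semistandard row-reading element), matching the triangular/cellular basis of the target exactly as in the finite-type story of \cite{Bru24}.

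The main obstacle I anticipate is the injectivity/rank-matching step, and specifically getting the bookkeeping of parameters and indexing sets to line up. On the $\qAW$ side one must identify $\End_{\qAW}(\stra)$ with the center $Z(\widehat{\mathcal H}_a)$ compatibly with how R.~Green's algebra sees the centers of its ``parabolic'' affine Hecke subalgebras — this is presumably arranged so that the dotted-strand basis $\RPar_a$ corresponds to the monomial symmetric functions / Schur functions basis of $Z(\widehat{\mathcal H}_a)$, but one has to verify $\Phi$ respects it. On the $\qASch$ side, the higher level algebra of \cite{MS21} carries the extra ``red strand'' data (the parameters $u$ in $\rightcrossing,\leftcrossing$), and one must check that the Maksimau--Stroppel generators corresponding to the red strands satisfy exactly \eqref{redslider}--\eqref{redbraid} and that the resulting basis count matches $\RPMat_{\lambda,\mu}$ in \eqref{RPMatblock}; an off-by-one in the multi-composition indexing $\Lambda_{\text{st}}^{1+\ell}$ versus $\Lambda_{\text{st}}^{\ell}$ is the kind of thing that needs care. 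A secondary obstacle is purely expository: if \cite{Gre99} or \cite{MS21} do not state their algebras in a presentation-ready form, one first has to extract such a presentation (or use the faithful tensor-space representation as the intermediary), which may require citing or reproving a presentation result. Granting those identifications, the rest is the standard ``well-defined $+$ surjective $+$ equal rank $\Rightarrow$ iso'' argument.
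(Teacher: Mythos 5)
Your main plan (build a homomorphism generator-by-generator, check relations in the target, prove surjectivity, then match ranks via Theorem~\ref{thm:BasesAffine}) has a genuine gap at the injectivity step, and it is exactly the obstacle you flag as ``secondary'': the target algebras do \emph{not} come with a presentation or with a basis of their Hom-spaces that you could match against $\RPMat_{\lambda,\mu}$. The paper explicitly points out (in \S\ref{subsec:iso-MS}) that Maksimau--Stroppel formulate merges, splits and mixed crossings but ``did not formulate the thick dots \ldots\ they did not specify all relevant relations either''; indeed, providing such diagrammatic presentations for the first time is a stated goal of the paper. So ``check the defining relations among the images in $S^{\mathrm{aff}}(m)$'' and ``send the diagrammatic basis to the known cellular/standard basis of $S^{\mathrm{aff}}(m)$'' both presuppose data about the target that is not available, and supplying it would amount to redoing the hard part of the theorem. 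For the same reason your identification of $\End_{\qAW}(\stra)$ with $Z(\widehat{\mathcal H}_a)$ compatibly with Green's algebra is not something you can just ``verify''; it is a consequence of the result, not an ingredient.

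What the paper actually does is the route you mention only parenthetically (``or use the faithful tensor-space representation as the intermediary''), and it is much shorter. Both algebras are realized inside $\End_{\kk}\bigl(\bigoplus_{\mu}\mathrm{Sym}_{\mu}\bigr)$: the polynomial representation of Theorem~\ref{th:ASchRep} is faithful on $\qASch(m)$ by Corollary~\ref{Cor:faithfulrep} (this is where the basis theorem is genuinely used --- linear independence is proved by computing leading terms of the action on monomials), while \cite[Lemma~4.40]{MS21} gives a faithful representation of the higher level affine $q$-Schur algebra on the same spaces whose image is generated by the same operators (splits, merges, traverse-ups/downs, multiplication by symmetric Laurent polynomials). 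Two faithfully represented algebras with the same image are equal, so no relation-checking and no rank count in the abstract target is needed. Part (1) is then not argued in parallel but deduced from Part (2): by Theorem~\ref{thm:basisqAW}, $\qAW$ is a full subcategory of $\qASch$, so $\qAW(m)$ is an idempotent truncation of $\qASch(m)$ and (1) is a special case of (2). If you want to salvage your write-up, promote your ``intermediary'' remark to the main argument and drop the presentation-matching and rank-matching steps.
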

In this way, we have obtained diagrammatic presentations for these affine $q$-Schur algebras for the first time. Some related generalized $q$-Schur algebras are studied in a recent paper \cite{LaiMinets25}.

 \subsection{Cyclotomic quotients}

Let $\bfu \in (\kk^*)^\ell$ with $\ell \ge 1$. We formulate the cyclotomic quotient categories (of level $\ell$): $\qW_\bfu$ and $\qSch_\bfu$ of $\qAW$ and $\qASch$, respectively; see Definitions \ref{def:cycWeb}--\ref{def:qcSchur}. These categories are naturally related, as illustrated by the following commutative diagrams.
\begin{align}  \label{diag:qHWS}
\xymatrix{
\qW \ar[r]  
& \qAW \ar[r]  \ar[d]  
& \qW_\bfu   \ar[d] 
\\
 & \qASch \ar[r] & \qSch_\bfu
}
\end{align}
The bases for the Hom spaces of these cyclotomic categories are certain bounded reduced dotted chicken foot ribbon diagrams. In particular, the level one quotients $\qW_\bfu$ and $\qSch_\bfu$ are all isomorphic to $\qW$ (see \cite{Bru24}). 

 \begin{alphatheorem}  [\cref{thm:basis-cyc-qweb,thm:basisqSchu}]
  \label{thm:CyclotBases}
  Let ${\bfu} \in (\kk^*)^\ell$ and $\ell \ge 1$. Then
  \begin{enumerate}
     \item 
    $\PMat_{\lambda,\mu}^\ell$ in \eqref{Def-spansetof-cycqweb} forms a basis for $\Hom_{\qW_{\bfu}}(\mu,\lambda)$, for any $\lambda,\mu \in\Lambda_{\text{st}}(m)$.   
    \item 
        $\PMat_{\nu,\mu}^\flat$ in \eqref{PM0} forms a basis for $\Hom_{\qSch_{\bfu}}(\mu,\nu)$, for any $\mu,\nu\in \Lambda_{\text{st}}^\ell(m)$.
 \item 
$\qW_\bfu$ is a full subcategory of $\qSch_\bfu$. 
 \end{enumerate}
 \end{alphatheorem}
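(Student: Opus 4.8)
The plan is to prove all three parts in parallel with the affine case (Theorem B), following the template that has presumably been set up in the proofs of \cref{thm:basisqAW,thm:basisASchur}. First I would establish that $\PMat_{\lambda,\mu}^\ell$ (resp.\ $\PMat_{\nu,\mu}^\flat$) spans the relevant Hom-space. For this one starts from the affine spanning set $\RPMat_{\lambda,\mu}$ provided by \cref{thm:BasesAffine} and shows that, modulo the cyclotomic ideal (the two-sided tensor ideal generated by the defining cyclotomic relation that kills a strand of thickness $1$ decorated by $\prod_{i=1}^{\ell}(\,\xdota - u_i\,)$, or its thick analogue), every reduced dotted chicken foot ribbon diagram can be rewritten as a $\kk$-linear combination of the claimed bounded diagrams. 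Concretely: use the dot-sliding relations \eqref{dotmovecrossing}, \eqref{dotmovesplits+merge} (and in the $\qSch$ case the red-strand relations \eqref{redslider}--\eqref{redbraid}) to push all dots to the leftmost strands at the bottom, then invoke the cyclotomic relation to reduce the dot-multiplicity on each such strand below $\ell$ (or the appropriate block-dependent bound encoded in \eqref{Def-spansetof-cycqweb} and \eqref{PM0}). This is the routine half.

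The substantive half is linear independence. Here I would not argue diagrammatically but instead exhibit a faithful representation — exactly as one expects \cite{Bru24} and the affine parts of this paper to do. The natural target is (a cyclotomic quotient of) the affine $q$-Schur algebra, equivalently a module category built from cyclotomic Hecke algebras $\Hu$ of type $A$: one constructs a monoidal functor from $\qW_\bfu$ (resp.\ $\qSch_\bfu$) to $\kk\text{-mod}$ sending the object $\lambda$ to a suitable permutation/Specht-type module, checks it is well-defined on relations, and shows the images of the spanning diagrams $\PMat_{\lambda,\mu}^\ell$ are linearly independent. The cleanest route is to compare ranks over the fraction field: by the cellular/quasi-hereditary structure of the Dipper--James--Mathas cyclotomic $q$-Schur algebra \cite{DJM98} (and the level-one identification with $\qW$ from \cite{Bru24}), the dimension of $\Hom_{\qW_\bfu}(\mu,\lambda)$ is bounded below by $\#\PMat_{\lambda,\mu}^\ell$; combined with the spanning statement this forces equality and hence a basis. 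Part (3), that $\qW_\bfu$ is a full subcategory of $\qSch_\bfu$, then follows formally: fullness is automatic since $\qW_\bfu$ is generated by a subset of the generators of $\qSch_\bfu$, and faithfulness follows by comparing the two bases — the diagrams in $\PMat_{\lambda,\mu}^\ell$ for $\lambda,\mu\in\Lambda_{\text{st}}(m)$ are precisely the subset of $\PMat_{\nu,\mu}^\flat$ supported on the objects with no red strands, so the inclusion functor is injective on Hom-spaces.

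The main obstacle is the lower bound on $\dim\Hom$, i.e.\ producing enough independent functors or a single faithful one. Two difficulties compound here: first, the bounded-diagram sets $\PMat^\ell$ and $\PMat^\flat$ have a block-dependent combinatorial description (the dot bounds depend on the residue data of the composition, not just on $\ell$ uniformly), so the counting identity $\#\PMat^\flat_{\nu,\mu} = \dim_{\C(q)}\Hom$ must be matched against the Dipper--James--Mathas cellular basis indexed by pairs of semistandard multitableaux, which requires a bijection that is not entirely formal. Second, one must ensure the evaluation functor is defined over $\kk=\Z[q,q^{-1}]$, not merely over $\C(q)$, so that integrality of the basis is genuinely established; this is handled by checking the functor lands in an integral form of the cyclotomic $q$-Schur algebra and that the transition matrix between the diagrammatic spanning set and the cellular basis is unitriangular with respect to a compatible order (dominance on multi-compositions refined by dot-degree), hence invertible over $\kk$. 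Once this unitriangularity is in place, spanning plus the field-theoretic count upgrades to an integral basis, and Theorem D follows.
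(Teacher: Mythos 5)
Your overall architecture matches the paper's: spanning via diagrammatic reduction modulo the cyclotomic ideal, then linear independence by mapping into the Dipper--James--Mathas world and counting against the cellular basis. The main divergence is in how the paper finishes the independence argument, and it sidesteps the two difficulties you flag at the end. Rather than comparing ranks over the fraction field and then worrying about a unitriangular transition matrix to get integrality, the paper constructs an intermediate \emph{double SST spanning set} $\{[\bT]\circ[\bS]^{\div}\}$ of explicit diagrams (Theorem~\ref{thm:SSTbasis}) and shows (Proposition~\ref{pro:propG}) that the functor $\mathcal G:\qSchu\to\qSchuDJM$ sends each $[\bT]\circ[\bS]^{\div}$ \emph{exactly} to the cellular basis element $\phi_{\bS\bT}$ of \eqref{basisofdjm} --- not merely up to lower-order terms. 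Since a spanning set that maps onto a $\kk$-basis is itself a $\kk$-basis, integrality is automatic and no unitriangularity or base-change argument is needed. The set $\PMat^\flat_{\nu,\mu}$ is then a basis because it spans (Proposition~\ref{pro:spanqSchu}) and has the same cardinality as the double SST index set; that counting bijection is exactly the nontrivial combinatorial input you identified, and the paper imports it from the degenerate companion paper rather than re-proving it. For part (1) the paper likewise builds no separate representation of $\qW_\bfu$: the functor $\qW_\bfu\to\qSch_\bfu$ (append $\ell$ red strands on the left) carries the spanning set $\PMat^\ell_{\lambda,\mu}$ bijectively onto the already-established basis $\PMat^\flat_{(\emptyset^\ell,\lambda),(\emptyset^\ell,\mu)}$, which yields independence, fullness and faithfulness in one stroke. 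One small correction to your part (3): fullness is \emph{not} automatic from $\qW_\bfu$ being generated by a subset of the generators of $\qSch_\bfu$ (a subcategory on fewer generators can easily fail to be full); it is precisely the statement that the image of the $\qW_\bfu$ spanning set exhausts a basis of the target Hom-space, i.e.\ it comes out of the same basis comparison you invoke for faithfulness.
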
 

Cyclotomic $q$-Schur algebras were introduced by Dipper-James-Mathas in \cite{DJM98} as endomorphism algebras of a direct sum of permutation modules over cyclotomic Hecke algebras, and they admit cellular bases \eqref{basisofdjm}. They form quasi-hereditary covers of 
cyclotomic Hecke algebras which are intimately related to categorification of representations of quantum groups; cf. \cite{R08, RSVV}. A final main result of this paper will connect our diagrammatic cyclotomic quotients with the cyclotomic $q$-Schur algebras.

To that end, let us formally formulate a small $\kk$-linear category $\qSchuDJM$ with endomorphism algebras being the cyclotomic $q$-Schur algebras. A functor $\mathcal G: \qSchu\rightarrow \qSchuDJM$ is formulated in \cref{thm:G}.
 
 \begin{alphatheorem}  [\cref{thm:G,thm:SSTbasis,thm:isoDJM}]
  \label{th:DJMiso}
  Let $m, \ell \ge 1$, and ${\bfu} \in(\kk^*)^\ell$. Then
  \begin{enumerate}
    \item 
    $\Hom_{\qSch_{\bfu}}(\mu,\nu)$ admits a double SST basis \eqref{doubleSST Hom}, for any $\mu,\nu\in \Lambda_{\text{st}}^\ell(m)$.
   \item 
   The functor $\mathcal G: \qSchu\rightarrow \qSchuDJM$ is an isomorphism.
   \item There exists an isomorphism between the path algebras of $\qSchu$ and the cyclotomic $q$-Schur algebras, $\qSchu(m)\cong \Smu$; moreover, the isomorphism matches the double SST basis \eqref{doubleSST} with the cellular basis \eqref{basisofdjm}.
  \end{enumerate}
 \end{alphatheorem}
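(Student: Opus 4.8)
The plan is to prove \cref{th:DJMiso} in three coordinated steps, mirroring the three parts of the statement, with part (1) supplying the combinatorial backbone that makes parts (2) and (3) essentially bookkeeping. First I would establish the double SST basis for $\Hom_{\qSch_{\bfu}}(\mu,\nu)$. By \cref{thm:CyclotBases}(2) we already know that $\PMat_{\nu,\mu}^\flat$ is a basis, so the task is to exhibit an explicit triangular (unitriangular over $\kk$) change of basis between the bounded reduced dotted chicken foot ribbon diagrams and the double SST diagrams indexed by pairs of semistandard tableaux of the relevant multipartition shapes. The natural device is to ``resolve'' a dotted strand of thickness $a$ using the crucial thick-dot expansion \eqref{eq:dota} (and its $\qASch$-analogue), rewriting dots in terms of merges, splits and crossings, and then read off a tableau from the resulting honest chicken-foot diagram exactly as in \cite{Bru24}; the cyclotomic relations \eqref{intergralballon} (the ``balloon'' relation) are what cut the tableau entries down to the admissible range dictated by $\ell$ and the parameters $\bfu$. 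One then checks that the transition matrix between the two spanning sets is triangular with respect to a dominance-type order on tableaux/diagrams, so that the double SST set is also a basis; the count matches because both index sets biject with $\PMat_{\nu,\mu}^\flat$.

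Second, with the double SST basis in hand, I would analyze the functor $\mathcal G:\qSchu\to\qSchuDJM$ constructed in \cref{thm:G}. Since $\qSchuDJM$ is defined (by construction in this paper) to have endomorphism algebras equal to the Dipper--James--Mathas cyclotomic $q$-Schur algebras $\Smu$, and since $\mathcal G$ is the identity on objects (both sides being indexed by $\Lambda_{\text{st}}^\ell(m)$), it suffices to show $\mathcal G$ is bijective on each $\Hom$-space. Surjectivity should follow because the images under $\mathcal G$ of the generating morphisms of $\qSchu$ generate $\Smu$ — here I would match generators to the standard ``$\varphi$'' / permutation-module maps and Jucys--Murphy-type elements used by Dipper--James--Mathas, citing \cite{DJM98} and the permutation-module presentation. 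For injectivity, the key is a dimension/rank comparison: $\mathcal G$ sends the double SST basis of $\Hom_{\qSch_{\bfu}}(\mu,\nu)$ to a set indexed by pairs of semistandard tableaux, which is precisely the index set of the cellular basis \eqref{basisofdjm} of the corresponding $\Hom$-space of $\qSchuDJM$; showing that $\mathcal G$ carries the double SST basis to (a unitriangular transform of) the cellular basis then forces $\mathcal G$ to be an isomorphism. This is where part (3) drops out: the induced map on path algebras $\qSchu(m)\to\Smu$ is an isomorphism matching \eqref{doubleSST} with \eqref{basisofdjm}.

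The main obstacle I expect is \emph{precisely} the compatibility in part (3): not merely that $\mathcal G$ is an abstract isomorphism, but that it matches the double SST basis with the DJM cellular basis, ideally term-by-term or at least unitriangularly with respect to compatible cell orders. This requires (a) pinning down the correspondence between the combinatorics of dotted chicken-foot ribbon diagrams and the Murphy-style tableau combinatorics underlying \eqref{basisofdjm}, including the correct normalization of thick dots via $[a]!$ in \eqref{eq:dota} so that no spurious scalars appear, and (b) verifying that the dominance order controlling the triangularity in step one is intertwined by $\mathcal G$ with the DJM cell order. Both \cite{Bru24} (for the $\ell=1$ case, where $\qW_\bfu\cong\qW$) and the degenerate prototype in \cite{SW24Schur} provide the template, and the affine basis \cref{thm:BasesAffine} plus the cyclotomic basis \cref{thm:CyclotBases} furnish the free-module statements needed to make the rank arguments rigorous; the remaining work is a careful but routine diagram-by-diagram comparison, together with checking that $\mathcal G$ is well defined by verifying it respects the defining relations \eqref{webassoc}--\eqref{intergralballon} and \eqref{redslider}--\eqref{redbraid} of $\qSchu$, which reduces to known relations in $\Smu$.
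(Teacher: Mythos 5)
There is a genuine gap in your plan for part (1), and it is a circularity rather than a missing computation. You propose to start from \cref{thm:CyclotBases}(2) --- the statement that $\PMat_{\nu,\mu}^\flat$ is a basis of $\Hom_{\qSch_{\bfu}}(\mu,\nu)$ --- and then pass to the double SST set by a triangular change of basis. But in the paper that statement (\cref{thm:basisqSchu}) is itself \emph{deduced from} the double SST basis theorem: the paper only proves that $\PMat_{\nu,\mu}^\flat$ \emph{spans} (\cref{pro:spanqSchu}); its linear independence is obtained afterwards by comparing cardinalities with the double SST basis. No independent proof of linear independence of $\PMat_{\nu,\mu}^\flat$ in the cyclotomic quotient is available --- the faithful polynomial representation only exists for the affine category $\qASch$, not for $\qSch_{\bfu}$ --- so your step 1 assumes exactly what needs to be proved. (A smaller slip: the ``balloon''/cyclotomic relations are those of \cref{lem:gru} and \eqref{eq:unitlambda}, not \eqref{intergralballon}, which is a defining relation of the affine category.)

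The correct engine, which you gesture at in your second and third paragraphs but do not make primary, is to prove only the \emph{spanning} property of the double SST set, by the cell-ideal filtration argument: modulo morphisms factoring through $\gamma \rhd \lambda$, each $\Hom_{\qSchu}(\lambda,\mu)$ is spanned by SST diagrams $[\bT]$ (\cref{pro:spancycdjm}), whence $\qSchu(m)$ is spanned by the elements $[\bT]\circ[\bS]^\div$ by induction on $\lambda$. Linear independence is then \emph{imported} from Dipper--James--Mathas: the functor $\mathcal G$ sends $[\bT]\circ[\bS]^\div$ exactly to $\phi_{\bS\bT}$ (\cref{pro:propG}, on the nose, not merely up to unitriangularity), and since \eqref{basisofdjm} is a basis of $\Smu$, the spanning set must be linearly independent and $\mathcal G$ must be injective. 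This single argument yields parts (1), (2) and (3) simultaneously; your proposed ``dimension/rank comparison'' for injectivity of $\mathcal G$ only works once the double SST basis is established, and the only non-circular way to establish it is through $\mathcal G$ itself.
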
 
 
In contrast to the degenerate setting \cite{SW24web, SW24Schur}, we need to keep track of the positive crossing $\crossingpos$ and negative crossing $\crossingneg$ in the current $q$-setting. On the other hand, several crucial relations (see \eqref{dotmovecrossing} and \eqref{dotmovesplitabr}) have become more homogeneous and simpler in the $q$-setting. Deriving various new identities for morphisms in $\qAW$ and $\qASch$ requires independent computations from {\em loc. cit.} On the other hand, proofs of several main results follow strategies similar to {\em loc. cit.} and we choose to refer to the earlier work to avoid repetition. 

\subsection{Other works}

Wada gave a presentation of the cyclotomic $q$-Schur algebras $\Smu$ in terms of some non-standard quantum groups. The diagrammatic presentation of $\Smu$ given by \cref{thm:isoDJM}(3) is more conceptual and much simpler than the one given by Wada \cite{Wad11}. 

In light of the connection developed in \cite{SW24web} between degenerate cyclotomic webs and certain Schur algebras arising from finite W-algebras, we expect that the path algebras of $\qW_\bfu$ should be closely related to finite $q$-$W$-algebras of type $A$ introduced by Sevostyanov \cite{Sev11}. This provides us a renewed motivation to studying finite $q$-$W$-algebras. 

Similar to \cite{KL3, Rou}, the categories constructed in this paper are well-suited for categorification, for $q$ generic or a root of unity, and this will be addressed elsewhere. We refer to the Introductions in \cite{SW24web, SW24Schur} for in-depth discussions on this research program of diagrammatic Schur categories and categorification.
Building on the categories {\em loc. cit.} and in this paper, we shall construct in a sequel new monoidal categories and cyclotomic quotients, which are closely related to representations of classical Lie algebras and  categorification related to $\imath$quantum groups.

\subsection{The organization}

The paper is organized as follows. 
In Section~\ref{sec:web}, we introduce the affine $q$-web categories $\qAW$ and $\qAWC$, and derive many additional identities from the defining relations. We establish the isomorphisms of $\qAWC$ and $\qAW$ over $\C(q)$. 

In Section~\ref{sec:Schur}, we introduce the affine $q$-Schur categories $\qASch$ and $\qASchC$, and again derive many additional identities from the defining relations. We establish the isomorphisms of $\qASchC$ and $\qASch$ over $\C(q)$. In addition, we formulate a polynomial representation of $\qASch$; see \cref{th:ASchRep}. 

In Section~\ref{sec:bases_affineWS}, we establish diagrammatic bases for Hom spaces in $\qASch$ and then in $\qAW$ in terms of reduced dotted chicken foot ribbon diagrams. Using the faithful polynomial representation of $\qASch$, we show that the path algebras $\qASch (m)$ (see \eqref{affineSchur_red} for definition) are isomorphic to the higher level $q$-Schur algebras defined in \cite{MS21}.

In Section~\ref{sec:cyclotomic}, we define the cyclotomic $q$-web category $\qW_\bfu$ and the cyclotomic $q$-Schur category $\qSch_\bfu$, and establish bases for their Hom spaces in terms of bounded reduced dotted chicken foot ribbon diagrams. For the Hom spaces of $\qSch_\bfu$, we establish the double SST bases. We show that the path algebras $\qSchu(m)$  (see \eqref{eq:qSchum} for definition) are isomorphic to the cyclotomic Schur algebras $\Smu$.

\vspace{2mm}

\noindent {\bf Acknowledgement.} 
LS is partially supported by NSFC (Grant No. 12071346), and he thanks Institute of Mathematical Science and Department of Mathematics at University of Virginia for hospitality and support, where this project was initiated. YS is partially supported by the Fields Institute. WW is partially supported by DMS--2401351. 
\section{The affine $q$-web category}
\label{sec:web}

In this section, we introduce a $\kk$-linear monoidal category, the affine $q$-web category $\qAW$, where $\kk =\Z[q,q^{-1}]$ or $\C(q)$. We also formulate a $\C(q)$-linear variant $\qAWC$ with fewer generating morphisms/relations, and show that it is isomorphic to $\qAW$ over $\C(q)$. Along the way, we derive various additional relations in $\qAW$ which will be used in later sections.

\subsection{Definition of $\qAW$}
For $n, a \in \N$, we define the $ q$-numbers and $q$-binomial coefficients to be
\begin{align}
\label{qbinom}
    [n] =[n]_q =\frac{q^n -q^{-n}}{q-q^{-1}},
    \qquad
    \qbinom{n}{a} =\frac{[n][n-1]\cdots [n-a+1]}{[a]!}.
\end{align}

Let $\kk =\Z[q,q^{-1}]$ or $\C(q)$.
\begin{definition}
  \label{def:qAWeb}
The affine $q$-web category $\qAW$ is the strict $\kk$-linear monoidal category generated by objects $a\in \mathbb Z_{\ge1}$. The object $a$ and its identity morphism  will be drawn as a vertical strand labeled by $a$,
$\stra.$
The generating morphisms are the merges, splits and (thick) positive crossings depicted as
\begin{align}
\label{merge+split+crossing}
\merge 
&:(a,b) \rightarrow (a+b),&
\splits
&:(a+b)\rightarrow (a,b),&
\crossingpos
&:(a,b) \rightarrow (b,a),
\end{align}
and 
\begin{equation}
\label{dotgenerator}
\wkdotaa , 
\end{equation}
for $a,b \in \Z_{\ge 1}$, where $\crossingpos$ is invertible with inverse being the negative crossing denoted $\crossingneg := \crossingpos^{-1}$ and $\xdota$ is invertible with inverse denoted by $\zdota$. 
These generating morphisms are subject to the following relations \eqref{webassoc}--\eqref{intergralballon}, for $a,b,c,d \in \Z_{\ge 1}$ with $d-a=c-b$:
\begin{align}
\label{webassoc}
\begin{tikzpicture}[baseline = 0,color=\clr]
	\draw[-,thick] (0.35,-.3) to (0.08,0.14);
	\draw[-,thick] (0.1,-.3) to (-0.04,-0.06);
	\draw[-,line width=1pt] (0.085,.14) to (-0.035,-0.06);
	\draw[-,thick] (-0.2,-.3) to (0.07,0.14);
	\draw[-,line width=1.5pt] (0.08,.45) to (0.08,.1);
        \node at (0.45,-.41) {$\scriptstyle c$};
        \node at (0.07,-.4) {$\scriptstyle b$};
        \node at (-0.28,-.41) {$\scriptstyle a$};
\end{tikzpicture}
&=
\begin{tikzpicture}[baseline = 0, color=\clr]
	\draw[-,thick] (0.36,-.3) to (0.09,0.14);
	\draw[-,thick] (0.06,-.3) to (0.2,-.05);
	\draw[-,line width=.8pt] (0.07,.14) to (0.19,-.06);
	\draw[-,thick] (-0.19,-.3) to (0.08,0.14);
	\draw[-,line width=1.5pt] (0.08,.45) to (0.08,.1);
        \node at (0.45,-.41) {$\scriptstyle c$};
        \node at (0.07,-.4) {$\scriptstyle b$};
        \node at (-0.28,-.41) {$\scriptstyle a$};
\end{tikzpicture}\:,
\qquad
\begin{tikzpicture}[baseline = -1mm, color=\clr]
	\draw[-,thick] (0.35,.3) to (0.08,-0.14);
	\draw[-,thick] (0.1,.3) to (-0.04,0.06);
	\draw[-,line width=1pt] (0.085,-.14) to (-0.035,0.06);
	\draw[-,thick] (-0.2,.3) to (0.07,-0.14);
	\draw[-,line width=1.5pt] (0.08,-.45) to (0.08,-.1);
        \node at (0.45,.4) {$\scriptstyle c$};
        \node at (0.07,.42) {$\scriptstyle b$};
        \node at (-0.28,.4) {$\scriptstyle a$};
\end{tikzpicture}
=\begin{tikzpicture}[baseline = -1mm, color=\clr]
	\draw[-,thick] (0.36,.3) to (0.09,-0.14);
	\draw[-,thick] (0.06,.3) to (0.2,.05);
	\draw[-,line width=1pt] (0.07,-.14) to (0.19,.06);
	\draw[-,thick] (-0.19,.3) to (0.08,-0.14);
	\draw[-,line width=1.5pt] (0.08,-.45) to (0.08,-.1);
        \node at (0.45,.4) {$\scriptstyle c$};
        \node at (0.07,.42) {$\scriptstyle b$};
        \node at (-0.28,.4) {$\scriptstyle a$};
\end{tikzpicture}\:,
\\
\label{mergesplit}
\begin{tikzpicture}[baseline = 7.5pt,scale=.8, color=\clr]
	\draw[-,line width=1pt] (0,0) to (.28,.3) to (.28,.7) to (0,1);
	\draw[-,line width=1pt] (.6,0) to (.31,.3) to (.31,.7) to (.6,1);
        \node at (0,1.13) {$\scriptstyle b$};
        \node at (0.63,1.13) {$\scriptstyle d$};
        \node at (0,-.1) {$\scriptstyle a$};
        \node at (0.63,-.1) {$\scriptstyle c$};
\end{tikzpicture}
&=
\sum_{\substack{0 \leq s \leq \min(a,b)\\0 \leq t \leq \min(c,d)\\t-s=d-a}}
q^{st}  
\begin{tikzpicture}[baseline = 7.5pt,scale=.8, color=\clr]
\draw[-,thick] (0.58,0) to (0.58,.2) to (.02,.8) to (.02,1);
	\draw[-,line width=4pt,white] (0.02,0) to (0.02,.2) to (.58,.8) to (.58,1);
	\draw[-,thick] (0.02,0) to (0.02,.2) to (.58,.8) to (.58,1);
	\draw[-,thick] (0,0) to (0,1);
	\draw[-,thick] (0.6,0) to (0.6,1);
        \node at (0,1.13) {$\scriptstyle b$};
        \node at (0.6,1.13) {$\scriptstyle d$};
        \node at (0,-.1) {$\scriptstyle a$};
        \node at (0.6,-.1) {$\scriptstyle c$};
        \node at (-0.1,.5) {$\scriptstyle s$};
        \node at (0.75,.5) {$\scriptstyle t$};
\end{tikzpicture},
\\
  \label{splitbinomial}
\begin{tikzpicture}[baseline = -1mm,scale=.8,color=\clr]
\draw[-,line width=1.5pt] (0.08,-.8) to (0.08,-.5);
\draw[-,line width=1.5pt] (0.08,.3) to (0.08,.6);
\draw[-,thick] (0.1,-.51) to [out=45,in=-45] (0.1,.31);
\draw[-,thick] (0.06,-.51) to [out=135,in=-135] (0.06,.31);
\node at (-.33,-.05) {$\scriptstyle a$};
\node at (.45,-.05) {$\scriptstyle b$};
\end{tikzpicture}
&= 
\qbinomsm{a+b}{a}\:
\begin{tikzpicture}[baseline = -1mm,scale=.6,color=\clr]
	\draw[-,line width=1.5pt] (0.08,-.8) to (0.08,.6);
        \node at (.08,-1) {$\scriptstyle a+b$};
\end{tikzpicture}, 
\\
\label{dotmovecrossing}
\begin{tikzpicture}[baseline=-1mm, scale=.8, color=\clr]
	\draw[-,line width=1pt] (0.3,-.3) to (-.3,.4);
	\draw[-,line width=4pt,white] (-0.3,-.3) to (.3,.4);
	\draw[-,line width=1pt] (-0.3,-.3) to (.3,.4);
        \node at (-0.3,-.4) {$\scriptstyle a$};
        \node at (0.3,-.4) {$\scriptstyle b$};
     \draw(-0.15,-0.12)   \bdot;
\end{tikzpicture}
 & =
\begin{tikzpicture}[baseline=-1mm, scale=.8, color=\clr]
 \draw[-,line width=1pt] (-0.3,-.3) to (.3,.4);
	\draw[-,line width=4pt,white] (0.3,-.3) to (-.3,.4);
  \draw[-,line width=1pt] (0.3,-.3) to (-.3,.4);
        \node at (-0.3,-.4) {$\scriptstyle a$};
        \node at (0.3,-.4) {$\scriptstyle b$};
     \draw(0.15,0.23)   \bdot;
\end{tikzpicture}
,  
    \qquad
\begin{tikzpicture}[baseline=-1mm, scale=.8, color=\clr]
	\draw[-,line width=1pt] (0.3,-.3) to (-.3,.4);
	\draw[-,line width=4pt,white] (-0.3,-.3) to (.3,.4);
	\draw[-,line width=1pt] (-0.3,-.3) to (.3,.4);
        \node at (-0.3,-.4) {$\scriptstyle a$};
        \node at (0.3,-.4) {$\scriptstyle b$};
     \draw(-0.15,0.23)   \bdot;
\end{tikzpicture}
  =
\begin{tikzpicture}[baseline=-1mm, scale=.8, color=\clr]
 \draw[-,line width=1pt] (-0.3,-.3) to (.3,.4);
	\draw[-,line width=4pt,white] (0.3,-.3) to (-.3,.4);
  \draw[-,line width=1pt] (0.3,-.3) to (-.3,.4);
        \node at (-0.3,-.4) {$\scriptstyle a$};
        \node at (0.3,-.4) {$\scriptstyle b$};
     \draw(0.15,-0.12)   \bdot;
\end{tikzpicture}
,
\\
 \label{dotmovesplits+merge}
\begin{tikzpicture}[baseline = -.5mm,scale=.8,color=\clr]
\draw[-,line width=1.5pt] (0.08,-.5) to (0.08,0.04);
\draw[-,line width=1pt] (0.34,.5) to (0.08,0);
\draw[-,line width=1pt] (-0.2,.5) to (0.08,0);
\node at (-0.22,.6) {$\scriptstyle a$};
\node at (0.36,.65) {$\scriptstyle b$};
\draw (0.08,-.2) \bdot;
\end{tikzpicture} 
& =
\begin{tikzpicture}[baseline = -.5mm,scale=.8,color=\clr]
\draw[-,line width=1.5pt] (0.08,-.5) to (0.08,0.04);
\draw[-,line width=1pt] (0.34,.5) to (0.08,0);
\draw[-,line width=1pt] (-0.2,.5) to (0.08,0);
\node at (-0.22,.6) {$\scriptstyle a$};
\node at (0.36,.65) {$\scriptstyle b$};
\draw (-.05,.24) \bdot;
\draw (.22,.24) \bdot;
\end{tikzpicture},
   \quad 
\begin{tikzpicture}[baseline = -.5mm, scale=.8, color=\clr]
\draw[-,line width=1pt] (0.3,-.5) to (0.08,0.04);
\draw[-,line width=1pt] (-0.2,-.5) to (0.08,0.04);
\draw[-,line width=1.5pt] (0.08,.6) to (0.08,0);
\node at (-0.22,-.6) {$\scriptstyle a$};
\node at (0.35,-.6) {$\scriptstyle b$};
\draw (0.08,.2) \bdot;
\end{tikzpicture}
 =~
\begin{tikzpicture}[baseline = -.5mm,scale=.8, color=\clr]
\draw[-,line width=1pt] (0.3,-.5) to (0.08,0.04);
\draw[-,line width=1pt] (-0.2,-.5) to (0.08,0.04);
\draw[-,line width=1.5pt] (0.08,.6) to (0.08,0);
\node at (-0.22,-.6) {$\scriptstyle a$};
\node at (0.35,-.6) {$\scriptstyle b$};
\draw (-.08,-.3) \bdot; \draw (.22,-.3) \bdot;
\end{tikzpicture} ,
\\
  \label{intergralballon}
\begin{tikzpicture}[baseline = 1.5mm, scale=.5, color=\clr]
\draw[-, line width=1.2pt] (0.5,2) to (0.5,2.5);
\draw[-, line width=1.2pt] (0.5,0) to (0.5,-.4);
\draw[-,thin]  (0.5,2) to[out=left,in=up] (-.5,1)
to[out=down,in=left] (0.5,0);
\draw[-,thin]  (0.5,2) to[out=left,in=up] (0,1)
 to[out=down,in=left] (0.5,0);      
\draw[-,thin] (0.5,0)to[out=right,in=down] (1.5,1)
to[out=up,in=right] (0.5,2);
\draw[-,thin] (0.5,0)to[out=right,in=down] (1,1) to[out=up,in=right] (0.5,2);
\node at (0.5,.7){$\scriptstyle \cdots$};
\draw (-0.5,1) \bdot; 
\draw (0,1) \bdot; 
\node at (0.5,-.6) {$\scriptstyle a$};
\draw (1,1) \bdot;
\draw (1.5,1) \bdot; 
\node at (-.22,0) {$\scriptstyle 1$};
\node at (1.2,0) {$\scriptstyle 1$};
\node at (.3,0.3) {$\scriptstyle 1$};
\node at (.7,0.3) {$\scriptstyle 1$};
\end{tikzpicture}
   & =
   [a]!  
\begin{tikzpicture}[baseline = 1.5mm, scale=.5, color=\clr]
\draw[-,line width=1.2pt] (0,-0.1) to[out=up, in=down] (0,1.4);
\draw(0,0.6) \bdot; 
\node at (0,-.3) {$\scriptstyle a$};
\end{tikzpicture} ,
\qquad
\begin{tikzpicture}[baseline = 1.5mm, scale=.5, color=\clr]
\draw[-, line width=1.2pt] (0.5,2) to (0.5,2.5);
\draw[-, line width=1.2pt] (0.5,0) to (0.5,-.4);
\draw[-,thin]  (0.5,2) to[out=left,in=up] (-.5,1)
to[out=down,in=left] (0.5,0);
\draw[-,thin]  (0.5,2) to[out=left,in=up] (0,1)
 to[out=down,in=left] (0.5,0);      
\draw[-,thin] (0.5,0)to[out=right,in=down] (1.5,1)
to[out=up,in=right] (0.5,2);
\draw[-,thin] (0.5,0)to[out=right,in=down] (1,1) to[out=up,in=right] (0.5,2);
\node at (0.5,.7){$\scriptstyle \cdots$};
\draw (-0.5,1) \zdot; 
\draw (0,1) \zdot; 
\node at (0.5,-.6) {$\scriptstyle a$};
\draw (1,1) \zdot;
\draw (1.5,1) \zdot; 
\node at (-.22,0) {$\scriptstyle 1$};
\node at (1.2,0) {$\scriptstyle 1$};
\node at (.3,0.3) {$\scriptstyle 1$};
\node at (.7,0.3) {$\scriptstyle 1$};
\end{tikzpicture}
    ~=~ [a]!  
\begin{tikzpicture}[baseline = 1.5mm, scale=.5, color=\clr]
\draw[-,line width=1.2pt] (0,-0.1) to[out=up, in=down] (0,1.4);
\draw(0,0.6) \zdot; 
\node at (0,-.3) {$\scriptstyle a$};
\end{tikzpicture} .
\end{align}
\end{definition}

\begin{rem}
    A degenerate version of the affine $q$-web category $\qAW$ was introduced independently in \cite{DKMZ} and \cite{SW24web}. 
\end{rem}

The following relations in $\qAW$ follow by \eqref{dotmovecrossing} and the inverse relations between solid and hollow dots:
\begin{align}
\label{eq:zdotmovecrossing}
    \begin{tikzpicture}[baseline=-1mm, scale=.8, color=\clr]
 \draw[-,line width=1pt] (-0.3,-.3) to (.3,.4);
	\draw[-,line width=4pt,white] (0.3,-.3) to (-.3,.4);
  \draw[-,line width=1pt] (0.3,-.3) to (-.3,.4);
        \node at (-0.3,-.4) {$\scriptstyle a$};
        \node at (0.3,-.4) {$\scriptstyle b$};
     \draw(-0.18,-0.15)   \zdot;
\end{tikzpicture}
=
\begin{tikzpicture}[baseline=-1mm, scale=.8, color=\clr]
	\draw[-,line width=1pt] (0.3,-.3) to (-.3,.4);
	\draw[-,line width=4pt,white] (-0.3,-.3) to (.3,.4);
	\draw[-,line width=1pt] (-0.3,-.3) to (.3,.4);
        \node at (-0.3,-.4) {$\scriptstyle a$};
        \node at (0.3,-.4) {$\scriptstyle b$};
     \draw(0.15,0.23)   \zdot;
\end{tikzpicture} ,
    \qquad
\begin{tikzpicture}[baseline=-1mm, scale=.8, color=\clr]
 \draw[-,line width=1pt] (-0.3,-.3) to (.3,.4);
	\draw[-,line width=4pt,white] (0.3,-.3) to (-.3,.4);
  \draw[-,line width=1pt] (0.3,-.3) to (-.3,.4);
        \node at (-0.3,-.4) {$\scriptstyle a$};
        \node at (0.3,-.4) {$\scriptstyle b$};
      \draw(-0.15,0.23)   \zdot;
\end{tikzpicture} 
=
\begin{tikzpicture}[baseline=-1mm, scale=.8, color=\clr]
	\draw[-,line width=1pt] (0.3,-.3) to (-.3,.4);
	\draw[-,line width=4pt,white] (-0.3,-.3) to (.3,.4);
	\draw[-,line width=1pt] (-0.3,-.3) to (.3,.4);
        \node at (-0.3,-.4) {$\scriptstyle a$};
        \node at (0.3,-.4) {$\scriptstyle b$};
    \draw(0.18,-0.15)   \zdot;
\end{tikzpicture} .
\end{align}

Similarly, the following relations in $\qAW$, for $a,b\ge 1$, follow by \eqref{dotmovesplits+merge}:
\begin{equation}
\label{eq:dotSM2}
\begin{tikzpicture}[baseline = -.5mm,scale=.8,color=\clr]
\draw[-,line width=1.5pt] (0.08,-.5) to (0.08,0.04);
\draw[-,line width=1pt] (0.34,.5) to (0.08,0);
\draw[-,line width=1pt] (-0.2,.5) to (0.08,0);
\node at (-0.22,.6) {$\scriptstyle a$};
\node at (0.36,.65) {$\scriptstyle b$};
\draw (0.08,-.2) \zdot;
\end{tikzpicture} 
=
\begin{tikzpicture}[baseline = -.5mm,scale=.8,color=\clr]
\draw[-,line width=1.5pt] (0.08,-.5) to (0.08,0.04);
\draw[-,line width=1pt] (0.34,.5) to (0.08,0);
\draw[-,line width=1pt] (-0.2,.5) to (0.08,0);
\node at (-0.22,.6) {$\scriptstyle a$};
\node at (0.36,.65) {$\scriptstyle b$};
\draw (-.05,.24) \zdot;
\draw (.22,.24) \zdot;
\end{tikzpicture},
   \qquad 
\begin{tikzpicture}[baseline = -.5mm, scale=.8, color=\clr]
\draw[-,line width=1pt] (0.3,-.5) to (0.08,0.04);
\draw[-,line width=1pt] (-0.2,-.5) to (0.08,0.04);
\draw[-,line width=1.5pt] (0.08,.6) to (0.08,0);
\node at (-0.22,-.6) {$\scriptstyle a$};
\node at (0.35,-.6) {$\scriptstyle b$};
\draw (0.08,.2) \zdot;
\end{tikzpicture}
 =~
\begin{tikzpicture}[baseline = -.5mm,scale=.8, color=\clr]
\draw[-,line width=1pt] (0.3,-.5) to (0.08,0.04);
\draw[-,line width=1pt] (-0.2,-.5) to (0.08,0.04);
\draw[-,line width=1.5pt] (0.08,.6) to (0.08,0);
\node at (-0.22,-.6) {$\scriptstyle a$};
\node at (0.35,-.6) {$\scriptstyle b$};
\draw (-.08,-.3) \zdot; \draw (.22,-.3) \zdot;
\end{tikzpicture} .
\end{equation}

\subsection{The $q$-web category}

To be consistent with our general terminology, the $q$-Schur category $q$-{\bf Schur} in \cite{Bru24} will be referred to as the {\em $q$-web category} and denoted by $\qW$. More explicitly, $\qW$ is the strict $\kk$-linear monoidal category generated by objects $a\in \mathbb Z_{\ge1}$. The object $a$ and its identity morphism  will be drawn as a vertical strand labeled by $a$:
$
\stra .  
$
The generating morphisms are the merges, splits and (thick) positive crossings depicted as \eqref{merge+split+crossing}, subject to the relations \eqref{webassoc}--\eqref{splitbinomial}, where $\crossingpos$ is invertible with inverse denoted by $\crossingneg$. 

The following additional relations hold for $\qW$ (and hence also for $\qAW$) as they can be derived from the defining relations of $\qW$, for $a,b,c,d \geq 0$ 
with $d \leq a$ and $c \leq b+d$:
\begin{align}
\label{squareswitch}
 .
\end{align*}
Plugging this formula back into the earlier computation proves the first formula in the lemma. 
\end{proof}

For any $a\in \Z_{>0}$, 
    let
    \begin{equation}
    \label{eq:RPar}
          \RPar_a:=\{\lambda=(\lambda_1,\ldots,\lambda_k)\in \Z^k\mid \lambda_1\ge \lambda_2\ge \ldots\ge \lambda_k, |\lambda_i|\le a,  \lambda_1-\lambda_k\le a, \forall 1\le i\le k \}.
    \end{equation}

We call any element in $ \RPar_a$ a rational partition.
 Let $\Par_a$ denote the subset of $\RPar_a$ consisting of partitions $\nu=(\nu_1,\ldots,\nu_k)$ such that $\nu_1\le a$.

For any $\lambda\in \RPar_a $, we define 
\begin{equation}
\label{eq:wkdotalambda}
\begin{tikzpicture}[baseline = -1mm,scale=1,color=\clr]
\draw[-,line width=2pt] (0.08,-.5) to (0.08,.5);
\node at (.08,-.65) {$\scriptstyle a$};
\draw(0.08,0) \bdot;
\draw(.45,0)node {$\scriptstyle \omega_{\lambda}$};\end{tikzpicture}
 := \begin{tikzpicture}[baseline = -1mm,scale=1,color=\clr]
\draw[-,line width=2pt] (0.08,-.7) to (0.08,.7);
\node at (.08,-.85) {$\scriptstyle a$};
\draw(0.08,.5) \bdot;
\draw(.55,.5)node {$\scriptstyle \omega_{\lambda_1}$};
\draw(0.08,.3) \bdot;
\draw(.55,.3)node {$\scriptstyle \omega_{\lambda_2}$};
\draw(.55,0) node{$\ldots$};
\draw(0.08,-.3) \bdot;
\draw(.55,-.3)node {$\scriptstyle \omega_{\lambda_k}$};
\end{tikzpicture}.   
\end{equation}
We may also denote the above diagram by the shorthand notation $\omega_{a,\la}$ or just $\omega_{\la}$ if there is no confusion on $a$. 
\begin{definition}
\label{def:Da}
    For any $a\in \N$, define 
    $D_a$ to be the (commutative) $\Z[q,q^{-1}]$-subalgebra of $ \End_{\qAW}(\stra)$ generated by $\omega_{a,r}$, for $-a\le r \le a$.
(The commutativity here follows by Lemma~\ref{lem:dotcommutative}.)
\end{definition}
It will be shown in \cref{cor:End1a} that $D_a = \End_{\qAW}(\stra)$; moreover, it is isomorphic to the ring of symmetric Laurent polynomials $\Z[q,q^{-1}] [X_1^\pm, \ldots, X_a^\pm]^{\mathfrak S_a}$.

\begin{lemma} \label{lem:spanDa}
     As a $\Z[q,q^{-1}]$-module, $D_a$ is
spanned by $\omega_{a,\lambda}$, for all $\lambda\in \RPar_a$.
\end{lemma}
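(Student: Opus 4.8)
The plan is to shrink the span of $D_a$ in stages until only the diagrams $\omega_{a,\lambda}$ with $\lambda\in\RPar_a$ remain; the essential tool will be a straightening identity turning a thick solid dot on top of a thick hollow dot into a single thick dot. By Lemma~\ref{lem:dotcommutative} the algebra $D_a$ is commutative, and it is generated by the $\omega_{a,r}$ with $-a\le r\le a$, so it is spanned over $\Z[q,q^{-1}]$ by the products $\omega_{a,r_1}\cdots\omega_{a,r_k}$ with $r_i\in\{-a,\dots,a\}$. Deleting the $r_i=0$ factors (since $\omega_{a,0}=\mathrm{id}$) and commuting the rest into weakly decreasing order, $D_a$ is spanned by $\omega_{a,\mu}:=\omega_{a,\mu_1}\cdots\omega_{a,\mu_k}$ over weakly decreasing $\mu=(\mu_1\ge\cdots\ge\mu_k)$ with all $\mu_i\in\{-a,\dots,a\}$. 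By \eqref{eq:RPar}, such a $\mu$ lies in $\RPar_a$ as soon as its spread $\mu_1-\mu_k$ is at most $a$, so the remaining task is to rewrite $\omega_{a,\mu}$, when the spread exceeds $a$, as a $\Z[q,q^{-1}]$-combination of $\omega_{a,\lambda}$ with $\lambda\in\RPar_a$.

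The heart of the argument is the identity
\[
\omega_{a,a}\cdot\omega_{a,-s}=\omega_{a,a-s}\qquad(0\le s\le a),
\]
which I would prove as follows. By \eqref{eq:wkdota}, $\omega_{a,-s}$ is a split $a\to(a-s,s)$, a thick hollow dot $\zdota$ on the thickness-$s$ strand, and a merge, while $\omega_{a,a}=\xdota$ sits on the top thickness-$a$ strand. Sliding $\xdota$ down through the merge using Lemma~\ref{lem:dotmovesplitabr} with the dot label maximal ($r=a$), exactly one term of the sum survives and it carries coefficient $q^0=1$, so $\xdota$ becomes $\xdota\otimes\xdota$ on the $(a-s,s)$ strands; the factor on the thickness-$s$ strand then cancels the hollow dot there (they are mutually inverse), and the diagram that remains is precisely that of $\omega_{a,a-s}$ in \eqref{eq:wkdota}. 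Since $\omega_{a,a}$ and $\omega_{a,-a}$ are mutually inverse, this identity also yields $\omega_{a,-r}=\omega_{a,a}^{-1}\,\omega_{a,a-r}$ for $0\le r\le a$; hence $D_a$ is generated as a commutative $\Z[q,q^{-1}]$-algebra by $\omega_{a,1},\dots,\omega_{a,a}$ and $\omega_{a,a}^{-1}=\omega_{a,-a}$, and therefore is spanned by the elements $\omega_{a,a}^{-N}\,\omega_{a,\nu}$ with $N\ge 0$ and $\nu\in\Par_a$.

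It then remains to straighten $\omega_{a,a}^{-N}\omega_{a,\nu}$. Writing $\nu=(\nu_1\ge\cdots\ge\nu_\ell\ge 1)$ and peeling off its parts one at a time against the $\omega_{a,a}^{-1}$ factors via the identity in the form $\omega_{a,a}^{-1}\omega_{a,\nu_i}=\omega_{a,-(a-\nu_i)}$, I reach after $m:=\min(N,\ell)$ steps the element $\omega_{a,a}^{-(N-m)}\,\omega_{a,-(a-\nu_1)}\cdots\omega_{a,-(a-\nu_m)}\,\omega_{a,\nu_{m+1}}\cdots\omega_{a,\nu_\ell}$, which by commutativity equals $\omega_{a,\rho}$ for a single weakly decreasing sequence $\rho$ with entries in $\{-a,\dots,a\}$. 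If $N\ge\ell$, every entry of $\rho$ is $\le 0$, so its spread is at most $a$; if $N<\ell$, the largest entry of $\rho$ is $\nu_{N+1}$ and the smallest is $\nu_N-a$, so its spread equals $\nu_{N+1}-\nu_N+a\le a$ because $\nu$ is weakly decreasing. In both cases $\rho\in\RPar_a$, and this completes the proof.

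I expect the main obstacle to be the identity $\omega_{a,a}\cdot\omega_{a,-s}=\omega_{a,a-s}$: the reduction of the generating set and the spread estimate are elementary once it is available, but the identity itself requires matching the conventions of \eqref{eq:wkdota} for the solid and hollow thick dots and invoking the maximal-label specialization of Lemma~\ref{lem:dotmovesplitabr}, in which the general sum collapses to a single coefficient-one term, together with the inverse relation between $\xdota$ and $\zdota$. Two minor points to watch: the spread bound in the case $N<\ell$ genuinely relies on $\nu_{N+1}\le\nu_N$, and any zero parts created along the way are harmless because $\omega_{a,0}=\mathrm{id}$.
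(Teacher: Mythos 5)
Your proof is correct, but it takes a genuinely different route from the paper's. The paper never isolates your key identity $\omega_{a,a}\,\omega_{a,-s}=\omega_{a,a-s}$; instead it works with the weaker membership statement that $\omega_{a,r}\,\omega_{a,-s}$ lies in $\Omega_a=\kk\text{-span}\{\omega_{a,h}\,\omega_{a,-k}\mid h,k\ge 0,\ h+k\le a\}$ for all $0\le r,s\le a$, obtained from a preliminary Claim~$(\star)$ about split--merge diagrams carrying one solid and one hollow dot packet, proved by induction on $r+s$ using Lemma~\ref{lem:dots2strands} and \eqref{dotmovesplitabr} with unspecified powers $q^{\ast},q^{\star}$ as coefficients; repeated application of that membership then straightens an arbitrary monomial. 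Your single exact identity does check out: writing $\omega_{a,-s}$ as in \eqref{eq:wkdota} and pushing $\xdota$ through the merge with legs of thickness $a-s$ and $s$ via the second identity of Lemma~\ref{lem:dotmovesplitabr} with dot label equal to the full thickness $a$, the vanishing convention $\omega_{c,j}=0$ unless $0\le j\le c$ kills every summand except the one placing $\xdot_{a-s}$ and $\xdot_s$ on the two legs, the surviving exponent evaluates to $0$, and the $\xdot_s$ cancels the hollow dot, leaving exactly $\omega_{a,a-s}$. From there your reduction of the generating set to $\omega_{a,1},\dots,\omega_{a,a},\omega_{a,a}^{-1}=\omega_{a,-a}$, the reordering via Lemma~\ref{lem:dotcommutative}, and the spread estimate $\nu_{N+1}-\nu_N+a\le a$ are elementary and correct (the edge cases $N=0$, empty $\nu$, and zero parts from $\nu_i=a$ are harmless, as you note). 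What your approach buys is an explicit closed-form rewriting with coefficient $1$ and no induction; what the paper's buys is a uniform treatment of arbitrary products $\omega_{a,r}\,\omega_{a,-s}$ that runs parallel to the degenerate argument in \cite{SW24web}.
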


\begin{proof}
    We first make the following.
    
\underline{Claim $(\star)$}. Let $a\ge b \ge 0$. For $r,s$ such that $|r|\le c$ and $|s| \le b$, we have 
    \[
    \begin{tikzpicture}
[baseline = -1mm,scale=.8,color=\clr]
\draw[-,line width=1.5pt] (0.08,-.8) to (0.08,-.5);
\draw[-,line width=1.5pt] (0.08,.3) to (0.08,.6);
\draw[-,thick] (0.1,-.51) to [out=45,in=-45] (0.1,.31);
\draw[-,thick] (0.06,-.51) to [out=135,in=-135] (0.06,.31);
\draw(-.1,0) \bdot;
\draw(-.4,0)node {$\scriptstyle \omega_r$};
\draw(.25,0) \bdot;
\draw(.7,0)node {$\scriptstyle \omega_{-s}$};
\node at (.1,-.95) {$\scriptstyle a$};
\node at (.42,-.35) {$\scriptstyle b$};
\end{tikzpicture}
\in 
\Omega_{a}:=\kk\text{-span}\{ \omega_{a,h}\omega_{a,-k} \mid h,k\ge 0, h+k\le a \} .
\]
The claim for $r\ge 0\ge s$ follows by \eqref{dots 2str}, and the claim for $r\le 0 \le s$ follows by a counterpart of \eqref{dots 2str}. It remains to prove the claim for $r,s \ge 0$ by induction on $r+s$. The cases with either $r=0$ or $s=0$ hold trivially. 
In general, for $r,s>0$, we have 
\begin{align*}
      \begin{tikzpicture}
[baseline = -1mm,scale=.8,color=\clr]
\draw[-,line width=1.5pt] (0.08,-.8) to (0.08,-.5);
\draw[-,line width=1.5pt] (0.08,.3) to (0.08,.6);
\draw[-,thick] (0.1,-.51) to [out=45,in=-45] (0.1,.31);
\draw[-,thick] (0.06,-.51) to [out=135,in=-135] (0.06,.31);
\draw(-.1,0) \bdot;
\draw(-.4,0)node {$\scriptstyle \omega_r$};
\draw(.25,0) \bdot;
\draw(.7,0)node {$\scriptstyle \omega_{-s}$};
\node at (.1,-.95) {$\scriptstyle a$};
\node at (.42,-.35) {$\scriptstyle b$};
\end{tikzpicture}
&=\qbinomsm{a-r-s}{b-s}
\begin{tikzpicture}
[baseline = -1mm,scale=.8,color=\clr]
\draw[-,line width=1.5pt] (0.08,-.8) to (0.08,-.5);
\draw[-,line width=1.5pt] (0.08,.3) to (0.08,.6);
\draw[-,thick] (0.1,-.51) to [out=45,in=-45] (0.1,.31);
\draw[-,thick] (0.06,-.51) to [out=135,in=-135] (0.06,.31);
\draw(-.1,0) \bdot;
\draw(-.4,0)node {$\scriptstyle \omega_r$};
\draw(.25,0) \zdot;
\node at (.42,-.35) {$\scriptstyle s$};
\node at (.1,-.95) {$\scriptstyle a$};
\end{tikzpicture}
\overset{\eqref{dotmovesplitabr}}=
\qbinomsm{a-r-s}{b-s} 
\Big( q^*\omega_{a,r}\omega_{a,-s}
-
\sum_{0\le t<r }
q^\star\begin{tikzpicture}
[baseline = -1mm,scale=.8,color=\clr]
\draw[-,line width=1.5pt] (0.08,-.8) to (0.08,-.5);
\draw[-,line width=1.5pt] (0.08,.3) to (0.08,.6);
\draw[-,thick] (0.1,-.51) to [out=45,in=-45] (0.1,.31);
\draw[-,thick] (0.06,-.51) to [out=135,in=-135] (0.06,.31);
\draw(-.1,0) \bdot;
\draw(-.4,0)node {$\scriptstyle \omega_t$};
\draw(.25,-.15) \zdot;
\draw(.25,.1) \bdot;
\draw(.8,0.1)node {$\scriptstyle \omega_{r-t}$};
\node at (.42,-.35) {$\scriptstyle s$};
\node at (.1,-.95) {$\scriptstyle a$};
\end{tikzpicture} \Big)
\\
&=\qbinomsm{a-r-s}{b-s}\Big( q^*\omega_{a,r}\omega_{a,-s}
-
\sum_{0\le t<r }
q^\star\begin{tikzpicture}
[baseline = -1mm,scale=.8,color=\clr]
\draw[-,line width=1.5pt] (0.08,-.8) to (0.08,-.5);
\draw[-,line width=1.5pt] (0.08,.3) to (0.08,.6);
\draw[-,thick] (0.1,-.51) to [out=45,in=-45] (0.1,.31);
\draw[-,thick] (0.06,-.51) to [out=135,in=-135] (0.06,.31);
\draw(-.1,0) \bdot;
\draw(-.4,0)node {$\scriptstyle \omega_t$};
\draw(.25,0) \bdot;
\draw(.94,0.1)node {$\scriptstyle \omega_{r-t-s}$};
\node at (.42,-.35) {$\scriptstyle s$};
\node at (.1,-.95) {$\scriptstyle a$};
\end{tikzpicture} \Big)
\end{align*}
where $q^*, q^\star$ are some powers of $q$. Then the claim holds since each summand of the summation
  is contained in $\Omega_{a}$ by using \eqref{dots 2str} if $r-t-s\ge 0$ or by induction assumption on $r+s$ if $r-t-s<0$ by noting that $s+2t-r<s+r $.
  This completes the proof of Claim ($\star$).

Next, we claim that
  \begin{align} \label{eq:omega2}
      \omega_{a,r}\omega_{a,-s}\in \Omega_a, 
      \quad \text{ for }0\le r,s\le a.
  \end{align}
The claim is trivial when $r+s\le a$. For $0\le r,s\le a$ with $r+s> a$, the claim \eqref{eq:omega2} follows from 
\begin{align*}
    \omega_{a,r}\omega_{a,-s}= \begin{tikzpicture}
[baseline = -1mm,scale=.8,color=\clr]
\draw[-,line width=1.5pt] (0.08,-.8) to (0.08,-.5);
\draw[-,line width=1.5pt] (0.08,.3) to (0.08,.6);
\draw[-,thick] (0.1,-.51) to [out=45,in=-45] (0.1,.31);
\draw[-,thick] (0.06,-.51) to [out=135,in=-135] (0.06,.31);
\draw(.08,0.45) \bdot;
\draw(-.25,.4)node {$\scriptstyle \omega_r$};
\draw(.25,0) \zdot;
\node at (.42,-.35) {$\scriptstyle s$};
\end{tikzpicture}
\overset{\eqref{dotmovesplitabr}}{\in}
\sum_{1\le t\le \min(r,s)}  q^\Z 
\begin{tikzpicture}
[baseline = -1mm,scale=.8,color=\clr]
\draw[-,line width=1.5pt] (0.08,-.8) to (0.08,-.5);
\draw[-,line width=1.5pt] (0.08,.3) to (0.08,.6);
\draw[-,thick] (0.1,-.51) to [out=45,in=-45] (0.1,.31);
\draw[-,thick] (0.06,-.51) to [out=135,in=-135] (0.06,.31);
\draw(-.1,0) \bdot;
\draw(-.6,0)node {$\scriptstyle \omega_{r-t}$};
\draw(.25,0) \bdot;
\draw(.8,0)node {$\scriptstyle \omega_{t-s}$};
\node at (.42,-.35) {$\scriptstyle s$};
\end{tikzpicture} 
\end{align*}
and then applying Claim ($\star$). 

By repeatedly applying \eqref{eq:omega2}, we see any monomial in $\{\omega_{a,r} \mid -a\le r \le a\}$ can be written as a $\kk$-linear combination of $\omega_{a,\lambda}$, for $\lambda\in \RPar_a$.
\end{proof}

For $k\ge 0$, we define
\[ 
D_{a}^{+,k}=\kk\text{-span}\{\omega_{a,\eta}\in D_{a}\mid \eta\in 
\Par_a, l(\eta)\le k \}. 
\]

\begin{lemma} 
\label{lem:blamu}
For any $\lambda \in \RPar_a$
and $\mu\in \RPar_b$ we have 
$
\begin{tikzpicture}
[baseline = -1mm,scale=.8,color=\clr]
\draw[-,line width=1.5pt] (0.08,-.8) to (0.08,-.5);
\draw[-,line width=1.5pt] (0.08,.3) to (0.08,.6);
\draw[-,thick] (0.1,-.51) to [out=45,in=-45] (0.1,.31);
\draw[-,thick] (0.06,-.51) to [out=135,in=-135] (0.06,.31);
\draw(-.1,0) \bdot;
\draw(-.4,0)node {$\scriptstyle \omega_\lambda$};
\draw(.25,0) \bdot;
\draw(.6,0)node {$\scriptstyle \omega_\mu$};
\node at (-.22,-.35) {$\scriptstyle a$};
\node at (.42,-.35) {$\scriptstyle b$};
\end{tikzpicture}
\in D_{a+b}$. Moreover, if $\lambda,\mu\in \Par_a$, then $
\begin{tikzpicture}
[baseline = -1mm,scale=.8,color=\clr]
\draw[-,line width=1.5pt] (0.08,-.8) to (0.08,-.5);
\draw[-,line width=1.5pt] (0.08,.3) to (0.08,.6);
\draw[-,thick] (0.1,-.51) to [out=45,in=-45] (0.1,.31);
\draw[-,thick] (0.06,-.51) to [out=135,in=-135] (0.06,.31);
\draw(-.1,0) \bdot;
\draw(-.4,0)node {$\scriptstyle \omega_\lambda$};
\draw(.25,0) \bdot;
\draw(.6,0)node {$\scriptstyle \omega_\mu$};
\node at (-.22,-.35) {$\scriptstyle a$};
\node at (.42,-.35) {$\scriptstyle b$};
\end{tikzpicture}
\in D_{a+b}^{+,k}$ where $k=\max\{l(\lambda),l(\mu)\}$. 
\end{lemma}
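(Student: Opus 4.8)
The plan is to establish both (1) and (2) by a single induction, using \cref{lem:dotmovesplitabr} to shrink the dotted part of the diagram. Write $E(\lambda,\mu)$ for the left-hand morphism, $E(\lambda,\mu)=\merge\circ(\omega_{a,\lambda}\otimes\omega_{b,\mu})\circ\splits\colon \mathbf{a{+}b}\to\mathbf{a{+}b}$, and for a partition $\nu$ (rational or genuine) and an integer $s$ write $\nu\cup\{s\}$ for $\nu$ with $s$ inserted and re-ordered. Using \cref{lem:dotcommutative} we may arrange the smallest part $\omega_{a,\lambda_k}$ of $\omega_{a,\lambda}$ to sit just above the split, and likewise $\omega_{b,\mu_l}$ on the $b$-strand. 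When $\lambda=\mu=\emptyset$ the base case $E(\emptyset,\emptyset)=\qbinomsm{a+b}{a}\,\unit{a+b}$ holds by \eqref{splitbinomial}, and this lies in $D_{a+b}^{+,0}\subseteq D_{a+b}$.

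The key step is to combine $\omega_{a,\lambda_k}$ and $\omega_{b,\mu_l}$ across the split. The morphism $(\omega_{a,\lambda_k}\otimes\omega_{b,\mu_l})\circ\splits$ occurs, with an invertible monomial coefficient $q^{\lambda_k(b-\mu_l)+(a-\lambda_k)\mu_l}$, as one summand on the right of the first identity in \eqref{dotmovesplitabr} taken with thick index $r=\lambda_k+\mu_l$; isolating it and substituting back gives
\[
E(\lambda,\mu)=q^{\bullet}\,E(\lambda^{-},\mu^{-})\circ\omega_{a+b,\lambda_k+\mu_l}\;+\;\sum_{s\ne\lambda_k}q^{\bullet}\,E\bigl(\lambda^{-}\cup\{s\},\;\mu^{-}\cup\{\lambda_k+\mu_l-s\}\bigr),
\]
where $\lambda^{-}=(\lambda_1,\dots,\lambda_{k-1})$, $\mu^{-}=(\mu_1,\dots,\mu_{l-1})$, each $q^{\bullet}$ is a monomial in $q$, and the sum runs over the admissible $s\ne\lambda_k$; here each product $\omega_{a,\lambda^{-}}\omega_{a,s}$ and $\omega_{b,\mu^{-}}\omega_{b,\lambda_k+\mu_l-s}$ is rewritten in the spanning sets of \cref{lem:spanDa} via \cref{lem:dotcommutative}. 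If $\lambda$ (or $\mu$) is empty one performs the same move with $\omega_{a,\lambda_k}$ replaced by $\unit{a}$, i.e.\ extracting the $s=0$ term of \eqref{dotmovesplitabr}. For (1) one uses in addition the hollow- and mixed-dot forms of \eqref{dotmovesplitabr} and the bounds $|\lambda_i|\le a$, $|\mu_j|\le b$; the rest of the argument is unchanged.

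The first term is easy: $E(\lambda^{-},\mu^{-})$ has two fewer dots, hence lies in $D_{a+b}$ by the inductive hypothesis, while $\omega_{a+b,\lambda_k+\mu_l}\in D_{a+b}$ and $D_{a+b}$ is a commutative subalgebra. For (2), note $l(\lambda^{-})=l(\lambda)-1$ and $l(\mu^{-})=l(\mu)-1$, so inductively $E(\lambda^{-},\mu^{-})\in D_{a+b}^{+,\max(l(\lambda),l(\mu))-1}$; since $0\le\lambda_k+\mu_l\le a+b$, multiplying by the single dot $\omega_{a+b,\lambda_k+\mu_l}$ raises the length by at most $1$, so this term lies in $D_{a+b}^{+,\max(l(\lambda),l(\mu))}$. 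Each correction term has both of its partitions of length $\le\max(l(\lambda),l(\mu))$ — one part is deleted from and one inserted into each — so by induction it lies in $D_{a+b}^{+,\le\max(l(\lambda),l(\mu))}$ as well (a routine check covers the cases where $\lambda$ or $\mu$ is empty).

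The main obstacle is to set up the induction so that every correction term $E(\lambda^{-}\cup\{s\},\mu^{-}\cup\{\lambda_k+\mu_l-s\})$ is genuinely smaller than $E(\lambda,\mu)$: in these terms $l(\lambda)+l(\mu)$ is unchanged, and for genuine partitions so is $|\lambda|+|\mu|$, so no single one of these works on its own. The two facts that drive the argument are that the parts of $\lambda$ (resp.\ $\mu$) are bounded by $a$ (resp.\ $b$), and that for each admissible $s\ne\lambda_k$ one has either $s<\lambda_k$, forcing the smallest part of $\lambda^{-}\cup\{s\}$ to drop strictly below $\lambda_k$, or $s>\lambda_k$, forcing the smallest part of $\mu^{-}\cup\{\lambda_k+\mu_l-s\}$ to drop strictly below $\mu_l$. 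I would therefore induct on $l(\lambda)+l(\mu)$ refined by a well-founded quantity assembled from the (bounded) smallest parts of $\lambda$ and of $\mu$. Choosing that quantity correctly is the delicate point — and the real content of the lemma — since in a correction term one of the two smallest parts may increase while the other decreases, so a plain lexicographic order is not enough; this is precisely where the bounds $\lambda_i\le a$, $\mu_j\le b$ are used.
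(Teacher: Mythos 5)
Your overall strategy --- an induction driven by \eqref{dotmovesplitabr}, fusing the dots on the two legs into thick dots $\omega_{a+b,r}$ on the merged strand --- is exactly the route the paper intends (its proof is one line, deferring to the same inductive argument as \cite[Lemma 2.13]{SW24web}). The base case, the treatment of the main term $E(\lambda^{-},\mu^{-})\circ\omega_{a+b,\lambda_k+\mu_l}$, and the length bookkeeping for the second assertion are all correct as far as they go.

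However, the recursion you set up does not terminate, and the difficulty you flag at the end is not merely ``delicate'' --- it is fatal to this particular decomposition. Take $a=b=2$, $\lambda=(2)$, $\mu=(1)$. Applying \eqref{dotmovesplitabr} with $r=\lambda_1+\mu_1=3$ gives exactly two admissible summands, $s=1$ and $s=2$, both with coefficient $q^{2}$, so after composing with the merge and using \eqref{splitbinomial} the identity reads
\[
\qbinomsm{4}{2}\,\omega_{4,3}\;=\;q^{2}\,E\bigl((1),(2)\bigr)\;+\;q^{2}\,E\bigl((2),(1)\bigr).
\]
Isolating the $s=\lambda_k$ term as you propose expresses $E((2),(1))$ in terms of the single correction term $E((1),(2))$; running the same step on $E((1),(2))$ produces $E((2),(1))$ as its only correction term, and substituting one relation into the other yields a tautology. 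In these correction terms $l(\lambda)+l(\mu)$, $|\lambda|+|\mu|$, and even the multiset of the two smallest parts are all preserved, so no well-founded order can close this recursion: \eqref{dotmovesplitabr} applied at $r=\lambda_k+\mu_l$ only determines the symmetric combination of the cyclically related terms, never the individual ones. (Each of $E((2),(1))$ and $E((1),(2))$ does lie in $D_4$, but this must be extracted from a differently organized induction --- for instance by expanding a product such as $E(\lambda^{-},\mu)\cdot\omega_{a+b,\lambda_k}$, which lies in $D_{a+b}$ by induction on $l(\lambda)+l(\mu)$, and then controlling the correction terms whose $\mu$-side grows in length; this is the shape of the argument in the cited degenerate reference.) So the proposal has a genuine gap: what is needed is a reorganization of the inductive scheme, not a cleverer termination measure for the recursion as written.
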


\begin{proof}
It follows from \eqref{dotmovesplitabr} by the same type inductive argument as for \cite[Lemma~ 2.13]{SW24web}. We skip the detail. 
\end{proof}

%

\subsection{Category $\qAWC$ over $\C(q)$}

We now introduce a $\C(q)$-linear monoidal category $\qAWC$, a simpler variant of $\qAW$. 

\begin{definition}
  \label{def:qAWebC}
The affine $q$-web category $\qAWC$ is the strict $\C(q)$-linear monoidal category generated by objects $a\in \mathbb Z_{\ge1}$. The object $a$ and its identity morphism  will be drawn as a vertical strand labeled by $a$, i.e. 
$
\stra .  
$
The generating morphisms are the merges, splits and (thick) positive crossings depicted as
\begin{align}
\label{merge+split+crossingC}
\merge 
&:(a,b) \rightarrow (a+b),&
\splits
&:(a+b)\rightarrow (a,b),&
\crossingpos
&:(a,b) \rightarrow (b,a),
\end{align}
and 
\begin{equation}
\label{dotgenerator1}
\dotgen, 
\end{equation}
for $a,b \in \Z_{\ge 1}$, where $\crossingpos$ is invertible with inverse being the negative crossing denoted $\crossingneg := \crossingpos^{-1}$ and $\dotgen$ is invertible with inverse $\zdotgen$, 
subject to the following relations \eqref{webassocC}--\eqref{dotmovecrossingC}, for $a,b,c,d \in \Z_{\ge 1}$ with $d-a=c-b$:
\begin{align}
\label{webassocC}
\begin{tikzpicture}[baseline = 0,color=\clr]
	\draw[-,thick] (0.35,-.3) to (0.08,0.14);
	\draw[-,thick] (0.1,-.3) to (-0.04,-0.06);
	\draw[-,line width=1pt] (0.085,.14) to (-0.035,-0.06);
	\draw[-,thick] (-0.2,-.3) to (0.07,0.14);
	\draw[-,line width=1.5pt] (0.08,.45) to (0.08,.1);
        \node at (0.45,-.41) {$\scriptstyle c$};
        \node at (0.07,-.4) {$\scriptstyle b$};
        \node at (-0.28,-.41) {$\scriptstyle a$};
\end{tikzpicture}
&=
\begin{tikzpicture}[baseline = 0, color=\clr]
	\draw[-,thick] (0.36,-.3) to (0.09,0.14);
	\draw[-,thick] (0.06,-.3) to (0.2,-.05);
	\draw[-,line width=1pt] (0.07,.14) to (0.19,-.06);
	\draw[-,thick] (-0.19,-.3) to (0.08,0.14);
	\draw[-,line width=1.5pt] (0.08,.45) to (0.08,.1);
        \node at (0.45,-.41) {$\scriptstyle c$};
        \node at (0.04,-.4) {$\scriptstyle b$};
        \node at (-0.28,-.41) {$\scriptstyle a$};
\end{tikzpicture}\:,
\qquad
\begin{tikzpicture}[baseline = -1mm, color=\clr]
	\draw[-,thick] (0.35,.3) to (0.08,-0.14);
	\draw[-,thick] (0.1,.3) to (-0.04,0.06);
	\draw[-,line width=1pt] (0.085,-.14) to (-0.035,0.06);
	\draw[-,thick] (-0.2,.3) to (0.07,-0.14);
	\draw[-,line width=1.5pt] (0.08,-.45) to (0.08,-.1);
        \node at (0.45,.4) {$\scriptstyle c$};
        \node at (0.07,.42) {$\scriptstyle b$};
        \node at (-0.28,.4) {$\scriptstyle a$};
\end{tikzpicture}
=\begin{tikzpicture}[baseline = -1mm, color=\clr]
	\draw[-,thick] (0.36,.3) to (0.09,-0.14);
	\draw[-,thick] (0.06,.3) to (0.2,.05);
	\draw[-,line width=1pt] (0.07,-.14) to (0.19,.06);
	\draw[-,thick] (-0.19,.3) to (0.08,-0.14);
	\draw[-,line width=1.5pt] (0.08,-.45) to (0.08,-.1);
        \node at (0.45,.4) {$\scriptstyle c$};
        \node at (0.07,.42) {$\scriptstyle b$};
        \node at (-0.28,.4) {$\scriptstyle a$};
\end{tikzpicture}\:,
\\
\label{mergesplitC}
\begin{tikzpicture}[baseline = 7.5pt,scale=.8, color=\clr]
	\draw[-,line width=1pt] (0,0) to (.28,.3) to (.28,.7) to (0,1);
	\draw[-,line width=1pt] (.6,0) to (.31,.3) to (.31,.7) to (.6,1);
        \node at (0,1.13) {$\scriptstyle b$};
        \node at (0.63,1.13) {$\scriptstyle d$};
        \node at (0,-.1) {$\scriptstyle a$};
        \node at (0.63,-.1) {$\scriptstyle c$};
\end{tikzpicture}
&=
\sum_{\substack{0 \leq s \leq \min(a,b)\\0 \leq t \leq \min(c,d)\\t-s=d-a}}
q^{st}  
\begin{tikzpicture}[baseline = 7.5pt,scale=.8, color=\clr]
\draw[-,thick] (0.58,0) to (0.58,.2) to (.02,.8) to (.02,1);
	\draw[-,line width=4pt,white] (0.02,0) to (0.02,.2) to (.58,.8) to (.58,1);
	\draw[-,thick] (0.02,0) to (0.02,.2) to (.58,.8) to (.58,1);
	\draw[-,thick] (0,0) to (0,1);
	\draw[-,thick] (0.6,0) to (0.6,1);
        \node at (0,1.13) {$\scriptstyle c$};
        \node at (0.6,1.13) {$\scriptstyle d$};
        \node at (0,-.1) {$\scriptstyle a$};
        \node at (0.57,-.1) {$\scriptstyle b$};
        \node at (-0.1,.5) {$\scriptstyle s$};
        \node at (0.75,.5) {$\scriptstyle t$};
\end{tikzpicture},
\\
  \label{splitbinomialC}
\begin{tikzpicture}[baseline = -1mm,scale=.8,color=\clr]
\draw[-,line width=1.5pt] (0.08,-.8) to (0.08,-.5);
\draw[-,line width=1.5pt] (0.08,.3) to (0.08,.6);
\draw[-,thick] (0.1,-.51) to [out=45,in=-45] (0.1,.31);
\draw[-,thick] (0.06,-.51) to [out=135,in=-135] (0.06,.31);
\node at (-.33,-.05) {$\scriptstyle a$};
\node at (.45,-.05) {$\scriptstyle b$};
\end{tikzpicture}
& = 
\qbinomsm{a+b}{a}\:
\begin{tikzpicture}[baseline = -1mm,scale=.6,color=\clr]
	\draw[-,line width=1.5pt] (0.08,-.8) to (0.08,.6);
        \node at (.08,-1) {$\scriptstyle a+b$};
\end{tikzpicture}, 
\\
\label{dotmovecrossingC}
\begin{tikzpicture}[baseline=-1mm, color=\clr]
	\draw[-,thick] (0.3,-.3) to (-.3,.4);
	\draw[-,line width=4pt,white] (-0.3,-.3) to (.3,.4);
	\draw[-,thick] (-0.3,-.3) to (.3,.4);
        \node at (-0.3,-.45) {$\scriptstyle 1$};
        \node at (0.3,-.45) {$\scriptstyle 1$};
     \draw(-0.15,-0.12)   \bdot;
\end{tikzpicture}
 & =
\begin{tikzpicture}[baseline=-1mm, color=\clr]
 \draw[-,thick] (-0.3,-.3) to (.3,.4);
	\draw[-,line width=4pt,white] (0.3,-.3) to (-.3,.4);
  \draw[-,thick] (0.3,-.3) to (-.3,.4);
        \node at (-0.3,-.45) {$\scriptstyle 1$};
        \node at (0.3,-.45) {$\scriptstyle 1$};
     \draw(0.15,0.23)   \bdot;
\end{tikzpicture} ,  
    \qquad
\begin{tikzpicture}[baseline=-1mm, color=\clr]
	\draw[-,thick] (0.3,-.3) to (-.3,.4);
	\draw[-,line width=4pt,white] (-0.3,-.3) to (.3,.4);
	\draw[-,thick] (-0.3,-.3) to (.3,.4);
        \node at (-0.3,-.45) {$\scriptstyle 1$};
        \node at (0.3,-.45) {$\scriptstyle 1$};
     \draw(-0.15,0.23)   \bdot;
\end{tikzpicture}
  =
\begin{tikzpicture}[baseline=-1mm, color=\clr]
 \draw[-,thick] (-0.3,-.3) to (.3,.4);
	\draw[-,line width=4pt,white] (0.3,-.3) to (-.3,.4);
  \draw[-,thick] (0.3,-.3) to (-.3,.4);
        \node at (-0.3,-.45) {$\scriptstyle 1$};
        \node at (0.3,-.45) {$\scriptstyle 1$};
     \draw(0.15,-0.12)   \bdot;
\end{tikzpicture} .
\end{align} 
\end{definition}

We define the following morphisms in $\qAWC$:
\begin{align}  \label{eq:dota}
\xdota := \frac{1}{[a]!}
\begin{tikzpicture}[baseline = 1.5mm, scale=.5, color=\clr]
\draw[-, line width=1.2pt] (0.5,2) to (0.5,2.5);
\draw[-, line width=1.2pt] (0.5,0) to (0.5,-.4);
\draw[-,thin]  (0.5,2) to[out=left,in=up] (-.5,1)
to[out=down,in=left] (0.5,0);
\draw[-,thin]  (0.5,2) to[out=left,in=up] (0,1)
 to[out=down,in=left] (0.5,0);      
\draw[-,thin] (0.5,0)to[out=right,in=down] (1.5,1)
to[out=up,in=right] (0.5,2);
\draw[-,thin] (0.5,0)to[out=right,in=down] (1,1) to[out=up,in=right] (0.5,2);
\node at (0.5,.7){$\scriptstyle \cdots$};
\draw (-0.5,1) \bdot; 
\draw (0,1) \bdot; 
\node at (0.5,-.6) {$\scriptstyle a$};
\draw (1,1) \bdot;
\draw (1.5,1) \bdot; 
\node at (-.22,0) {$\scriptstyle 1$};
\node at (1.2,0) {$\scriptstyle 1$};
\node at (.3,0.3) {$\scriptstyle 1$};
\node at (.7,0.3) {$\scriptstyle 1$};
\end{tikzpicture} ,
\qquad\qquad
\zdota := \frac{1}{[a]!}
\begin{tikzpicture}[baseline = 1.5mm, scale=.5, color=\clr]
\draw[-, line width=1.2pt] (0.5,2) to (0.5,2.5);
\draw[-, line width=1.2pt] (0.5,0) to (0.5,-.4);
\draw[-,thin]  (0.5,2) to[out=left,in=up] (-.5,1)
to[out=down,in=left] (0.5,0);
\draw[-,thin]  (0.5,2) to[out=left,in=up] (0,1)
 to[out=down,in=left] (0.5,0);      
\draw[-,thin] (0.5,0)to[out=right,in=down] (1.5,1)
to[out=up,in=right] (0.5,2);
\draw[-,thin] (0.5,0)to[out=right,in=down] (1,1) to[out=up,in=right] (0.5,2);
\node at (0.5,.7){$\scriptstyle \cdots$};
\draw (-0.5,1) \zdot; 
\draw (0,1) \zdot; 
\node at (0.5,-.6) {$\scriptstyle a$};
\draw (1,1) \zdot;
\draw (1.5,1) \zdot; 
\node at (-.22,0) {$\scriptstyle 1$};
\node at (1.2,0) {$\scriptstyle 1$};
\node at (.3,0.3) {$\scriptstyle 1$};
\node at (.7,0.3) {$\scriptstyle 1$};
\end{tikzpicture}. 
\end{align}

\subsection{Isomorphism of $\qAWC$ and $\qAW$ over $\C(q)$}

We allow the $\xdota$ and $\zdota$ (for $a\ge 1$) defined above to be included as generating morphisms in $\qAWC$ for the formulation of the theorem below. 

\begin{theorem}
\label{th:qAWiso}
    The $\C(q)$-linear monoidal categories $\qAWC$ and $\qAW$ are isomorphic by matching the generating objects and generating morphisms in the same symbols. 
\end{theorem}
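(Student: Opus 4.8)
The plan is to build mutually inverse $\C(q)$-linear monoidal functors $\Phi\colon\qAWC\to\qAW$ and $\Psi\colon\qAW\to\qAWC$, each sending generating objects and the generating morphisms $\merge,\splits,\crossingpos$ to their namesakes and matching the dot generators in the obvious way. All the content lies in checking defining relations, and the asymmetry between the two categories — $\qAWC$ carries the \emph{definition} \eqref{eq:dota} of $\xdota,\zdota$ but only the ``thin'' relations, whereas $\qAW$ carries the ``thick'' relations \eqref{dotmovecrossing}, \eqref{dotmovesplits+merge}, \eqref{intergralballon} — makes the two directions genuinely different.

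For $\Phi\colon\qAWC\to\qAW$: the relations \eqref{webassocC}--\eqref{splitbinomialC} coincide with \eqref{webassoc}--\eqref{splitbinomial}, and \eqref{dotmovecrossingC} is the $a=b=1$ instance of \eqref{dotmovecrossing}, so every defining relation of $\qAWC$ holds automatically in $\qAW$; invertibility of $\dotgen$ is the $a=1$ case of invertibility of $\xdota$. It remains to note that $\Phi$ respects the definition \eqref{eq:dota} on the enlarged generating set, i.e. that in $\qAW$ the thick dot $\xdota$ equals $\tfrac1{[a]!}$ times the balloon diagram; this is precisely the first relation in \eqref{intergralballon}, dividing by $[a]!$ over $\C(q)$, and similarly for $\zdota$, while $\xdota$ and $\zdota$ are mutually inverse in $\qAW$ by fiat. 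Hence $\Phi$ is well defined.

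For $\Psi\colon\qAW\to\qAWC$ one must derive the thick relations of $\qAW$ inside $\qAWC$, where $\xdota,\zdota$ are given by \eqref{eq:dota}. Again \eqref{webassoc}--\eqref{splitbinomial} are defining relations of $\qAWC$, and \eqref{intergralballon} holds by the very definition \eqref{eq:dota}. For \eqref{dotmovecrossing}, observe that $\qAWC$ contains $\qW$ as the subcategory on $\merge,\splits,\crossingpos$, so all of \eqref{squareswitch}--\eqref{braid} are available and a thick crossing of $(a,b)$ may be ``exploded'' into thin crossings by fully splitting both legs into $1$-strands; a single thin dot passes through any thin crossing by \eqref{dotmovecrossingC}, hence through the exploded thick crossing, and applying this to each of the $a$ thin dots comprising the balloon and recombining yields \eqref{dotmovecrossing} (the $\zdota$-version, and \eqref{eq:zdotmovecrossing}, then follow using invertibility of $\dotgen$). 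For \eqref{dotmovesplits+merge} one inserts \eqref{eq:dota} on the thick leg of a merge/split, uses \eqref{webassocC} to absorb it into the balloon's split/merge, resolves the resulting split-then-merge on thin strands via the expansion implicit in \eqref{mergesplitC}/\eqref{splitbinomialC}, and collapses it — tracking the $q^{st}$ factors of \eqref{mergesplitC} and the $\tfrac1{[a]!}$ normalization — onto thin dots on the two outgoing legs, which is the right-hand side of \eqref{dotmovesplits+merge}. Finally, invertibility of $\xdota$ with inverse $\zdota$ in $\qAWC$ is obtained by composing the two balloons, resolving the middle merge-then-split by the same expansion, cancelling each adjacent solid/hollow thin-dot pair via invertibility of $\dotgen$, and checking that the surviving terms collapse after the $q$-power bookkeeping to $\tfrac1{([a]!)^2}\cdot[a]!\cdot[a]!\cdot\mathrm{id}_a=\mathrm{id}_a$. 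With both functors constructed, $\Phi\Psi$ and $\Psi\Phi$ fix every generating object and morphism — on the dot generators because $\Psi(\xdota_{\qAW})=\xdota_{\qAWC}$ by \eqref{eq:dota} and $\Phi(\xdota_{\qAWC})=\xdota_{\qAW}$ by \eqref{intergralballon}, and likewise for $\zdota$ — so both composites are the identity, giving the isomorphism. I expect the main obstacle to be the derivation of \eqref{dotmovesplits+merge}, and with it the invertibility of $\xdota$: unlike the crossing case, it does not reduce to one clean application of a thin relation but requires unwinding the balloon against a merge/split using the full force of \eqref{mergesplitC} together with careful control of the $q$-powers and the $\tfrac1{[a]!}$ normalization.
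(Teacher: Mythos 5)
Your overall architecture is exactly the paper's: one direction is free because \eqref{webassocC}--\eqref{splitbinomialC} coincide with \eqref{webassoc}--\eqref{splitbinomial} and \eqref{dotmovecrossingC} is a special case of \eqref{dotmovecrossing}, while \eqref{intergralballon} is \eqref{eq:dota} read backwards; all the content is in deriving \eqref{dotmovecrossing}, \eqref{dotmovesplits+merge} and the invertibility of $\xdota$ inside $\qAWC$. You correctly isolate these three targets. Where you diverge is in \emph{how} you propose to prove them. The paper handles each by a short induction that peels off a single thin strand at a time (Lemma~\ref{lem:dotSM} by induction on $a+b$, Lemma~\ref{lem:inversedot} by induction on $a$, Lemma~\ref{lem:dotcross} by a double induction, in that order, with \ref{lem:dotSM} feeding into the other two); at each step the $q$-power bookkeeping collapses to a one-line identity such as $q^{b}[a]+q^{-a}[b]=[a+b]$ or $q^{a-1}[a]+q^{-1}[a][a-1]=[a]^2$. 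You instead propose full ``explosions'': resolve $\mathrm{merge}\circ\mathrm{split}$ (or the two stacked balloons) all the way down to thin strands via iterated \eqref{mergesplitC} and resum. That is a legitimate alternative, but it is precisely where you stop — the sum over all $\qbinomsm{a+b}{a}$ shuffles with their $q^{st}$ weights, the $q$-powers from absorbing the resulting thin crossings into splits/merges via \eqref{unfold}, and the $\tfrac{1}{[a]!}$ normalizations must conspire exactly, and you flag this as ``the main obstacle'' rather than carrying it out. Since this is the entire mathematical content of the theorem, the proposal as written is a correct plan with its hardest steps asserted rather than proved; the paper's inductions exist precisely to avoid this global resummation.

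One concrete imprecision worth fixing: \eqref{dotmovecrossingC} does \emph{not} say that a thin dot slides through a positive crossing. Both sides of that relation have opposite crossing signs — it is the affine Hecke relation $H_iX_iH_i=X_{i+1}$ in disguise — so each passage of a dot through a thin crossing flips that crossing from positive to negative. Your explosion argument for \eqref{dotmovecrossing} survives this because the target relation has the same sign flip (each of the $ab$ thin crossings is flipped exactly once by the unique dot on its $a$-side strand, and the flipped configuration reassembles into the opposite-sign thick crossing), but your sketch of ``the dot passes through, hence through the exploded crossing'' glosses over this, and in the invertibility computation the interaction of the solid/hollow dot pairs with the crossings produced by $\mathrm{split}\circ\mathrm{merge}$ is exactly where naive bookkeeping goes wrong. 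I would either adopt the paper's one-strand-at-a-time inductions or, if you keep the explosion route, write out the $a=2$ case of \eqref{dotmovesplits+merge} and of $\zdota\circ\xdota=1_a$ in full to calibrate the signs and $q$-powers before claiming the general case.
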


\begin{proof}
    There is a natural functor $\Psi: \qAWC \rightarrow \qAW$ which matches the generating objects and generating morphisms (see \eqref{eq:dota}) in the same symbols. 
    
    The main difference between $\qAWC$ and $\qAW$ is that the defining relations \eqref{webassocC}--\eqref{dotmovecrossingC} for $\qAWC$ are weaker than the definition relations \eqref{webassoc}--\eqref{intergralballon} for $\qAW$. The relations \eqref{webassocC}--\eqref{splitbinomialC} are the same as \eqref{webassoc}--\eqref{splitbinomial}, and \eqref{dotmovecrossingC} is a special case of \eqref{dotmovecrossing}. 

    To show that $\Psi$ is an isomorphism, it remains to show that the other defining relations \eqref{dotmovecrossing}--\eqref{intergralballon} hold for $\qAWC$. Indeed, \eqref{intergralballon} follows by definition \eqref{eq:dota}. The other two relations \eqref{dotmovecrossing} and \eqref{dotmovesplits+merge} will be proved in Lemma~\ref{lem:dotcross} and Lemma~\ref{lem:dotSM} below.

    This completes the proof of the theorem.
\end{proof}

\begin{lemma}
    \label{lem:dotSM}
       The relations \eqref{dotmovesplits+merge} hold in $\qAWC$, that is, 
 \begin{equation*}
 .
\end{equation*}
The lemma is proved. 
\end{proof}

\section{The affine $q$-Schur category}
\label{sec:Schur}

In this section, we enlarge $\qAW$ to a new $\kk$-linear monoidal category, the affine $q$-Schur category $\qASch$, with additional generating morphisms/relations, where $\kk =\Z[q,q^{-1}]$ or $\C(q)$. We also formulate a $\C(q)$-linear variant $\qASchC$ with fewer generating morphisms/relations, and show that it is isomorphic to $\qASch$ over $\C(q)$. Various additional relations in $\qASch$ are derived. 

\subsection{Definition of $\qASch$}

Let $\kk =\Z[q,q^{-1}]$ or $\C(q)$.

\begin{definition}
\label{def:qSchur}
    The affine $q$-Schur category  $\qASch$ is a strict $\kk$-linear monoidal category with generating objects $a\in \Z_{\ge 1}$ and $u\in \kk$. We denote the object $a\in \N$ by $\stra$ and denote the object $u \in \kk$ by a red strand labeled by $u$ as $\stru .$
The morphisms are generated by 
\begin{equation} 
\label{generator-affschur}
  \begin{tikzpicture}[anchorbase, scale=0.4, color=\clr] 
                \draw[-,line width=1pt] (-2,0.2) to (-1.5,1);
	\draw[-,line width=1pt ] (-1,0.2) to (-1.5,1);
	\draw[-,line width=1.5pt] (-1.5,1) to (-1.5,1.8);
                \draw (-1,0) node{$\scriptstyle {b}$};
                \draw (-2,0) node{$\scriptstyle {a}$};
                \draw (-1.5,2.2) node{$\scriptstyle {a+b}$};
                \end{tikzpicture}, 
                \quad 
  \begin{tikzpicture}[anchorbase, scale=0.4, color=\clr]
                \draw[-,line width=1pt] (-2,1.8) to (-1.5,1);
	\draw[-,line width=1pt ] (-1,1.8) to (-1.5,1);
	\draw[-,line width=1.5pt] (-1.5,1) to (-1.5,0.2);
                \draw (-1,2.2) node{$\scriptstyle {b}$};
                \draw (-2,2.2) node{$\scriptstyle {a}$};
                \draw (-1.5,0) node{$\scriptstyle {a+b}$};
                \end{tikzpicture}, 
    \quad  \;
    \crossingpos , 
        \quad \;
    ~\xdota ,
 \end{equation}
and
\begin{equation}
\label{redcrossing}
\rightcrossing 
\;\; (\text{traverse-up}), 
                \qquad
                \leftcrossing 
 \;\; (\text{traverse-down}),    
\end{equation}
subject to relations \eqref{webassoc}--\eqref{intergralballon} for the generators in \eqref{generator-affschur} and additional relations \eqref{redslider}--\eqref{redbraid} involving also the traverse-ups and traverse-downs \eqref{redcrossing}:
\begin{align}
\label{redslider}
\begin{tikzpicture}[anchorbase,scale=.6,color=\clr]
	\draw[-,line width=1pt,color=\cred] (0.4,0) to (-0.6,1);
  \draw (-.6,1.2) node{$\scriptstyle \red{u}$};
	\draw[-,line width=1pt] (0.08,0) to (0.08,1);
	\draw[-,line width=1pt] (0.1,0) to (0.1,.6) to (.5,1);
        \node at (0.6,1.18) {$\scriptstyle a$};
        \node at (0.1,1.2) {$\scriptstyle b$};
\end{tikzpicture}
& =
\begin{tikzpicture}[anchorbase,scale=.6,color=\clr]
	\draw[-,line width=1pt,color=\cred] (0.7,0) to (-0.3,1);
  \draw (-.3,1.16) node{$\scriptstyle \red{u}$};
	\draw[-,line width=1pt] (0.08,0) to (0.08,1);
	\draw[-,line width=1pt] (0.1,0) to (0.1,.2) to (.9,1);
        \node at (0.9,1.18) {$\scriptstyle a$};
        \node at (0.1,1.2) {$\scriptstyle b$};
\end{tikzpicture},
\quad
\begin{tikzpicture}[anchorbase,scale=.6,color=\clr]
	\draw[-,line width=1pt,color=\cred] (-0.4,0) to (0.6,1);
  \draw (.6,1.16) node{$\scriptstyle \red{u}$};
	\draw[-,line width=1pt] (-0.08,0) to (-0.08,1);
	\draw[-,line width=1pt] (-0.1,0) to (-0.1,.6) to (-.5,1);
         \node at (-0.1,1.18) {$\scriptstyle b$};
        \node at (-0.6,1.2) {$\scriptstyle a$};
\end{tikzpicture}
\!\!=\!\!
\begin{tikzpicture}[anchorbase,scale=.6,color=\clr]
	\draw[-,line width=1pt,color=\cred] (-0.7,0) to (0.3,1);
 \draw (.3,1.16) node{$\scriptstyle \red{u}$};
	\draw[-,line width=1pt] (-0.08,0) to (-0.08,1);
	\draw[-,line width=1pt] (-0.1,0) to (-0.1,.2) to (-.9,1);
         \node at (-0.1,1.2) {$\scriptstyle b$};
        \node at (-0.95,1.18) {$\scriptstyle a$};
\end{tikzpicture}, 
\quad
\begin{tikzpicture}[baseline=-3.3mm,scale=.6,color=\clr]
\draw[-,line width=1pt,color=\cred] (0.4,.2) to (-0.6,-.8);
\draw (-.6,-1.05) node{$\scriptstyle \red{u}$};
\draw[-,line width=1pt] (0.08,0.2) to (0.08,-.75);
\draw[-,line width=1pt] (0.1,0.2) to (0.1,-.4) to (.5,-.8);
\node at (0.6,-1.05) {$\scriptstyle c$};
\node at (0.07,-1.05) {$\scriptstyle b$};
\end{tikzpicture}
\!\!=\!\!
\begin{tikzpicture}[baseline=-3.3mm,scale=.6,color=\clr]
\draw[-,line width=1pt,color=\cred] (0.7,0.2) to (-0.3,-.8);
\draw (-.3,-1.0) node{$\scriptstyle \red{u}$};
\draw[-,line width=1pt] (0.08,0.2) to (0.08,-.75);
\draw[-,line width=1pt] (0.1,0.2) to (0.1,0) to (.9,-.8);
\node at (1,-1.0) {$\scriptstyle c$};
\node at (0.1,-1.0) {$\scriptstyle b$};
\end{tikzpicture} ,
\quad
\begin{tikzpicture}[baseline=-3.3mm,scale=.6,color=\clr]
\draw[-,line width=1pt,color=\cred] (-0.4,0.2) to (0.6,-.8);
\draw (.6,-.91) node{$\scriptstyle \red{u}$};
\draw[-,line width=1pt] (-0.08,0.2) to (-0.08,-.75);
\draw[-,line width=1pt] (-0.1,0.2) to (-0.1,-.4) to (-.5,-.8);
\node at (-0.1,-1.0) {$\scriptstyle b$};
\node at (-0.6,-1.0) {$\scriptstyle a$};
\end{tikzpicture}
\!\!=\!\!
\begin{tikzpicture}[baseline=-3.3mm,scale=.6,color=\clr]
\draw[-,line width=1pt,color=\cred] (-0.7,0.2) to (0.3,-.8);
\draw (.3,-1) node{$\scriptstyle \red{u}$};
\draw[-,line width=1pt] (-0.08,0.2) to (-0.08,-.75);
\draw[-,line width=1pt] (-0.1,0.2) to (-0.1,0) to (-.9,-.8);
\node at (-0.1,-1.0) {$\scriptstyle b$};
\node at (-0.95,-1.0) {$\scriptstyle a$};
\end{tikzpicture} ,
\\
\label{redcross2}
\begin{split}
\begin{tikzpicture}[baseline = 7pt, scale=0.3, color=\clr]
\draw[-,line width=1pt,color=\cred](0.1,-.22)to (0,2.5);
\draw (0.1,-.5) node{$\scriptstyle \red{u}$};
\draw[-,line width=1.2pt](-.5,-.1) to[out=45, in=down] (.5,1.4);
\draw[-,line width=1.2pt](.5,1.4) to[out=up, in=300] (-.5,2.4);
\draw (-.5,-0.5) node{$\scriptstyle {a}$};
\end{tikzpicture}
 & =
\sum_{0\le t \le a} (-1)^t q^{-t(a-t)} u^t
\begin{tikzpicture}[baseline = 7pt, scale=0.3, color=\clr]
\draw[-,line width=1pt,color=\cred](-.5,-.25)to (-.5,2.15);
\draw (-.5,-.6) node{$\scriptstyle \red{u}$};
\draw[-,line width=1pt] (-1.3,2.15) to (-1.3,-0.2);   
\draw (-1.3,1) \bdot;
\draw (-2.5,1.4) node{$\scriptstyle \omega_{a-t}$};
\draw (-1.3,-.6) node{$\scriptstyle {a}$};
\end{tikzpicture} ~, 
\\    
\begin{tikzpicture}[baseline = 7pt, scale=0.3, color=\clr]
\draw[-,line width=1pt,color=\cred](-0.1,-.6)to (-0.1,2.2);
\draw (-0.1,-.9) node{$\scriptstyle \red{u}$};
\draw[-,line width=1.2pt](.5,-.5) to[out=135, in=down] (-.5,1);
\draw[-,line width=1.2pt](-.5,1) to[out=up, in=270] (.5,2.2);
\draw (.5,-0.9) node{$\scriptstyle {a}$};
\end{tikzpicture}
&=
\sum_{0\le t \le a} (-1)^t q^{-t(a-t)} u^t
\begin{tikzpicture}[baseline = 7pt, scale=0.3, color=\clr]
\draw[-,line width=1pt,color=\cred](-1.8,-.25)to (-1.8,2.15);
\draw (-1.8,-.6) node{$\scriptstyle \red{u}$};
\draw[-,line width=1pt] (-1,2.15) to (-1,-0.25);          
\draw (-1,1) \bdot;
\draw (0.4,1)  node{$\scriptstyle \omega_{a-t}$};
\draw (-1,-.6) node{$\scriptstyle {a}$};
\end{tikzpicture} ,
\end{split}
\\
 \label{redbraid}
\begin{tikzpicture}[anchorbase, scale=0.35, color=\clr]
\draw[-,line width=1pt](0,2) to (1.5,-.35);
\draw[-,line width=4pt, white](0,-.35) to (1.5,2); to (1.5,-.35);
\draw[-,line width=1pt] (0,-.35) to (1.5,2);
\draw[-,line width=1pt,color=\cred](.8,2) to[out=down,in=90] (0.2,1) to [out=down,in=up] (.8,-.4);
\draw (.8,-.8) node{$\scriptstyle \red{u}$};
\node at (0, -.7) {$\scriptstyle a$};
\node at (1.5, -.7) {$\scriptstyle b$};
\end{tikzpicture}
& = 
\sum_{s=0}^{\min (a,b)} (-1)^s q^{s(s-1)/2} (q^{-1} -q)^s [s]! 
\begin{tikzpicture}[anchorbase, scale=0.35, color=\clr]
\draw[-,line width=1.5pt](0,-.35) to   (0,0);
\draw[-,line width=1.5pt](0,1.65) to   (0,2);
\draw[-,line width=1.5pt](1.5,-.35) to   (1.5,0);
\draw[-,line width=1.5pt](1.5,1.65) to   (1.5,2);
\draw[-,thick](0,0) to   (0,1.65);
\draw[-,thick](1.5,0) to   (1.5,1.65);
\draw[-,line width=1pt](0,1.65) to   (1.5,0);
\draw[-,line width=4pt, white] (0.2,0.2) to (1.3,1.35);
\draw[-,line width=1pt] (0,0) to (1.5,1.65);
\draw[-,line width=1pt,color=\cred](.8,2) to
[out=-60,in=60]
(.8,-.4);
\draw (.8,-.7) node{$\scriptstyle \red{u}$};
\node at (0, -.7) {$\scriptstyle a$};
\node at (1.5, -.7) {$\scriptstyle b$};
\node at (-0.3, 1.1) {$\scriptstyle s$};
\node at (1.8, 1.1) {$\scriptstyle s$};
\draw (1.5, .7) \bdot;
\end{tikzpicture} . 
\end{align}
\end{definition}

By the symmetry of the defining relations in Definition \ref{def:qSchur}, there is an isomorphism of  categories:
\begin{equation} 
\label{eq:autoflip}
\div : \qASch\longrightarrow \mathpzc{SchuR}^{{\hspace{-.03in}}\bullet\text{op}} 
\end{equation} 
defined by rotating a string diagram by 180 degrees around a horizontal axis. In particular, $\div$ sends thick positive (resp. negative) crossings from $(a,b)$ to $(b,a)$ to thick positive (resp., negative) crossing from $(b,a)$ to $(a,b)$.

\subsection{More relations in $\qASch$}

\begin{lemma}
 \label{adamovecrossings}
The following relations hold in $\qASch$, for $u\in \kk$:
\begin{align*}
\begin{tikzpicture}[anchorbase, scale=0.4, color=\clr]
\draw[-,line width=1.2pt](-.5,1.5) to (.5,-.5);
\draw[-,line width=4pt, white](-.5,-.5) to (.5,1.5); 
\draw[-,line width=1.2pt](-.5,-.5) to (.5,1.5); 
\draw[-,line width=1pt,color=\cred] (.8,1.5) to (0.8,.8) to[out=down,in=up] (-.8,-.5);
\draw(-.85,-.7) node{$\scriptstyle \red u$};
\end{tikzpicture}
    &
    ~=~
\begin{tikzpicture}[anchorbase, scale=0.4, color=\clr]
\draw[-,line width=1.2pt](-.5,1.5) to (.5,-.5);
\draw[-,line width=4pt, white](-.5,-.5) to (.5,1.5); 
\draw[-,line width=1.2pt](-.5,-.5) to (.5,1.5); 
\draw[-,line width=1pt,color=\cred] (.8,1.5) to [out=down,in=45](-.7,.5) to [out=-135,in=up] (-.8,-.4);
\draw(-.85,-.7) node{$\scriptstyle \red u$};
\end{tikzpicture},  
    \qquad
\begin{tikzpicture}[anchorbase, scale=0.4, color=\clr]
\draw[-,line width=1.2pt](-.5,-.5) to (.5,1.5); 
\draw[-,line width=4pt, white](-.5,1.5) to (.5,-.5);
\draw[-,line width=1.2pt](-.5,1.5) to (.5,-.5);
\draw[-,line width=1pt,color=\cred] (.8,1.5) to (0.8,.8) to[out=down,in=up] (-.8,-.5);
\draw(-.85,-.7) node{$\scriptstyle \red u$};
\end{tikzpicture}
    &
    ~=~
\begin{tikzpicture}[anchorbase, scale=0.4, color=\clr]
\draw[-,line width=1.2pt](-.5,-.5) to (.5,1.5); 
\draw[-,line width=4pt, white](-.5,1.5) to (.5,-.5);
\draw[-,line width=1.2pt](-.5,1.5) to (.5,-.5);
\draw[-,line width=1pt,color=\cred] (.8,1.5) to [out=down,in=45](-.7,.5) to [out=-135,in=up] (-.8,-.4);
\draw(-.85,-.7) node{$\scriptstyle \red u$};
\end{tikzpicture}  ,
    \qquad
\begin{tikzpicture}[anchorbase, scale=0.4, color=\clr]
\draw[-,line width=1.2pt](-.5,1.5) to (.5,-.5);
\draw[-,line width=4pt,white](-.5,-.5) to (.5,1.5); 
\draw[-,line width=1.2pt](-.5,-.5) to (.5,1.5); 
\draw[-,line width=1pt,color=\cred] (-.8,1.5) to (-.8,.9) to [out=down,in=up] (.8,-.5);
\draw(.8,-.7) node{$\scriptstyle \red u$};
\end{tikzpicture}
    ~=~
\begin{tikzpicture}[anchorbase, scale=0.4, color=\clr]
\draw[-,line width=1.2pt](-.5,1.5) to (.5,-.5);
\draw[-,line width=4pt,white](-.5,-.5) to (.5,1.5); 
\draw[-,line width=1.2pt](-.5,-.5) to (.5,1.5); 
\draw[-,line width=1pt,color=\cred] (-.8,1.5) to [out=down,in=134] 
(.7,.5) to[out=-45,in=up](.8,-.4);
\draw(.8,-.7) node{$\scriptstyle \red{u}$};
\end{tikzpicture} ,  
    \qquad
\begin{tikzpicture}[anchorbase, scale=0.4, color=\clr]
\draw[-,line width=1.2pt](-.5,-.5) to (.5,1.5); 
\draw[-,line width=4pt,white](-.5,1.5) to (.5,-.5);
\draw[-,line width=1.2pt](-.5,1.5) to (.5,-.5);
\draw[-,line width=1pt,color=\cred] (-.8,1.5) to (-.8,.9) to [out=down,in=up] (.8,-.5);
\draw(.8,-.7) node{$\scriptstyle \red u$};
\end{tikzpicture}
    ~=~
\begin{tikzpicture}[anchorbase, scale=0.4, color=\clr]
\draw[-,line width=1.2pt](-.5,-.5) to (.5,1.5); 
\draw[-,line width=4pt,white](-.5,1.5) to (.5,-.5);
\draw[-,line width=1.2pt](-.5,1.5) to (.5,-.5);
\draw[-,line width=1pt,color=\cred] (-.8,1.5) to [out=down,in=134] 
(.7,.5) to[out=-45,in=up](.8,-.4);
\draw(.8,-.7) node{$\scriptstyle \red{u}$};
\end{tikzpicture} .
\end{align*}
\end{lemma}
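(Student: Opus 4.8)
The plan is to reduce all four identities to the slider relations \eqref{redslider}, which already govern how a red strand passes in front of a merge or a split, by first resolving each thick black crossing into a sum of ladder webs. Concretely, I would start with the first identity (red strand entering to the left of a positive crossing). Apply the crossing-resolution formula \eqref{cross via square} to the black crossing occurring on \emph{both} sides of the claimed equality, rewriting it as $\sum_s(-q)^s$ times a square web, i.e.\ a composite of exactly one split and one merge. This turns each side of the desired identity into the very same $\kk$-linear combination of diagrams, in each of which the red strand now passes in front of a split-then-merge configuration rather than in front of a crossing.

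Next, fix a summand and push the red strand through the square web one trivalent vertex at a time: moving it past the split is one instance of \eqref{redslider}, and moving it past the merge is a second instance. Performing these two local moves on the left-hand diagram produces precisely the corresponding right-hand diagram, summand by summand; re-summing over $s$ completes the proof of the first identity, and no scalars intervene because the moves in \eqref{redslider} are coefficient-free and \eqref{cross via square} supplies identical expansions (with identical $(-q)^s$) to the two sides. The third identity (red strand entering on the right) is handled identically, using the right-handed form of \eqref{cross via square} and the right-handed members of \eqref{redslider}.

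For the two identities involving the negative crossing $\crossingneg$, I would run the same argument verbatim, but with the negative-crossing resolution formula (the displayed relation immediately following \eqref{cross via square}, with coefficients $(-q)^{-s}$) in place of \eqref{cross via square}. Alternatively, one may derive the negative-crossing identities from the positive-crossing ones already obtained by composing both sides on the top and bottom with a copy of $\crossingneg=\crossingpos^{-1}$ and one further application of \eqref{redslider} to slide the red strand past the inserted crossing; I expect the direct resolution route to be cleaner to write up.

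I expect the only real work to be diagrammatic bookkeeping: after sliding the red strand past the split and then past the merge one must verify that no spurious crossing with the black web has been created, so that the intermediate isotopy genuinely lands on the stated target diagram rather than on a variant of it. This tracking of the local geometry of the two moves is where the main (though entirely routine) effort lies; once it is done there are no combinatorial or coefficient subtleties left.
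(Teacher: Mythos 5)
Your proposal is correct and is essentially the paper's own argument: the paper proves this lemma in one line by citing exactly \eqref{cross via square} (to resolve the thick crossings, positive and negative, into sums of split--merge squares) together with the slider relations \eqref{redslider} (to carry the red strand past each trivalent vertex). The extra bookkeeping you describe is routine and matches what the paper leaves implicit.
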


\begin{proof}
    This follows from \eqref{redslider} and \eqref{cross via square}.
\end{proof}

\begin{lemma}
The following  relations hold in $\qASch$:
\begin{equation}
\label{dotmoveadaptor}
\begin{tikzpicture}[anchorbase, scale=0.4, color=\clr]
\draw[-,line width=1.2pt] (-1.5,2.2) to (-1.5,1);
\draw[-,line width=1,color=\cred](-2,2.2) to [out=down,in=up] (-1,0);
\draw (-1,-.5) node{$\scriptstyle \red{u}$};
\draw[-,line width=1.2pt] (-1.5,1) to (-1.5,0); 
\draw(-1.5,.5)\bdot;
\draw (-2.1,.5) node{$\scriptstyle \omega_r$};
\draw (-1.5,-.5) node{$\scriptstyle {a}$};
\end{tikzpicture}
 ~=~
\begin{tikzpicture}[anchorbase, scale=0.4, color=\clr]
\draw[-,line width=1.2pt] (-1.5,2.2) to (-1.5,1);
\draw[-,line width=1,color=\cred](-2.2,2) to [out=down,in=up] (-1,0);
\draw (-1,-.5) node{$\scriptstyle \red{u}$};
\draw[-,line width=1.2pt] (-1.5,1) to (-1.5,0); 
\draw(-1.5,1.5)\bdot;
\draw (-.9,1.5) node{$\scriptstyle \omega_r$};
\draw (-1.5,-.5) node{$\scriptstyle {a}$};
\end{tikzpicture}, 
\quad 
\begin{tikzpicture}[anchorbase, scale=0.4, color=\clr]
\draw[-,line width=1.2pt] (-1.5,2.2) to (-1.5,1);
\draw[-,line width=1,color=\cred](-1,2) to [out=down,in=up] (-2.2,0);
\draw (-2.2,-.5) node{$\scriptstyle \red{u}$};
\draw[-,line width=1.2pt] (-1.5,1) to (-1.5,0);
\draw(-1.5,.5)\bdot;
\draw (-.9,.5) node{$\scriptstyle \omega_r$};
\draw (-1.5,-.5) node{$\scriptstyle {a}$};
\end{tikzpicture}
~=~
\begin{tikzpicture}[anchorbase, scale=0.4, color=\clr]
\draw[-,line width=1.2pt] (-1.5,2.2) to (-1.5,1);
\draw[-,line width=1,color=\cred](-.8,2) to [out=down,in=up] (-2.2,0);
\draw (-2.2,-.5) node{$\scriptstyle \red{u}$};
\draw[-,line width=1.2pt] (-1.5,1) to (-1.5,0);
\draw(-1.5,1.5)\bdot;
\draw (-2.1,1.5) node{$\scriptstyle \omega_r$};
\draw (-1.5,-.5) node{$\scriptstyle {a}$};
\end{tikzpicture}
 \end{equation}
 for all admissible $a,r$ and any  $u\in \kk$.
 \end{lemma}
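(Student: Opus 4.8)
The plan is to prove the first identity in \eqref{dotmoveadaptor}; the second is entirely analogous, and in any case follows from the first by applying the symmetry $\div$ of \eqref{eq:autoflip} together with a top-bottom flip of diagrams. Write $\xdota$ for the full dot on a thickness-$a$ strand, i.e.\ the case $r=a$ of $\omega_{a,r}$, and let $T$ denote the red-black adaptor crossing occurring in the identity to be proved.

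\textbf{Step 1 (reduction to the full dot).} By the definition \eqref{eq:wkdota}, the morphism $\omega_{a,r}$ is the composite: split $a\to(a-r,r)$, then $\xdot$ on the thickness-$r$ strand tensored with the identity on the thickness-$(a-r)$ strand, then merge $(a-r,r)\to a$. Since the red strand slides freely past splits and merges by \eqref{redslider}, I would slide the red through the split, so that it traverses the two black strands separately: it crosses the undecorated $(a-r)$-strand (with no dot in the way) and it crosses the $r$-strand that carries $\xdot$; then slide the red back through the merge. This reduces the $(a,r)$-case of \eqref{dotmoveadaptor} to the $r=a$ case applied to a strand of thickness $r\le a$. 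So it suffices to prove that $\xdota$ slides through $T$, for all $a\ge1$.

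\textbf{Step 2 (the full-dot case, by induction on $a$).} Let $T'$ be the adaptor crossing in the opposite direction. Reading \eqref{redcross2} with the over/under conventions matched (using Lemma~\ref{adamovecrossings} and/or the symmetry $\div$ if needed), one obtains the two ``double-traverse'' identities
\[
T'\circ T=\theta_a\otimes\mathrm{id}_u,\qquad T\circ T'=\mathrm{id}_u\otimes\theta_a,\qquad
\theta_a:=\sum_{t=0}^{a}(-1)^tq^{-t(a-t)}u^t\,\omega_{a,a-t}\ \in\ \End_{\qASch}(\stra).
\]
The $t=0$ summand of $\theta_a$ is $\xdota$, so $\xdota=\theta_a-\sum_{t\ge1}(-1)^tq^{-t(a-t)}u^t\,\omega_{a,a-t}$, and every $\omega_{a,a-t}$ with $t\ge1$ is $\omega_{a,r}$ with $r=a-t\le a-1$; by Step~1 together with the induction hypothesis (applied to strands of thickness $<a$) each of these slides through $T$. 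Hence
\[
T\circ(\xdota\otimes\mathrm{id}_u)=T\circ(\theta_a\otimes\mathrm{id}_u)-\!\sum_{t\ge1}(\ast)\,T\circ(\omega_{a,a-t}\otimes\mathrm{id}_u)=(T\circ T')\circ T-\!\sum_{t\ge1}(\ast)\,(\mathrm{id}_u\otimes\omega_{a,a-t})\circ T,
\]
where $(\ast)$ denotes the coefficients above; using $T\circ T'=\mathrm{id}_u\otimes\theta_a$ the right-hand side collapses to $(\mathrm{id}_u\otimes\xdota)\circ T$, which is the desired identity. The base case $a=1$ is the same computation, the only term in the sum being $\omega_{1,0}=\mathrm{id}$, which trivially slides through $T$.

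\textbf{Main obstacle.} The only genuinely delicate point is bookkeeping: one must pin down the over/under conventions for the traverse-up/traverse-down generators of \eqref{redcrossing}–\eqref{redcross2} so that $T'\circ T$ and $T\circ T'$ are governed by the \emph{same} endomorphism $\theta_a$, and one must verify that the reduction of Step~1, when invoked inside the inductive step, only produces instances with $r\le a-1$, so that the induction is well-founded and non-circular. Both checks are routine once the conventions are fixed, and the degenerate analogue in \cite{SW24Schur} serves as a template.
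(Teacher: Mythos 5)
Your proof is correct and follows essentially the same route the paper indicates: both arguments hinge on the double-traverse relation \eqref{redcross2}, which expresses the full dot $\xdota$ as the composite $T'\circ T$ (resp.\ $T\circ T'$) minus lower dot packets $\omega_{a,a-t}$, combined with the sliders \eqref{redslider} and an induction (the paper phrases it as induction on the number of dots, you as induction on thickness after reducing $\omega_{a,r}$ to a full dot on a thinner strand — the same well-founded recursion). Your added remarks that the second identity follows from the first via the flip $\div$ of \eqref{eq:autoflip}, and that the red--black crossings carry no over/under ambiguity, are consistent with the paper's conventions.
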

 \begin{proof}
     Similar to the proof of  \cite[Lemma 3.5]{SW24Schur}
     by using \eqref{redcross2} and induction on the number of dots.
 \end{proof}


\subsection{Definition of $\qASchC$}

To define a simpler $\C(q)$-linear variant of $\qASch$, we shall enlarge the $\C(q)$-linear category $\qAWC$ to $\qASchC$ by allowing more generating objects (with corresponding relations) as follows.

\begin{definition}
\label{def:qSchurC}
    The affine $q$-Schur category  $\qASchC$ is a strict $\C(q)$-linear monoidal category with generating objects $a\in \Z_{\ge 1}$ and $u\in \C(q)$. We denote the object $a\in \N$ by $\stra$ and denote the object $u \in \C(q)$ by a red strand labeled by $u$ as $\stru .$
The morphisms are generated by 
\begin{equation} 
\label{generator-affschur C}
  \begin{tikzpicture}[anchorbase, scale=0.4, color=\clr] 
                \draw[-,line width=1pt] (-2,0.2) to (-1.5,1);
	\draw[-,line width=1pt ] (-1,0.2) to (-1.5,1);
	\draw[-,line width=1.5pt] (-1.5,1) to (-1.5,1.8);
                \draw (-1,0) node{$\scriptstyle {b}$};
                \draw (-2,0) node{$\scriptstyle {a}$};
                \draw (-1.5,2.2) node{$\scriptstyle {a+b}$};
                \end{tikzpicture}, 
                \quad 
  \begin{tikzpicture}[anchorbase, scale=0.4, color=\clr]
                \draw[-,line width=1pt] (-2,1.8) to (-1.5,1);
	\draw[-,line width=1pt ] (-1,1.8) to (-1.5,1);
	\draw[-,line width=1.5pt] (-1.5,1) to (-1.5,0.2);
                \draw (-1,2.2) node{$\scriptstyle {b}$};
                \draw (-2,2.2) node{$\scriptstyle {a}$};
                \draw (-1.5,0) node{$\scriptstyle {a+b}$};
                \end{tikzpicture}, 
    \quad  \;
    \crossingpos , 
        \quad \;
    ~\dotgen ,
 \end{equation}
and
\begin{equation}
\label{redcrossingC}
\rightcrossing \;\; (\text{traverse-up}), 
                \qquad
\leftcrossing \;\; (\text{traverse-down}),    
\end{equation}
subject to relations \eqref{webassocC}--\eqref{dotmovecrossingC} for the generators in \eqref{generator-affschur C} and additional relations \eqref{redsliderC}--\eqref{redbraidC} involving also the traverse-ups and traverse-downs \eqref{redcrossing}:
\begin{align}
\label{redsliderC}
\begin{tikzpicture}[anchorbase,scale=.7,color=\clr]
	\draw[-,line width=1pt,color=\cred] (0.4,0) to (-0.6,1);
  \draw (-.6,1.2) node{$\scriptstyle \red{u}$};
	\draw[-,line width=1pt] (0.08,0) to (0.08,1);
	\draw[-,line width=1pt] (0.1,0) to (0.1,.6) to (.5,1);
        \node at (0.6,1.2) {$\scriptstyle a$};
        \node at (0.1,1.2) {$\scriptstyle b$};
\end{tikzpicture}
& =
\begin{tikzpicture}[anchorbase,scale=.7,color=\clr]
	\draw[-,line width=1pt,color=\cred] (0.7,0) to (-0.3,1);
  \draw (-.3,1.16) node{$\scriptstyle \red{u}$};
	\draw[-,line width=1pt] (0.08,0) to (0.08,1);
	\draw[-,line width=1pt] (0.1,0) to (0.1,.2) to (.9,1);
        \node at (0.9,1.13) {$\scriptstyle a$};
        \node at (0.1,1.16) {$\scriptstyle b$};
\end{tikzpicture},
\quad
\begin{tikzpicture}[anchorbase,scale=.7,color=\clr]
	\draw[-,line width=1pt,color=\cred] (-0.4,0) to (0.6,1);
  \draw (.6,1.16) node{$\scriptstyle \red{u}$};
	\draw[-,line width=1pt] (-0.08,0) to (-0.08,1);
	\draw[-,line width=1pt] (-0.1,0) to (-0.1,.6) to (-.5,1);
         \node at (-0.1,1.18) {$\scriptstyle b$};
        \node at (-0.6,1.16) {$\scriptstyle a$};
\end{tikzpicture}
\!\!=\!\!
\begin{tikzpicture}[anchorbase,scale=.7,color=\clr]
	\draw[-,line width=1pt,color=\cred] (-0.7,0) to (0.3,1);
 \draw (.3,1.16) node{$\scriptstyle \red{u}$};
	\draw[-,line width=1pt] (-0.08,0) to (-0.08,1);
	\draw[-,line width=1pt] (-0.1,0) to (-0.1,.2) to (-.9,1);
         \node at (-0.1,1.18) {$\scriptstyle b$};
        \node at (-0.95,1.16) {$\scriptstyle a$};
\end{tikzpicture}, 
\quad
\begin{tikzpicture}[baseline=-3.3mm,scale=.7,color=\clr]
\draw[-,line width=1pt,color=\cred] (0.4,.2) to (-0.6,-.8);
\draw (-.6,-1) node{$\scriptstyle \red{u}$};
\draw[-,line width=1pt] (0.08,0.2) to (0.08,-.75);
\draw[-,line width=1pt] (0.1,0.2) to (0.1,-.4) to (.5,-.8);
\node at (0.6,-1) {$\scriptstyle c$};
\node at (0.07,-1) {$\scriptstyle b$};
\end{tikzpicture}
\!\!=\!\!
\begin{tikzpicture}[baseline=-3.3mm,scale=.7,color=\clr]
\draw[-,line width=1pt,color=\cred] (0.7,0.2) to (-0.3,-.8);
\draw (-.3,-1) node{$\scriptstyle \red{u}$};
\draw[-,line width=1pt] (0.08,0.2) to (0.08,-.75);
\draw[-,line width=1pt] (0.1,0.2) to (0.1,0) to (.9,-.8);
\node at (1,-1) {$\scriptstyle c$};
\node at (0.1,-1) {$\scriptstyle b$};
\end{tikzpicture} ,
\quad
\begin{tikzpicture}[baseline=-3.3mm,scale=.7,color=\clr]
\draw[-,line width=1pt,color=\cred] (-0.4,0.2) to (0.6,-.8);
\draw (.6,-.91) node{$\scriptstyle \red{u}$};
\draw[-,line width=1pt] (-0.08,0.2) to (-0.08,-.75);
\draw[-,line width=1pt] (-0.1,0.2) to (-0.1,-.4) to (-.5,-.8);
\node at (-0.1,-.9) {$\scriptstyle b$};
\node at (-0.6,-.91) {$\scriptstyle a$};
\end{tikzpicture}
\!\!=\!\!
\begin{tikzpicture}[baseline=-3.3mm,scale=.7,color=\clr]
\draw[-,line width=1pt,color=\cred] (-0.7,0.2) to (0.3,-.8);
\draw (.3,-1) node{$\scriptstyle \red{u}$};
\draw[-,line width=1pt] (-0.08,0.2) to (-0.08,-.75);
\draw[-,line width=1pt] (-0.1,0.2) to (-0.1,0) to (-.9,-.8);
\node at (-0.1,-1) {$\scriptstyle b$};
\node at (-0.95,-1) {$\scriptstyle a$};
\end{tikzpicture} ,
\\
\label{redcross2C}
\begin{tikzpicture}[anchorbase, scale=0.35, color=\clr]
\draw[-,line width=1pt,color=\cred](0,-.22)to (0,2.5);
\draw (0,-.5) node{$\scriptstyle \red{u}$};
\draw[-,thick](-.5,-.1) to[out=45, in=down] (.5,1.4);
\draw[-,thick](.5,1.4) to[out=up, in=300] (-.5,2.4);
\draw (-.5,-0.5) node{$\scriptstyle {1}$};
\end{tikzpicture}
& =
\begin{tikzpicture}[anchorbase, scale=0.35, color=\clr]
\draw[-,line width=1pt,color=\cred](-.5,-.25)to (-.5,2.15);
\draw (-.5,-.6) node{$\scriptstyle \red{u}$};
\draw[-,thick] (-1.3,2.15) to (-1.3,-0.2);      
\draw (-1.3,1) \bdot;
\draw (-1.3,-.6) node{$\scriptstyle {1}$};
\end{tikzpicture}
- u  
\begin{tikzpicture}[anchorbase, scale=0.35, color=\clr]
\draw[-,line width=1pt,color=\cred](-.5,-.25)to (-.5,2.15);
\draw (-.5,-.6) node{$\scriptstyle \red{u}$};
\draw[-,thick] (-1.3,2.15) to (-1.3,-0.2);      
\draw (-1.3,-.6) node{$\scriptstyle {1}$};
\end{tikzpicture}
~, \qquad \qquad  
\begin{tikzpicture}[anchorbase, scale=0.35, color=\clr]
\draw[-,line width=1pt,color=\cred](0,-.6)to (0,2.2);
\draw (0,-.9) node{$\scriptstyle \red{u}$};
\draw[-,thick](.5,-.5) to[out=135, in=down] (-.5,1);
\draw[-,thick](-.5,1) to[out=up, in=270] (.5,2.2);
\draw (.5,-0.9) node{$\scriptstyle {1}$};
\end{tikzpicture}
~=~ 
\begin{tikzpicture}[anchorbase, scale=0.35, color=\clr]
\draw[-,line width=1pt,color=\cred](-1.8,-.25)to (-1.8,2.15);
\draw (-1.8,-.6) node{$\scriptstyle \red{u}$};
\draw[-,thick] (-1,2.15) to (-1,-0.25);          
\draw (-1,1) \bdot;
\draw (-1,-.6) node{$\scriptstyle {1}$};
\end{tikzpicture}
- u
\begin{tikzpicture}[anchorbase, scale=0.35, color=\clr]
\draw[-,line width=1pt,color=\cred](-1.8,-.25)to (-1.8,2.15);
\draw (-1.8,-.6) node{$\scriptstyle \red{u}$};
\draw[-,thick] (-1,2.15) to (-1,-0.25);          
\draw (-1,-.6) node{$\scriptstyle {1}$};
\end{tikzpicture},
\\
 \label{redbraidC}
\begin{tikzpicture}[anchorbase, scale=0.35, color=\clr]
\draw[-,thick](0,2) to (1.5,-.35);
\draw[-,line width=4pt, white](0,-.35) to (1.5,2); to (1.5,-.35);
\draw[-,thick] (0,-.35) to (1.5,2);
\draw[-,line width=1pt,color=\cred](.8,2) to[out=down,in=90] (0.2,1) to [out=down,in=up] (.8,-.4);
\draw (.8,-.8) node{$\scriptstyle \red{u}$};
\node at (0, -.7) {$\scriptstyle 1$};
\node at (1.5, -.7) {$\scriptstyle 1$};
\end{tikzpicture}
& =
\begin{tikzpicture}[anchorbase, scale=0.35, color=\clr]
\draw[-,thick](0,2) to   (1.5,-.35);
\draw[-,line width=4pt, white] (0,-.35) to (1.5,2);
\draw[-,thick] (0,-.35) to (1.5,2);
\draw[-,line width=1pt,color=\cred](.8,2) to
[out=down,in=up] (1.5,1) to [out=down,in=up] (.8,-.4);
\draw (.8,-.7) node{$\scriptstyle \red{u}$};
\node at (0, -.7) {$\scriptstyle 1$};
\node at (1.5, -.7) {$\scriptstyle 1$};
\end{tikzpicture}
- (q^{-1} -q)
\begin{tikzpicture}[anchorbase, scale=0.35, color=\clr]
\draw[-, thick] (0,-.2) to (0,1.8);
\draw[-, line width=1pt,color=\cred] (.5,1.8) to (.5,-.2);
\draw[-, thick] (1,1.8) to (1,-.2); 
\draw (.5,-.6) node{$\scriptstyle \red{u}$};
\node at (0, -.6) {$\scriptstyle 1$};
\node at (1, -.6) {$\scriptstyle 1$};
\draw (1, 0.8) \bdot;
\end{tikzpicture} .
\end{align}
\end{definition}

Assuming the relation \eqref{redcross2C}, we readily show that the relation \eqref{redbraidC} is equivalent to the relation 
\begin{align}
\label{eq:redbraidequiv}
    \begin{tikzpicture}[anchorbase, scale=0.35, color=\clr]
\draw[-,thick](0,2) to (1.5,-.35);
\draw[-,line width=4pt, white](0,-.35) to (1.5,2); to (1.5,-.35);
\draw[-,thick] (0,-.35) to (1.5,2);
\draw[-,line width=1pt,color=\cred](.8,2) to[out=down,in=90] (0.2,1) to [out=down,in=up] (.8,-.4);
\draw (.8,-.8) node{$\scriptstyle \red{u}$};
\node at (0, -.7) {$\scriptstyle 1$};
\node at (1.5, -.7) {$\scriptstyle 1$};
\end{tikzpicture}
~=~ 
\begin{tikzpicture}[anchorbase, scale=0.35, color=\clr]
\draw[-,thick] (0,-.35) to (1.5,2);
\draw[-,line width=4pt, white] (0,2) to   (1.5,-.35);
\draw[-,thick](0,2) to   (1.5,-.35);
\draw[-,line width=1pt,color=\cred](.8,2) to
[out=down,in=up] (1.5,1) to [out=down,in=up] (.8,-.4);
\draw (.8,-.7) node{$\scriptstyle \red{u}$};
\node at (0, -.7) {$\scriptstyle 1$};
\node at (1.5, -.7) {$\scriptstyle 1$};
\end{tikzpicture}
~-~ 
u (q^{-1} -q)
\begin{tikzpicture}[anchorbase, scale=0.35, color=\clr]
\draw[-, thick] (0,-.2) to (0,1.8);
\draw[-, line width=1pt,color=\cred] (.5,1.8) to (.5,-.2);
\draw[-, thick] (1,1.8) to (1,-.2); 
\draw (.5,-.6) node{$\scriptstyle \red{u}$};
\node at (0, -.6) {$\scriptstyle 1$};
\node at (1, -.6) {$\scriptstyle 1$};
\end{tikzpicture} .
\end{align}

\subsection{Isomorphism of $\qASchC$ and $\qASch$ over $\C(q)$}

We define the morphisms $\xdota$ and $\zdota$ (for $a\ge 1$) as in \eqref{eq:dota} in $\qASchC$; it is convenient to formally include them as generating morphisms in $\qASchC$ for discussion below. 

\begin{theorem}
\label{th:qASchiso}
    The $\C(q)$-linear monoidal categories $\qASchC$ and $\qASch$ are isomorphic by matching the generating objects and generating morphisms in the same symbols. 
\end{theorem}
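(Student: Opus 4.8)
The plan is to mimic the proof of \cref{th:qAWiso}, now at the level of the Schur categories. There is a natural $\C(q)$-linear monoidal functor $\Psi'\colon \qASchC \to \qASch$ sending each generating object and each generating morphism in \eqref{generator-affschur C}--\eqref{redcrossingC} to the morphism carrying the same name in \eqref{generator-affschur}--\eqref{redcrossing}, where the thick dots $\xdota,\zdota$ (which we have formally adjoined to $\qASchC$ via \eqref{eq:dota}) are sent to the corresponding morphisms of $\qASch$. To see that $\Psi'$ is well defined one checks that the defining relations of $\qASchC$ hold in $\qASch$. The relations \eqref{webassocC}--\eqref{dotmovecrossingC} involve only the black generators, so they hold by \cref{th:qAWiso} (using \eqref{eq:dota}). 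The relations \eqref{redsliderC} coincide with \eqref{redslider}; the relations \eqref{redcross2C} are precisely the $a=1$ specializations of \eqref{redcross2}, since for $a=1$ the sum in \eqref{redcross2} has only the terms $t=0$ (the single dot $\omega_1$) and $t=1$ (the scalar $-u$ times the identity); and \eqref{redbraidC} is the $a=b=1$ instance of \eqref{redbraid}, after simplifying the $s=0,1$ summands and invoking \eqref{redcross2C} — equivalently, one uses that \eqref{redbraidC} reduces to \eqref{eq:redbraidequiv}. Hence $\Psi'$ is well defined.

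It then remains to construct an inverse monoidal functor $\Phi'\colon \qASch \to \qASchC$ matching generators with generators; this is the substance of the argument. Well-definedness of $\Phi'$ amounts to verifying that every defining relation of $\qASch$ holds in $\qASchC$. The black relations \eqref{webassoc}--\eqref{intergralballon} hold in $\qASchC$ by \cref{th:qAWiso} and the derived identities of Section~\ref{sec:web} (in particular Lemmas~\ref{lem:dotSM}, \ref{lem:dotcross}, and \eqref{eq:dota} for \eqref{intergralballon}); the sliding relations \eqref{redslider} are literally \eqref{redsliderC}. So the only genuine content is to establish the \emph{thick} relations \eqref{redcross2} and \eqref{redbraid}, for all $a,b\ge 1$, in $\qASchC$, starting only from their $a=b=1$ instances \eqref{redcross2C}, \eqref{redbraidC} together with the web relations. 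I would isolate this as two lemmas, which are the Schur analogues of Lemmas~\ref{lem:dotSM}--\ref{lem:dotcross}.

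For \eqref{redcross2} I would induct on $a$. Using \eqref{splitbinomialC} and \eqref{webassoc} one writes the thickness-$a$ strand as a split into $(1,a-1)$ followed by a merge (up to the scalar $\qbinomsm{a}{1}^{-1}$), slides the red strand past the merge and split using \eqref{redsliderC}, reduces the traverse past each factor by the $a=1$ case \eqref{redcross2C} and the inductive hypothesis, and finally reassembles the resulting dotted webs into the $\omega_{a-t}$ normal form via \eqref{eq:wkdota}, \eqref{dotmoveadaptor}, and the dot-manipulation identities Lemmas~\ref{lem:rightdot}, \ref{lem:dots2strands}, \ref{lem:dotcommutative}; the bookkeeping of signs and $q$-powers is reconciled with the $q$-binomial identities coming from \eqref{splitbinomialC}. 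For \eqref{redbraid} I would induct on $a+b$, peeling one thin strand off the thickness-$a$ (or thickness-$b$) strand, commuting the red strand through using \eqref{redsliderC} and \eqref{cross via square}, applying the $a=b=1$ case \eqref{redbraidC} and the inductive hypothesis, and resumming with the help of \eqref{cross2}, Lemma~\ref{lem:cross+=-}, and Lemma~\ref{lem:wrdotcross}.

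The main obstacle is the coefficient bookkeeping in the thickening of \eqref{redbraid}: its right-hand side carries the alternating sum with coefficients $(-1)^s q^{s(s-1)/2}(q^{-1}-q)^s[s]!$, and reproducing this exactly by iterating the thin relation \eqref{redbraidC} together with the square-switch and double-crossing relations \eqref{squareswitch}, \eqref{cross2} requires a nontrivial $q$-integer identity, somewhat more intricate than those appearing in the proofs of Lemmas~\ref{lem:cross+=-} and \ref{lem:wrdotcross}. Once both thick relations are verified, $\Phi'$ is well defined; since $\Psi'\circ\Phi'$ and $\Phi'\circ\Psi'$ fix every generating object and generating morphism, they are the identity functors, and therefore $\Psi'$ is an isomorphism, proving the theorem.
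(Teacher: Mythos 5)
Your proposal follows essentially the same route as the paper: a natural functor matching generators, well-definedness in the easy direction because \eqref{redsliderC}, \eqref{redcross2C}, \eqref{redbraidC} are the thin ($a=1$, resp.\ $a=b=1$) specializations of \eqref{redslider}, \eqref{redcross2}, \eqref{redbraid}, and then the real work of deriving the thick relations \eqref{redcross2} and \eqref{redbraid} in $\qASchC$ by induction on thickness via the split--merge decomposition of \eqref{splitbinomialC} and the sliders — exactly what the paper does in Lemmas~\ref{lem:redcross2thick} and \ref{lem:redbraidthick} (the latter as a double induction, first $a=1$ for all $b$, then general $a$), with the coefficient bookkeeping resolved by identities such as $q^{t}[a-t]+q^{-(a-t)}[t]=[a]$.
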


\begin{proof}
    There is a natural functor $\Psi: \qASchC \rightarrow \qASch$ which matches the generating objects and generating morphisms in the same symbols. 

    Note that the defining relations \eqref{webassocC}--\eqref{dotmovecrossingC} and \eqref{redsliderC}--\eqref{redbraidC} for $\qASchC$ are weaker than the defining relations \eqref{webassoc}--\eqref{intergralballon} and \eqref{redslider}--\eqref{redbraid} for $\qASch$. 

    In the proof of Theorem~\ref{th:qAWiso}, we have already shown that the relations \eqref{webassoc}--\eqref{intergralballon} hold in $\qAWC$, and hence hold in $\qASchC$ as well since the defining relations for $\qAWC$ is part of defining relations for $\qASchC$. Note that the slider relations \eqref{redslider} are the same as \eqref{redsliderC}. It remains to show that the relations \eqref{redcross2}--\eqref{redbraid} hold for $\qASchC$, whose proofs will be given in Lemma~\ref{lem:redcross2thick} and Lemma~\ref{lem:redbraidthick} below. This completes the proof of the theorem. 
\end{proof}

Below we supply the detailed proofs that the two relations \eqref{redcross2}--\eqref{redbraid} hold for $\qASchC$. 
\begin{lemma}
    \label{lem:redcross2thick}
    The relation \eqref{redcross2} hold in $\qASchC$, for $u\in \C(q)$ and $a\ge 1$, that is,
    \begin{align*}
 . 
\end{align*} 
The lemma follows by inserting the above two simplified formulas for the two summands back to the identity \eqref{cal1} and using the identity $q^{-s} [a-s] +q^{a-s} [s] = [a]$.
\end{proof}

\subsection{A polynomial representation of $\qASch$}
\label{sec:polyrep}
For any \( m \in \mathbb{N} \), a composition (respectively, partition) \( \mu = (\mu_1, \mu_2, \ldots, \mu_n) \) of \( m \) is defined as a sequence of (respectively, weakly decreasing) non-negative integers such that \( \sum_{i \geq 1} \mu_i = m \). The length of \( \mu \) is denoted by \( l(\mu) \). A composition \( \mu \) is called \textit{strict} if \( \mu_i > 0 \) for all \( i \). Let \( \Lambda_{\text{st}}(m) \) (respectively, \( \Par(m) \)) represent the set of all strict compositions (respectively, partitions) of \( m \), and let \( \Lambda_{\text{st}} \) (respectively, \( \Par \)) represent the set of all compositions (respectively, partitions).

Recall that the affine Hecke algebra $\Hd$ is the associative unital $\kk$-algebra generated by $H_i$ and $X^{\pm 1}_l$, for $i=1,\ldots,d-1$ and $l=1,\ldots,d$, subject to the relations for all admissible $i,j,l$:
\begin{align}
    (H_i-q^{-1})(H_i+q)=0,\quad &H_iH_{i+1}H_i=H_{i+1}H_iH_{i+1},\quad
    H_iH_j=H_jH_i\ (|i-j|>1)\\
    X_iX_i^{-1}=1,\quad&X_iX_j=X_jX_i,\quad X^{\pm 1}_jH_i=H_iX^{\pm 1}_j\ (j\neq i,i+1),\\
    &H_iX_iH_i=X_{i+1}.
    \label{HXH}
\end{align}
The Hecke algebra $\mathcal{H}_d$ is the subalgebra of $\Hd$ generated by $H_i$ for $i=1,\ldots,d-1$.

For any $a\in \N$, let 
\[
\text{Sym}_a:=\kk[X^{\pm}_1,X^{\pm}_2,\ldots, X^{\pm}_a]^{\mathfrak S_a}
\]
be the ring of symmetric Laurent 
 polynomials in $X_1,\ldots, X_a$.
More generally we consider the ring of partially symmetric Laurent  polynomials
\[\text{Sym}_\lambda:=\kk[X^{\pm 1}_1,\ldots, X^{\pm 1}_a]^{\mathfrak S_{\lambda}}\cong \text{Sym}_{\lambda_1} \otimes \ldots\otimes \text{Sym}_{\lambda_s}
\]
for any $\lambda=(\lambda_1,\ldots, \lambda_s)\in \Lambda_{\text{st}}(a)$.

Recall that (cf. \cite[Proposition 3.6]{Lus89}) the affine Hecke algebra $\Hd$ acts on $\kk[X^{\pm 1}_1,\ldots,X^{\pm 1}_d]$ by letting $X^{\pm 1}_l$ act by multiplication by $X^{\pm 1}_l$ and $H_i$ act by
 \begin{equation}
 \label{actionHi}
     H_i. f= q^{-1}f+(qX_{i+1}-q^{-1}X_i)\frac{ f-f^{s_i}}{X_i-X_{i+1}}
 \end{equation}
for all 
$f\in \kk[X^{\pm 1}_1,\ldots, X^{\pm 1}_d] $ and all admissible $i,l$,
where 
\[ f^{s_i}(X^{\pm 1}_1,\ldots, X^{\pm 1}_d)=f(X^{\pm 1}_1, \ldots, X^{\pm 1}_{i+1}, X^{\pm 1}_i,\ldots,X^{\pm 1}_d).\]

For any reduced expression $w=s_{i_1}\cdots s_{i_j}$ in the symmetric group $\mathfrak S_{d}$, we define $H_w:=H_{i_1}\cdots H_{i_j}$. For any $a,b\in \N$, we define
    \begin{align}  \label{eq:sigmaab}
    \sigma_{a,b} := \sum_{w\in (\mathfrak S_{a+b} /\mathfrak S_a \times \mathfrak S_b)_{\min} }q^{\ell(w_{a,b})-\ell(w)}H_w,
\end{align}
where $w_{a,b}$ is the longest element of $(\mathfrak S_{a+b} /\mathfrak S_a \times \mathfrak S_b)_{\min}$.
 
Now we introduce $\kk$-linear maps for each generator of $\qASch$ in \eqref{generator-affschur}--\eqref{redcrossing} denoted by the same symbol as follows: 
\begin{align}
\begin{split}
        &\splits\colon \text{Sym}_{a+b}\longrightarrow \text{Sym}_{(a,b)}, \quad f\mapsto f,
        \\
        &\merge\colon \text{Sym}_{(a,b)}\longrightarrow \text{Sym}_{a+b}, \quad f\mapsto \sigma_{a,b}.f,
        \end{split}
         \label{map_SM}
\\ 
\begin{split}
     &\wkdotr\colon \text{Sym}_r\longrightarrow \text{Sym}_r, \quad f\mapsto X_1X_2\cdots X_r f,\\
     &\wkzdotr\colon \text{Sym}_r\longrightarrow \text{Sym}_r, \quad f\mapsto X^{-1}_1X^{-1}_2\cdots X^{-1}_r f,
     \end{split}
     \label{eq:wrblackwhite}
\\
     \begin{split}
     &\rightcrossing: \text{Sym}_a \longrightarrow \text{Sym}_a, \quad  f \mapsto f,
     \\
     &\leftcrossing: \text{Sym}_a\longrightarrow \text{Sym}_a, \quad f\mapsto \prod_{1\le j\le a}(X_j-u)f.
     \end{split}
\label{map_redcross}
\end{align} 
where the action of $\sigma_{a,b}$ is given by \eqref{actionHi}. Finally, the linear maps associated with the crossings $\crossingpos$ and $\crossingneg$ are uniquely determined by those of splits and merges by \eqref{cross via square}. 
For example, the linear map for 
$ \begin{tikzpicture}[anchorbase,scale=1,yscale=.6, color=\clr]
\draw[-,thick] (0,1) to (.6,0);
  \draw[wipe] (0,0) to (.6,1);
	\draw[-,thick] (0,0) to (.6,1);
        \node at (0,-.2) {$\scriptstyle 1$};
        \node at (0.6,-0.2) {$\scriptstyle 1$};
\end{tikzpicture} 
$ is given by the action of $H_1$ in \eqref{actionHi}. In general,
the action of $\crossingpos$ on $\text{Sym}_{(a,b)}$ is given by the action of $H_{w_{a,b}}$.

Denote by $\mathpzc{Vec}_\kk$ the category of free $\kk$-modules. 

\begin{theorem}
    \label{th:ASchRep}
There is a $\kk$-linear monoidal functor 
\[
\mathcal F: \qASch \longrightarrow \mathpzc{Vec}_\kk, 
\]
which sends objects $a\in \N$ to $\text{Sym}_a$, and  $u\in \kk$ to $\kk$, and sends the generating morphisms to the linear maps in the same symbols defined in \eqref{map_SM}--\eqref{map_redcross} above.
\end{theorem}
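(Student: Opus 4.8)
The plan is to verify that the assignment on generating morphisms respects every defining relation of $\qASch$, namely \eqref{webassoc}--\eqref{intergralballon} together with \eqref{redslider}--\eqref{redbraid}. By the monoidal universal property of $\qASch$ (as a strict $\kk$-linear monoidal category given by generators and relations), this is exactly what is needed to produce the functor $\mathcal F$. Since $\qAW$ is a monoidal subcategory of $\qASch$ (Theorem~\ref{thm:BasesAffine}(3)) generated by $\merge,\splits,\crossingpos,\xdota$, and the linear maps attached to these lie entirely inside the representation $\kk[X_1^{\pm 1},\ldots,X_d^{\pm 1}]$ of $\Hd$ of \eqref{actionHi}, the relations \eqref{webassoc}--\eqref{intergralballon} can be checked first; they amount to identities in (actions of) the affine Hecke algebra $\Hd$. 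Here the core computational inputs are: the $\sigma_{a,b}$'s satisfy the idempotent-like recursions controlling \eqref{mergesplit} and \eqref{splitbinomial} (this is the standard $q$-Schur parabolic calculation, as in \cite{Bru24}); the element $\sigma_{a,b}$ commutes appropriately with the symmetric functions $X_1\cdots X_r$, giving \eqref{dotmovesplits+merge}; the crossing $H_{w_{a,b}}$ intertwines multiplication operators correctly, giving \eqref{dotmovecrossing}; and the "balloon'' relation \eqref{intergralballon} reduces to $\sigma_{1,1}\cdots$ (nested) applied to a monomial producing $[a]!$ times the monomial, which is a direct Hecke-algebra computation using $H_i.(X_1\cdots X_i f)$.

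Next I would handle the genuinely new relations \eqref{redslider}--\eqref{redbraid} involving the red strand. The slider relations \eqref{redslider} are immediate: traverse-up acts as the identity and traverse-down as multiplication by $\prod_j(X_j-u)$, and both of these are $\mathfrak S$-equivariant in the obvious sense, so they slide past merges/splits by the very same parabolic bookkeeping used for \eqref{sliders}. For \eqref{redcross2}, the double-traverse (up then down, or down then up) on a single strand of thickness $a$ becomes, after composing the two maps, multiplication by $\prod_{1\le j\le a}(X_j-u)$; expanding this product in the basis of elementary symmetric functions $e_t(X_1,\ldots,X_a)$ gives exactly $\sum_{t}(-1)^t u^{a-t} e_t$, which must be matched against the right-hand side $\sum_t (-1)^t q^{-t(a-t)} u^t\,\omega_{a,a-t}$. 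So the key lemma here is that $\mathcal F(\omega_{a,r})$ equals $q^{r(a-r)}e_{a-r}(X_1,\ldots,X_a)$ up to the explicit power of $q$ dictated by the definition \eqref{eq:wkdota}; this is a clean induction on $a$ using \eqref{map_SM} and \eqref{eq:wrblackwhite}, and is the one place where the various $q$-powers in \eqref{redcross2} and \eqref{splitmerge} must be reconciled with the Hecke action. Finally \eqref{redbraid} is the braid-type relation: the left side is a traverse through a thick positive crossing, and upon composing the three maps one gets $H_{w_{a,b}}$ acting after multiplication by $\prod_j(X_j-u)$ (with the red strand threading between); using \eqref{HXH} repeatedly to move the $X$-multiplications through $H_{w_{a,b}}$ produces the error terms $(q^{-1}-q)^s[s]!$ times lower terms with a dot $\omega$, which is precisely the right-hand side of \eqref{redbraid} once one again substitutes $\mathcal F(\omega)=$ elementary symmetric function.

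An efficient route that avoids re-deriving the thick-strand relations from scratch: first establish the analogous statement for the simpler $\C(q)$-linear category $\qASchC$ (Definition~\ref{def:qSchurC}), whose defining relations \eqref{redcross2C} and \eqref{redbraidC} only involve single strands and are short to check directly from \eqref{actionHi}, \eqref{HXH}, and the definition of $\xdota$ via \eqref{eq:dota}; then transport along the isomorphism $\qASchC\cong\qASch$ of Theorem~\ref{th:qASchiso}. Over $\Z[q,q^{-1}]$ one cannot literally use $\qASchC$, but all the identities being verified are between linear maps on free $\Z[q,q^{-1}]$-modules with bases of monomials, so it suffices to prove each generator-relation as an identity of operators on $\kk[X_1^{\pm1},\ldots]$; no division occurs in these operator identities (the apparent denominators in \eqref{actionHi} cancel, as is standard), so the integral case follows from the same computations.

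The main obstacle I expect is bookkeeping rather than conceptual: getting every $q$-power in \eqref{mergesplit}, \eqref{splitbinomial}, \eqref{splitmerge}, \eqref{redcross2}, and \eqref{redbraid} to agree with the normalization of $\sigma_{a,b}$ in \eqref{eq:sigmaab} and of $\omega_{a,r}$ in \eqref{eq:wkdota}. Concretely, the crux is the identification $\mathcal F(\omega_{a,r}) = q^{?}\,e_{a-r}(X_1,\ldots,X_a)$ with the correct exponent, and the verification that $\sigma_{a,b}$ applied to a partially-symmetric polynomial reproduces the merge-split coefficients $q^{st}$; once these two facts are pinned down, every remaining relation is a mechanical consequence of the affine Hecke relations \eqref{HXH}. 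The faithfulness of $\mathcal F$ is not needed for this theorem (it is used later, in Section~\ref{sec:bases_affineWS}), so I would not address it here.
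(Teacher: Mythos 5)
Your proposal is correct and follows essentially the same route as the paper: the paper's proof of Theorem~\ref{th:ASchRep} simply states that the argument is completely analogous to the degenerate counterpart in \cite{SW24Schur} and skips the details, and what it skips is precisely the relation-by-relation verification on the polynomial representation that you outline (including the key identification of $\mathcal F(\omega_{a,r})$ with a $q$-power times an elementary symmetric polynomial --- though note the index should be $e_r(X_1,\dots,X_a)$ rather than $e_{a-r}$, so that $\omega_{a,a-t}$ matches $e_{a-t}$ in \eqref{redcross2}, and the sign in the expansion of $\prod_j(X_j-u)$ is $(-1)^t u^t e_{a-t}$). Your observation that faithfulness is not needed here and is established later in Section~\ref{sec:bases_affineWS} also matches the paper.
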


\begin{proof}
    The proof is completely analogous to the one for \cite[Theorem~3.6]{SW24Schur} (which is a degenerate counterpart of the current theorem); we skip the detail. 
\end{proof}

\section{Bases for affine $q$-web and $q$-Schur categories}
\label{sec:bases_affineWS}

In this section, we establish bases for arbitrary Hom-spaces for the categories $\qAW$ and $\qASch$.

\subsection{Elementary diagrams of $\qAW$}

The Hom-spaces for the degenerate version of $\qAW$ have a basis given by the \emph{elementary chicken foot diagrams} (see \cite[\S 3.1]{SW24web}).  In this subsection, we shall construct a spanning set of $\qAW$, which is a $q$-analogue of the  elementary chicken foot diagram. 

The difference between web diagrams of $\qAW$ and its degenerate analogues lies in the distinction of the \emph{positive} crossings $\crossingpos$ and the \emph{negative} crossings $\crossingneg$. From the relation of positive and negative crossings as in Lemma~\ref{lem:cross+=-}, only one of them is needed to construct a spanning set for $\qAW$.

Let $\lambda,\mu\in \Lambda_{\text{st}}(m)$. A $\lambda\times \mu$ \emph{elementary chicken foot ribbon diagram} is a diagram in $\Hom_{\qW}(\mu,\lambda)$ consisting of three horizontal  parts such that
\begin{itemize}
    \item the bottom part consists of only splits,
    \item the top part consists of only merges,
    \item the middle part consists of only positive crossings of the thinner strands (i.e., legs).
\end{itemize}
A chicken foot ribbon diagram (CFRD) is called \emph{reduced} if there is at most one
intersection or one join between every pair of the legs. 

For $\lambda,\mu\in \Lambda_{\text{st}}(m)$, let 
$\Mat_{\lambda,\mu}$ be the set of all $l(\lambda)\times l(\mu)$
non-negative integer matrices $A=(a_{ij})$ with row sum vector $\la$ and column sum vector $\mu$, i.e.,  $\sum_{1\le h\le l(\mu)}a_{ih}=\lambda_i$ and $\sum_{1\le h\le l(\lambda)}a_{hj}=\mu_j$ for all $1\le i\le l(\lambda) $ and $1\le j\le l(\mu)$. 

For each reduced chicken foot ribbon diagram, we may forget about the positivity of each crossing and recover a reduced chicken foot diagram. Thus the combinatorial information of a chicken foot ribbon diagram is really the same as the underlying chicken foot diagram, which is encoded by a matrix $A\in \Mat_{\lambda,\mu}$ ,cf. \cite[(3.2)]{SW24web}.  
Recall the morphisms $\omega_{a,r}$ from \eqref{eq:wkdota} for $1\leq |r| \leq a$ and the set of rational partitions $\RPar_a$ from \eqref{eq:RPar}. We call such a morphism {\em the $r$-th elementary dot packet (of thickness $a$)}. We may also write $\omega_r=\omega_{a,r}$ if $a$ is clear in the context. For any tuple $\nu=(\nu_1,\nu_2,\ldots, \nu_k)\in \RPar_a$, the {\em elementary dot packet (of thickness $a$)} is defined to be 
\[
\omega_{a, \nu}:= \omega_{a,\nu_1}\omega_{a,\nu_2}\cdots \omega_{a,\nu_k}\in \End_{\qAW}(a).
\]
We write $\omega_\nu=\omega_{a,\nu}$ if $a$ is clear from the context and draw it as $
\begin{tikzpicture}[baseline = 3pt, scale=0.4, color=\clr]
\draw[-,line width=1.2pt] (0,-.2) to[out=up, in=down] (0,1.2);
\draw(0,0.5) \bdot; \node at (0.6,0.5) {$ \scriptstyle \nu$};
\node at (0,-.4) {$\scriptstyle a$};
\end{tikzpicture}$. Moreover, a CFRD with elementary dot packets on each segment of its legs is referred to as a \emph{dotted chicken foot ribbon diagram}.

\begin{definition} \label{def:dottedribbon}
    For $\lambda,\mu\in \Lambda_{\text{st}}(m)$, a $\lambda\times \mu$ elementary chicken foot ribbon diagram (or simply elementary ribbon diagram) is a $\lambda\times \mu$ reduced chicken foot ribbon diagram with an elementary dot packet $\omega_{\nu}$, for some tuple $\nu\in \RPar_a$, attached at the bottom of each leg with thickness $a$. 
\end{definition}

Denote  
\begin{equation}\label{dottedreduced}
 \RPMat_{\lambda,\mu}:=\{ (A, P))\mid  A=(a_{ij})\in \Mat_{\lambda,\mu}, P=(\nu_{ij}), \nu_{ij}\in \RPar_{a_{ij}} \}.   
\end{equation}
We will identify the set of all elementary chicken foot ribbon diagrams from $\mu$ to $\lambda$ with $\RPMat_{\lambda,\mu}$.

\begin{example}
    We have the following elementary chicken foot ribbon diagram:
    \begin{align}
    \label{examplereduced}
          \begin{tikzpicture}
        [anchorbase,color=\clr,scale=1.5]
        \draw[-,line width=1pt] (-.5,1) to (-1,0)
                                (-.5,1) to (0,0)
                                (-.5,1) to (1,0);
        \draw[wipe]             (.2,.8) to (-.7,.2)
                                (.4,.8) to (.1,.2)
                                (.6,.8) to (.8,.4);
        \draw[-,line width=1pt] (.5,1) to (-1,0)
                                (.5,1) to (0,0)
                                (.5,1) to (1,0);
        \draw[-,line width=1.5pt] (-.5,1) to (-.5,1.15)
                                (.5,1) to (.5,1.15)
                                (-1,0) to (-1,-.15)
                                (1,0) to (1,-.15)
                                (0,0) to (0,-.15);
        \node at  (-.8,.8){$\scriptstyle 2$};
        \node at  (-.5,.6){$\scriptstyle 3$};
        \node at  (-.2,1){$\scriptstyle 4$};
        \node at  (.2,1){$\scriptstyle 3$};
        \node at  (.5,.6){$\scriptstyle 2$};
        \node at  (.8,.8){$\scriptstyle 2$};
        \draw (-.85,.3) \bdot;
        \node at (-1.1,.3){$\scriptstyle \nu^1$};
        \draw (-.52,.3) \bdot;
        \node at (-.52,.1){$\scriptstyle \nu^2$};
        \draw (-.15,.3) \bdot;
        \node at (-.18,.1){$\scriptstyle \nu^3$};
        \draw (.15,.3) \bdot;
        \node at (.18,.1){$\scriptstyle \nu^4$};
        \draw (.58,.3) \bdot;
        \node at (.58,.1){$\scriptstyle \nu^5$};
        \draw (.88,.3) \bdot;
        \node at (1.13,.3){$\scriptstyle \nu^6$};
    \end{tikzpicture}
    \end{align}
    where $\nu^i\in \RPar_{a_i}$, with the thickness of legs are $(a_1,\ldots,a_6)=(2,3,3,2,4,2)$. Associated with \eqref{examplereduced} we have
    \[
    A=\begin{pmatrix}
        2& 3& 4\\
        3& 2& 2
    \end{pmatrix},\qquad 
    P=\begin{pmatrix}
        \nu^1& \nu^3 & \nu^5\\
        \nu^2& \nu^4& \nu^6
    \end{pmatrix}.
    \]
\end{example}

\begin{definition}
\label{def:Cdeg}
     Assign degree $a+b$ to $\crossingpos$ and $\crossingneg$ and $0$ to all other generating morphisms \eqref{generator-affschur}--\eqref{redcrossing}.  We define the \emph{crossing degree}, denoted by $\deg^\times$, of a dotted ribbon diagram as the sum of the degrees of its local components.
\end{definition}
Let $\Hom_{\qAW} (\mu,\la)_{<k}$ (resp. $\Hom_{\qAW} (\mu,\la)_{\le k}$) denote the $\kk$-span of all dotted ribbon diagrams with crossing degree $<k$ (resp. $\leq k$). For any $R$ and $R'$ in $\Hom_{\qAW} (\mu,\la)_{\le k}$, we write 
\[
R \equivc R' \text{ if }R=R' \mod \Hom_{\qAW} (\mu,\la)_{<k}.
\]

\begin{lemma}
    \label{lem:movedotsfreely}
    The following $\equivc$-relations hold in $\qAW$:
    \begin{align*}
    \crossingpos
    &\equivc
    \crossingneg,\\
\begin{tikzpicture}[anchorbase, scale=0.5, yscale=.5,color=\clr]
\draw[-,line width=1.2pt] (0,-.2) to  (1,2.2);
\draw[wipe] (1,-.2) to  (0,2.2);
\draw[-,line width=1.2pt] (1,-.2) to  (0,2.2);
\draw(0.2,1.6)\bdot;
\draw(-.4,1.6) node {$\scriptstyle \omega_r$};
\node at (0, -.5) {$\scriptstyle a$};
\node at (1, -.5) {$\scriptstyle b$};
\end{tikzpicture}
&\equivc~ 
\begin{tikzpicture}[anchorbase, yscale=.5,scale=0.5, color=\clr]
\draw[-, line width=1.2pt] (0,-.2) to (1,2.2);
\draw[wipe] (1,-.2) to (0,2.2);
\draw[-,line width=1.5pt] (1,-.2) to (0,2.2);
\draw(.8,0.3)\bdot;
\draw(1.35,0.3) node {$\scriptstyle \omega_r$};
\node at (0, -.5) {$\scriptstyle a$};
\node at (1, -.5) {$\scriptstyle b$};
\end{tikzpicture}, 
\quad 
\begin{tikzpicture}[anchorbase, yscale=.5,scale=0.5, color=\clr]
\draw[-, line width=1.2pt] (0,-.2) to (1,2.2);
\draw[wipe]  (1,-.2) to(0,2.2);
\draw[-,line width=1.5pt] (1,-.2) to(0,2.2);
\draw(.2,0.2)\bdot;
\draw(-.4,.2)node {$\scriptstyle \omega_r$};
\node at (0, -.5) {$\scriptstyle b$};
\node at (1, -.5) {$\scriptstyle a$};
\end{tikzpicture}
~\equivc~
\begin{tikzpicture}[anchorbase, yscale=.5,scale=0.5, color=\clr]
\draw[-,line width=1.2pt] (0,-.2) to  (1,2.2);
\draw[wipe] (1,-.2) to  (0,2.2);
\draw[-,line width=1.5pt] (1,-.2) to  (0,2.2);
\draw(0.8,1.6)\bdot;
\draw(1.4,1.6)node {$\scriptstyle \omega_r$};
\node at (0, -.5) {$\scriptstyle b$};
\node at (1, -.5) {$\scriptstyle a$};
\end{tikzpicture},\quad |r|\leq b.
    \end{align*}
\end{lemma}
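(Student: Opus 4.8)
The three $\equivc$-relations fall into two families, and the plan is to reduce both to facts already established. For the first relation, $\crossingpos \equivc \crossingneg$, I would simply invoke Lemma~\ref{lem:cross+=-}: the right-hand side there expresses $\crossingneg$ (equivalently the $(b,a)\to(a,b)$ crossing read the other way) as $\crossingpos$ plus a sum over $s\ge 1$ of terms factoring through a ladder diagram in which a strand of thickness $s$ has been ``pinched off'', i.e.\ through merges and splits with no crossing of full thickness. Each such correction term is a composite of merges/splits (degree $0$) and at most one crossing of strictly smaller total thickness, hence has crossing degree $<a+b$; so all of them vanish modulo $\Hom_{\qAW}(\mu,\lambda)_{<a+b}$, which is exactly the assertion $\crossingpos\equivc\crossingneg$. (One should be slightly careful that the formula in Lemma~\ref{lem:cross+=-} is stated for the crossing from $(b,a)$ to $(a,b)$; applying $\div$ from \eqref{eq:autoflip} or just rereading the lemma in the mirrored convention handles this bookkeeping.)

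For the dot-sliding relations, the key input is Lemma~\ref{lem:wrdotcross}, which computes exactly how $\omega_r$ on the bottom of a thick crossing moves to the top. In that lemma the right-hand side is the ``expected'' term (the crossing with $\omega_r$ slid to the opposite strand at the top) plus a sum over $s\ge 1$ of terms with coefficient $q^{rs-s(s-1)/2}(q^{-1}-q)^s[s]!$, each of which factors through a diagram where strands of thickness $s$ are created and reabsorbed via merges/splits — again reducing the total thickness that actually crosses. Thus every $s\ge 1$ summand has crossing degree $\le (a+b) - 2s < a+b$ (the crossing of thickness $a-s,b-s$ contributes $a+b-2s$, and the auxiliary ladder contributes $0$), so it dies modulo $\Hom_{\qAW}(\mu,\lambda)_{<a+b}$. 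This yields the first displayed $\equivc$-relation for dots on the bottom-left; the other three displayed variants (dots on the bottom-right, and the versions with the two strands labeled $b,a$ instead of $a,b$) follow by the same argument applied to the second formula of Lemma~\ref{lem:wrdotcross}, together with the symmetry $\div$ of \eqref{eq:autoflip} which swaps bottom and top and interchanges the roles of the two strands. Finally, to slide a dot packet $\omega_\nu=\omega_{\nu_1}\cdots\omega_{\nu_k}$ rather than a single $\omega_r$, one repeats the single-$\omega_r$ computation factor by factor; at each stage the crossing degree can only drop, so the cumulative error stays in $\Hom_{\qAW}(\mu,\lambda)_{<a+b}$.

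I anticipate the only real subtlety is the degree bookkeeping: one must confirm that in Lemma~\ref{lem:cross+=-} and Lemma~\ref{lem:wrdotcross} the ``error'' diagrams genuinely have crossing degree strictly below $a+b$, i.e.\ that pinching off $s$ strands with $s\ge1$ via merges and splits (which carry degree $0$ by Definition~\ref{def:Cdeg}) leaves behind at most a crossing of thickness pair $(a-s,b-s)$ and hence crossing degree $a+b-2s$. This is immediate from the explicit shapes of the right-hand sides in those lemmas, but it is the one place where one must actually look at the diagrams rather than quote a black box. Everything else is formal: linearity of $\equivc$, compatibility of $\equivc$ with composition and tensor product (so that sliding inside a larger diagram is legitimate), and induction on $k$ for multi-packet dots.
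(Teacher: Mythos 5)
Your proposal is correct and matches the paper's proof, which simply cites Lemma~\ref{lem:cross+=-} and Lemma~\ref{lem:wrdotcross}; your degree bookkeeping (the $s\ge 1$ terms having crossing degree $a+b-2s<a+b$) is exactly the content left implicit there. The only point you gloss over, as does the paper, is the case $r<0$ (hollow dots), which needs \eqref{eq:zdotmovecrossing} or a hollow-dot analogue of Lemma~\ref{lem:wrdotcross} rather than the solid-dot statement as written.
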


\begin{proof}
    Follows from Lemma~\ref{lem:cross+=-} and Lemma~\ref{lem:wrdotcross} directly.
\end{proof}

\begin{proposition} \label{prop:spanqAW}
For any $\lambda,\mu\in \Lambda_{\text{st}}(m)$, $\Hom_{\qAW}(\mu,\lambda)$ is spanned by $\RPMat_{\la,\mu}$ consisting of all $\lambda \times \mu$ elementary chicken foot diagrams from $\mu$ to $\la$. 
\end{proposition}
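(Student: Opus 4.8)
The plan is to reduce an arbitrary morphism in $\Hom_{\qAW}(\mu,\lambda)$ to a linear combination of elementary chicken foot ribbon diagrams by a normalization procedure, working modulo lower crossing degree where convenient. First I would note that any morphism is a $\kk$-linear combination of diagrams built by horizontal and vertical composition of the generating morphisms $\merge$, $\splits$, $\crossingpos$, $\crossingneg$, and the dot generators $\xdota$, $\zdota$. By Lemma~\ref{lem:cross+=-} (expressing a negative crossing in terms of a positive crossing plus ``ladder'' corrections of strictly smaller crossing degree), every negative crossing can be traded for a positive one at the cost of lower-degree terms; thus, arguing by downward induction on total crossing degree, it suffices to span each associated graded piece by reduced dotted ribbon diagrams, and so we may assume all crossings are positive.

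Next I would push all dot packets to the bottoms of the legs. Using Lemma~\ref{lem:dotmovesplitabr} (and its $\zdot$-variant) a dot on a thick strand below a split/merge can be moved across the trivalent vertex, and using Lemma~\ref{lem:movedotsfreely} (which rests on Lemma~\ref{lem:wrdotcross}) a dot packet slides through a positive crossing modulo lower crossing degree. Repeatedly applying these, together with Lemma~\ref{lem:dots2strands} and Lemma~\ref{lem:dotcommutative} to merge and reorder dot packets that collide on a common strand, I can arrange that every dot packet sits on a bottom leg; by Lemma~\ref{lem:spanDa} each such packet on a strand of thickness $a$ is a $\kk$-linear combination of the $\omega_{a,\nu}$ with $\nu\in\RPar_a$, which are exactly the elementary dot packets allowed in Definition~\ref{def:dottedribbon}. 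This reduces us to spanning the dot-free part: a morphism in $\Hom_{\qW}(\mu,\lambda)$ that is a composite of splits, merges, and positive crossings.

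For the dot-free part I invoke the results recalled from \cite{Bru24} (and \cite{CKM}): the category $\qW$ has Hom-spaces spanned by reduced chicken foot ribbon diagrams, equivalently by $\Mat_{\lambda,\mu}$. Concretely, the relations \eqref{webassoc} (associativity/coassociativity), \eqref{mergesplit}, \eqref{splitbinomial}, together with the slider relations \eqref{sliders}, the square-switch relations \eqref{squareswitch}--\eqref{squareswitch2}, and \eqref{cross via square}--\eqref{braid}, allow one to normalize any composite of trivalent vertices and crossings into the three-layer ``splits at the bottom, crossings in the middle, merges at the top'' shape, and then \eqref{cross2} (the double-crossing relation) removes any repeated crossing between a fixed pair of legs modulo the span of diagrams with fewer crossings, yielding a reduced diagram. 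Splicing this with the previous paragraph, every morphism in $\Hom_{\qAW}(\mu,\lambda)$ lies in the span of the elements of $\RPMat_{\lambda,\mu}$.

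The main obstacle is the bookkeeping in the reduction of the dot-free part to a genuinely \emph{reduced} ribbon diagram: one must be careful that each application of \eqref{cross2} or of a square-switch produces only diagrams that are either already in the desired normal form or have strictly smaller crossing degree, so that an induction on $\deg^\times$ closes; this is where appealing cleanly to the established normal-form result for $\qW$ from \cite{Bru24} is essential, rather than re-deriving it. A secondary subtlety is that sliding dot packets through crossings only holds modulo lower crossing degree (Lemma~\ref{lem:movedotsfreely}), so the dot-normalization and the crossing-normalization must be interleaved within a single induction on crossing degree rather than performed sequentially; organizing the argument so that these two normalizations do not interfere is the part that needs the most care, though each individual step is routine given the lemmas above.
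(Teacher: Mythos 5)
Your proposal is correct and follows essentially the same route as the paper: induction on the crossing degree $\cdeg$, using Lemma~\ref{lem:cross+=-} and Lemma~\ref{lem:movedotsfreely} to identify positive and negative crossings and slide dot packets modulo lower crossing degree, the slider relations \eqref{sliders} to sort splits below merges, and Lemma~\ref{lem:spanDa}/Lemma~\ref{lem:blamu} to resolve colliding dot packets into elementary ones. The only organizational difference is that the paper first proves the span of \emph{all} (not necessarily reduced) dotted CFRDs is closed under left composition with each generator and then reduces within that span, whereas you normalize directly; the subtlety you flag about interleaving the dot- and crossing-normalizations is exactly what the paper's single induction on $\cdeg$ in its second step handles.
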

\begin{proof}
Let $D_{\lambda,\mu}$ denote the set of all dotted CFRDs (not necessarily reduced) from $\mu$ to $\lambda$. Clearly we have $\RPMat_{\la,\mu}\subset D_{\lambda,\mu}$.  We prove the proposition in two steps.
\begin{enumerate}
    \item $\Hom_{\qAW}(\mu,\lambda)$ is spanned by $D_{\lambda,\mu}$,
    \item  $D_{\lambda,\mu}$ is contained in the span of $\RPMat_{\la,\mu}$.
\end{enumerate}

To prove (1), it suffices to show that $fg$ can be written as a linear combination of elements in $D_{\lambda,\mu}$  for any $g\in D_{\lambda,\mu}$ and a generating morphism $f$ of the following types: 
\begin{equation*}
    1_* \wkdotaa 1_*,\qquad  1_*\merge 1_*,\qquad  1_*\splits 1_*,
\end{equation*}
where $1_*$ represents suitable identity morphisms. 

When $f=1_* \wkdotaa 1_*$, the claim follows from Lemma~\ref{lem:dotSM} directly. When $f=1_*\merge 1_*$, then the claim follows from the associativity relation \eqref{webassoc}. When $f=1_*\splits 1_*$, the proof is similar to Part (3) of \cite[Proposition 3.6]{SW24web} using \eqref{mergesplit} and \eqref{sliders}.

Now to prove (2), we show that any $g\in D_{\la,\mu}$ can be written as a linear combination of elements in $\RPMat_{\la,\mu}$. We proceed via induction on the crossing degree $k:=\cdeg(g)$. Assume that $k=0$, i.e. there is no crossing in $g$. Then $g$ is either already an element of $\RPMat_{\la,\mu}$  or there exists local components of $g$  of the form $\begin{tikzpicture}
[baseline = -1mm,scale=.8,color=\clr]
\draw[-,line width=1.5pt] (0.08,-.8) to (0.08,-.5);
\draw[-,line width=1.5pt] (0.08,.3) to (0.08,.6);
\draw[-,thick] (0.1,-.51) to [out=45,in=-45] (0.1,.31);
\draw[-,thick] (0.06,-.51) to [out=135,in=-135] (0.06,.31);
\draw(-.1,0) \bdot;
\draw(-.4,0)node {$\scriptstyle \omega_\lambda$};
\draw(.25,0) \bdot;
\draw(.55,0)node {$\scriptstyle \omega_\mu$};
\node at (-.22,-.35) {$\scriptstyle a$};
\node at (.42,-.35) {$\scriptstyle b$};
\end{tikzpicture}$. By Lemma~\ref{lem:blamu}, such a local component lies in the span of \eqref{eq:wkdotalambda}. Hence we can rewrite $g$ as a linear combination of diagrams in $\RPMat_{\la,\mu}$.

Now suppose $k>0$ and the claim is true for $\Hom_{\qAW} (\mu,\la)_{<k}$. By Lemma~\ref{lem:movedotsfreely} we may swap positive and negative crossings and move dots freely modulo lower crossing degree terms. Moreover, we may move the merges in $g$ up and the splits in $g$ down via \eqref{sliders} without changing the crossing degree. Thus $g$ is equivalent to certain linear combination of diagrams in  $\RPMat_{\la,\mu}$ modulo $\Hom_{\qAW} (\mu,\la)_{<k}$. Hence the claim (2) follows.

Now the proposition follows from Claims (1) and (2).
\end{proof}

\subsection{Elementary ribbon diagrams of $\qASch$}

In this subsection, we construct a spanning set of $\qASch$, which is the $q$-analogue of the elementary diagram basis in \cite[\S 3.4]{SW24Schur}.

 By definition, an arbitrary object of $\qASch$ is of the form 
\begin{equation}
\label{equ:object-of-Aqschur}
 \lambda^{(0)} \red{u_1} \lambda^{(1)} \red{u_2}  \ldots \lambda^{(\ell-1)} \red{u_{\ell}} \lambda^{(\ell)}    
\end{equation}
where $\lambda^{(i)}$ is a strict composition and $\bfu=(u_1,u_2,\ldots,u_{\ell})$ is a vector in $ \kk^{\ell}$, for some $\ell\in \N$. The case $\ell=0$ should be understood as there is no $u_i$. For $\ell\in \N$, an $(1+\ell)$-multicomposition of $m$ is an ordered $(1+\ell)$-tuple of compositions 
$\mu=(\mu^{(0)}, \mu^{(1)}, \ldots, \mu^{(\ell)})$ 
such that $\sum_{i=1}^{1+\ell}|\mu^{(i)}|=m$. 

We call an $(1+\ell)$-multicomposition $\mu$ strict if all its components are strict compositions (where the partition $\emptyset$ is understood to be strict). Let $\Lambda_{\text{st}}^{1+\ell}(m)$ be the set of all strict $(1+\ell)$-multicompositions of $m$. Write 
\[
\Lambda_{\text{st}}^{1+\ell}:=\bigcup_{m\in \N}\Lambda_{\text{st}}^{1+\ell}(m).
\]

Thus, the set of objects of $\qASch$
can be identified with $ \bigcup_{\ell\in \N} \big(\Lambda_{\text{st}}^{1+\ell}\times \kk^{\ell} \big)$.

\begin{lemma}
\label{lem:Homzero}
Let $(\lambda,\bfu )\in \Lambda_{\text{st}}^{1+\ell}(m)\times \kk^{\ell}$ and $ (\mu,\red{\bfu'})\in \Lambda_{\text{st}}^{1+\ell'}(m')\times \kk^{\ell'}$.
Then $\Hom_{\qASch}((\lambda,\bfu ),  (\mu,\red{\bfu'})) =0$ unless
$m=m',  \ell=\ell',$ and $\bfu=\red{\mathbf u'}.
$
\end{lemma}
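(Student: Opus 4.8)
The plan is to extract three invariants from any morphism in $\qASch$ and show each is preserved under horizontal and vertical composition, hence constant on any Hom-space between indecomposable objects. The three invariants are: the total thickness $m$ (sum of all labels on black strands, counted with multiplicity, on either the source or target side — these agree since every generator preserves total black thickness), the number $\ell$ of red strands, and the ordered tuple $\bfu$ of red labels read left to right. For the generating morphisms this is immediate: merges, splits, thick crossings and dots in \eqref{generator-affschur} involve no red strands and preserve total black thickness; the traverse-ups and traverse-downs \eqref{redcrossing} each have exactly one red strand of a fixed label $u$ on top and bottom and preserve black thickness, so they preserve all three invariants as well. Since the invariants are additive under $\otimes$ (disjoint union of diagrams) and unchanged under vertical stacking of composable morphisms, any composite of generators — and hence any morphism in $\qASch$ — between $(\lambda,\bfu)$ and $(\mu,\bfu')$ forces $m=m'$, $\ell=\ell'$ and $\bfu=\bfu'$. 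Therefore if any of these equalities fails, the Hom-space is spanned by the empty set, i.e.\ is zero.

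First I would make precise the claim that the relations \eqref{webassoc}--\eqref{intergralballon} and \eqref{redslider}--\eqref{redbraid} are homogeneous with respect to all three invariants, so that passing to the quotient defining $\qASch$ does not disturb the argument: each relation equates $\kk$-linear combinations of diagrams all having the same source and target object, hence the same $(m,\ell,\bfu)$, so the invariants descend to well-defined functions on morphisms of the quotient category. Next I would observe that because $\qASch$ is defined as the strict $\kk$-linear monoidal category on these generators and relations, every morphism is a $\kk$-linear combination of composites (vertical and horizontal) of generators and identities; the identity on an object visibly fixes $(m,\ell,\bfu)$. A one-line induction on the length of such a composite, using additivity under $\otimes$ and invariance under $\circ$, then gives that every diagrammatic morphism from $(\lambda,\bfu)$ to $(\mu,\bfu')$ can exist only when $(m,\ell,\bfu)=(m',\ell',\bfu')$.

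I do not expect a serious obstacle here; the only mildly delicate point is bookkeeping for the red-strand label tuple $\bfu$ rather than just its multiset — one must check that no generator permutes or merges red strands, which is clear from \eqref{generator-affschur}, \eqref{redcrossing}: there is simply no generator with two red inputs. (The sliders \eqref{redslider} move a red strand past black strands but do not reorder reds; the braid-type relations \eqref{redbraid} involve a single red strand.) One should also note that an object of the form \eqref{equ:object-of-Aqschur} records the red labels in left-to-right order as part of its data, so ``$\bfu=\bfu'$'' is exactly the statement that the ordered tuples of red labels of the source and target coincide. With these remarks the proof is complete; it is the exact $q$-analogue of the corresponding degenerate statement, so I would phrase it briefly and, if desired, refer to the analogous argument in \cite{SW24Schur}.
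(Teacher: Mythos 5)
Your proposal is correct and is essentially the paper's own argument, which simply observes that the claim "follows from the forms of the generating morphisms \eqref{generator-affschur}--\eqref{redcrossing}": every generator preserves total black thickness and the ordered list of red labels, so no morphism can exist between objects with different $(m,\ell,\bfu)$. Your version just spells out the invariance/induction bookkeeping that the paper leaves implicit.
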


\begin{proof}
It follows from the forms of generating morphisms \eqref{generator-affschur}--\eqref{redcrossing} for $\qASch$.
\end{proof}

Thus we only need to consider the case for fixed $m,\ell$ and $\mathbf{u}\in \kk^\ell$. 
We define a forgetful map
\begin{align} \label{muforget}
    \Lambda_{\text{st}}^{1+\ell}(m) \longrightarrow \Lambda_{\text{st}}(m),
    \qquad \mu \mapsto \overline{\mu},
\end{align}
by viewing $\mu$ as a strict $1$-multicomposition (by omitting the empty components).

We shall refer to the diagrams obtained by compositions and tensor products of \eqref{generator-affschur}--\eqref{redcrossing} (and identity morphisms) as {\em red strand dotted ribbon diagrams}. In particular, by our construction, there is no crossing between red strands and a red strand only intersects with leg segments (connected components after removing all the intersections and dots).

\begin{example}
Let $\lambda=((\varnothing), (9),(7))$ and $\mu=((5),(5),(6))$, we have two red strand dotted ribbon diagrams as follows.
     \begin{align}
    \label{examplered}
          \begin{tikzpicture}
        [anchorbase,color=\clr,scale=1.6]
        \draw[-,line width=1pt] (-.5,1) to (-1,0)
                                (-.5,1) to (0,0)
                                (-.5,1) to (1,0);
        \draw[wipe]             (.2,.8) to (-.7,.2)
                                (.4,.8) to (.1,.2)
                                (.6,.8) to (.8,.4);
        \draw[-,line width=1pt] (.5,1) to (-1,0)
                                (.5,1) to (0,0)
                                (.5,1) to (1,0);
        \draw[-,line width=1.5pt] (-.5,1) to (-.5,1.15)
                                (.5,1) to (.5,1.15)
                                (-1,0) to (-1,-.15)
                                (1,0) to (1,-.15)
                                (0,0) to (0,-.15);
        \node at  (-.8,.8){$\scriptstyle 2$};
        \node at  (-.5,.6){$\scriptstyle 3$};
        \node at  (-.2,1){$\scriptstyle 4$};
        \node at  (.2,1){$\scriptstyle 3$};
        \node at  (.5,.6){$\scriptstyle 2$};
        \node at  (.8,.8){$\scriptstyle 2$};
        \draw (-.85,.3) \bdot;
        \node at (-1.1,.3){$\scriptstyle \nu^1$};
        \draw (-.52,.3) \bdot;
        \node at (-.52,.1){$\scriptstyle \nu^2$};
        \draw (-.15,.3) \bdot;
        \node at (-.18,.1){$\scriptstyle \nu^3$};
        \draw (.1,.2) \bdot;
        \node at (.12,.05){$\scriptstyle \nu^4$};
        \draw (.58,.3) \bdot;
        \node at (.58,.1){$\scriptstyle \nu^5$};
        \draw (.88,.3) \bdot;
        \node at (1.13,.3){$\scriptstyle \nu^6$};
        \draw[-,line width=1pt,red] (-.7,1) to (-.7,-.2);
        \node[red] at (-.7,1.15){$\scriptstyle u_1$};
        \node[red] at (-.7,-.35){$\scriptstyle u_1$};
        \draw[-,line width=1pt,red] (0,1) to [out=down,in=up] (.15,.45) to [out=down,in=up] (.5,-.2); 
        \node[red] at (0,1.15){$\scriptstyle u_2$};
        \node[red] at (.5,-.35){$\scriptstyle u_2$};
    \end{tikzpicture},
    \qquad     
    \begin{tikzpicture}
        [anchorbase,color=\clr,scale=1.6]
        \draw[-,line width=1pt] (-.5,1) to (-1,0)
                                (-.5,1) to (0,0)
                                (-.5,1) to (1,0);
        \draw[wipe]             (.2,.8) to (-.7,.2)
                                (.4,.8) to (.1,.2)
                                (.6,.8) to (.8,.4);
        \draw[-,line width=1pt] (.5,1) to (-1,0)
                                (.5,1) to (0,0)
                                (.5,1) to (1,0);
        \draw[-,line width=1.5pt] (-.5,1) to (-.5,1.15)
                                (.5,1) to (.5,1.15)
                                (-1,0) to (-1,-.15)
                                (1,0) to (1,-.15)
                                (0,0) to (0,-.15);
        \node at  (-.8,.8){$\scriptstyle 2$};
        \node at  (-.5,.6){$\scriptstyle 3$};
        \node at  (-.2,1){$\scriptstyle 4$};
        \node at  (.2,1){$\scriptstyle 3$};
        \node at  (.5,.6){$\scriptstyle 2$};
        \node at  (.8,.8){$\scriptstyle 2$};
        \draw (-.85,.3) \bdot;
        \node at (-1.1,.3){$\scriptstyle \nu^1$};
        \draw (-.52,.3) \bdot;
        \node at (-.52,.1){$\scriptstyle \nu^2$};
        \draw (-.15,.3) \bdot;
        \node at (-.18,.1){$\scriptstyle \nu^3$};
        \draw (.1,.2) \bdot;
        \node at (.12,.05){$\scriptstyle \nu^4$};
        \draw (.58,.3) \bdot;
        \node at (.58,.1){$\scriptstyle \nu^5$};
        \draw (.88,.3) \bdot;
        \node at (1.13,.3){$\scriptstyle \nu^6$};
        \draw[-,line width=1pt,red] (-.7,1) to (-.7,-.2);
        \node[red] at (-.7,1.15){$\scriptstyle u_1$};
        \node[red] at (-.7,-.35){$\scriptstyle u_1$};
        \draw[-,line width=1pt,red] (0,1) to [out=down,in=up] (.4,.45) to [out=down,in=up] (.5,-.2); 
        \node[red] at (0,1.15){$\scriptstyle u_2$};
        \node[red] at (.5,-.35){$\scriptstyle u_2$};
    \end{tikzpicture}
    \end{align}
\end{example}

We extend the notion of crossing degree for dotted ribbon diagrams from Definition~\ref{def:Cdeg} for red strand dotted ribbon diagrams.
\begin{definition}
\label{def:Cdegred}
We define the \emph{crossing degree}, denoted by $\cdeg$, of a red strand dotted ribbon diagram as the sum of the crossing degrees of its local components, where we declare locally that $\cdeg (\crossingpos ) = \cdeg (\crossingneg )=a+b$ and $\cdeg =0$ for all other generating morphisms \eqref{generator-affschur}--\eqref{redcrossing}.  
\end{definition}


Let $\Homlesk$ (resp. $\Homleqk$) denote the $\kk$-span of all red strand dotted ribbon diagram with crossing degree $<k$ (resp. $\leq k$). For any $R$ and $R'$ in $\Homleqk$, we write \[R \equivc R' \text{ if }R=R' \mod \Homlesk.\]

\begin{lemma}
    \label{lem:movedotsfreelyred}
    The following $\equivc$-relations hold in $\qASch$:
    \begin{align*}
\begin{tikzpicture}[anchorbase, scale=0.35, color=\clr]
\draw[-,line width=1pt](0,2) to (1.5,-.35);
\draw[-,line width=4pt, white](0,-.35) to (1.5,2); to (1.5,-.35);
\draw[-,line width=1pt] (0,-.35) to (1.5,2);
\draw[-,line width=1pt,color=\cred](.8,2) to[out=down,in=90] (0.2,1) to [out=down,in=up] (.8,-.4);
\draw (.8,-.8) node{$\scriptstyle \red{u}$};
\node at (0, -.7) {$\scriptstyle a$};
\node at (1.5, -.7) {$\scriptstyle b$};
\end{tikzpicture}
~\equivc~ 
\begin{tikzpicture}[anchorbase, scale=0.35, color=\clr]
\draw[-,line width=1pt] (0,-.35) to (1.5,2);
\draw[-,line width=4pt, white] (0,2) to   (1.5,-.35);
\draw[-,line width=1pt](0,2) to   (1.5,-.35);
\draw[-,line width=1pt,color=\cred](.8,2) to
[out=down,in=up] (1.5,1) to [out=down,in=up] (.8,-.4);
\draw (.8,-.7) node{$\scriptstyle \red{u}$};
\node at (0, -.7) {$\scriptstyle a$};
\node at (1.5, -.7) {$\scriptstyle b$};
\end{tikzpicture}
~\equivc~ 
\begin{tikzpicture}[anchorbase, scale=0.35, color=\clr]
\draw[-,line width=1pt](0,2) to   (1.5,-.35);
\draw[wipe] (0,-.35) to (1.5,2);
\draw[-,line width=1pt] (0,-.35) to (1.5,2);
\draw[-,line width=1pt,color=\cred](.8,2) to
[out=down,in=up] (1.5,1) to [out=down,in=up] (.8,-.4);
\draw (.8,-.7) node{$\scriptstyle \red{u}$};
\node at (0, -.7) {$\scriptstyle b$};
\node at (1.5, -.7) {$\scriptstyle a$};
\end{tikzpicture}
~\equivc~
\begin{tikzpicture}[anchorbase, scale=0.35, color=\clr]
\draw[-,line width=1pt] (0,-.35) to (1.5,2);
\draw[wipe](0,2) to (1.5,-.35);
\draw[-,line width=1pt](0,2) to (1.5,-.35);
\draw[-,line width=1pt,color=\cred](.8,2) to[out=down,in=90] (0.2,1) to [out=down,in=up] (.8,-.4);
\draw (.8,-.8) node{$\scriptstyle \red{u}$};
\node at (0, -.7) {$\scriptstyle b$};
\node at (1.5, -.7) {$\scriptstyle a$};
\end{tikzpicture}.
    \end{align*}
\end{lemma}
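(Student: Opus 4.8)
The four diagrams in the statement are related in a chain, so the plan is to establish each $\equivc$-relation in turn. The first relation, $\crossingpos \equivc \crossingneg$ with a red strand threading through them, follows immediately from Lemma~\ref{lem:cross+=-}: writing $\crossingpos$ as $\crossingneg$ plus a sum of terms each containing a pair of merge/split local components of the form $\begin{tikzpicture}[anchorbase,scale=.6, color=\clr]\draw[-,thick] (0.02,0) to (0.02,.2) to (.58,.8) to (.58,1);\draw[-,line width=4pt,white] (0.58,0) to (0.58,.2) to (.02,.8) to (.02,1);\draw[-,thick] (0.58,0) to (0.58,.2) to (.02,.8) to (.02,1);\draw[-,thin] (0,0) to (0,1);\draw[-,thin] (0.6,0) to (0.6,1);\node at (0,-.1) {$\scriptstyle b$};\node at (0.6,-.14) {$\scriptstyle a$};\node at (-0.12,.5) {$\scriptstyle s$};\node at (.7,.5) {$\scriptstyle s$};\end{tikzpicture}$ for $s\ge 1$, each such term has strictly smaller crossing degree since removing the crossing and inserting the thinner $s$-strands lowers the sum of local degrees; the red strand plays no role. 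This gives the first and last $\equivc$.

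For the middle relation --- interchanging the over/under strand while keeping the red strand on the same side --- the plan is to use Lemma~\ref{adamovecrossings}, which lets the red strand slide from one routing around a thick crossing to the other, combined with the first relation to swap $\crossingpos \leftrightarrow \crossingneg$. More precisely, I would first apply Lemma~\ref{adamovecrossings} to move the red strand across the crossing (this is an honest equality, no degree change), then apply the already-established $\crossingpos \equivc \crossingneg$ to flip the crossing modulo $\Homlesk$. Chaining these two moves produces exactly the middle $\equivc$ in the statement. One should double-check the bookkeeping of which of the four variants in Lemma~\ref{adamovecrossings} matches each of the four diagrams here (traverse-up versus traverse-down, red strand entering from the left versus the right), but this is routine once the pictures are lined up.

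The key point to verify carefully --- and the only genuine obstacle --- is the degree estimate: one must confirm that in the expansion coming from Lemma~\ref{lem:cross+=-}, every correction term genuinely has crossing degree strictly less than the leading term. For a crossing of strands of thickness $a$ and $b$ the leading degree is $a+b$, and a correction term with an $s$-labelled pair (for $s\ge 1$) has a thick crossing of thickness $a-s$ and $b-s$ contributing $a+b-2s < a+b$, plus zero-degree merges and splits; so the estimate holds. I expect the writing-up obstacle to be purely diagrammatic --- ensuring the red strand is drawn in a position consistent with Lemma~\ref{adamovecrossings} throughout --- rather than mathematical. With these pieces in place the lemma follows by transitivity of $\equivc$.

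\begin{proof}
All four diagrams lie in $\Homleqk$ for $k = a+b$, so it suffices to prove the three displayed $\equivc$-relations; the result then follows by transitivity.

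For the first (and, symmetrically, the last) relation, expand $\crossingpos$ via Lemma~\ref{lem:cross+=-}:
\[
   \begin{tikzpicture}[anchorbase,scale=.8, color=\clr]
	\draw[-,line width=1pt] (0.3,-.3) to (-.3,.4);
	\draw[-,line width=4pt,white] (-0.3,-.3) to (.3,.4);
	\draw[-,line width=1pt] (-0.3,-.3) to (.3,.4);
        \node at (-0.3,-.4) {$\scriptstyle b$};
        \node at (0.3,-.4) {$\scriptstyle a$};
\end{tikzpicture}
    =
    \sum_{s=0}^{\min(a,b)}
q^{-s(s-1)/2}(q^{-1}-q)^s [s]!
\begin{tikzpicture}[anchorbase,scale=.9, color=\clr]
	\draw[-,thick] (0.02,0) to (0.02,.2) to (.58,.8) to (.58,1);
	\draw[-,line width=4pt,white] (0.58,0) to (0.58,.2) to (.02,.8) to (.02,1);
	\draw[-,thick] (0.58,0) to (0.58,.2) to (.02,.8) to (.02,1);
	\draw[-,thin] (0,0) to (0,1);
	\draw[-,thin] (0.6,0) to (0.6,1);
        \node at (0,-.1) {$\scriptstyle b$};
        \node at (0.6,-.14) {$\scriptstyle a$};
        \node at (-0.12,.5) {$\scriptstyle s$};
        \node at (.7,.5) {$\scriptstyle s$};
\end{tikzpicture} .
\]
The $s=0$ term is $\crossingneg$ (up to the orientation of the over/under strand, which is immaterial after composing with the red strand), and has crossing degree $a+b$. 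For $s\ge 1$ the corresponding term contains a thick crossing of strands of thickness $a-s$ and $b-s$ together with merges and splits only; its crossing degree is $(a-s)+(b-s) = a+b-2s < a+b = k$. Tensoring and composing with the red strand (whose routing is unchanged) does not affect crossing degrees. Hence the $s\ge 1$ terms lie in $\Homlesk$, and the first and last $\equivc$-relations follow.

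For the middle relation, first apply Lemma~\ref{adamovecrossings} to slide the red strand from one routing around the thick crossing to the other; this is an equality of morphisms, so in particular holds modulo $\Homlesk$. Then apply the relation $\crossingpos \equivc \crossingneg$ just established to interchange the over- and under-crossing strands modulo $\Homlesk$. Composing the two moves yields precisely the middle $\equivc$-relation of the statement. (One checks in each of the four cases that the relevant variant of Lemma~\ref{adamovecrossings}, according to whether the crossing is traverse-up or traverse-down and whether the red strand enters from the left or the right, matches the pictures above.)

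Combining the three relations by transitivity of $\equivc$ completes the proof.
\end{proof}
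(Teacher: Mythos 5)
There is a genuine gap. The essential content of this lemma is that the \emph{red strand's routing} past the black--black crossing can be changed: in the first diagram the red strand passes to the left of the crossing point of the $a$- and $b$-strands, while in the second it passes to the right. Your opening step asserts that the first relation is just $\crossingpos\equivc\crossingneg$ ``with a red strand threading through'' and that ``the red strand plays no role.'' That is exactly where the argument fails: expanding the black crossing via Lemma~\ref{lem:cross+=-} changes the over/under data but leaves the red strand on the same side of the crossing, so it yields at best $[\text{pos, red left}]\equivc[\text{neg, red left}]$, not the stated relation. Moving the red strand from one side of a black--black crossing to the other is governed by the defining relation \eqref{redbraid}, which is imposed as an axiom precisely because it is not a consequence of the black-only relations; its $s=0$ term is the same crossing with the red strand rerouted, and the $s\ge 1$ terms have crossing degree $(a-s)+(b-s)<a+b$. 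The paper's proof is simply to read this off from \eqref{redbraid} (together with $\crossingpos\equivc\crossingneg$ from Lemma~\ref{lem:movedotsfreely}). Your proof never invokes \eqref{redbraid}.

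Your fallback for the rerouting, Lemma~\ref{adamovecrossings}, does not apply here. In that lemma the red strand enters on one outer side of the two black strands and exits on the opposite outer side, i.e.\ it fully traverses the crossing; in that configuration the move is an honest equality. In the present lemma the red strand enters and exits \emph{between} the two black strands, which is a different local configuration (it is the one appearing in \eqref{redbraid}), and the corresponding move produces nontrivial correction terms. Since the configuration cannot be reduced to the fully-traversing one without introducing extra crossings, Lemma~\ref{adamovecrossings} cannot substitute for \eqref{redbraid}. Your degree bookkeeping for the Lemma~\ref{lem:cross+=-} expansion is correct, but the proof as a whole is incomplete without \eqref{redbraid}.
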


\begin{proof}
    Follows from \eqref{redbraid} directly.
\end{proof}

By Lemmas~\ref{lem:movedotsfreely} and \ref{lem:movedotsfreelyred}, we may identify positive crossings and negative crossings modulo lower crossing degree terms.  By \eqref{dotmovesplitabr} and Lemma~\ref{lem:movedotsfreely}, we may freely move the dots through crossings, merges and splits modulo lower crossing degree terms. We will use these facts frequently in what follows without further reference.

Recall from \eqref{examplereduced} and Definition \ref{def:dottedribbon} the set of elementary (chicken foot) ribbons. Recall the forgetful map \eqref{muforget} which sends $\mu$ to $\overline{\mu}$. 

\begin{definition}
Suppose that $\lambda,\mu \in \Lambda_{\text{st}}^{1+\ell}(m)$.
A red strand dotted ribbon diagram $R$ from $\mu$ to $\lambda$ is  called an elementary ribbon diagram if  
 \begin{enumerate}
     \item
     the ribbon $\overline R $ obtained from $R$ by removing the red strands is an elementary chicken foot ribbon diagram in  $\RPMat_{\bar\lambda,\bar\mu}$, and 
     \item
     there is at most one intersection between each red strand and any other strand.
 \end{enumerate}   
In this case, we call $R$ a $\la \times\mu$-ornamentation of $\overline R$. 
\end{definition}

For example, \eqref{examplered} are two examples of $\lambda\times\mu$-ornamentations of \eqref{examplereduced}. In particular, they are equivalent modulo lower crossing degrees according to Lemma~\ref{lem:movedotsfreelyred}. 

To construct the spanning set of $\Hom_{\qASch}(\mu,\lambda)$, we first fix an $\lambda \times \mu$-ornamentation for each elementary chicken foot ribbon diagram in $\RPMat_{\bar\lambda,\bar\mu}$ up to lower crossing degree terms. 

For $\lambda,\mu \in \Lambda_{\text{st}}^{1+\ell}(m)$, we introduce
\begin{align}  \label{RPMatblock}
\RPMat_{\lambda,\mu} = \left\{\big(A=(A_{pq})_{0\le p,q\le \ell}, P=(P_{pq})_{0\le p,q\le \ell} \big) \mid (A , P ) \in  \RPMat_{\bar\lambda ,\bar\mu }  \right\}. 
\end{align}
Recalling the forgetful map \eqref{muforget}, we define the following forgetful map (which is indeed a bijection) 
\begin{align}  \label{bijPM}
    \RPMat_{\la,\mu} \longrightarrow \RPMat_{\overline\lambda, \overline\mu},
    \qquad (A,P) \mapsto (A,P),
\end{align}
by viewing a block matrix as a standard matrix; the map is bijective as there is a unique way to view a matrix in $\PMat_{\overline\lambda, \overline\mu}$ as a block matrix in $\PMat_{\lambda,\mu}$ where the blocks are determined by the fixed $(1+\ell)$-multicompositions.

For each elementary chicken foot ribbon diagram $\overline{R}$  encoded by $(A,P)\in\RPMat_{\overline\lambda, \overline\mu}$, there is a unique elementary ribbon diagram (up to lower crossing degree terms) encoded by the preimage of $(A,P)$ under \eqref{bijPM}.

\begin{example}
    Let $\lambda=((\varnothing),(9),(7))$ and $\mu=((5),(5),(6))$. Then \eqref{examplered} are two (equivalent) ornamentations of $(A,P)$ with
\[A=\left(\begin{array}{c|c|c}
2 & 3 & 4 \\
\hline
3& 2 & 2 \\  
\end{array}\right),
\quad  
P=  \left(\begin{array}{c|c|c}
\nu^1 & \nu^3 & \nu^5 \\
\hline
\nu^2& \nu^4 & \nu^6 \\  
\end{array}\right).\]
The forgetful map in \eqref{bijPM} sends $(A,P)$
to 
\[A=\left(\begin{array}{ccc}
2 & 3 & 4 \\
3& 2 & 2 \\  
\end{array}\right),\quad  P=  \left(\begin{array}{ccc}
\nu^1 & \nu^3 & \nu^5 \\
\nu^2& \nu^4 & \nu^6 \\  
\end{array}\right).\]
\end{example}

\begin{definition}
\label{def:dotdegred}
    For any red strand dotted ribbon diagram, we define its dot degree, denoted by $\ddeg$, to be the sum of the dot degrees of its local components, where we declare locally that $\ddeg (\leftcrossing) =a$ and $\ddeg (\omega_{a,\nu}) =|\nu_1|+\ldots+|\nu_k|$, for $\nu\in \RPar_a$.
\end{definition}

\begin{proposition} \label{prop:spanqASch}
For any $\lambda,\mu\in \Lambda_{\text{st}}^{1+\ell}(m)$, $\Hom_{\qASch}(\mu,\lambda)$ is spanned by $\RPMat_{\la,\mu}$ consisting of all $\lambda \times \mu$ elementary chicken foot diagrams from $\mu$ to $\la$. 
\end{proposition}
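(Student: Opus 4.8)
The plan is to follow the same two-step strategy as in the proof of Proposition~\ref{prop:spanqAW}, adapting it to accommodate the red strands. First I would show that $\Hom_{\qASch}(\mu,\lambda)$ is spanned by the set of \emph{all} red strand dotted ribbon diagrams from $\mu$ to $\lambda$ (not necessarily elementary), by checking that post-composing such a diagram $g$ with each generating morphism $f$ of $\qASch$ again yields a $\kk$-linear combination of red strand dotted ribbon diagrams. For the generators $1_*\wkdotaa 1_*$, $1_*\merge 1_*$, $1_*\splits 1_*$ this is exactly the argument from Proposition~\ref{prop:spanqAW} (using Lemma~\ref{lem:dotSM}, the associativity relation \eqref{webassoc}, and the split-absorption argument via \eqref{mergesplit} and \eqref{sliders}). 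For the new generators $\rightcrossing$ and $\leftcrossing$, I would use the slider relations \eqref{redslider}, Lemma~\ref{adamovecrossings}, \eqref{dotmoveadaptor}, and the relations \eqref{redcross2}, \eqref{redbraid} to push a red strand through merges, splits, dots, and thick crossings; in each case the result is visibly a combination of red strand dotted ribbon diagrams. This establishes Step~(1).

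For Step~(2), I would show that any red strand dotted ribbon diagram $g$ lies in the span of $\RPMat_{\lambda,\mu}$, proceeding by induction on the crossing degree $k:=\cdeg(g)$. In the base case $k=0$ there are no crossings among the thin strands (legs); the diagram $\overline g$ obtained by deleting the red strands is then a dotted chicken foot diagram with no leg crossings, and by Lemma~\ref{lem:blamu} any local component $\begin{tikzpicture}[baseline=-1mm,scale=.6,color=\clr]\draw[-,line width=1.5pt] (0.08,-.8) to (0.08,-.5);\draw[-,line width=1.5pt] (0.08,.3) to (0.08,.6);\draw[-,thick] (0.1,-.51) to [out=45,in=-45] (0.1,.31);\draw[-,thick] (0.06,-.51) to [out=135,in=-135] (0.06,.31);\draw(-.1,0) \bdot;\draw(.25,0) \bdot;\node at (-.22,-.35) {$\scriptstyle a$};\node at (.42,-.35) {$\scriptstyle b$};\end{tikzpicture}$ (a split immediately followed by a merge, each leg carrying an elementary dot packet) can be rewritten as a combination of elements of the form \eqref{eq:wkdotalambda}; combined with the fact that each red strand can be isotoped so that it meets each other strand at most once (using \eqref{redslider}, Lemma~\ref{adamovecrossings} and the relations in $\qASch$, which do not raise the crossing degree), this rewrites $g$ as a combination of elements of $\RPMat_{\lambda,\mu}$. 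For the inductive step $k>0$, I would use Lemma~\ref{lem:movedotsfreely} and Lemma~\ref{lem:movedotsfreelyred} to interchange positive and negative crossings and to slide elementary dot packets past crossings modulo $\Homlesk$, then use \eqref{sliders} and the slider relations to push all merges to the top and all splits to the bottom without changing the crossing degree, and finally reduce to the situation that each pair of legs crosses at most once (any superfluous double crossing among legs can be removed via \eqref{cross2}, lowering the crossing degree) and each red strand meets each other strand at most once. What remains after these reductions is a $\kk$-linear combination of elements of $\RPMat_{\lambda,\mu}$ plus a term of strictly smaller crossing degree, to which the inductive hypothesis applies.

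Combining Steps~(1) and~(2) gives the proposition. I expect the main obstacle to be Step~(1) in the presence of red strands: one must verify that pushing a red strand past each type of local configuration (thick crossing, merge, split, dot packet) stays within the class of red strand dotted ribbon diagrams, which requires invoking the full list of $\qASch$-relations \eqref{redslider}--\eqref{redbraid} together with the adaptor-dot relation \eqref{dotmoveadaptor} and Lemma~\ref{adamovecrossings}, and being careful that the relations \eqref{redcross2}, \eqref{redbraid} introduce new dots/crossings only of the permitted shapes. A secondary subtlety is bookkeeping in the base case of Step~(2): one must be sure that isotoping the red strands into standard position (at most one crossing with any other strand) can always be arranged, which follows from the slider relations but should be stated carefully. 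Since all of these reductions parallel the degenerate case treated in \cite[\S3.4]{SW24Schur}, I would keep the write-up brief and refer to \emph{loc. cit.} for the routine parts, much as in the proof of Proposition~\ref{prop:spanqAW}.
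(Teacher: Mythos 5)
Your proposal is correct in outline, but it is organized differently from the paper's argument, so a comparison is worth making. You transplant the two-step scheme of Proposition~\ref{prop:spanqAW} wholesale: first show closure of the class of (not necessarily elementary) red strand dotted ribbon diagrams under composition with generators, then reduce to $\RPMat_{\la,\mu}$ by induction on the crossing degree $\cdeg$. The paper instead runs a single closure argument directly on \emph{elementary} ribbon diagrams, showing that $fg$ is a combination of elementary ribbon diagrams modulo terms of lower crossing degree \emph{and} lower dot degree, and the induction is set up on $k=\ddeg(g)$. The payoff of the paper's choice is the base case: when $\ddeg(g)=0$ there are no traverse-downs, so by \eqref{redslider} and Lemma~\ref{adamovecrossings} \emph{all} ornamentations of a given underlying chicken foot ribbon diagram represent the same morphism, and composing with a traverse-up is then immediately another dot-degree-zero elementary diagram; the merge/split cases reduce to Brundan's computations. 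Your route buys uniformity with the web-category proof but forgoes this shortcut, so you have to push red strands through arbitrary configurations by hand in both steps.

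The one place where your sketch needs tightening is the choice of induction parameter in Step (2). The relations \eqref{redcross2} and \eqref{redbraid}, which you must invoke to put red strands into standard position (at most one intersection with each leg), do not lower the crossing degree --- red--black crossings contribute $0$ to $\cdeg$ --- but they \emph{create dots} in the interior of the diagram. Collecting those dots into elementary dot packets at the bottoms of legs via \eqref{dotmovesplitabr}, \eqref{dotmoveadaptor}, Lemma~\ref{lem:wrdotcross} and Lemmas~\ref{lem:spanDa}--\ref{lem:blamu} is a nontrivial rewriting process happening entirely at fixed crossing degree, so an induction on $\cdeg$ alone does not certify that it terminates. You need either a secondary induction on the dot degree $\ddeg$ (this is exactly why the paper filters by both degrees simultaneously) or an explicit well-founded normal-form argument for the dot-collection step. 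With that adjustment, your proof goes through.
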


\begin{proof}
    It suffices to show that $fg$ can be written as a linear combination of elementary ribbon diagrams up to lower crossing degree and dot degree terms  for an arbitrary elementary ribbon diagram $g$ and a generating morphism $f$ of the following five types: 
\[ 
1_* \wkdota 1_*,\qquad  1_*\merge 1_*,\qquad  1_*\splits 1_*,   \qquad  1_*\rightcrossing1_*, \qquad   1_*\leftcrossing1_*,
\]
where $1_*$ represents suitable identity morphisms.
   We proceed by induction on $k=\ddeg(g)$. When $k=0$, for merges and splits the claim follows from \cite[(6.18)--(6.19)]{Bru24}. Note that all ornamentations of a given elementary  chicken foot ribbon diagram represent the same morphism by \eqref{redslider} and Lemma \ref{adamovecrossings} since there are no traverse-down.
Thus it is clear that the left multiplication of the traverse-up is still an ornamentation of some dot degree zero elementary ribbon diagram and hence the claim holds for $k=0$. For $k>0$, the rest of the proof follows similarly the proof in \cite[Theorem 3.13]{SW24Schur}, and we skip the detail.
\end{proof}

\subsection{Basis for $\qASch$}

\begin{theorem}
  \label{thm:basisASchur}
$\RPMat_{\lambda,\mu}$ forms a basis for  $\Hom_{\qASch}(\mu,\lambda)$, for any $\lambda,\mu \in \Lambda_{\text{st}}^{1+\ell}(m)$.    
\end{theorem}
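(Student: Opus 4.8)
The span statement is already in hand from Proposition~\ref{prop:spanqASch}, so the entire content of the theorem is linear independence of $\RPMat_{\lambda,\mu}$ inside $\Hom_{\qASch}(\mu,\lambda)$. The plan is to deduce this from the faithfulness of the polynomial representation $\mathcal F$ of \cref{th:ASchRep}, exactly in the spirit of the degenerate argument in \cite{SW24Schur}. First I would assemble the $\mathcal F$-images of all the diagrams in $\RPMat_{\lambda,\mu}$: each elementary ribbon diagram is a composite of merges, splits, (positive) crossings, dot packets, and traverse-ups, and under $\mathcal F$ these become the explicit operators on partially symmetric Laurent polynomials given by \eqref{map_SM}--\eqref{map_redcross}. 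The key is to organize these images by the filtration on $\Hom$-spaces: the crossing degree $\cdeg$ of Definition~\ref{def:Cdegred} together with the dot degree $\ddeg$ of Definition~\ref{def:dotdegred}, since modulo lower-degree terms the diagram operators decouple into a ``Hecke/permutation'' piece (controlled by $\sigma_{a,b}$ and the $H_w$'s from the crossings and merges) and a ``dot-packet'' piece (multiplication by symmetric monomials $\omega_{a,\nu}$ and by $\prod_j(X_j-u)$).

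\textbf{Key steps, in order.} (1) Reduce to linear independence, invoking Proposition~\ref{prop:spanqASch}. (2) Establish that $\mathcal F$ is faithful, or rather the weaker statement actually needed: that $\mathcal F$ is injective on each $\Hom_{\qASch}(\mu,\lambda)$; since the degenerate counterpart \cite[Theorem~3.6 and its consequences]{SW24Schur} runs through a rank count against the higher-level $q$-Schur algebra, I would follow the same outline — compare $\dim_{\C(q)}\Hom_{\qASch}(\mu,\lambda)$ (bounded above by $|\RPMat_{\lambda,\mu}|$ from the spanning set) with the known dimension of the relevant Hom-space in $\mathrm{MS}$'s higher level affine $q$-Schur category, forcing equality and hence both faithfulness and independence simultaneously. (3) Alternatively and more self-containedly: show directly that $\{\mathcal F(D) : D\in\RPMat_{\lambda,\mu}\}$ is linearly independent in $\Hom_{\mathpzc{Vec}_\kk}(\mathrm{Sym}_{\bar\mu},\mathrm{Sym}_{\bar\lambda})$, by first passing to the associated graded with respect to $(\cdeg,\ddeg)$ and then arguing that the leading terms are linearly independent — the combinatorics here is that the matrix $A\in\Mat_{\bar\lambda,\bar\mu}$ records which minimal double-coset permutation operator appears (these are linearly independent by the usual theory of permutation modules / the $q$-Schur algebra over $\mathrm{Sym}_{\bar\mu}$), while $P=(\nu_{ij})$ records the symmetric monomials multiplied in, and monomials of distinct multidegrees in the variables $X_\bullet$ are independent. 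One extends the argument over $\Z[q,q^{-1}]$ by the usual flatness/base-change reasoning (the spanning set has the correct size, so a $\C(q)$-basis that lies in the $\Z[q,q^{-1}]$-lattice spanned by $\RPMat_{\lambda,\mu}$ and spans it must be a $\Z[q,q^{-1}]$-basis).

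\textbf{Main obstacle.} The delicate point is Step (2)/(3): decoupling the Hecke part from the dot part modulo lower degree. Concretely, one must check that after moving all dot packets to the bottoms of the legs (legitimate modulo $\cdeg$ and $\ddeg$ by Lemmas~\ref{lem:dotmovesplitabr}, \ref{lem:movedotsfreely}, \ref{lem:movedotsfreelyred} and the dot-through-adaptor relation \eqref{dotmoveadaptor}), the surviving leading operator is a product of a ``pure chicken-foot'' web operator (no dots, no red strands beyond the ornamentation choice) with a diagonal multiplication operator by $\prod_{i,j}\omega_{a_{ij},\nu_{ij}}$ applied in the appropriate tensor slots — and that the pure web operators indexed by distinct $A\in\Mat_{\bar\lambda,\bar\mu}$ are themselves linearly independent. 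For the latter I would cite the corresponding independence inside $\qW$ / the classical $q$-Schur algebra (Brundan's chicken-foot basis \cite{Bru24}, or the image computation therein), which \cref{th:ASchRep} is compatible with. The red strands contribute only the scalars $\prod_j(X_j-u)$ and their removal is clean because each red strand meets each other strand at most once in an elementary ribbon diagram; thus the red data adds no new independence subtlety beyond bookkeeping. Once leading terms are shown independent, a standard triangularity/filtration argument upgrades this to independence of the $\mathcal F(D)$ themselves, completing the proof.
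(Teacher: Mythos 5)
Your proposal lands on the same engine the paper uses: Step (1) is exactly the paper's reduction via Proposition~\ref{prop:spanqASch}, and your Step (3) is in substance the paper's argument — linear independence is extracted from the polynomial representation $\mathcal F$ of \cref{th:ASchRep} by a leading-term analysis. The difference is in execution. Rather than passing to an associated graded of the operators and decoupling a ``Hecke part'' (cited from Brundan's chicken-foot basis for $\qW$) from a ``dot part,'' the paper evaluates every diagram in a fixed finite subset of $\RPMat_{\lambda,\mu}$ on a \emph{single} test monomial $m=\prod_{k=1}^h X^{kN}_{p_{k-1}+1}\cdots X^{kN}_{p_{k}}$ with $N\gg 0$, and observes that the leading monomial of $B.m$ records simultaneously the matrix $A$ (through the residues of the exponents modulo $N$), the dot packets (through the conjugates $\nu'_{ij}$), and the traverse-downs (through the shifts $r_{ij}$, which are pinned down by the block position once $\lambda,\mu$ are fixed). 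Distinct elements of $\RPMat_{\lambda,\mu}$ therefore produce distinct leading monomials, and independence follows in one stroke, over $\Z[q,q^{-1}]$ directly, with no separate base-change step and no appeal to Brundan's independence result. Your route should also work, but the operator-level ``leading term'' needs a precise definition, and the test-vector trick is what makes that precise in the paper.

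Two caveats on your Step (2). First, the dimension comparison against Maksimau--Stroppel's higher level affine $q$-Schur algebra is circular as the paper is organized: faithfulness of $\mathcal F$ (Corollary~\ref{Cor:faithfulrep}) and the isomorphism with the MS algebra (Proposition~\ref{prop:path_MSchur}) are both \emph{consequences} of this basis theorem, so you would need an independent computation of the MS Hom-space dimensions matching $|\RPMat_{\lambda,\mu}|$ to make that route non-circular. Second, the traverse-downs do not act by scalars: they act by multiplication by $\prod_j(X_j-u)$, whose leading term shifts exponents by the thickness of the strand; this is harmless precisely because, in an elementary ribbon diagram, the number of traverse-downs on each leg is determined by the block indices, which is the bookkeeping the paper's $r_{ij}$ performs.
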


\begin{proof}
    By Proposition~\ref{prop:spanqASch}, it remains to prove that $\RPMat_{\lambda,\mu}$ is linearly independent. To that end, it suffices to show that every finite subset in $\RPMat_{\lambda,\mu}$ is linearly independent. Fix a subset $\mathcal{T}\subset\RPMat_{\lambda,\mu}$. Let 
    \[ \overline{\mathcal{T}}=\{(A_{\alpha},(\nu_{ij})_{\alpha})\in\RPMat_{\overline{\lambda},\overline{\mu}}\mid \alpha=1,\ldots,c\} \]
    denote the collection of images of elements in $\mathcal{T}$ under the forgetful map.

    Let $B$ be an ornamentation associated with $\overline{B}=(A_{\alpha},(\nu_{ij})_{\alpha})\in \overline{\mathcal{T}}$. Suppose $l(\mu^{(i)})=h_i$, $l(\lambda^{(i)})=t_i$. Let $h:=l(\overline{\mu})=\sum_{k=0}^\ell h_k$ and $t:=\sum_{k=0}^\ell t_k$. Define $p_0=0$ and $p_j=\sum_{i=1}^j\overline{\mu}_i$ for $1\leq j\leq h$. Then we consider the monomial
    \[m=\prod_{k=1}^h X^{kN}_{p_{k-1}+1}\cdots X^{kN}_{p_{k-1}+2}X^{kN}_{p_{k}}\in \Sym_{\overline{\mu}}\]
    where $N\gg 0$ is chosen such that the leading term of $B'$ acting on $m$ is a polynomial for any $B'\in \mathcal{T}$. (Such $N$ exists since $\mathcal{T}$ is finite.) 

    By \eqref{eq:wrblackwhite}, the leading term of $\omega_{a,\nu}. X_b^lX_{b+1}^l\cdots X_{b+a}^l$ is \[q^*X_b^{l+\nu_1'}X_{b+1}^{l+\nu_2'}\cdots X_{b+a}^{l+\nu_a'}+(q-q^{-1})f\] where $q^*$ is certain power of $q$, $f$ is a symmetric Laurent polynomial in $X_b,\ldots,X_{b+a}$ and $\nu'_i$ is the number of positive components $\nu_k$ in $\nu$ such that $|\nu_k|\geq i$ subtracting the number of negative ones such that $|\nu_k|\geq i$. In particular, for any rational partition $\nu=(\nu_1,\cdots,\nu_l)\in\RPar_{\overline{\lambda},\overline{\mu}}$, we can define its dual to be $\nu'=(\nu'_1,\cdots,\nu'_{l'})$.

    Using this together with the actions of splits, merges, crossings, traverse-downs and traverse-ups (see \eqref{map_SM}--\eqref{map_redcross}), we conclude that the leading term of $B.m$  is
    \[
q^*X^{((hN)^{a_{1h}})+\nu'_{1h}+((r_{1h})^{a_{1h}})}_{1,\ldots, a_{1h}}
X^{((h-1)N)^{a_{1h-1}}+ \nu'_{1h-1}+(r_{1h-1})^{a_{1h-1}})}_{a_{1h}+1, \ldots, a_{1h}+a_{1h-1}}
\ldots X_{p_h-a_{t1}+1,\ldots, p_h}^{(N)^{a_{t1}} + \nu'_{t1} +(r_{t1}^{a_{t1}})}+(q-q^{-1})f',
\]
where 
\begin{itemize}
        \item $A_{\alpha}=(a_{ij})_{\alpha}$ and $f'$ is a symmetric Laurent polynomial,
       \item $X_{a+1,\ldots, a+b}^{(d_1,\ldots, d_b)}=X_{a+1}^{d_1}\ldots X_{a+b}^{d_b}$,
    \item $r_{ij}$ is the number of traverse-downs which produced by the strand $a_{ij}$.
\end{itemize}
This shows that for different $\overline B$'s, the ornamentation $B$ will produce different leading terms which are clearly linearly independent. This implies the linear independence of $\RPMat_{\lambda,\mu}$. 
\end{proof}

For any $\ell, m \ge 1$ and $\bfu=(u_1,\ldots, u_\ell)$, the path algebras of the category $\qASch$ are defined to be 
\begin{align} \label{affineSchur_red}
    \qASch (m):=\oplus_{\lambda,\mu\in \Lambda^{1+\ell}_{st}(m)}\Hom_{\qASch}(\lambda,\mu). 
\end{align}
    The following can be read off from the proof of the above basis theorem. 
\begin{corollary}
\label{Cor:faithfulrep}
 The functor $\mathcal F$ in Theorem  \ref{th:ASchRep} gives a faithful representation of $\qASch (m)$, for any $\bfu$ and $m$.
\end{corollary}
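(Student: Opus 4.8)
\textbf{Proof proposal for Corollary \ref{Cor:faithfulrep}.}
The plan is to observe that faithfulness of $\mathcal F$ on $\qASch(m)$ reduces directly to two ingredients both already available in the excerpt: the basis theorem (Theorem \ref{thm:basisASchur}) and the explicit leading-term computation carried out in its proof. First I would recall that, since $\mathcal F$ is a $\kk$-linear monoidal functor, it restricts to a $\kk$-algebra homomorphism
\[
\mathcal F: \qASch(m) = \bigoplus_{\lambda,\mu\in \Lambda_{\text{st}}^{1+\ell}(m)} \Hom_{\qASch}(\lambda,\mu) \longrightarrow \bigoplus_{\lambda,\mu} \Hom_{\mathpzc{Vec}_\kk}\big(\mathcal F(\lambda),\mathcal F(\mu)\big),
\]
where $\mathcal F(\lambda) = \Sym_{\overline\lambda}$ and the components are the linear maps \eqref{map_SM}--\eqref{map_redcross}. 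By Lemma \ref{lem:Homzero} the only potentially nonzero blocks are those with matching $m$, $\ell$ and $\bfu$, so it suffices to prove that for each fixed pair $\lambda,\mu\in \Lambda_{\text{st}}^{1+\ell}(m)$ the restriction $\mathcal F\colon \Hom_{\qASch}(\mu,\lambda)\to \Hom_\kk(\Sym_{\overline\mu},\Sym_{\overline\lambda})$ is injective.

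Next I would reduce injectivity to linear independence of a family of images, exactly mirroring the structure of the proof of Theorem \ref{thm:basisASchur}. Take any finite $\kk$-linear combination $\sum_{\alpha} c_\alpha B_\alpha$ of distinct elementary ribbon diagrams $B_\alpha\in \RPMat_{\lambda,\mu}$ that maps to zero under $\mathcal F$; by Theorem \ref{thm:basisASchur} it is enough to show all $c_\alpha=0$. Choose $N\gg 0$ as in the proof of Theorem \ref{thm:basisASchur} and apply $\mathcal F(\sum_\alpha c_\alpha B_\alpha)$ to the monomial $m=\prod_{k=1}^{h} X_{p_{k-1}+1}^{kN}\cdots X_{p_k}^{kN}\in \Sym_{\overline\mu}$. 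The computation recorded there shows that each $\mathcal F(B_\alpha).m$ has a leading monomial term
\[
q^{*}\, X^{((hN)^{a_{1h}})+\nu'_{1h}+((r_{1h})^{a_{1h}})}_{1,\ldots,a_{1h}}\cdots X_{p_h-a_{t1}+1,\ldots,p_h}^{(N)^{a_{t1}}+\nu'_{t1}+(r_{t1}^{a_{t1}})} + (q-q^{-1}) f'_\alpha,
\]
and — this is the crucial point already established there — distinct basis diagrams $B_\alpha$ (equivalently, distinct pairs $(A_\alpha, P_\alpha)$ under the forgetful map \eqref{bijPM}) produce leading monomials that are pairwise distinct, hence $\kk$-linearly independent in $\Sym_{\overline\lambda}$. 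Therefore $\mathcal F(\sum_\alpha c_\alpha B_\alpha).m = 0$ forces each $c_\alpha=0$, so $\mathcal F$ is injective on $\Hom_{\qASch}(\mu,\lambda)$, and summing over all $\lambda,\mu$ gives faithfulness of $\mathcal F$ on $\qASch(m)$.

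I do not anticipate a serious obstacle here: the genuinely hard analytic content — producing the monomial $m$, tracking how splits, merges, crossings, traverse-ups and traverse-downs act on it via \eqref{map_SM}--\eqref{map_redcross}, and verifying that the leading terms separate distinct diagrams — is precisely what was carried out in the proof of Theorem \ref{thm:basisASchur}, which is why the corollary is stated as being ``read off'' from that proof. The only point requiring a line of care is the bookkeeping that $\mathcal F$, being monoidal and $\kk$-linear, is genuinely an algebra map on the path algebra so that ``faithful representation of $\qASch(m)$'' is the right statement; this is immediate from the definition \eqref{affineSchur_red} of the path algebra and the functoriality of $\mathcal F$. One should also note in passing that $N$ can be chosen uniformly once the finite subset $\mathcal T$ is fixed, so no issue of non-uniform bounds arises. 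Hence the corollary follows.
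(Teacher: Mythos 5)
Your proposal is correct and is essentially the paper's own argument: the paper proves \cref{Cor:faithfulrep} by observing that faithfulness can be read off from the proof of \cref{thm:basisASchur}, since linear independence of the basis $\RPMat_{\lambda,\mu}$ was established there precisely by showing that the images under $\mathcal F$ of distinct basis diagrams have pairwise distinct (hence linearly independent) leading terms when applied to the monomial $m$. Your write-up just makes explicit the routine reduction from faithfulness on the path algebra to injectivity on each Hom-space, which the paper leaves implicit.
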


\subsection{Basis for $\qAW$}

We now derive a basis theorem for $\qAW$ from its counterpart for $\qASch$. 

\begin{theorem} 
 \label{thm:basisqAW}
For any $\lambda,\mu \in \Lambda_{\text{st}}(m)$, $\Hom_{\qAW}(\mu,\lambda)$ has a basis $\RPMat_{\lambda,\mu}$ which consists of all elementary ribbons from $\mu$ to $\la$. Moreover, $\qAW$ is a full subcategory of $\qASch$. 
\end{theorem}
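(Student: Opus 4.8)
The plan is to deduce the basis theorem for $\qAW$ from the already-established basis theorem for $\qASch$ (Theorem~\ref{thm:basisASchur}) together with the spanning result for $\qAW$ (Proposition~\ref{prop:spanqAW}). First I would observe that there is an obvious monoidal functor $\iota\colon \qAW \to \qASch$ sending each generating object $a$ to $a$ (viewed as the $1$-multicomposition $((a))$, i.e. with no red strands) and each generating morphism $\merge$, $\splits$, $\crossingpos$, $\xdota$ of $\qAW$ to the morphism of the same name in $\qASch$; this is well-defined because the defining relations \eqref{webassoc}--\eqref{intergralballon} of $\qAW$ are among the defining relations of $\qASch$. On objects $\iota$ is the inclusion $\Lambda_{\text{st}}(m) \hookrightarrow \Lambda_{\text{st}}^{1+\ell}(m)$ arising from $\ell = 0$ (the subset of multicompositions all of whose components beyond the zeroth are empty — more precisely, identifying $\Lambda_{\text{st}}(m)$ with the image under the various forgetful maps).

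Next I would pin down the comparison of Hom-spaces. For $\lambda,\mu \in \Lambda_{\text{st}}(m)$ regarded as objects of $\qAW$, Proposition~\ref{prop:spanqAW} gives that $\Hom_{\qAW}(\mu,\lambda)$ is spanned by the set of $\lambda\times\mu$ elementary chicken foot ribbon diagrams indexed by $\RPMat_{\lambda,\mu}$ as in \eqref{dottedreduced}. The functor $\iota$ carries this spanning set to the corresponding set of (red-strand-free) elementary ribbon diagrams in $\Hom_{\qASch}(\mu,\lambda)$, which under the block-matrix identification \eqref{RPMatblock} (in the case $\ell = 0$, so that the block structure is trivial) is precisely the subset $\RPMat_{\lambda,\mu} \subseteq \RPMat_{\lambda,\mu}^{\ell=0}$ appearing in Theorem~\ref{thm:basisASchur}. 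Since Theorem~\ref{thm:basisASchur} asserts this set is a basis of $\Hom_{\qASch}(\mu,\lambda)$, in particular it is linearly independent there; hence its preimage under $\iota$, which is the spanning set of $\Hom_{\qAW}(\mu,\lambda)$, is linearly independent in $\Hom_{\qAW}(\mu,\lambda)$ as well (a spanning set whose image under a linear map is independent must itself be independent). Therefore $\RPMat_{\lambda,\mu}$ is a basis of $\Hom_{\qAW}(\mu,\lambda)$, proving the first assertion.

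Finally, for the "full subcategory" claim I would argue that $\iota$ is faithful and full onto its image: faithfulness is immediate from the previous paragraph (a morphism in $\qAW$ is a linear combination of the basis elements $\RPMat_{\lambda,\mu}$, and $\iota$ maps these bijectively to a linearly independent set, so $\iota$ is injective on each $\Hom$-space); fullness onto the image follows because $\iota$ maps the basis $\RPMat_{\lambda,\mu}$ of $\Hom_{\qAW}(\mu,\lambda)$ onto the basis of $\Hom_{\qASch}(\iota\mu,\iota\lambda)$ given by Theorem~\ref{thm:basisASchur} restricted to these objects — note that any object of $\qASch$ with $\ell = 0$ is in the image of $\iota$, and by Lemma~\ref{lem:Homzero} there are no morphisms in $\qASch$ between an $\ell=0$ object and an $\ell'\neq 0$ object, so the relevant $\Hom$-spaces of $\qASch$ between objects in the image of $\iota$ coincide with the $\Hom$-spaces computed by Theorem~\ref{thm:basisASchur}. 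Thus $\iota$ identifies $\qAW$ with the full subcategory of $\qASch$ on the objects $\Lambda_{\text{st}}(m)$, $m \ge 0$.

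The only mild subtlety — and the step I would be most careful about — is the bookkeeping that the set called $\RPMat_{\lambda,\mu}$ in \eqref{dottedreduced} (indexing the spanning set of $\qAW$) really does match, under $\iota$ and the block-matrix identification \eqref{bijPM}, the set called $\RPMat_{\lambda,\mu}$ in \eqref{RPMatblock} for $\ell=0$ that Theorem~\ref{thm:basisASchur} shows is a basis of $\Hom_{\qASch}$; once that identification is made explicit, everything else is formal. There is no hard analytic or combinatorial obstacle here — the work was already done in proving Theorem~\ref{thm:basisASchur} (whose linear-independence argument via the polynomial representation $\mathcal F$ does not require any red strands).
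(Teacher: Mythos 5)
Your proposal is correct and follows essentially the same route as the paper: spanning from Proposition~\ref{prop:spanqAW}, then linear independence by pushing the spanning set through the obvious functor $\qAW\to\qASch$ and invoking Theorem~\ref{thm:basisASchur}, with fullness read off from the matching of bases. The paper's proof is a three-line version of exactly this argument; your extra bookkeeping about the $\ell=0$ block-matrix identification and Lemma~\ref{lem:Homzero} is harmless and fills in details the paper leaves implicit.
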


\begin{proof}
It is proved in Proposition \ref{prop:spanqAW} that $\Hom_{\qAW}(\mu,\lambda)$ is spanned by $\RPMat_{\lambda,\mu}$. Furthermore, considering the obvious functor from $\qAW$ to $\qASch$ sending the generating morphisms with the same names, we see that $\RPMat_{\lambda,\mu}$ are also linear independent by  Theorem \ref{thm:basisASchur}. The last statement is now clear. 
\end{proof}

For any $m \ge 1$, the path algebras of the category $\qAW$ are defined to be 
\begin{align} \label{affineSchur_black}
    \qAW (m):=\oplus_{\lambda,\mu\in \Lambda_{st}(m)}\Hom_{\qAW}(\lambda,\mu). 
\end{align}

Recall the algebras $D_a$ from Definition~\ref{def:Da}. A special case of \cref{thm:basisqAW} provides detailed structures on the endomorphism algebra of $\stra$. 

\begin{corollary}
\label{cor:End1a}
Let $a\ge 1$. Then
\begin{enumerate}
    \item 
    The $\kk$-algebra $\End_{\qAW}(\stra)$ has a basis $\{\omega_{a,\lambda} \mid \lambda\in \RPar_a\}.$
    \item
    $\End_{\qAW}(\stra)$  is a commutative $\kk$-algebra generated by $\omega_{a,r}$, for $-a\le r \le a$.
\end{enumerate}
\end{corollary}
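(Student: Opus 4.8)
\textbf{Proof proposal for Corollary~\ref{cor:End1a}.}

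The plan is to deduce both statements directly from the general basis theorem \cref{thm:basisqAW} together with the spanning and commutativity results already established in \cref{lem:spanDa} and \cref{lem:dotcommutative}. First I would specialize \cref{thm:basisqAW} to the case $\lambda=\mu=(a)$, i.e.\ a single strand of thickness $a$. In this situation, the only matrix in $\Mat_{(a),(a)}$ is the $1\times 1$ matrix $(a)$, so there is a unique underlying chicken foot ribbon diagram (no crossings, no merges, no splits — just the identity strand), and an elementary ribbon diagram is obtained by decorating that single strand with one elementary dot packet $\omega_{a,\nu}$ for some $\nu\in\RPar_a$. Hence $\RPMat_{(a),(a)}$ is in bijection with $\RPar_a$, and \cref{thm:basisqAW} says $\{\omega_{a,\lambda}\mid \lambda\in\RPar_a\}$ is a basis of $\End_{\qAW}(\stra)$; this is part (1).

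For part (2), I would argue as follows. By \cref{def:Da}, $D_a$ is the subalgebra of $\End_{\qAW}(\stra)$ generated by the $\omega_{a,r}$ for $-a\le r\le a$, and by \cref{lem:spanDa} it is spanned over $\Zq$ by $\{\omega_{a,\lambda}\mid \lambda\in\RPar_a\}$. Combined with part (1), which shows this same set is a $\kk$-basis of the ambient algebra $\End_{\qAW}(\stra)$, we conclude that the inclusion $D_a\hookrightarrow \End_{\qAW}(\stra)$ is surjective, i.e.\ $D_a=\End_{\qAW}(\stra)$. In particular $\End_{\qAW}(\stra)$ is generated by $\omega_{a,r}$ for $-a\le r\le a$, and it is commutative by \cref{lem:dotcommutative} (as already remarked in the parenthetical comment of \cref{def:Da}). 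This proves (2).

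I expect no serious obstacle here: the corollary is essentially a bookkeeping specialization of \cref{thm:basisqAW}. The one point that warrants a sentence of care is verifying that, in the degenerate combinatorial data $(A,P)$ for $\lambda=\mu=(a)$, the matrix $A$ is forced to be $(a)$ and hence no nontrivial crossings/merges/splits appear — so that the elementary ribbons really are exactly the $\omega_{a,\nu}$; this follows immediately from the definition of $\Mat_{(a),(a)}$ since the row and column sums both equal $a$ and there is only one entry. A second minor point is matching conventions: the basis in part (1) is indexed by $\RPar_a$ including possibly the empty tuple, which corresponds to the identity morphism $\stra$, consistent with the conventions fixed after \eqref{eq:wkdota}.
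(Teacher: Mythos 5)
Your proposal matches the paper's own proof: part (1) is obtained by specializing the basis theorem to $\lambda=\mu=(a)$ and observing $\RPMat_{(a),(a)}=\RPar_a$, and part (2) follows by combining (1) with \cref{lem:spanDa} (and the commutativity already noted via \cref{lem:dotcommutative}). Your extra remarks on why $\Mat_{(a),(a)}$ is a singleton and on the empty-tuple convention are correct and just make explicit what the paper leaves implicit.
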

\begin{proof}
Part (1) follows from \cref{thm:basisASchur} for $\la=\mu=(a)$ and $\RPMat_{(a),(a)} =\RPar_a$. 
    Part (2) follows by (1) and Lemma~\ref{lem:spanDa}.
\end{proof}

\subsection{Some algebra isomorphisms}
\label{subsec:iso-MS}

A $\kk$-linear monoidal category called higher level affine Hecke category was introduced in \cite{MS21}, whose path algebras are called higher level affine Hecke algebras \cite{Web20}. This category appears as a monoidal subcategory of $\qASch$ with generating objects being a thin strand $\str$ and $\stru$ ($u\in \kk^*$) and generating morphisms 
\[
\begin{tikzpicture}[baseline=-1mm, color=\clr]
	\draw[-,thick] (0.3,-.3) to (-.3,.4);
	\draw[-,line width=4pt,white] (-0.3,-.3) to (.3,.4);
	\draw[-,thick] (-0.3,-.3) to (.3,.4);
        \node at (-0.3,-.45) {$\scriptstyle 1$};
        \node at (0.26,-.45) {$\scriptstyle 1$};
\end{tikzpicture}
\text{ (with inverse}
\begin{tikzpicture}[baseline=-1mm, color=\clr]
 \draw[-,thick] (-0.3,-.3) to (.3,.4);
	\draw[-,line width=4pt,white] (0.3,-.3) to (-.3,.4);
 \draw[-,thick] (0.3,-.3) to (-.3,.4);
        \node at (-0.33,-.45) {$\scriptstyle 1$};
        \node at (0.3,-.45) {$\scriptstyle 1$};
\end{tikzpicture}), 
\quad 
\dotgen,  \quad 
\begin{tikzpicture}[anchorbase, scale=1, color=\clr]
 \draw[-,thick] (-0.3,.3) to (.3,1);
\draw[-,line width=1pt,color=\cred] (0.3,.3) to (-.3,1);
\draw(-.3,0.2) node{$\scriptstyle 1$};
\draw (.3, 0.2) node{$\scriptstyle \red{u}$};
\end{tikzpicture} , 
\quad
\begin{tikzpicture}[anchorbase, scale=1, color=\clr]
 \draw[-,line width=1pt,color=\cred] (-0.3,.3) to (.3,1);
\draw[-,thick] (0.3,.3) to (-.3,1);
\draw(-.3,0.2) node{$\scriptstyle \red{u}$};
\draw (.3, 0.2) node{$\scriptstyle 1$};
\end{tikzpicture}. 
\]
(To match their convention of affine Hecke algebra with ours, one identifies their $H_i, X_i$ and $q$ with our $q^{-1}H_i, X_i$ and $q^{-2}$, respectively.)

Following \cite{DJM98}, the authors in \cite{MS21} further defined a higher level affine $q$-Schur algebra as the endomorphism algebra of a direct sum of permutation modules over a higher level affine Hecke algebra. They formulated a set of generating morphisms including the merges, splits, and (mix-color) crossings, but did not formulate the thick dots $\wkdotaa, \wkdota$ as we do; they did not specify all relevant relations either. 

Recall that earlier R. Green \cite{Gre99} formulated the affine $q$-Schur algebra as the endomorphism algebra of a direct sum of permutation modules over the affine Hecke algebra associated with the symmetric group $S_m$. 

\begin{proposition}
\label{prop:path_MSchur}
\begin{enumerate}
    \item 
    The path algebras $\qAW (m)$ in \eqref{affineSchur_black} are isomorphic to the affine $q$-Schur algebras. 
    \item
The path algebras $\qASch (m)$ in \eqref{affineSchur_red} are isomorphic to the higher level affine $q$-Schur algebras (of rank $m$ with the same parameter $\bfu$). 
\end{enumerate}
\end{proposition}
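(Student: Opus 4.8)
\textbf{Proof proposal for Proposition~\ref{prop:path_MSchur}.} The plan is to identify both path algebras with endomorphism algebras of permutation-module sums by constructing an explicit functor and checking it is an isomorphism via dimension (rank) counting and the basis theorems already established. I will treat (2) first, since (1) is the special case $\ell=1$ (where $\qSch_{\bfu}\cong \qW$ and the higher level affine $q$-Schur algebra specializes to R.~Green's affine $q$-Schur algebra), with the caveat that (1) also follows from the degenerate-case strategy of \cite{SW24web} and from the faithful polynomial representation $\mathcal F$ of Theorem~\ref{th:ASchRep} together with \cref{Cor:faithfulrep}.

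For (2), first I would recall the polynomial representation: by \cref{Cor:faithfulrep}, the functor $\mathcal F$ of Theorem~\ref{th:ASchRep} gives a faithful representation of $\qASch(m)$ on $\bigoplus_{\mu} \Sym_{\mu}$, where $\mu$ runs over $\Lambda_{\text{st}}^{1+\ell}(m)$ and $\Sym_{\mu}$ is the partially symmetric Laurent polynomial ring attached to $\overline{\mu}$, twisted by the traverse-down maps $\prod_j(X_j-u_i)$ encoding the cyclotomic-type parameters. The key point is to recognize this representation space and this action as exactly the sum of permutation modules over the higher level affine Hecke algebra $\Hd$ (with $d=m$) that defines the higher level affine $q$-Schur algebra of \cite{MS21}: the module $\Sym_{\mu} = \kk[X_1^{\pm},\ldots,X_m^{\pm}]^{\mathfrak S_{\overline\mu}}$ with the Lusztig action \eqref{actionHi} is precisely the ``permutation module'' $x_{\overline\mu}\Hd$ (up to the identification of parameters, replacing their $H_i,X_i,q$ by our $q^{-1}H_i, X_i, q^{-2}$ as noted in \S\ref{subsec:iso-MS}), and the multi-composition structure with the red strands carrying $u_1,\ldots,u_\ell$ records the higher-level data. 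Then the functor sending the generating morphisms of $\qASch(m)$ to the corresponding $\Hd$-module homomorphisms (merges $\mapsto$ the symmetrizer-type maps $\sigma_{a,b}$, splits $\mapsto$ inclusions, crossings $\mapsto$ the $H_w$ actions, thick dots $\mapsto$ multiplication by $X_1\cdots X_r$, traverse-downs $\mapsto$ multiplication by $\prod_j(X_j-u_i)$) is a well-defined algebra homomorphism $\qASch(m)\to S^{\text{MS}}_{\bfu}(m)$ onto the higher level affine $q$-Schur algebra, and it is injective by faithfulness of $\mathcal F$.

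It then remains to prove surjectivity, equivalently that the image spans all $\Hd$-module homomorphisms between the permutation modules. For this I would invoke the basis theorem \cref{thm:basisASchur}: the $\kk$-rank of $\Hom_{\qASch}(\mu,\lambda)$ equals $|\RPMat_{\lambda,\mu}|$, and one checks this matches the known $\kk$-rank of $\Hom_{\Hd}(x_{\overline\mu}\Hd, x_{\overline\lambda}\Hd)$, which by the Dipper--James--Mathas / Maksimau--Stroppel theory is counted by pairs of a matrix in $\Mat_{\overline\lambda,\overline\mu}$ together with a tuple of rational partitions (dominant weights) on each entry --- exactly the combinatorial data defining $\RPMat_{\lambda,\mu}$. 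Since an injective homomorphism between free $\kk$-modules of the same finite rank in each graded/bi-indexed piece (for $\kk$ a field $\C(q)$, or by a rank argument over $\Z[q,q^{-1}]$ using that both sides are free) is an isomorphism, this finishes (2). For part (1), one specializes $\ell=1$: here $\qAW(m)$ sits inside $\qASch(m)$ as a full subcategory's path algebra by \cref{thm:basisqAW}, the representation space becomes $\bigoplus_{\mu\in\Lambda_{\text{st}}(m)}\Sym_{\mu}$, and the same functor identifies $\qAW(m)$ with $\End_{\mathcal H_m}\big(\bigoplus_\mu x_\mu \mathcal H_m\big)$ tensored up to the affine Hecke algebra --- i.e. R.~Green's affine $q$-Schur algebra --- again by comparing ranks via the basis $\RPMat_{\lambda,\mu}$.

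The main obstacle I anticipate is the bookkeeping in matching our conventions with those of \cite{MS21, Gre99}: reconciling the parameter dictionary ($H_i \leftrightarrow q^{-1}H_i$, $q\leftrightarrow q^{-2}$), checking that our thick-dot generators $\wkdotaa$ act by the same operators that \cite{MS21} use implicitly (they did not formulate these generators or all relations, so one must verify that the relations \eqref{intergralballon}, \eqref{dotmovecrossing}, \eqref{dotmovesplits+merge} and \eqref{redcross2}--\eqref{redbraid} are satisfied by the module maps, which is exactly the content of Theorem~\ref{th:ASchRep} and \cref{Cor:faithfulrep} once unwound), and confirming the rank count on the Hecke side is the same combinatorial set $\RPMat$. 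Because \cite{MS21} leaves part of the presentation unspecified, the cleanest route is to prove the isomorphism directly through the faithful functor $\mathcal F$ rather than matching presentations morphism-by-morphism; I would therefore phrase the argument so that it rests on \cref{Cor:faithfulrep} and \cref{thm:basisASchur} rather than on a relations check against {\em loc. cit.}
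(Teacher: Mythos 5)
Your overall strategy—run everything through the faithful polynomial representation $\mathcal F$ of \cref{th:ASchRep} and \cref{Cor:faithfulrep}—is the same as the paper's, and injectivity is handled correctly. The gap is in your surjectivity step. You propose to finish by a rank count: show $\mathrm{rank}_\kk\Hom_{\qASch}(\mu,\lambda)=|\RPMat_{\lambda,\mu}|$ equals the ``known'' rank of the corresponding Hom-space between permutation modules over the higher level affine Hecke algebra, and conclude that an injection between free $\kk$-modules of equal rank is an isomorphism. Two problems. First, that rank count on the Hecke side is asserted, not established: neither \cite{DJM98} nor \cite{MS21} is quoted in the paper as giving a basis of $\Hom_{\Hd}(x_{\overline\mu}\Hd,x_{\overline\lambda}\Hd)$ indexed by $\RPMat_{\lambda,\mu}$, and supplying such a count is essentially as hard as the isomorphism itself. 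Second, even granting the count, over $\kk=\Z[q,q^{-1}]$ an injective map between free modules of the same rank need not be surjective (multiplication by $2$ on $\Z$), so your argument only closes over $\C(q)$. The paper sidesteps both issues: it invokes \cite[Lemma~4.40]{MS21}, which says the higher level affine $q$-Schur algebra itself acts faithfully on the symmetric Laurent polynomials with image \emph{generated by} exactly the operators in \eqref{map_SM}--\eqref{map_redcross} (splits, merges, traverse-ups/downs, multiplication by symmetric Laurent polynomials). Hence the two algebras have the \emph{same image} inside $\End_\kk(\bigoplus_\mu\Sym_\mu)$, and both being faithful, they are isomorphic—no rank comparison needed, and the argument is valid integrally.

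A smaller point: your framing of part (1) as ``the special case $\ell=1$, where $\qSch_\bfu\cong\qW$'' conflates the level-one \emph{cyclotomic} quotient with the affine web category; part (1) concerns $\qAW$ and R.~Green's algebra over the affine Hecke algebra, with no cyclotomic parameter at all. The correct reduction—which you do also state later—is that $\qAW$ is a full subcategory of $\qASch$ by \cref{thm:basisqAW}, so (1) is the restriction of (2) to the objects with no red strands; that is exactly how the paper argues.
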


\begin{proof}
By \cref{thm:basisqAW}, $\qAW$ is a full subcategory of $\qASch$ and hence Part (1) becomes a special case of (2). 

Let us prove (2). According to \cite[Lemma~ 4.40]{MS21}, the higher level affine $q$-Schur algebra (of rank $m$ with same parameter $\bfu$) has a faithful representation on the symmetric Laurent polynomials such that the image is generated by splits, merges, right-crossings (= traverse-downs), left-crossings (= traverse-ups), left multiplications of symmetric Laurent polynomials and hence the same as $\qASch (m)$. Thus, they are isomorphic by Corollary
~\ref{Cor:faithfulrep}.
\end{proof}
\section{Cyclotomic $q$-web and cyclotomic $q$-Schur categories}
\label{sec:cyclotomic}

In this section, we introduce the cyclotomic $q$-web category $\qW_{\bfu}$ and  cyclotomic $q$-Schur category $\qSchu$ associated with any fixed parameter  $\bfu \in (\kk^*)^\ell.$ We construct an elementary ribbon diagram basis and a double SST basis for the cyclotomic $q$-Schur category $\qSchu$. We establish a functor from $\qSchu$ to a small $\kk$-linear category $\qSchuDJM$ reformulated from the Dipper-James-Mathas' cyclotomic $q$-Schur algebras, and prove that it is an isomorphism by matching the double SST basis of $\qSchu$ with the cellular basis of $\qSchuDJM$.

\subsection{Definitions}

For any $r\ge 1$ and $u\in \kk$, we define
\begin{equation}
    \label{def-gau}
 g_{r}(u):= 
\sum_{0\le i\le r} (-u)^i q^{-i(r-i)} \:\:
 \begin{tikzpicture}[anchorbase,scale=.7,color=\clr]
\draw[-,line width=1.5pt] (0.08,-.6) to (0.08,.5);
\node at (.08,-.8) {$\scriptstyle r$};
\draw(0.08,0) \bdot;
\draw(.7,0)node {$\scriptstyle \omega_{r-i}$};
\end{tikzpicture}.
\end{equation}

\begin{lemma}
 \label{lem:gru}
 For any $r\ge 1$ and $u\in \kk$, the following relation (called a {\em balloon relation}) holds in $\qAW$ and  $\qASch$:
\[
\begin{tikzpicture}[baseline = 5pt, scale=.5, color=\clr]
\draw[-, line width=1.2pt] (0.5,2) to (0.5,2.3);
\draw[-, line width=1.2pt] (0.5,0) to (0.5,-.3);
\draw[-,thin](0.5,2) to[out=left,in=up] (-.5,1) to[out=down,in=left] (0.5,0);
\draw[-,thin]  (0.5,2) to[out=left,in=up] (0,1) to[out=down,in=left] (0.5,0);   
\draw[-,thin] (0.5,0)to[out=right,in=down] (1.5,1)to[out=up,in=right] (0.5,2);
\draw[-,thin] (0.5,0)to[out=right,in=down] (1,1)
 to[out=up,in=right] (0.5,2);
\node at (0.5,.7){$\scriptstyle \cdots$};
\draw (-0.5,1) \bdot; 
 \node at (-.75,1) {$\scriptstyle g$}; 
\draw (0,1) \bdot; 
\node at (0.3,1) {$\scriptstyle g$};
\draw (1,1) \bdot;
\draw (1.5,1) \bdot; 
\node at (1.8,1) {$\scriptstyle g$};
\node at (-.4,0) {$\scriptstyle 1$};
 \node at (.2,0.3) {$\scriptstyle 1$};
\node at (.7,0.3) {$\scriptstyle 1$};
\node at (1.2,0) {$\scriptstyle 1$};
\draw (.5,-.5) node{$\scriptstyle {r}$};
\end{tikzpicture}
=
[r]!\, g_{r}(u),
\qquad \text{ where }
 \begin{tikzpicture}[baseline = 3pt, scale=0.5, color=\clr]
\draw[-,thin] (0,0) to[out=up, in=down] (0,1.4);
\draw(0,0.6) \bdot; 
\draw (0.65,0.6) node {$\scriptstyle g$};
\node at (0,-.3) {$\scriptstyle 1$};
\end{tikzpicture} 
=
\begin{tikzpicture}[baseline = 3pt, scale=0.5, color=\clr]
\draw[-,thin] (0,0) to[out=up, in=down] (0,1.4);
\draw(0,0.6) \bdot; 
\draw (0.65,0.6) node {$\scriptstyle \omega_1$};
\node at (0,-.3) {$\scriptstyle 1$};
\end{tikzpicture}
-u 
\begin{tikzpicture}[baseline = 3pt, scale=0.5, color=\clr]
\draw[-,thin] (0,0) to[out=up, in=down] (0,1.4);
\node at (0,-.3) {$\scriptstyle 1$};
\end{tikzpicture} .
\]
\end{lemma}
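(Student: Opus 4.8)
The statement to prove is the balloon relation: the ``balloon diagram'' with $r$ thin strands, each decorated by the one-dot operator $g = \omega_1 - u\cdot\mathrm{id}$, equals $[r]!\, g_r(u)$. The natural strategy is to expand each of the $r$ hollow/solid dots $g$ on the thin strands as $\omega_1 - u$, multiply out, and re-assemble using the integral balloon relation \eqref{intergralballon} already available in $\qAW$. Concretely, write the balloon with decorations $g$ on each of the $r$ strands as $\sum_{S \subseteq \{1,\dots,r\}} (-u)^{r-|S|}$ times the balloon where the strands indexed by $S$ carry a single solid dot $\omega_1$ and the remaining $r-|S|$ strands carry no dot. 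By symmetry of the balloon in its $r$ thin strands (which follows from the relations of $\qW$, e.g. the symmetric-group action on the thin part via crossings together with \eqref{dotmovecrossing}), the balloon with $i$ of its strands carrying one $\omega_1$-dot and $r-i$ undecorated depends only on $i = |S|$, so the sum collapses to $\sum_{i=0}^r \binom{r}{i}(-u)^{r-i} B_i$, where $B_i$ denotes this balloon with exactly $i$ dotted thin strands.

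The key computation is then to identify $B_i$ as a scalar multiple of $\omega_{r,i}$ (the $i$-th elementary dot packet of thickness $r$), namely $B_i = \frac{[r]!}{[i]!\,[r-i]!}\cdot$ something, or more precisely to show $B_i$ equals a known multiple of $\omega_{r,i}$ so that $\sum_i \binom{r}{i}(-u)^{r-i} B_i$ matches $[r]!\, g_r(u) = [r]! \sum_i (-u)^i q^{-i(r-i)}\omega_{r,r-i}$. There are two ways to get this. One is to use \eqref{intergralballon} as a black box: \eqref{intergralballon} says the fully undecorated balloon is $[r]!\,\omega_{r,0} = [r]!\cdot \mathrm{id}$ up to the bottom dot; applying the split/merge relations \eqref{splitbinomial}, \eqref{mergesplit} and Lemma~\ref{lem:dots2strands} (especially \eqref{splitmerge}) one pushes the $i$ solid dots through the balloon and collects them into the single dot packet $\omega_{r,i}$ on the outgoing strand of thickness $r$, tracking the $q$-binomial coefficients and powers of $q$ that appear. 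The cleanest route is probably to insert \eqref{splitbinomial}-type splittings to reduce the balloon with $i$ dotted strands to $[i]!\,[r-i]!$ times the picture ``$\omega_{r,i}$ merged into a thickness-$r$ strand,'' so that $B_i = [i]!\,[r-i]!\cdot q^{?}\,\omega_{r,i}$; then $\binom{r}{i}B_i = [r]!\, q^{?}\,\omega_{r,i}$ and one only needs to check the power of $q$ is $q^{-i(r-i)}$ by comparing with \eqref{dotmovesplitabr} or \eqref{splitmerge}. Actually the fastest comparison: rewrite $B_i$ using Lemma~\ref{lem:dots2strands}\eqref{splitmerge} by viewing the balloon as a composite of a split $r \to (i, r-i)$, placing $\omega_{i,(1^i)} = \omega_{i,i}$ and $\omega_{r-i,0}$, then a merge $(i,r-i) \to r$; by \eqref{splitmerge} this gives $\binom{r-i}{r-i}\,\omega_{r,i}\cdot(\text{balloon factors})$, and the balloon factors on the two sub-strands contribute $[i]!$ and $[r-i]!$ by \eqref{intergralballon} applied to thickness $i$ and $r-i$ respectively.

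Thus the skeleton is: (1) expand the $g$'s; (2) use symmetry of the balloon to reduce to the counting sum $\sum_i \binom{r}{i}(-u)^{r-i}B_i$; (3) evaluate $B_i$ via split-merge relations, \eqref{intergralballon} on smaller thicknesses, and Lemma~\ref{lem:dots2strands}, obtaining $B_i = [i]!\,[r-i]!\,q^{-i(r-i)}\,\omega_{r,i}$; (4) assemble: $\sum_i \binom{r}{i}(-u)^{r-i}B_i = [r]!\sum_i (-u)^{r-i}q^{-i(r-i)}\omega_{r,i} = [r]!\sum_j(-u)^j q^{-j(r-j)}\omega_{r,r-j} = [r]!\,g_r(u)$, where we substituted $j = r-i$ and used $q^{-i(r-i)} = q^{-j(r-j)}$. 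The same proof works verbatim in $\qASch$ since all relations invoked are inherited from $\qAW$, which is a full subcategory by \cref{thm:basisqAW}.

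\textbf{Main obstacle.} The delicate point is bookkeeping the power of $q$ in step (3): the split and merge relations \eqref{mergesplit}, \eqref{splitbinomial}, and especially the dot-through-split/merge formulas \eqref{dotmovesplitabr} and \eqref{splitmerge} carry various powers like $q^{st}$, $q^{-2ab}$, $q^{s(s+b-r)+(a-s)(r-s)}$, and it requires care to confirm these combine to exactly $q^{-i(r-i)}$ and that no cross terms or lower-order ($\neq \omega_{r,i}$) contributions survive. One safeguard is to not compute $B_i$ directly but to argue that the balloon with $i$ dotted thin strands is symmetric in those $i$ strands and in the remaining $r-i$ strands separately, hence equals $([i]!\,[r-i]!)$ times a single ``merged'' configuration, and then pin down the one remaining $q$-power by testing on the polynomial representation $\mathcal F$ of \cref{th:ASchRep} (which is faithful on path algebras by Corollary~\ref{Cor:faithfulrep}), where $\omega_{r,i}$ acts by the $i$-th elementary symmetric polynomial in $X_1^{\pm},\dots,X_r^{\pm}$ (up to a monomial) and the balloon acts by an explicit symmetrization operator — reducing the $q$-power to an elementary, checkable identity of Laurent polynomials. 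Alternatively, since this balloon relation is the defining relation of the cyclotomic quotient $\qW_\bfu$ (cf. the reference to \cite{Bru24} and the role of $g_r(u)$), one may already have the $r=1$ case by definition and induct on $r$ using the split $r \to (r-1,1)$ together with \eqref{dotmovesplitabr}; I expect the inductive route to be the least error-prone.
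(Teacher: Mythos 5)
Your main route has a genuine error in step (2). The balloon is \emph{not} symmetric in its thin strands in the sense you claim: the crossings in $\qW$ are Hecke-type ($\crossingpos^2\neq\mathrm{id}$), and sliding a split through a crossing costs a factor $q^{ab}$ by \eqref{unfold}, so the balloon with dots on a subset $S$ of the thin strands depends on $S$ and not just on $|S|$. Concretely, for $r=2$ the balloon with a single $\omega_1$ on the left strand equals $\omega_{2,1}$ by \eqref{splitmerge}, while the one with the dot on the right strand equals $q^{-2}\omega_{2,1}$ by Lemma~\ref{lem:rightdot}; their sum is $(1+q^{-2})\omega_{2,1}=q^{-1}[2]!\,\omega_{2,1}$, which is what the lemma requires, whereas your collapse to $\binom{2}{1}B_1$ would produce the coefficient $2$ instead of $[2]=q+q^{-1}$. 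The same conflation appears again in step (3)--(4), where you use $\binom{r}{i}[i]!\,[r-i]!=[r]!$; this mixes the ordinary binomial (from counting subsets) with $q$-factorials and is false — the identity you need is $\qbinomsm{r}{i}[i]!\,[r-i]!=[r]!$, and the $q$-binomial must arise from summing the distinct $q$-powers contributed by the different subsets $S$ of size $i$. So the expansion-and-symmetrization route can be repaired, but only by actually tracking the subset-dependent $q$-powers and showing they sum to $q^{-i(r-i)}\qbinomsm{r}{i}$; as written the argument does not close.

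Your fallback — induction on $r$ via the split $r\to(r-1,1)$ — is in fact the paper's proof: the paper disposes of the lemma in one line, ``follows by induction similar to the proof of \cite[Lemma 2.11]{SW24web}.'' Had you led with that route (peel off one thin strand, apply the inductive hypothesis to the inner balloon of thickness $r-1$, expand the last $g=\omega_1-u$, and recombine using \eqref{splitmerge} and Lemma~\ref{lem:rightdot} to absorb the extra dot into $\omega_{r,\ast}$ with the correct $q$-powers), the proposal would be essentially correct and aligned with the paper.
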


\begin{proof}
    Follows by induction similar to the proof of \cite[Lemma 2.11]{SW24web}.
\end{proof}

Let
\begin{equation}
\label{equ:parau}
    \bfu=(  u_1, \ldots, u_\ell)\in (\kk^*)^\ell.
\end{equation}

\begin{definition}
 \label{def:cycWeb}
    Suppose  $\ell\ge 1$. For any $\bfu=(u_1,\ldots, u_\ell) $ in \eqref{equ:parau}, the cyclotomic web category $\qW_\bfu$ is the quotient category of $\qAW$ by the right tensor ideal generated by $\prod_{1\le j\le \ell }g_{r}(u_j)$, for $r\ge 1$. 
\end{definition}

In the following, we will identify 
$\lambda^{(0)}\red{u_1}\ldots \red{u_\ell} \lambda^{(\ell)}$ with $\lambda=(\lambda^{(0)},\ldots, \lambda^{(\ell)})$ once $\bfu$ is fixed in \eqref{equ:parau}.

Let $\qSchu$ be the subcategory of $\qASch$ with objects $\Lambda_{st}^{1+\ell}$.
\begin{definition}
\label{def:qcSchur}
    For any fixed $\bfu$ in \eqref{equ:parau}, the cyclotomic $q$-Schur category is the quotient of the $\kk$-linear category $\qSchu$
    by the relations 
    \begin{equation}
    \label{eq:unitlambda}
       \unit{\lambda}=0 \text{ for all } \lambda =(\la^{(0)},\la^{(1)},\ldots,\la^{(\ell)}) \in \Lambda_{\text{st}}^{1+\ell} \text{ with }
\lambda^{(0)}\neq \emptyset.
    \end{equation}
\end{definition}

\begin{rem}
    The commuting relations via obvious diagrammatic isotopes that do not change the combinatorial type of the diagram 
    are implicitly imposed in Definition~\ref{def:qcSchur}. For example,  
\begin{equation}
\label{commurelation}
     ~\begin{tikzpicture}[anchorbase, scale=0.45, color=\clr] 
               \draw[-,line width=1pt] (-2,0.2) to (-1.5,1);
	\draw[-,line width=1pt] (-1,0.2) to (-1.5,1);
\draw[-,line width=1.5pt] (-1.5,1) to (-1.5,1.8);
 \draw (-1,0) node{$\scriptstyle {b}$};
\draw (-2,0) node{$\scriptstyle {a}$};
\draw (-1.5,2.2) node{$\scriptstyle {a+b}$};
               \end{tikzpicture}~
\begin{tikzpicture}[anchorbase, scale=0.45, color=\clr] 
\draw[-,line width=1pt] (-2,0.2) to (-1,1.8);
\draw[wipe](-1,0.2) to (-2,1.8);
\draw[-,line width=1pt] (-1,0.2) to (-2,1.8); 
\draw (-1,0) node{$\scriptstyle {c}$};
 \draw (-2,0) node{$\scriptstyle {d}$};
\draw (-1,2.2) node{$\scriptstyle {d}$};
\draw (-2,2.2) node{$\scriptstyle {c}$};
\end{tikzpicture}~ ~
:= ~  
~\begin{tikzpicture}[anchorbase, scale=0.4, color=\clr] 
 \draw[-,line width=1pt] (-2,0.7) to (-1.5,1.5);
\draw[-,line width=1pt] (-2,0.7) to (-2,-.3);  \draw[-,line width=1pt] (-1,0.7) to (-1,-.3);   
\draw[-,line width=1pt] (-1,0.7) to (-1.5,1.5);
\draw[-,line width=1.5pt] (-1.5,1.5) to (-1.5,2.3);
 \end{tikzpicture}
~\begin{tikzpicture}[anchorbase, scale=0.4, color=\clr] 
\draw[-,line width=1pt] (-2,-.3) to (-1,.7);
\draw[-,line width=1pt] (-1,.7) to (-1,2.3);
\draw[-,line width=1pt] (-2,.7) to (-2,2.3);
\draw[wipe] (-1,-0.3) to (-2,.7); 
\draw[-,line width=1pt] (-1,-0.3) to (-2,.7); 
\end{tikzpicture}~
~=~ 
~\begin{tikzpicture}[anchorbase, scale=0.4, color=\clr] 
 \draw[-,line width=1pt] (-2,-0.3) to (-1.5,.6);  
\draw[-,line width=1pt] (-1,-0.3) to (-1.5,.6);
\draw[-,line width=1.5pt] (-1.5,.6) to (-1.5,2.3);
 \end{tikzpicture}~
\begin{tikzpicture}[anchorbase, scale=0.4, color=\clr] 
 \draw[-,line width=1pt] (-2,.8) to (-1,2.3);
 \draw[wipe]  (-1,.8) to (-1,-.3);
 \draw[-,line width=1pt] (-1,.8) to (-1,-.3);
  \draw[wipe] (-2,.8) to (-2,-.3);
\draw[-,line width=1pt] (-2,.8) to (-2,-.3);
  \draw[wipe] (-1,.8) to (-2,2.3); 
\draw[-,line width=1pt] (-1,.8) to (-2,2.3); 
\end{tikzpicture}~.
\end{equation} 
\end{rem}

\begin{rem}
    Any diagram we will use is obtained from the identity morphism by replacing some parts with some other diagrams.  
To simplify the notation, we will only draw the different parts to denote the whole diagram if it is clear for us to draw the whole diagram from this.
 For example, a diagram of the form 
 \[
 \begin{tikzpicture}[baseline = 10pt, scale=0.5, color=\clr]
\draw[-,line width=1.5pt] (-8,0.5)to[out=up,in=down](-8,2.2);
\draw (-7,1) node {$\ldots$};
\draw (-8,0) node{$ \scriptstyle{\mu^{(0)}_1}$};   
\draw[-,line width=1.5pt] (-6,0.5)to[out=up,in=down](-6,2.2);
\draw (-6,0) node{$\scriptstyle {\mu^{(0)}_{h_0}}$};
\draw[-,line width=1pt, color=\cred](-5,0.2) to (-5,2.3);
\draw (-5,-.2) node{$\scriptstyle \red{u_1}$};
\draw[-,line width=1.5pt] (-4,0.5)to[out=up,in=down](-4,2.2);
\draw (-3,1) node {$\ldots$};
\draw (-4,0) node{$ \scriptstyle{\mu^{(1)}_1}$};
\draw[-,line width=1pt] (-2,1)to[out=up,in=down](-2,1.8) to (2,1.8)to (2,1) to (-2,1);
\draw (-2,0) node{$ \scriptstyle{\mu^{(i)}_j}$}; 
\draw (2,0) node{$ \scriptstyle{\mu^{(i)}_l}$};
\draw[-,line width=1.5pt](-1.7,.5) to(-1.7,1);
\draw[-,line width=1.5pt](1.5,.5) to(1.5,1);
\draw[-,line width=1.5pt](-1.7,1.8) to(-1.7,2.2);
\draw[-,line width=1.5pt](1.5,1.8) to(1.5,2.2);
\draw (0,.6) node {$\ldots$};
\draw (0,2) node {$\ldots$};
\draw[-,line width=1pt, color=\cred](4,0.2) to (4,2.3);
\draw (4,-.2) node{$\scriptstyle \red{u_\ell}$};
\draw (3,1) node {$\ldots$};
\draw[-,line width=1.5pt] (5,0.5)to[out=up,in=down](5,2.2);
\draw (6,1) node {$\ldots$};
\draw (5,0) node{$ \scriptstyle{\mu^{(\ell)}_1}$};   
\draw[-,line width=1.5pt] (7,0.5)to[out=up,in=down](7,2.2);
\draw (7,0) node{$\scriptstyle {\mu^{(\ell)}_{h_\ell}}$};
\end{tikzpicture},  
\]
can be expressed concisely as 
\[
\begin{tikzpicture}[baseline = 10pt, scale=0.5, color=\clr]
\draw[-,line width=1pt] (-2,1)to[out=up,in=down](-2,1.8) to (2,1.8)to (2,1) to (-2,1);
\draw (-2,0) node{$ \scriptstyle{\mu^{(i)}_j}$}; 
\draw (2,0) node{$ \scriptstyle{\mu^{(i)}_l}$};
\draw[-,line width=1.5pt](-1.7,.5) to(-1.7,1);
\draw[-,line width=1.5pt](1.5,.5) to(1.5,1);
\draw[-,line width=1.5pt](-1.7,1.8) to(-1.7,2.2);
\draw[-,line width=1.5pt](1.5,1.8) to(1.5,2.2);
\draw (0,.6) node {$\ldots$};
\draw (0,2) node {$\ldots$};
\end{tikzpicture}~.
\]
\end{rem}

 By \eqref{eq:unitlambda}, it will be convenient to identify the set of objects $\Lambda_{\text{st}}^{\ell}(m)$ of $\qSchu$ with the subset 
$\Lambda_{\text{st}}^{\emptyset,\ell}(m)$ of $\Lambda_{\text{st}}^{1+\ell}(m)$, where 
\begin{align}
\label{def:barlambdaum}
\Lambda_{\text{st}}^{\emptyset,\ell}(m) &:=\{\lambda\in \Lambda_{\text{st}}^{1+\ell}(m)
\mid \lambda^{(0)}=\emptyset\},
\\
\label{def:barlambdau}
\Lambda_{\text{st}}^{\emptyset,\ell} &:=\bigcup_{m\in \N} \Lambda_{\text{st}}^{\emptyset,\ell}(m).    
\end{align}

For $1\le i\le \ell$ and $r\in\Z_{> 0}$, we define 
\begin{align}
  \label{eq:gri}
g_{r,i}=\prod_{1\le j\le i}g_{r}(u_j).
\end{align}
Then we may draw $g_{r,i}$ as a diagram
\begin{tikzpicture}[baseline = 10pt, scale=0.4, color=\clr]
\draw[-,line width=1pt] (-1,2) to (-1,0.2);
\draw (-1,1) \bdot;
\draw (-0.25,1) node{$\scriptstyle {g_{r,i}}$};
\draw (-1,-.2) node{$\scriptstyle {r}$}; 
\end{tikzpicture} 
which is the vertical concatenation of 
\begin{tikzpicture}[baseline = 10pt, scale=0.4, color=\clr]
\draw[-,line width=1pt] (-1,2) to (-1,0.2);
\draw (-1,1) \bdot;
\draw (0.2,1) node{$\scriptstyle {g_{r}(u_j)}$};
\draw (-1,-.2) node{$\scriptstyle {r}$}; 
\end{tikzpicture}, for $1\le j\le i$.
Note that $\begin{tikzpicture}[baseline = 10pt, scale=0.4, color=\clr]
\draw[-,line width=1pt] (-1,2) to (-1,0.2);
\draw (-1,1) \bdot;
\draw (0.2,1) node{$\scriptstyle {g_{a}(u)}$};
\draw (-1,-.2) node{$\scriptstyle {a}$}; 
\end{tikzpicture}$ has already appeared in \eqref{redcross2} when we define $\qASch$.
\begin{lemma}
\label{lem:cycpolyvanish}
The following relation holds in $\qSchu$:
\begin{equation}
\label{cyclotomicpolyi}
~
\begin{tikzpicture}[baseline = 10pt, scale=0.5, color=\clr]
\draw[-,line width =1pt,color=\cred] (-4.5,.2) to (-4.5,2);
\draw (-4.5,-.2) node{$\scriptstyle \red{u_1}$};    
\draw[-,line width =1pt,color=\cred] (-3.5,.2) to (-3.5,2);
\draw (-3.5,-.2) node{$\scriptstyle \red{u_2}$};
\draw(-2.6,.8) node{$\ldots$};
\draw[-,line width =1pt,color=\cred] (-2,.2) to (-2,2);
\draw (-2,-.2) node{$\scriptstyle \red{u_{i}}$};
\draw[-,line width=1pt] (-1,2) to (-1,0.2);
\draw (-1,1) \bdot;
\draw (-.25,1) node{$\scriptstyle {g_{r,i}}$};
\draw (-1,-.2) node{$\scriptstyle {r}$}; 
\end{tikzpicture}
~1_*
=0, 
\qquad 1\le i\le \ell,
\end{equation}
where $1_*$ represents any suitable identity morphisms. 
\end{lemma}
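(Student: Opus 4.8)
\textbf{Proof plan for Lemma~\ref{lem:cycpolyvanish}.}
The plan is to reduce the general identity \eqref{cyclotomicpolyi} to the two defining relations of $\qW_\bfu$ and $\qSchu$: the balloon vanishing $\prod_{1\le j\le\ell}g_r(u_j)\equiv 0$ (via Definition~\ref{def:cycWeb}) and the identification $\unit{\lambda}=0$ when $\lambda^{(0)}\neq\emptyset$ (via Definition~\ref{def:qcSchur}). The key observation is that a single red strand labelled $u_j$ sitting to the left of a thick strand of thickness $r$ can be ``absorbed'': dragging the red strand across the thick strand from one side to the other, using the traverse-up/traverse-down relations \eqref{redcross2} (or equivalently Lemma~\ref{lem:redcross2thick}), produces exactly the factor $g_r(u_j)$ as a polynomial in the elementary dot packets $\omega_{r,\bullet}$, up to sign and powers of $q$. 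More precisely, \eqref{redcross2} says that resolving a cup/cap of the thick strand past a vertical red strand $u$ yields $\sum_{0\le t\le a}(-1)^tq^{-t(a-t)}u^t\,\omega_{a,a-t}$, which is precisely $g_a(u)$ up to the normalization in \eqref{def-gau}. So the diagram in \eqref{cyclotomicpolyi} should be rewritten as $\prod_{1\le j\le i}g_r(u_j)$ sitting on the thick strand of thickness $r$, with the red strands $u_1,\dots,u_i$ now ``freed up'' on the far side.

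First I would treat the case $i=\ell$. Here I apply the traverse relations repeatedly to slide all $\ell$ red strands $u_1,\dots,u_\ell$ from the left of the thick $r$-strand to its right (using Lemma~\ref{adamovecrossings} and the sliders \eqref{redslider} to move them past merges/splits, and \eqref{redcross2} to pull them past the strand). Each crossing contributes the generating function $g_r(u_j)$; combining with the pre-existing factor $g_{r,i}=\prod_{1\le j\le i}g_r(u_j)$ from \eqref{eq:gri}, and recalling the balloon relation Lemma~\ref{lem:gru} which shows $[r]!\,g_r(u)$ equals the ``ballooned'' diagram, I would arrange the whole diagram so that a factor $\prod_{1\le j\le\ell}g_r(u_j)$ appears explicitly; this is zero in $\qW_\bfu$ by Definition~\ref{def:cycWeb}, hence zero in $\qSchu$ as well (since by the commuting diagram \eqref{diag:qHWS} and Part~(3) of \cref{thm:BasesAffine}, or directly, the defining ideal of $\qW_\bfu$ maps into $\qSchu$).

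Next, for the general case $1\le i\le\ell$, I would not try to produce $g_{r,\ell}$ directly; instead I would observe that the $g_{r,i}$ dot packet together with the red strands $u_1,\dots,u_i$ can be absorbed into an object of the form $\lambda$ with $\lambda^{(0)}\neq\emptyset$, so that \eqref{eq:unitlambda} applies. Concretely: applying \eqref{redcross2} $i$ times to slide the thick $r$-strand (with its $g_{r,i}$ decoration) to the \emph{left} past each of $u_1,\dots,u_i$ and using that each traversal converts a $g_r(u_j)$ factor into a cup/cap, the decorated $r$-strand gets turned into a closed component which, read as an object, lands in the $\lambda^{(0)}$-slot; hence the corresponding identity morphism is $\unit{\lambda}$ with $\lambda^{(0)}=(r)\neq\emptyset$, which is $0$ by Definition~\ref{def:qcSchur}. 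I expect the main obstacle to be the careful bookkeeping of signs, powers of $q$, and $[r]!$-normalizations when matching the output of \eqref{redcross2} to the definition \eqref{def-gau} of $g_r(u)$ and to Lemma~\ref{lem:gru}, together with verifying that the isotopies used to move red strands past merges and splits are legitimate (these are covered by \eqref{redslider}, Lemma~\ref{adamovecrossings}, and \eqref{dotmoveadaptor}, so no new relations are needed). Once the $i=\ell$ and the object-reduction cases are handled, the statement for all intermediate $i$ follows, and I would remark that the proof is essentially the $q$-analogue of the argument in \cite[\S 4]{SW24web}, so routine details can be omitted.
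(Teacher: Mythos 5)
There is a genuine gap, and it sits in your $i=\ell$ branch: the argument there is circular. The vanishing of $\prod_{1\le j\le \ell}g_r(u_j)$ is a \emph{defining} relation of $\qW_\bfu$ (Definition~\ref{def:cycWeb}), but $\qSchu$ is a different quotient, defined only by $\unit{\lambda}=0$ for $\lambda^{(0)}\neq\emptyset$ (Definition~\ref{def:qcSchur}). The statement that ``the defining ideal of $\qW_\bfu$ maps into $\qSchu$'' (i.e.\ maps to zero there) is precisely the content of the present lemma at $i=\ell$; it is what later makes the functor $\qW_\bfu\to\qSch_\bfu$ in \cref{thm:basis-cyc-qweb} well defined, so you cannot invoke it as an input. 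Neither the diagram \eqref{diag:qHWS} nor \cref{thm:BasesAffine}(3) rescues this: the former is a summary of downstream results and the latter concerns the affine categories. A second, smaller inaccuracy: a single traverse of a red strand past the thick strand does not ``contribute $g_r(u_j)$''; by \eqref{redcross2} it is the full detour (a traverse-up composed with a traverse-down, so the thick strand ends where it started) that equals $g_r(u_j)$, while a one-way crossing merely changes the object.

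The paper's proof is a single induction on $i$ that makes your second paragraph precise and renders the $i=\ell$ case unnecessary. Write $g_{r,i}=g_{r,i-1}\,g_r(u_i)$ and use the second identity of \eqref{redcross2} backwards to replace the factor $g_r(u_i)$ by the detour of the $r$-strand to the left of the adjacent red strand $u_i$ and back; by \eqref{dotmoveadaptor} the remaining packet $g_{r,i-1}$ may be slid to the apex of the detour. For $i=1$ the apex cross-section is an object with $\lambda^{(0)}=(r)\neq\emptyset$, so the morphism factors through $\unit{\lambda}=0$ and vanishes; for $i>1$ the apex configuration is exactly the case $i-1$, and the induction closes. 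Nothing ``turns into a closed component,'' and the balloon relation of Lemma~\ref{lem:gru} is not needed here. Your proposal contains the right two ingredients (\eqref{redcross2} and \eqref{eq:unitlambda}) but does not assemble them into this induction, and the branch it does carry out in detail assumes the conclusion.
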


\begin{proof}
    Follows by induction on $i$ with the initial case for $i=1$ proved by using \eqref{redcross2}. The detailed argument is the same as for \cite[Lemma~4.6]{SW24Schur} and will be skipped. 
\end{proof}

\subsection{Cyclotomic $q$-Schur algebras}

The cyclotomic Hecke algebra $\Hu$ with parameter $\bfu$ is by definition the quotient of the affine Hecke algebra $\widehat{\mathcal{H}}_m$ by the two-sided ideal generated by $\prod_{1 \le i \le \ell}(X_1 - u_i)$; cf. \cite{Ar96}.     

Let $\lambda = (\lambda^{(1)}, \ldots, \lambda^{(\ell)}) \in \Lambda_{\text{st}}^\ell(m)$ be a multicomposition. For each $i$, set 
$a_i=\sum_{1\le j\le i}|\lambda^{(j)}|$, for $1\le i\le \ell-1$, and define 
\begin{equation}\label{def-pilambda}
 \pi_\lambda:= \pi_{a_1,1} \pi_{a_2,2} \cdots \pi_{a_{\ell-1},\ell-1},   
\end{equation}
where $\pi_{a,i}=\prod_{1\le j\le a}(X_j-u_{i+1})$.
Recall the Young subgroup $\mathfrak{S}_\lambda$ of $\mathfrak{S}_m$ associated with $\lambda$ and let $\x_\lambda = \sum_{w \in \mathfrak{S}_\lambda} q^{-\ell(w)}H_w$. Define \begin{equation} \label{def-mlambda} m_\lambda := \pi_{\lambda} \x_{\lambda} = \x_\lambda \pi_\lambda. \end{equation}
For any $\lambda\in \Lambda_{\text{st}}^\ell(m)$, we refer to the right $\Hu$-module 
\[
M^\lambda=m_\lambda \Hu
\]
as a permutation module.

An $\ell$-multicomposition $\la$ is an $\ell$-multipartition if each of its components $\la^{(i)}$'s is a partition. Let $\Par^\ell(m)$  be the set of all $\ell$-multipartitions  of $m$. We have $\Par^\ell(m)\subset \Lambda_{\text{st}}^\ell(m)$.

We can define the cyclotomic $q$-Schur algebra as in \cite[Definition (6.1)]{DJM98} since $\Par^\ell(m)\subset \Lambda_{\text{st}}^\ell(m)$. Suppose that $\bfu =(u_1,\ldots, u_{\ell})\in \kk^{\ell}$ is fixed as for the cyclotomic Hecke algebra $\Hu$. The cyclotomic $q$-Schur algebra is the endomorphism algebra 
     \begin{equation} \label{eq:SalgDJM}
        \Smu:= \End_{\Hu}\Big(\bigoplus_{\mu\in \Lambda_{\text{st}}^\ell(m)}M^\mu\Big).
     \end{equation}
Clearly we have $\Smu\cong \oplus_{\mu,\nu\in\Lambda_{\text{st}}^\ell(m) }\Hom_{\Hu}(M^\nu,M^\mu)$. Hence we may regard  any $\phi\in \Hom_{\Hu}(M^\nu,M^\mu)$ as an element of $\Smu$ by extension of zero. We further denote 
   \begin{align}
   \Sc_\bfu=\bigoplus_{m\in \N}\Smu.  
   \end{align}

 For any $\mu\in \Lambda_{\text{st}}^\ell(m)$, let $\unit{\mu}\in \Hom_{\Hu}(M^\mu,M^\mu)$ be the identity morphism of $M^\mu$.
Then the algebra $\Sc_\bfu$ is a locally unital algebra with 
$\{1_\mu\mid \mu\in \Lambda_{\text{st}}^{\ell}\}$ as a family of mutually orthogonal idempotents such that 
\[
\Sc_\bfu=\bigoplus _{\mu,\nu\in\Lambda_{\text{st}}^{\ell}} \unit{\nu}\Sc_\bfu 1_\mu.
\]
The algebra $\Sc_\bfu$ can be identified with a small $\kk$-linear category $\qSchuDJM$, with the object set $\Lambda_{\text{st}}^{\ell}$ and morphisms $\Hom_{\qSchuDJM}(\mu,\nu):=\unit{\nu}\Sc_\bfu 1_\mu$, which is  $ \Hom_{\Hu}(M^\mu,M^\nu) $ (respectively, $0$ ) if $\mu\in \Lambda_{\text{st}}^\ell(m), \nu \in\Lambda_{\text{st}}^\ell(m')$ with $m=m'$ (respectively, $m\neq m'$).

For any $\lambda\in\Par^\ell(m)$ and $\mu\in \Lambda_{st}^\ell(m)$, we refer to \cite{DJM98} for the notion of  standard $\lambda$-tableaux and semistandard $\lambda$-tableau of type $\mu$, etc.
Let $\std(\lambda)$ be the set of all standard $\lambda$-tableaux. 
Recall $\t^\lambda$ is maximal $\lambda$-tableau and for any $\t\in \std(\lambda)$, $d(\t)$ is the minimal length representative in $\mathfrak S_\lambda/ \mathfrak S_m$ such that $\t=\t^\lambda d(\t)$.  
For $\s,\t \in\std(\lambda)$, let 
 \begin{align}  \label{def-mst}
 m_{\s\t}:= H_{d(\s)}^* m_\lambda H_{d(\t)},  
 \end{align}
 where $*$ is the anti-involution of $\mathcal H_{m,\bfu}$ fixing all the generators $H_j$ and $X_i$.

Let $\SST(\lambda,\mu)$ be the set of all semistandard $\lambda$-tableaux of type $\mu$. For any $\mathbf S\in \SST(\lambda,\mu)$ and $\mathbf T\in \SST(\lambda,\nu)$, let 
    \begin{equation}
        m_{\mathbf S\mathbf T}=\sum_{\s\in \mu^{-1}(\mathbf S),\t\in\nu^{-1}(\mathbf T) }m_{\s\t}.
    \end{equation}
   Set $\phi_{\mathbf S\mathbf T}\in \Hom_{\Hu}(M^\nu,M^\mu)$ by 
 \begin{equation} \label{phi}
     \phi_{\mathbf S\mathbf T}(m_\nu h)=m_{\mathbf S\mathbf T}h
 \end{equation}
 for $h\in \Hu$. 
Then $\Smu$ has a cellular basis \cite{DJM98} given by 
\begin{equation}
    \label{basisofdjm}
    \{\phi_{\mathbf S\mathbf T}\mid \mathbf S\in \SST(\lambda,\mu), \mathbf T\in \SST(\lambda,\nu), \mu,\nu\in \Lambda_{\text{st}}^\ell(m), \lambda\in \Par^\ell(m)\}. 
    \end{equation}
    
\subsection{A functor $\mathcal G$ from $\qSchu$ to $\qSchuDJM$}
Recall the set $\Lambda_{\text{st}}^{\emptyset,\ell} $ in \eqref{def:barlambdau}. We formally extend the object set of  $\qSchuDJM$ to be $\Lambda_{\text{st}}^{1+\ell}$ such that $ \Hom_{\qSchuDJM}(\lambda,\mu)=0$ if $\lambda\notin \Lambda_{\text{st}}^{\emptyset,\ell} $ or $\mu\notin \Lambda_{\text{st}}^{\emptyset,\ell} $.

For any $\lambda =(\lambda_1, \ldots,\lambda_{k})\in \Lambda_{\text{st}}$, the {\em $i$-th merge} of $\lambda$ is defined to be     
$$
\la^{\vartriangle_i}:=(\lambda_1, \lambda_2,\ldots, \lambda_{i-1},\lambda_{i}+\lambda_{i+1},\lambda_{i+2},\ldots, \lambda_k),$$ 
for $1\le i\le k-1$. Recall the sum $\sigma_{\lambda_i, \lambda_{i+1}}$ defined by \eqref{eq:sigmaab}.
Then we have 
\begin{equation}\label{glambdai}
    \x_{\la^{\vartriangle_i}}=\x_\lambda \sigma_{\lambda_i,\lambda_{i+1}} = \sigma_{\lambda_i,\lambda_{i+1}}^* \x_\lambda,
\end{equation}
where $*$ is the $\kk$-linear anti-involution of the (finite) Hecke algebra $\mathcal H_m$ fixing all generators $H_i$.

\begin{proposition}
[{\cite[Theorem 5.4]{Bru24}}]
\label{prop:isom-qschur} Suppose $\bfu=(u_1)$, then $\Hu\cong \mathcal H_m$ and
 we have an algebra isomorphism  
\[
\phi: \bigoplus_{\lambda,\mu \in \Lambda_{\text{st}}(m)}\Hom_{\qW}(\lambda,\mu) \stackrel{\cong}{\longrightarrow} \End_{\mathcal H_m}(\oplus _{\lambda\in \Lambda_{\text{st}}(m)} M^\lambda)
\]
such that the generators are sent by
$$\begin{aligned}\phi\Big(1_{*}\begin{tikzpicture}[anchorbase,color=\clr]
	\draw[-,line width=1pt] (0.28,-.3) to (0.08,0.04);
	\draw[-,line width=1pt] (-0.12,-.3) to (0.08,0.04);
	\draw[-,line width=2pt] (0.08,.4) to (0.08,0);
        \node at (-0.22,-.4) {$\scriptstyle \lambda_i$};
        \node at (0.4,-.45) {$\scriptstyle \lambda_{i+1}$};\end{tikzpicture}1_{*}\Big) &: M^{\lambda} \rightarrow M^{\la^{\vartriangle_i} }, \quad  \x_\lambda h\mapsto \x_{\la^{\vartriangle_i}}h, \\     
\phi\Big(1_{*}\begin{tikzpicture}[anchorbase,color=\clr]
	\draw[-,line width=2pt] (0.08,-.3) to (0.08,0.04);
	\draw[-,line width=1pt] (0.28,.4) to (0.08,0);
	\draw[-,line width=1pt] (-0.12,.4) to (0.08,0);
        \node at (-0.22,.6) {$\scriptstyle \lambda_i$};
        \node at (0.36,.6) {$\scriptstyle \lambda_{i+1}$};
\end{tikzpicture}1_{*} \Big)&: M^{\la^{\vartriangle_i}}\mapsto M^{\lambda},\quad  \x_{\la^{\vartriangle_i}}h\mapsto \x_{\la^{\vartriangle_i}} h,
\end{aligned}
$$
for any $h\in \mathcal H_m$, where $1_*$ stands for suitable identity morphisms.    
\end{proposition}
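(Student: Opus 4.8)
The statement of \cref{prop:isom-qschur} is the level-one ($\ell=1$) base case of the main isomorphism theorem: when $\bfu=(u_1)$, the cyclotomic Hecke algebra $\Hu$ collapses to the finite Hecke algebra $\mathcal H_m$ (since the relation $\prod_{1\le i\le 1}(X_1-u_i)=X_1-u_1=0$ just pins down $X_1$, and iterating \eqref{HXH} forces every $X_j$ to become a unit of $\mathcal H_m$), and the cyclotomic $q$-Schur algebra $\Smu$ becomes $\End_{\mathcal H_m}(\oplus_\lambda M^\lambda)$ with $M^\lambda=\x_\lambda\mathcal H_m$. Thus the content to verify is precisely that the path algebra of $\qW$ (equivalently, by \cref{thm:basisqAW} and the level-one collapse $\qW_\bfu\cong\qW$, the path algebra of $\qW_{(u_1)}$) is isomorphic to the classical $q$-Schur-type endomorphism algebra, via the explicit assignment of merges and splits to the natural maps between permutation modules. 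Since this is quoted verbatim from \cite[Theorem~5.4]{Bru24}, my plan is to reduce it to that reference and to spell out only the dictionary needed to match conventions.

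First I would set up the well-definedness of the map $\phi$ on generators. The merge $1_*\,\merge\,1_*$ from $(\ldots,\lambda_i,\lambda_{i+1},\ldots)$ to $\lambda^{\vartriangle_i}$ is sent to the $\mathcal H_m$-module map $M^\lambda\to M^{\lambda^{\vartriangle_i}}$, $\x_\lambda h\mapsto \x_{\lambda^{\vartriangle_i}}h$; this is well-defined because $\x_{\lambda^{\vartriangle_i}}\in\x_\lambda\mathcal H_m$ by \eqref{glambdai} (indeed $\x_{\lambda^{\vartriangle_i}}=\x_\lambda\sigma_{\lambda_i,\lambda_{i+1}}$), so right multiplication by $\sigma_{\lambda_i,\lambda_{i+1}}$ does the job, and it is an $\mathcal H_m$-map since it is literally a right multiplication inside $\mathcal H_m$. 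The split is sent to the inclusion $M^{\lambda^{\vartriangle_i}}\hookrightarrow M^\lambda$, well-defined because $\x_{\lambda^{\vartriangle_i}}=\sigma_{\lambda_i,\lambda_{i+1}}^*\x_\lambda\in\mathcal H_m\x_\lambda$ — wait, we need $\x_{\lambda^{\vartriangle_i}}\in\x_\lambda\mathcal H_m$ again, which we have from the first identity in \eqref{glambdai}; so the inclusion is the identity on the submodule, which is visibly an $\mathcal H_m$-map. Then I would check that $\phi$ respects the defining relations \eqref{webassoc}--\eqref{splitbinomial} of $\qW$: associativity \eqref{webassoc} is transitivity of the merge maps and follows from associativity of the $\sigma$-elements; the square-switch/ladder relation \eqref{mergesplit} and the binomial relation \eqref{splitbinomial} translate into known identities for the $\sigma_{a,b}$ inside $\mathcal H_m$ acting on the $\x_\lambda$'s — these are exactly the computations underlying the classical fact that $\End_{\mathcal H_m}(\oplus M^\lambda)$ is the $q$-Schur algebra, and I would cite \cite{Bru24} for them rather than reproduce them. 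This yields a well-defined algebra homomorphism $\phi$ from the path algebra of $\qW$ to $\End_{\mathcal H_m}(\oplus_\lambda M^\lambda)$.

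Next I would argue $\phi$ is an isomorphism. Surjectivity: the permutation modules $M^\lambda$ for $\lambda\in\Lambda_{\text{st}}(m)$ are, up to the standard equivalence, enough to generate all homomorphisms between them by compositions of the elementary maps coming from merges, splits, and (since crossings are built from merges and splits via \eqref{cross via square}) crossings — this is the statement that these elementary morphisms span the $q$-Schur algebra, again a classical input from \cite{CKM,Bru24}. Injectivity: \cref{thm:basisqAW} (specialized to $\ell=0$, i.e.\ to $\qW=\qAW$ with no dots and only positive crossings, equivalently the level-one quotient) gives a basis $\Mat_{\lambda,\mu}$ of $\Hom_{\qW}(\mu,\lambda)$ indexed by nonnegative integer matrices with the prescribed row/column sums; on the other side $\Hom_{\mathcal H_m}(M^\mu,M^\lambda)$ has the well-known $\Mat_{\lambda,\mu}$-indexed basis $\{\phi_{A}\}$ (the $q$-Schur algebra dimension count), and $\phi$ sends the diagrammatic basis element labeled by $A$ to (a scalar times a unitriangular combination of) $\phi_{A}$, so it is a bijection on these matched bases. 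Hence $\phi$ is an isomorphism.

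The main obstacle, though really it is only an obstacle of \emph{presentation} rather than of mathematics, is the bookkeeping of conventions: matching the parameter normalization of \cite{Bru24} (their $H_i,X_i,q$ versus ours, cf.\ the parenthetical remark in \cref{subsec:iso-MS}), matching the definition of the $\sigma_{a,b}$ and the longest-coset-element conventions in \eqref{eq:sigmaab}, and making sure the chosen diagrammatic basis of $\qW$ maps to a \emph{unitriangular} (hence invertible) matrix against the standard $q$-Schur basis so that injectivity is genuinely forced rather than merely plausible. Since \cite[Theorem~5.4]{Bru24} already establishes exactly this isomorphism in exactly this language, the cleanest route is to state the proposition as a citation, verify only that our generator assignment agrees with Brundan's under the dictionary above, and invoke his theorem; I would therefore keep the proof to a few lines plus the reference.
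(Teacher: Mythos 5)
The paper gives no proof of this proposition: it is stated verbatim as a citation of \cite[Theorem 5.4]{Bru24}, which is exactly the route you propose, so your approach coincides with the paper's. One slip in your supporting sketch is worth correcting: you describe the merge $M^\lambda\to M^{\lambda^{\vartriangle_i}}$, $\x_\lambda h\mapsto \x_{\lambda^{\vartriangle_i}}h$, as ``right multiplication by $\sigma_{\lambda_i,\lambda_{i+1}}$,'' but right multiplication by a non-central element is not a homomorphism of right $\mathcal H_m$-modules; the map is left multiplication by $\sigma_{\lambda_i,\lambda_{i+1}}^*$, using the identity $\x_{\lambda^{\vartriangle_i}}=\sigma_{\lambda_i,\lambda_{i+1}}^*\x_\lambda$ from \eqref{glambdai} (this is also how the paper defines the corresponding morphism $\brY$ in the list preceding \cref{thm:G}), and well-definedness of the merge likewise needs $\x_{\lambda^{\vartriangle_i}}\in\mathcal H_m\x_\lambda$ rather than $\x_{\lambda^{\vartriangle_i}}\in\x_\lambda\mathcal H_m$, the latter being what you need for the split/inclusion.
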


 Associated with each generator of $\qSchu$ in \eqref{generator-affschur}--\eqref{redcrossing}, we introduce certain elements in $\qSchuDJM$ as follows.

 Suppose $ \mu ,\lambda \in  \Lambda_{\text{st}}^{\emptyset,\ell}$. Recall $m_\lambda$ and $m_\mu$ from \eqref{def-mlambda}.

 \begin{enumerate}
     \item $\merge\in \Hom_{\qSchu}(\mu,\lambda)$: In this case, $\lambda$ is obtained from $\mu$  by contracting $a=\mu^{(i)}_j$ and $b=\mu^{(i)}_{j+1}$ as $\mu^{(i)}_j+\mu^{(i)}_{j+1}$ for some $i$th component and some $j$.  Then
$\pi_{\mu}=\pi_{\lambda}$
and 
$
\x_{\lambda}=\x_{\mu}\sigma_{a,b}=\sigma_{a,b}^*\x_\mu
$, where 
$\sigma_{a,b}$
is given as in \eqref{eq:sigmaab}. Moreover, we have 
$
\sigma_{a,b}\pi_{\mu}=\pi_{\mu}\sigma_{a,b}
$
and
$
m_{\lambda}=\sigma_{a,b}^*m_{\mu}=m_\mu \sigma_{a,b} 
$.
Then we define 
$
\brY\in \Hom_{\Hu}(M^\mu,M^{\lambda})
$
to be the left multiplication by $\sigma_{a,b}^* $.

\item $\splits \in \Hom_{\qSchu}(\lambda,\mu)$: we define $\bY$ to be  the inclusion from $M^{\lambda}$ to $M^\mu$. 

\item $
  \upliftuip\in \Hom_{\qSchu}(\mu,\lambda)
  $: we define $\upliftuip\in \Hom_{\Hu}(M^\mu,M^{\lambda})$ to be inclusion from $M^\mu$ to $M^{\lambda}$.
  

\item $\downliftuip\in \Hom_{\qSchu}(\lambda,\mu)$: we define $\downliftuip\in \Hom_{\Hu}(M^\mu,M^{\lambda})$ to be the left multiplication by
$\prod_{1\le j\le \mu^{(i)}_{h_i}}(X_{k+j}-u_{i+1})$, where $k=\sum_{j=1}^i |\mu_j^{(i)}|-\mu_{h_i}^{(i)}$.   

\item  $
 \wkdotr,\ \wkzdotr
~\in \Hom_{\qSchu}(\mu,\mu)
$: In this case we have $r=\mu^{(i)}_j$ for some $i,j$ with $i\ge1$. Set 
$s=\sum_{1\le h\le i-1}|\mu^{(h)}|+\sum_{1\le l\le j-1}\mu^{(i)}_l$. We define 
$
\wkdotr
\in \Hom_{\Hu}(M^\mu,M^\mu)
$
to be the left multiplication by $\prod_{s+1\le k\le s+\mu^{(i)}_j}X_k$.   Similarly, we define 
$
\wkzdotr
\in \Hom_{\Hu}(M^\mu,M^\mu)
$
to be the left multiplication by $\prod_{s+1\le k\le s+\mu^{(i)}_j}X^{-1}_k$.    In general, the map associated to $\wkdota$ with $|r|<a$ is determined by \eqref{splitmerge}.

\item $\crossingpos\in  \Hom_{\qSchu}(\mu,\lambda)$: In this case $\lambda $
is obtained from $\mu$ by swapping 
$a=\mu^{(i)}_j$ 
and 
$b=\mu^{(i)}_{j+1}$ 
for some $i,j$ with $i\ge 1$.
Then $\pi_\mu=\pi_{\lambda}$.
We define the morphism 
$
\crossingpos
\in \Hom_{\Hu}(M^\mu,M^{\lambda}) 
$ 
to be
\[
\crossingpos\colon M^\mu \longrightarrow M^{\lambda}, \quad m_\mu \mapsto \pi_{\mu} \phi\left(
\crossingpos\right)(\x_\mu),    
\]
where $\phi$ is the isomorphism given in 
Proposition ~\ref{prop:isom-qschur}.
Note that by \eqref{cross via square} and  Proposition \ref{prop:isom-qschur}, the morphism  
$
\crossingpos
$ 
is determined by 
$\bY$'s 
and
$\brY$'s.
 \end{enumerate}
For any generator as described in \eqref{generator-affschur}--\eqref{redcrossing}, when viewed within $\Hom_{\qSchu}(\lambda, \mu)$ for some $\lambda \notin \Lambda_{\text{st}}^{\emptyset, \ell}$ or $\mu \notin \Lambda_{\text{st}}^{\emptyset, \ell}$, we define the corresponding morphisms in $\qSchuDJM$ to be zero. 

\begin{theorem}
 \label{thm:G}
There is a functor $\mathcal G: \qSchu\rightarrow \qSchuDJM$, which sends an object $\mu$ to $\mu$, and the generating morphisms in  \eqref{generator-affschur}--\eqref{redcrossing} to the corresponding morphisms 
$\brY$, 
$\bY$, 
$\upliftui$, 
$\downliftui$,
$\wkdota$, 
$\crossingpos$, respectively.
\end{theorem}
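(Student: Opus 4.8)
\textbf{Proof strategy for Theorem \ref{thm:G}.} The plan is to verify that the assignment $\mathcal G$ on generating objects and generating morphisms respects all the defining relations of $\qSchu$, namely the relations \eqref{webassoc}--\eqref{intergralballon} inherited from $\qAW$, the relations \eqref{redslider}--\eqref{redbraid} involving traverse-ups and traverse-downs, and the quotient relations \eqref{eq:unitlambda}. Since $\qSchu$ is defined by generators and relations (as a quotient of $\qASch$ restricted to objects in $\Lambda_{\text{st}}^{1+\ell}$, then further quotiented by \eqref{eq:unitlambda}), establishing that these images satisfy all the relations is exactly what is needed to produce the functor. The relations \eqref{eq:unitlambda} are automatic by construction, since we have declared all morphisms in $\qSchuDJM$ with source or target $\lambda\notin\Lambda_{\text{st}}^{\emptyset,\ell}$ to be zero, so $\unit{\lambda}\mapsto 0$ whenever $\lambda^{(0)}\ne\emptyset$.

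First I would handle the purely web-theoretic relations \eqref{webassoc}--\eqref{intergralballon}. For these, the red strands play no role, and the content reduces to checking that $\brY$ (left multiplication by $\sigma_{a,b}^*$), $\bY$ (the natural inclusion $M^\la\hookrightarrow M^\mu$), and $\wkdota$ (left multiplication by the appropriate monomial in the $X_k$'s, or its symmetrization as dictated by \eqref{splitmerge}) satisfy the split/merge associativity, the merge-split braiding \eqref{mergesplit}, the binomial relation \eqref{splitbinomial}, the dot-slide relations \eqref{dotmovecrossing}--\eqref{dotmovesplits+merge}, and the balloon relation \eqref{intergralballon}. The key input here is Proposition \ref{prop:isom-qschur}, which is precisely Brundan's isomorphism $\phi$ identifying the $q$-web category $\qW$ (with one red strand, hence the finite Hecke algebra) with the endomorphism algebra of $\oplus M^\la$; the $\xdota$ relations additionally use that multiplication by $X_k$ on the polynomial representation matches the thick dot, and the formulas \eqref{glambdai} relating $\x_{\la^{\vartriangle_i}}$ to $\x_\la$ and $\sigma_{a,b}$. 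Most of these verifications are either immediate or can be reduced to Brundan's result applied blockwise; I would cite Proposition \ref{prop:isom-qschur} and check that the $\pi_\lambda$-prefactors (present because we are now in the cyclotomic rather than finite setting) commute appropriately with the merges and dots, which they do since $\pi_\mu=\pi_\lambda$ whenever $\mu,\lambda$ differ by a merge, a crossing, or a dot in a component of index $\ge 1$.

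Next I would treat the relations \eqref{redslider}--\eqref{redbraid} involving the traverse-ups $\upliftui$ and traverse-downs $\downliftui$. The slider relations \eqref{redslider} and the analogue for dots \eqref{dotmoveadaptor} say that a red strand passes freely through merges, splits, and dots; on the $M^\mu$ side these become the statements that the relevant inclusions and left-multiplication maps commute, which follows because left multiplication by $\prod(X_{k+j}-u_{i+1})$ for indices $k+j$ in one tensor factor commutes with operations on the other tensor factors and with $\sigma_{a,b}$ acting within a block. The crucial and most delicate relations are \eqref{redcross2} (a traverse-up followed by a traverse-down, or vice versa, equals a sum of dotted strands) and \eqref{redbraid} (the red-strand braid-type relation). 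Here I would use the reduction already indicated in the excerpt: by \eqref{cross via square}, \eqref{redcross2C}, \eqref{redbraidC}, and the equivalence \eqref{eq:redbraidequiv}, both relations follow from their $a=b=1$ (thin-strand) versions, and the thin-strand versions are exactly the defining relations of the higher level affine Hecke category of \cite{MS21} translated into the $M^\mu$-picture. Concretely, one checks that a traverse-up followed by a traverse-down on a thin strand is the composite $M^\mu\hookrightarrow M^{\lambda}\xrightarrow{\prod(X-u)} M^\mu$, which is multiplication by $\prod_{j}(X_j-u)$, matched against the RHS of \eqref{redcross2C} via the defining relation \eqref{HXH} and the polynomial action \eqref{actionHi}. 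I expect this step to be the main obstacle: it requires carefully unwinding the permutation module structure, keeping track of which $\pi$-factors and $\sigma$-factors appear, and reconciling the two conventions for the affine Hecke algebra (their $H_i,X_i,q$ versus our $q^{-1}H_i,X_i,q^{-2}$, as flagged in Section \ref{subsec:iso-MS}). As noted elsewhere in the paper, the detailed bookkeeping closely parallels the degenerate case treated in \cite[Section~4]{SW24Schur}, so I would carry out the thin-strand verifications explicitly and then invoke the thickening lemmas (Lemmas \ref{lem:redcross2thick} and \ref{lem:redbraidthick}) together with \eqref{cross via square} to conclude the general case, thereby completing the construction of $\mathcal G$.
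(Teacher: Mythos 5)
Your proposal is correct and follows essentially the same route as the paper: verify the defining relations under $\mathcal G$, handle the web relations via Proposition \ref{prop:isom-qschur}, reduce the red-strand relations to their thin-strand versions, and match those against the permutation-module picture using the affine Hecke algebra identities (the paper's key example being $H_i(X_i-u)(X_{i+1}-u)=(X_i-u)(X_{i+1}-u)H_i$ for \eqref{redsliderC}), deferring the remaining bookkeeping to the degenerate case in \cite{SW24Schur}. The only cosmetic difference is that the paper phrases the reduction as "it suffices to check the relations of the $\C(q)$-presentation \eqref{webassocC}--\eqref{dotmovecrossingC}, \eqref{redsliderC}--\eqref{redbraidC}" (invoking \cref{th:qASchiso}), whereas you check the integral relations and invoke the thickening lemmas directly; these amount to the same argument.
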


\begin{proof}
   The proof of this theorem is completely analogous to the proof of its degenerate
    counterpart \cite[Theorem 4.7]{SW24Schur}, and we will give an outline. It suffices to prove the results over $\C(q)$, i.e., to check the relations \eqref{webassocC}--\eqref{dotmovecrossingC}, \eqref{redsliderC}--\eqref{redbraidC}, and \eqref{eq:unitlambda}  are satisfied  under $\mathcal G$.

The relations  \eqref{webassocC}--\eqref{splitbinomialC} follow from Proposition \ref{prop:isom-qschur}. Since the positive crossing and negative crossing (resp., thin dot) correspond to left multiplication of $H_i$ and $H_i^{-1}$ (resp., $X_i$), The relation \eqref{dotmovecrossingC} holds by the relation \eqref{HXH} for the affine Hecke algebras.
 Finally, the remaining relations holds by similar arguments as that for the degenerate analog relations in \cite[Theorem 4.7]{SW24Schur} and we omit the details. (For example, we use the identity $H_i(X_i-u)(X_{i+1}-u)= (X_i-u)(X_{i+1}-u)H_i$ when we check \eqref{redsliderC}.)
\end{proof}


\subsection{Elementary ribbon diagrams of $\qSchu$}
 For $\lambda,\mu\in \Lambda_{\text{st}}(m)$, 
 recall from \eqref{dottedreduced} the set $\RPMat_{\lambda,\mu}$. Denote  
\begin{equation}
\label{euq:defofparm}
 \PMat_{\lambda,\mu}:=\{ (A, P))\mid  A=(a_{ij})\in \Mat_{\lambda,\mu}, P=(\nu_{ij}), \nu_{ij}\in \Par_{a_{ij}} \}.
 \end{equation}

For $\lambda,\mu\in \Lambda_{\text{st}}^{\emptyset,\ell}(m)$,  we define the set $\PMat_{\lambda,\mu}$ to be the collection of pairs of $\ell \times \ell$-block matrices $(A=(A_{pq})_{p,q=1}^\ell, P=(P_{pq})_{p,q=1}^\ell)$ such that $(A,P)\in \PMat_{\bar\lambda,\bar\mu}$. Such a pair is usually denoted as
\[
\big(A=(a_{(p,i),(q,j)}), P=(\eta_{(p,i),(q,j)}) \big)
\]
 
 Moreover, we define the set of bounded-partition-enhanced $\ell\times \ell$-block $\N$-matrices 
 \begin{align}
     \label{PM0}
\PMat_{\lambda,\mu}^\flat:= \big\{ (A,P)\in\PMat_{\lambda,\mu}\mid   l(\eta_{(p,i),(q,j)})\le \min\{p,q\}-1, \forall i,j,p,q 
 \big \}.
\end{align}

Recall the crossing degree and dot degree from Definition~\ref{def:Cdegred} and Definition~\ref{def:dotdegred} of a red strand dotted ribbon diagram.

\begin{proposition}  
\label{pro:spanqSchu}
For $\lambda,\mu\in \Lambda_{\text{st}}^{\emptyset,\ell}(m)$, the Hom-space $\Hom_{\qSchu}(\mu,\lambda)$
is spanned by $\PMat_{\lambda,\mu}^\flat$.
\end{proposition}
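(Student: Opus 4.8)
\textbf{Proof strategy for Proposition~\ref{pro:spanqSchu}.}
The plan is to start from the spanning set $\RPMat_{\lambda,\mu}$ for $\Hom_{\qASch}(\mu,\lambda)$ established in Proposition~\ref{prop:spanqASch} and Theorem~\ref{thm:basisASchur}, pass to the quotient $\qSchu$, and successively cut this set down to $\PMat_{\lambda,\mu}^\flat$ using the cyclotomic relations. First I would observe that in $\qSchu$ the relation \eqref{eq:unitlambda} forces every strand to cross, reading from bottom to top, \emph{all} the red strands $\red{u_1},\ldots,\red{u_\ell}$ lying to its left before it is allowed to appear above; equivalently, since $\lambda^{(0)}=\mu^{(0)}=\emptyset$, every leg segment of an elementary ribbon diagram in $\RPMat_{\lambda,\mu}$ that originates in the $p$-th block and terminates in the $q$-th block can be isotoped (modulo lower crossing degree, using Lemmas~\ref{lem:movedotsfreely} and \ref{lem:movedotsfreelyred}) so that its elementary dot packet $\eta$ sits on a portion of the strand lying between the red strands $\red{u_1},\ldots,\red{u_{\min\{p,q\}-1}}$ and the rest of the diagram. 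The key point is then a localized application of the balloon relation: on a strand of thickness $a$ carrying the dot packet $\omega_{a,\eta}$ and positioned to the right of $\red{u_1},\ldots,\red{u_i}$ (for $i=\min\{p,q\}-1$), the morphism $g_{a,i}=\prod_{1\le j\le i}g_a(u_j)$ of \eqref{eq:gri} vanishes by Lemma~\ref{lem:cycpolyvanish}, and $g_a(u_j)$ by its definition \eqref{def-gau} expresses the ``top'' elementary dot packet $\omega_{a,a}$ in terms of lower ones plus $u_j$ times strictly shorter packets. Iterating this, any dot packet $\eta$ with $l(\eta)\ge \min\{p,q\}$ can be rewritten, modulo terms of strictly smaller dot degree (and lower crossing degree), as a $\kk$-linear combination of diagrams whose dot packets on that segment have length $<\min\{p,q\}$; here one also uses Lemma~\ref{lem:dotmoveadaptor} and \eqref{redcross2} to slide the resulting dots across the relevant red strands without increasing complexity.

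The argument is then organized as a double induction, first on the crossing degree $\cdeg$ and then, within a fixed crossing degree, on the total dot degree $\ddeg$ (as in the proof of Proposition~\ref{prop:spanqASch} and \cite[Theorem~3.13]{SW24Schur}). At the bottom of the induction one has a genuine elementary ribbon diagram in $\RPMat_{\lambda,\mu}$; the reduction just described replaces any offending dot packet by shorter ones at the cost of lower-order terms that are already handled by the inductive hypothesis. Since $\RPMat_{\lambda,\mu}$ already consists of \emph{reduced} ribbon diagrams encoded by $(A,P)\in\RPMat_{\bar\lambda,\bar\mu}$, and the only change is that each rational partition $\nu_{(p,i),(q,j)}\in\RPar_{a_{(p,i),(q,j)}}$ gets replaced by an honest partition $\eta_{(p,i),(q,j)}\in\Par_{a_{(p,i),(q,j)}}$ with the length bound $l(\eta_{(p,i),(q,j)})\le\min\{p,q\}-1$, the resulting spanning set is exactly $\PMat_{\lambda,\mu}^\flat$ as defined in \eqref{PM0}. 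One subtlety to address carefully is that the passage from $\RPar$ to $\Par$ (removing negative entries in the rational partitions) also uses the cyclotomic relations: a negative dot packet $\omega_{a,-r}$ can be traded, via the inverse-dot relations and the balloon relation applied to the red strands, for a combination of non-negative ones, again modulo lower dot degree; this is the $q$-analogue of the corresponding step in \cite[\S 4]{SW24Schur}.

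The main obstacle I anticipate is the bookkeeping needed to show that the reduction terminates and stays within the claimed set: one must verify that applying $g_{a,i}$-vanishing on one segment does not reintroduce long dot packets on neighbouring segments or create new crossings that raise the crossing degree, and that the isotopies used to bring dot packets into ``canonical position'' relative to the red strands are compatible with the reduced-ribbon structure. This is exactly the kind of careful induction carried out in the degenerate setting, so I would organize the write-up to parallel \cite[Proposition~4.9]{SW24Schur} (or its analogue), invoking Lemmas~\ref{lem:movedotsfreely}, \ref{lem:movedotsfreelyred}, \ref{lem:dotmoveadaptor}, \ref{lem:gru}, and \ref{lem:cycpolyvanish} at the appropriate points and deferring the routine diagrammatic manipulations with a reference to \emph{loc. cit.}
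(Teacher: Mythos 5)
Your overall architecture --- a simultaneous induction on the crossing degree and the dot degree, reduction of the rational partitions appearing in $\RPMat_{\lambda,\mu}$ to honest partitions, and then a length-bounding step via the balloon relations deferred to the degenerate analogue \cite[Proposition~5.1]{SW24Schur} --- matches the paper's proof, and the length-bounding part is indeed handled exactly as you describe (the paper likewise only sketches it with a reference to \emph{loc.\ cit.}).

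The genuine gap is in the step you relegate to a one-sentence ``subtlety'': the elimination of negative entries, i.e.\ the passage from $\RPar_a$ to $\Par_a$. This is the one step that is genuinely new to the $q$-setting --- in the degenerate category the dots are not invertible, so there is no ``corresponding step in \cite[\S 4]{SW24Schur}'' for it to be the analogue of --- and it is precisely the part of the proof the paper writes out in full. Your proposed mechanism, trading $\omega_{a,-r}$ for non-negative packets ``via the inverse-dot relations and the balloon relation applied to the red strands,'' is not what the paper does and is problematic as stated: Lemma~\ref{lem:cycpolyvanish} requires the red strands $\red{u_1},\dots,\red{u_i}$ to sit immediately to the left of the decorated strand, whereas a leg of an elementary ribbon diagram generally has other black strands in between; moreover each factor $g_r(u_j)$ is a linear combination of the $\omega_{r,r-i}$ rather than a polynomial in a single dot, so inverting its leading term to solve for $\omega_{r,-r}$ would need a separate argument. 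The paper instead uses a sliding argument: by \eqref{redcross2} and \eqref{dotmoveadaptor} (see \eqref{eq:movewhite}), a white dot on a thin strand immediately to the right of a red strand $\red u$ equals, modulo lower dot degree, $-u$ times the same white dot moved to the \emph{left} of $\red u$; combining this with \eqref{movedottofirststrand} and Lemma~\ref{lem:rightdot} to isolate the negatively dotted sub-strand, any diagram carrying some $\omega_{a,-r}$ with $r>0$ can be pushed through all the red strands to the far left, where it vanishes by \eqref{eq:unitlambda} because $\lambda^{(0)}=\mu^{(0)}=\emptyset$. You would need either to reproduce this sliding argument or to supply the missing justification for your balloon-based alternative.
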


\begin{proof}
We prove by induction on the crossing degree $\cdeg$ and the dotted degree $\ddeg$ in the same time. All the $\equiv$ symbol in this proof means equivalent modulo terms with lower crossing degrees and lower dotted degrees.

Recall that 
\begin{align}
\label{symmetric}
\mathord{
\begin{tikzpicture}[baseline = -1mm,scale=0.8,color=\clr]
	\draw[-,thick] (0.28,0) to[out=90,in=-90] (-0.28,.6);
	\draw[-,thick] (-0.28,0) to[out=90,in=-90] (0.28,.6);
	\draw[-,thick] (0.28,-.6) to[out=90,in=-90] (-0.28,0);
	\draw[-,thick] (-0.28,-.6) to[out=90,in=-90] (0.28,0);
        \node at (0.3,-.75) {$\scriptstyle b$};
        \node at (-0.3,-.75) {$\scriptstyle a$};
\end{tikzpicture}
}&=
\mathord{
\begin{tikzpicture}[baseline = -1mm,scale=0.8,color=\clr]
	\draw[-,thick] (0.2,-.6) to (0.2,.6);
	\draw[-,thick] (-0.2,-.6) to (-0.2,.6);
        \node at (0.2,-.75) {$\scriptstyle b$};
        \node at (-0.2,-.75) {$\scriptstyle a$};
\end{tikzpicture}
}\:,\end{align}
By Lemma~\ref{lem:wrdotcross} and \eqref{symmetric}, we have  for any $-b\le r\le b$ that
\begin{equation}
\label{movedottofirststrand}
  \begin{tikzpicture}[baseline = -1mm,scale=0.8,color=\clr]
	\draw[-,thick] (0.2,-.6) to (0.2,.6);
	\draw[-,thick] (-0.2,-.6) to (-0.2,.6);
        \node at (0.2,-.75) {$\scriptstyle b$};
        \node at (-0.2,-.75) {$\scriptstyle a$};
        \draw (.2, 0) \bdot;
        \node at (0.55,0) {$\scriptstyle \omega_r$};
\end{tikzpicture}
\equiv
\begin{tikzpicture}[baseline = -1mm,scale=0.8,color=\clr]
	\draw[-,thick] (0.28,0) to[out=90,in=-90] (-0.28,.6);
    \draw[wipe]  (-0.28,0) to[out=90,in=-90] (0.28,.6);
	\draw[-,thick] (-0.28,0) to[out=90,in=-90] (0.28,.6);
    \draw[-,thick] (-0.28,-.6) to [out=90,in=-90] (0.28,0);
        \draw[wipe] (0.28,-.6) to[out=90,in=-90] (-0.28,0);
	\draw[-,thick] (0.28,-.6) to[out=90,in=-90] (-0.28,0);
        \node at (0.3,-.75) {$\scriptstyle b$};
        \draw (-.28, 0) \bdot;
        \node at (-0.6,0) {$\scriptstyle \omega_r$};
        \node at (-0.3,-.75) {$\scriptstyle a$};
\end{tikzpicture}.
\end{equation}

 By \eqref{redcross2} and \eqref{dotmoveadaptor} we see that
   \begin{equation}
\label{eq:movewhite}
   \begin{tikzpicture}[baseline = 7pt, scale=0.35, color=\clr]
\draw[-,line width=1pt,color=\cred](-0.1,-.6)to (-0.1,2.2);
\draw (-0.1,-.9) node{$\scriptstyle \red{u}$};
\draw[-,line width=1pt](.5,-.5) to[out=135, in=down] (-.5,1);
\draw[-,line width=1pt](-.5,1) to[out=up, in=270] (.5,2.2);
\draw  (-.5,1) \zdot;
\draw (.5,-0.9) node{$\scriptstyle {1}$};
\end{tikzpicture}
~=~ 
\begin{tikzpicture}[baseline = 7pt, scale=0.35, color=\clr]
\draw[-,line width=1pt,color=\cred](-1.8,-.25)to (-1.8,2.15);
\draw (-1.8,-.6) node{$\scriptstyle \red{u}$};
\draw[-,line width=1pt] (-1,2.15) to (-1,-0.25);   
\draw (-1,-.6) node{$\scriptstyle {1}$};
\end{tikzpicture}
~-u~
\begin{tikzpicture}[baseline = 7pt, scale=0.35, color=\clr]
\draw[-,line width=1pt,color=\cred](-1.8,-.25)to (-1.8,2.15);
\draw (-1.8,-.6) node{$\scriptstyle \red{u}$};
\draw[-,line width=1pt] (-1,2.15) to (-1,-0.25);   
\draw (-1,1) \zdot;
\draw (-1,-.6) node{$\scriptstyle {1}$};
\end{tikzpicture}
~\equiv~
-u\begin{tikzpicture}[baseline = 7pt, scale=0.35, color=\clr]
\draw[-,line width=1pt,color=\cred](-1.8,-.25)to (-1.8,2.15);
\draw (-1.8,-.6) node{$\scriptstyle \red{u}$};
\draw[-,line width=1pt] (-1,2.15) to (-1,-0.25);   
\draw (-1,1) \zdot;
\draw (-1,-.6) node{$\scriptstyle {1}$};
\end{tikzpicture} .
   \end{equation}
By \eqref{eq:movewhite}, \eqref{movedottofirststrand} and Lemma~\ref{lem:rightdot}, in general we have
\[
\begin{tikzpicture}[baseline = 7pt, scale=0.35, color=\clr]
\draw[-,line width=1pt,color=\cred](-1.8,-.25)to (-1.8,2.15);
\draw (-1.8,-.6) node{$\scriptstyle \red{u}$};
\draw[-,line width=1.2pt] (-1,2.15) to (-1,-0.25); 
\draw (-1,1) \zdot;
\draw (0,1)  node{$\scriptstyle \omega_{r}$};
\draw (-1,-.6) node{$\scriptstyle {a}$};
\end{tikzpicture}
~\equiv~z
\begin{tikzpicture}[baseline = 7pt, scale=0.35, color=\clr]
\draw[-,line width=1pt,color=\cred](-0.1,-.6)to (-0.1,2.2);
\draw (-0.1,-.9) node{$\scriptstyle \red{u}$};
\draw[-,line width=1pt](.5,-.5) to[out=135, in=down] (-.5,1);
\draw[-,line width=1pt](-.5,1) to[out=up, in=270] (.5,2.2);
\draw[-,line width=1pt] (.5,-.5) -- (.5,2.2);
\node at (1.5,.85){$\scriptstyle{a-r}$};
\draw  (-.5,1) \zdot;
\node at (-1.2,1){$\scriptstyle{\omega_r}$};
\draw (.5,-0.9) node{$\scriptstyle {a}$};
\end{tikzpicture}\quad \text{ for some }z\in \kk^\times .
\]
Thus whenever a red strand dotted ribbon diagram contains $\omega_r$ for some $r<0$, we may move the strand containing $\omega_r$ through all the red strand \red{$u_i$} to its left (up to lower degree terms). Hence such a red strand dotted ribbon diagram results as $0$ by \eqref{eq:unitlambda}.

Therefore, the proof of this proposition reduces to the case where all the elementary dot packets $\omega_{a,\nu}$ satisfy $\nu\in \Par_a$. The proof in this case is entirely similar to that for \cite[Proposition~ 5.1]{SW24Schur}, and we skip the detail.
\end{proof}

Later in Theorem~\ref{thm:basisqSchu} we will show that $\PMat_{\lambda,\mu}^\flat$ indeed forms a basis of $\Hom_{\qSchu}(\mu,\lambda)$.

\subsection{The SST-ribbon diagrams}
  \label{subsec:SST}
Recall we identify $\Lambda_{\text{st}}^{\ell}$ with the subset $\Lambda_{\text{st}}^{\emptyset,\ell} $ of $\Lambda_{\text{st}}^{1+\ell}$ by identifying $\mu=(\mu^{(1)}, \ldots, \mu^{(\ell)})$ with $(\emptyset, \mu^{(1)}, \ldots, \mu^{(\ell)})$.
Suppose that $\lambda\in \Par^\ell(m)$ and $\mu\in \Lambda_{\text{st}}^\ell(m)$. Recall that the forgetful map takes $\mu$ to the composition 
\[
\bar \mu =(\mu^{(1)}_1,\ldots, \mu^{(1)}_{h_1}, \mu^{(2)}_1,\ldots, \mu^{(2)}_{h_2},\ldots,\mu^{(r)}_1,\ldots, \mu^{(r)}_{h_{\ell}}),
\]
of $m$, where 
$ h_i=l(\mu^{(i)})$ for $1\le i\le \ell$. We may identify $\bar \mu $ with a strict composition by omitting the empty components (i.e., those corresponding to $h_i=0$).

 Associated to any semistandard tableau 
$\bT\in \SST(\lambda,\mu)$  we can construct a matrix $A_{\bT}\in \Mat_{\overline{\mu},\overline{\lambda}}$ as in \cite[\S 4.5]{SW24Schur} such that 
$a_{(p,i),(q,j)}$ 
is the number of $i_p$ in $j$th row of
$\mathbf T^{(q)}$.   Here we index the row  and column of any matrix in $\Mat_{\bar\mu,\bar\lambda}$
in the order 
\begin{align*}
 &
 (1,1),(1,2),\ldots, (1, l(\mu^{(1)}), (2,1), \ldots, (2, l(\mu^{(2)})), \ldots, (\ell,1),\ldots, (\ell, l(\mu^{(\ell)})),\quad \text{ and }
  \\
 & 
 (1,1),(1,2),\ldots, (1, l(\lambda^{(1)}), (2,1), \ldots, (2, l(\lambda^{(2)})), \ldots, (\ell,1),\ldots, (\ell, l(\lambda^{(\ell)})), 
\end{align*}
 respectively.

Thus, we have a reduced chicken foot ribbon diagram $[A_{\bT}]$  of shape $A_{\bT}$. Let $[\bT]$ denote the ornamentation associated with $(A_{\bT},(\emptyset))$. We also denote by $\div$ the anti-involution on $\qSchu$ induced by $\div$ on $\qASch$ in \eqref{eq:autoflip}. Applying the symmetry $\div$ gives us $[{\bT}]^\div \in \Hom_{\qSchu}(\mu,\la)$. The diagrams of the form $[{\bT}]$ or $[{\bT}]^\div$ are called {\em SST-ribbon diagrams (or SST-morphisms)}. 

Recall from Theorem \ref{thm:G} the functor $\mathcal G: \qSchu\rightarrow \qSchuDJM$. Recall the cellular basis $\{ \phi_{\bS\bT}\}$ from \eqref{phi}--\eqref{basisofdjm} for the cyclotomic  $q$-Schur algebras.

\begin{proposition}
 \label{pro:propG}
Suppose that $\lambda\in \emph{\Par}^\ell(m)$ and $\mu,\nu\in \Lambda_{\text{st}}^\ell(m).$ Then the functor $\mathcal G$ sends $[\bT]\circ [\bS]^\div$ to $\phi_{\bS\bT}$, for any $\bS\in \emph{\SST}(\lambda,\mu)$ and $\bT\in \emph{\SST}(\lambda,\nu)$.
\end{proposition}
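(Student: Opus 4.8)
The plan is to reduce the statement to a computation on permutation modules that we already control, by tracking how both sides act on the canonical generators $m_\mu$ and $m_\nu$. First I would recall that $\mathcal G$ is a monoidal functor (Theorem~\ref{thm:G}), so $\mathcal G([\bT]\circ[\bS]^\div) = \mathcal G([\bT])\circ \mathcal G([\bS]^\div)$, and that $\div$ on $\qSchu$ corresponds, under $\mathcal G$, to the anti-involution $*$ on $\qSchuDJM$ coming from the anti-involution $*$ on $\mathcal H_{m,\bfu}$ (this compatibility should be recorded as an elementary observation, since $\mathcal G$ was defined on generators and $*$ fixes the generators $H_j, X_i$ while $\div$ rotates diagrams). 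Hence it suffices to show $\mathcal G([\bT])^* = \mathcal G([\bS]^\div)$ matches the relevant factor and that $\mathcal G([\bT])$ is the homomorphism $M^\lambda \to M^\nu$ sending $m_\lambda h \mapsto m_{\bT} h$ where $m_{\bT} := \sum_{\t\in \nu^{-1}(\bT)} H_{d(\t)}^* m_\lambda \cdot (\text{appropriate piece})$; more precisely, I would show $\mathcal G$ applied to the pure-merge ornamentation $[A_\bT]$ (together with the identification of rows/columns fixed in \S\ref{subsec:SST}) produces exactly the map $m_\lambda h \mapsto \big(\sum_{\t\in\nu^{-1}(\bT)} H_{d(\t)}\big)^* x_\lambda \pi_\lambda h$ after composing with the inclusion/adaptor morphisms.

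The key steps, in order: (1) Decompose $[\bT]$ into its three horizontal layers — the bottom layer of splits, the middle layer of thin-strand positive crossings, and the top layer of merges — together with the red-strand traverse morphisms (traverse-ups only, since $[\bT]$ has no traverse-downs, exactly as in the proof of Proposition~\ref{prop:spanqASch}), and apply $\mathcal G$ layer by layer using the explicit formulas (1)--(6) defining $\mathcal G$ on generators. (2) Use Proposition~\ref{prop:isom-qschur} (Brundan's isomorphism $\phi$ in the $\ell=1$ case) to identify the image of the split/crossing/merge part with left multiplication by a sum of the form $\sum_w \sigma^{*}$-type elements; the combinatorics here — matching the matrix $A_\bT$ to a sum over the coset of minimal-length double coset representatives, hence to $\sum_{\t\in\nu^{-1}(\bT)} H_{d(\t)}$ — is precisely the combinatorial heart and is the same bookkeeping as in \cite[Theorem~4.7]{SW24Schur} and \cite{Bru24}. (3) Handle the red strands: the traverse-ups are sent by $\mathcal G$ to inclusions $M^{(\cdot)}\hookrightarrow M^{(\cdot)}$, so they only serve to reorganize the multi-composition structure and the idempotents $1_\mu, 1_\nu$, contributing nothing beyond the passage from $m_\mu$ to $m_\nu$; one must check that the net effect is to insert the cyclotomic factor $\pi_\lambda$ correctly — this is where the definition $m_\lambda = \pi_\lambda x_\lambda$ and the relation $\sigma_{a,b}\pi_\mu = \pi_\mu\sigma_{a,b}$ (from step (3) of the definition of $\mathcal G$ and the discussion preceding it) are used. (4) Assemble: $\mathcal G([\bT])$ sends $m_\lambda h \mapsto m_{\bT} h$, and applying $*$ and composing gives $\mathcal G([\bT]\circ[\bS]^\div)(m_\nu h) = m_{\bS\bT} h = \phi_{\bS\bT}(m_\nu h)$, invoking the definition \eqref{def-mst}--\eqref{phi} of $m_{\bS\bT}$ and $\phi_{\bS\bT}$ and the expansion $m_{\bS\bT} = \sum_{\s\in\mu^{-1}(\bS),\t\in\nu^{-1}(\bT)} m_{\s\t}$.

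The main obstacle I expect is step (2): making precise the claim that $\mathcal G$ applied to the reduced chicken-foot ribbon diagram $[A_\bT]$ (with its fixed choice of ornamentation and the row/column ordering of \S\ref{subsec:SST}) yields left multiplication by exactly $\big(\sum_{\t\in\nu^{-1}(\bT)} H_{d(\t)}\big)^*$ composed with the inclusions. This requires carefully aligning (i) the iterated merges/splits with iterated applications of $\sigma_{a,b}$ via $\phi$, (ii) the standard-tableau combinatorics of $d(\t)$ for $\t\in\nu^{-1}(\bT)$ with the double-coset sums $\sigma_{a,b}$ appearing at each merge, and (iii) the fact that a reduced (rather than arbitrary) chicken-foot diagram corresponds to minimal-length representatives — so no extra $q$-powers or cancellations intervene. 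Fortunately this is an exact $q$-analogue of the calculation carried out in the degenerate setting in \cite[\S 4.5, proof of Prop.~4.8-type statement]{SW24Schur}, so I would structure the argument to parallel that reference closely, indicating the substitutions $H_i \leftrightarrow$ divided-difference-type operators and $x_\lambda \leftrightarrow \sum_{w\in\mathfrak S_\lambda} q^{-\ell(w)}H_w$, and only spell out the points where the presence of the parameter $q$ (as opposed to the degenerate $\hbar$) forces a genuinely different verification — chiefly the quadratic Hecke relation and the precise normalization of $\sigma_{a,b}$ in \eqref{eq:sigmaab}. The remaining steps are then routine, and I would present them compactly, citing \cite{SW24Schur, Bru24, DJM98} for the parallel degenerate and level-one statements.
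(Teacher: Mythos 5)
Your proposal is correct and takes essentially the same route as the paper: the paper's own proof of this proposition is a one-line reference to the degenerate counterpart \cite[Proposition~4.9]{SW24Schur}, and your plan is precisely to carry out that analogous layer-by-layer computation (splits/crossings/merges via Proposition~\ref{prop:isom-qschur}, traverse-ups as inclusions inserting $\pi_\lambda$, and compatibility of $\div$ with $*$) in the $q$-setting. Your identification of step (2) as the combinatorial heart, and of the $q$-specific points needing re-verification, matches what the cited degenerate argument actually requires.
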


\begin{proof}
    The proof is completely analogue to \cite[Proposition~4.9]{SW24Schur}, and we skip the detail here.    
\end{proof}

\subsection{The double SST basis}

In this subsection, we shall construct a double SST basis for the path algebra of the cyclotomic $q$-Schur category $\qSchu$:
 \begin{align} \label{eq:qSchum}
 \qSchu(m):= \bigoplus_{\mu,\nu\in \Lambda_{\text{st}}^\ell(m)}\Hom_{\qSchu}(\mu,\nu),
 \qquad \text{ for } m \ge 1.
 \end{align}
 Moreover, we shall identify the double SST basis for $\qSchu(m)$ with the cellular basis $\{\phi_{\bS \bT}\}$ for $\qSchuDJM$ in \eqref{basisofdjm}. 

 For any $\lambda\in \Par^\ell(m)$, let $\qSch^{\rhd\lambda}(m)$ be the span of all morphisms factoring  through another object $\gamma\in \Par^\ell(m)$ with $\gamma\rhd \lambda$. Equivalently, $\qSch^{\rhd\lambda}(m)$ is the two-sided ideal of the algebra $\qSchu(m)$ generated by all $1_{\gamma}$, for $\gamma\in \Par^\ell(m)$ with $\gamma\rhd \lambda$.

\begin{proposition}
 \label{pro:spancycdjm}
For any $\mu\in \Lambda^\ell_{\text{st}}(m)$ and $\lambda\in \emph{\Par}^\ell(m)$, we have
\[
\Hom_{\qSchu}(\lambda,\mu)+\qSch^{\rhd\lambda}(m)= \kk\text{-span} \big\{ [\bT] + \qSch^{\rhd\lambda}(m)\mid \bT\in \emph{\SST}(\lambda,\mu) \big\}.
\]
\end{proposition}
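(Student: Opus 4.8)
\textbf{Proof plan for Proposition~\ref{pro:spancycdjm}.}
The plan is to reduce the statement to the already-established spanning result for $\qASch$, namely Proposition~\ref{prop:spanqASch}, and then ``straighten'' elementary ribbon diagrams into SST-ribbon diagrams modulo the ideal $\qSch^{\rhd\lambda}(m)$. First I would recall that, by Proposition~\ref{prop:spanqASch} applied with the first component $\lambda^{(0)}=\emptyset$, every morphism in $\Hom_{\qSchu}(\lambda,\mu)$ is a $\kk$-linear combination of elementary ribbon diagrams $R$ whose underlying chicken foot ribbon $\overline R$ is encoded by a pair $(A,P)\in\RPMat_{\overline\lambda,\overline\mu}$; moreover, by the proof of Proposition~\ref{pro:spanqSchu}, after moving any dot packet $\omega_{a,\nu}$ with some $\nu_k<0$ leftward through the red strands (using \eqref{eq:movewhite}, \eqref{movedottofirststrand}, Lemma~\ref{lem:rightdot}) and killing the result by \eqref{eq:unitlambda}, we may assume all dot packets are genuine partitions, i.e.\ $(A,P)\in\PMat_{\lambda,\mu}$, and even $(A,P)\in\PMat_{\lambda,\mu}^\flat$. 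So the task is to express each such $R$, modulo $\qSch^{\rhd\lambda}(m)$, as a combination of the $[\bT]$ with $\bT\in\SST(\lambda,\mu)$.

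The key step is a Schur--Weyl style ``column-reading'' / straightening argument. Since $\lambda\in\Par^\ell(m)$, I would read the chicken foot ribbon diagram $\overline R$ column by column (in the block order fixed in \S\ref{subsec:SST}) and attempt to fill a $\lambda$-tableau: the entry $a_{(p,i),(q,j)}$ of $A$ records how many legs of ``color $i$ in block $p$'' enter the $j$th row of $\lambda^{(q)}$. The point is that modulo $\qSch^{\rhd\lambda}(m)$ --- i.e.\ modulo morphisms that factor through some $\gamma\rhd\lambda$ --- one may rearrange the merges at the top so that the resulting fillings are semistandard: any non-semistandard configuration (a column repetition, or a row-ordering violation) can be rewritten, using the relations \eqref{webassoc}, \eqref{mergesplit}, \eqref{squareswitch}--\eqref{squareswitch2}, \eqref{sliders} together with the balloon/cyclotomic vanishing Lemmas~\ref{lem:gru} and \ref{lem:cycpolyvanish}, as a sum of diagrams that either factor through a strictly dominating multipartition (hence lie in $\qSch^{\rhd\lambda}(m)$) or are already of SST type. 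This is exactly the $q$-analogue of the straightening used in \cite[Proposition~5.2]{SW24Schur}, and I would follow that argument closely: the cyclotomic relation \eqref{cyclotomicpolyi} is what forces the column of a leg passing to the left of all red strands $u_1,\dots,u_i$ to be bounded, which is precisely the semistandardness constraint $l(\eta_{(p,i),(q,j)})\le\min\{p,q\}-1$ already baked into $\PMat^\flat$; the leftover dot packets then get absorbed, again modulo $\qSch^{\rhd\lambda}(m)$, by moving them onto legs where the SST combinatorics still makes sense.

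The containment $\kk\text{-span}\{[\bT]+\qSch^{\rhd\lambda}(m)\}\subseteq\Hom_{\qSchu}(\lambda,\mu)+\qSch^{\rhd\lambda}(m)$ is immediate since each $[\bT]$ is literally a morphism in $\Hom_{\qSchu}(\lambda,\mu)$, so only the reverse inclusion needs the work above. I expect the main obstacle to be the bookkeeping in the straightening step: one must verify that every rewriting move either lands in $\qSch^{\rhd\lambda}(m)$ or strictly decreases some combinatorial statistic (e.g.\ the dominance order on the column-content, or the crossing degree $\cdeg$ together with the dot degree $\ddeg$ from Definitions~\ref{def:Cdegred} and \ref{def:dotdegred}), so that the procedure terminates. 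Since the relations \eqref{webassoc}--\eqref{sliders} and the cyclotomic vanishing lemmas are formally identical to their degenerate counterparts used in \cite{SW24Schur}, the induction goes through essentially verbatim, and I would invoke \cite[Proposition~5.2]{SW24Schur} as the template, supplying only the new inputs (the positive/negative crossing identification Lemma~\ref{lem:movedotsfreelyred}, and Lemmas~\ref{lem:gru}, \ref{lem:cycpolyvanish}) and omitting the repeated details.
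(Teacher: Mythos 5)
Your proposal is correct and takes essentially the same route as the paper: the paper's own proof simply reduces to the spanning set $\PMat^{\flat}_{\mu,\lambda}$ via \cref{pro:spanqSchu} and then declares the straightening of each $(A,P)$ modulo $\qSch^{\rhd\lambda}(m)$ into the span of the $[\bT]$ to be completely analogous to \cite[Proposition~5.2]{SW24Schur}, which is exactly the reduction-plus-straightening scheme you describe. The only (inessential) slip is an index transposition: for $\Hom_{\qSchu}(\lambda,\mu)$ the relevant spanning set is $\PMat^{\flat}_{\mu,\lambda}$ (equivalently $\RPMat_{\overline{\mu},\overline{\lambda}}$), not $\RPMat_{\overline{\lambda},\overline{\mu}}$.
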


\begin{proof}
Denote the span on the right-hand side in the formula above by $\text{Span}_{\lambda,\mu}$.
    By Proposition~\ref{pro:spanqSchu}, it suffices to show  that 
$(A,P)  +\qSch^{\rhd\lambda}(m)\in \text{Span}_{\lambda,\mu}$ for any 
$(A,P)\in\PMat_{\mu,\lambda}^\flat$. The proof of this statement is completely analogous to \cite[Proposition 5.2]{SW24Schur}. 
\end{proof}

 \begin{theorem} [Double SST basis]
 \label{thm:SSTbasis}
Let $m\ge 1$. 
\begin{enumerate}
\item 
For $\mu,\nu\in \Lambda_{\text{st}}^\ell(m)$, $\Hom_{\qSchu}(\mu,\nu)$ has a basis given  by 
\begin{align}  \label{doubleSST Hom}
\bigcup_{\lambda\in \Par^\ell(m)}
\big\{ [\bT]\circ [\bS]^\div \mid \bT\in \SST(\lambda,\nu), \bS\in \SST(\lambda,\mu) \big\}. 
\end{align}
\item 
The algebra $\qSchu(m)$ has a basis given  by 
\begin{align}  \label{doubleSST}
\bigcup_{\overset{\lambda\in \Par^\ell(m)}{\mu,\nu\in \Lambda_{\text{st}}^\ell(m)} }
\big\{ [\bT]\circ [\bS]^\div \mid \bT\in \SST(\lambda,\nu), \bS\in \SST(\lambda,\mu) \big\}. 
\end{align}
\end{enumerate}
\end{theorem}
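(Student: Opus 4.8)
\textbf{Proof proposal for Theorem~\ref{thm:SSTbasis} (Double SST basis).}
The plan is to deduce both statements from the spanning result in Proposition~\ref{pro:spancycdjm} together with a dimension/rank comparison against the cellular basis of $\Smu$ via the functor $\mathcal G$. Note that (2) is the direct sum of the statements in (1) over all $\mu,\nu\in\Lambda^\ell_{\text{st}}(m)$, so it suffices to prove (1). First I would fix $\lambda\in\Par^\ell(m)$ and, running Proposition~\ref{pro:spancycdjm} on both the source and the target, observe that the quotient $\qSchu(m)^{\unrhd\lambda}/\qSch^{\rhd\lambda}(m)$ — where $\qSchu(m)^{\unrhd\lambda}$ is the span of morphisms factoring through some $\gamma\unrhd\lambda$ — is spanned by the images of $[\bT]\circ[\bS]^\div$ with $\bS\in\SST(\lambda,\mu)$, $\bT\in\SST(\lambda,\nu)$. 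Summing the spanning statements of Proposition~\ref{pro:spanqSchu}/Proposition~\ref{pro:spancycdjm} over $\lambda$ (using the dominance filtration on $\qSchu(m)$ by the two-sided ideals $\qSch^{\rhd\lambda}(m)$), one gets that the set \eqref{doubleSST Hom} spans $\Hom_{\qSchu}(\mu,\nu)$.

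Next I would establish linear independence. Here the key tool is the functor $\mathcal G:\qSchu\to\qSchuDJM$ from Theorem~\ref{thm:G} and Proposition~\ref{pro:propG}, which tells us that $\mathcal G$ sends $[\bT]\circ[\bS]^\div$ to the cellular basis element $\phi_{\bS\bT}$ of $\Smu$ from \eqref{basisofdjm}. Since $\{\phi_{\bS\bT}\}$ is a genuine basis of $\Smu$ (Dipper--James--Mathas), the elements $\mathcal G([\bT]\circ[\bS]^\div)$ are linearly independent in $\Hom_{\qSchuDJM}(\mu,\nu)$, hence the $[\bT]\circ[\bS]^\div$ are linearly independent in $\Hom_{\qSchu}(\mu,\nu)$. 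Combined with the spanning statement, this proves \eqref{doubleSST Hom} is a basis, giving (1); taking the direct sum over $\mu,\nu$ gives (2). As a bonus this argument shows $\mathcal G$ is an isomorphism on Hom-spaces (needed later for Theorem~\ref{thm:isoDJM}), but for the present theorem only the injectivity of $\mathcal G$ on the span of these SST-morphisms is used.

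The main obstacle I anticipate is organizing the dominance filtration correctly so that the spanning argument of Proposition~\ref{pro:spancycdjm} assembles into a global spanning statement for $\Hom_{\qSchu}(\mu,\nu)$: one must check that every morphism lies in $\qSchu(m)^{\unrhd\lambda}$ for the minimal relevant $\lambda$ and that passing to successive quotients by $\qSch^{\rhd\lambda}(m)$ exhausts the Hom-space, i.e.\ that $\qSch^{\rhd\lambda}(m)=0$ for $\lambda$ maximal and that the filtration is finite and separated. This is exactly the cellular-algebra style bookkeeping; it is routine but must be done carefully, and it mirrors the corresponding step in \cite[\S5]{SW24Schur}. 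A secondary point to be careful about is that the SST-morphisms $[\bT]$, a priori only well-defined modulo lower crossing/dot degree (and modulo $\qSch^{\rhd\lambda}(m)$), give well-defined elements after the counting forces the spanning set to have the right cardinality; once linear independence via $\mathcal G$ is in hand, any such ambiguity disappears. I expect no essentially new computation beyond what is already assembled in Propositions~\ref{pro:spanqSchu}--\ref{pro:propG}, so the proof can be kept short, referring to \cite{SW24Schur} for the filtration details and to \cite{DJM98} for cellularity of $\Smu$.
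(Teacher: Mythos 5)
Your proposal is correct and follows essentially the same route as the paper: spanning is obtained by combining Proposition~\ref{pro:spancycdjm} with its image under $\div$ and inducting along the dominance order, and linear independence follows because $\mathcal G$ sends the proposed set to the Dipper--James--Mathas cellular basis \eqref{basisofdjm} by Proposition~\ref{pro:propG}. The paper's actual proof (given jointly for Theorems~\ref{thm:SSTbasis} and~\ref{thm:isoDJM}) is exactly this argument, including the observation that parts (1) and (2) are equivalent by definition.
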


Recall the functor $\mathcal G: \qSchu\rightarrow \qSchuDJM$ from \cref{thm:G}. 

\begin{theorem} 
 \label{thm:isoDJM}
Let $m\ge 1$. 
\begin{enumerate}
\item The functor $\mathcal G: \qSchu\rightarrow \qSchuDJM$ is an isomorphism.
\item 
For any $m\ge 1$, we have an algebra isomorphism $\mathcal G: \qSchu(m)\cong \Smu$, which sends the double SST basis \eqref{doubleSST} to the cellular basis \eqref{basisofdjm}.    
\end{enumerate}
\end{theorem}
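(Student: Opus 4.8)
\textbf{Proof proposal for Theorems \ref{thm:SSTbasis} and \ref{thm:isoDJM}.}

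The plan is to prove these two theorems together, since they are tightly interlocked: the basis statement for $\qSchu(m)$ will be deduced from the cellularity of $\Smu$ via the functor $\mathcal G$, and conversely the isomorphism $\mathcal G$ will follow once we know both sides have matching bases. First I would establish the ``upper bound'' direction. By Proposition \ref{pro:spanqSchu}, $\Hom_{\qSchu}(\mu,\lambda)$ is spanned by $\PMat_{\lambda,\mu}^\flat$; combining this with Proposition \ref{pro:spancycdjm} (applied with the dominance filtration by $\qSch^{\rhd\lambda}(m)$), an induction on the dominance order on $\Par^\ell(m)$ shows that the set \eqref{doubleSST} spans $\qSchu(m)$. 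Concretely, for each $\lambda$ one uses Proposition \ref{pro:spancycdjm} to write any morphism factoring through $\lambda$ (modulo $\qSch^{\rhd\lambda}(m)$) as a combination of $[\bT]$'s with $\bT\in\SST(\lambda,\mu)$; applying $\div$ to the dual side and iterating down the dominance order yields that \eqref{doubleSST Hom}, hence \eqref{doubleSST}, is a spanning set.

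Next I would show this spanning set maps under $\mathcal G$ to a linearly independent set. By Proposition \ref{pro:propG}, $\mathcal G$ sends $[\bT]\circ[\bS]^\div$ to $\phi_{\bS\bT}$, and by \eqref{basisofdjm} the collection $\{\phi_{\bS\bT}\}$ over all $\lambda\in\Par^\ell(m)$, $\bS\in\SST(\lambda,\mu)$, $\bT\in\SST(\lambda,\nu)$, $\mu,\nu\in\Lambda_{\text{st}}^\ell(m)$ is precisely the cellular basis of $\Smu$, hence linearly independent in $\qSchuDJM$. Therefore the elements \eqref{doubleSST} are linearly independent in $\qSchu(m)$, so they form a basis; this proves \cref{thm:SSTbasis}(2), and restricting to a single Hom-space gives part (1). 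Simultaneously, since $\mathcal G$ carries a basis to a basis (bijectively, as the index sets match), $\mathcal G$ is an isomorphism of $\kk$-linear categories on the relevant full subcategories, and the induced algebra map $\mathcal G\colon\qSchu(m)\to\Smu$ is an isomorphism sending \eqref{doubleSST} to \eqref{basisofdjm}. This establishes \cref{thm:isoDJM}. Finally, \cref{th:DJMiso} is the amalgamation: part (1) is \cref{thm:SSTbasis}(1), part (2) is \cref{thm:isoDJM}(1), and part (3) is \cref{thm:isoDJM}(2) together with the path-algebra identification $\qSchu(m)\cong\Smu$.

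The main obstacle is the spanning step, i.e.\ upgrading Proposition \ref{pro:spanqSchu} and Proposition \ref{pro:spancycdjm} to the clean double-SST spanning statement \eqref{doubleSST Hom}. One must carefully run the induction on the dominance order: at the top of the order $\Par^\ell(m)$ the ideal $\qSch^{\rhd\lambda}(m)$ is zero (or handled by the largest multipartition), and descending, one must check that the ``error terms'' produced when rewriting a $\PMat^\flat$ diagram as a combination of $[\bT]$'s genuinely lie in $\qSch^{\rhd\lambda}(m)$ — this is exactly the content mirroring \cite[Proposition 5.2]{SW24Schur}, but it requires knowing that factoring through a dominant $\gamma$ is compatible with the chicken-foot/ornamentation combinatorics. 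Once spanning is in hand, linear independence is essentially free because it is pulled back from the known cellular basis of $\Smu$ through $\mathcal G$; so the delicacy is entirely on the combinatorial/diagrammatic side, and I would lean on the analogy with the degenerate case in \cite{SW24Schur} to control it, checking only that the $q$-deformation introduces no new obstruction (the positive/negative crossing distinction is already absorbed modulo lower crossing degree by Lemmas \ref{lem:movedotsfreely} and \ref{lem:movedotsfreelyred}).
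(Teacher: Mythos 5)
Your proposal follows essentially the same route as the paper's own (combined) proof: spanning of \eqref{doubleSST} via Propositions \ref{pro:spanqSchu} and \ref{pro:spancycdjm} together with its $\div$-image and induction on the dominance order, then linear independence pulled back from the cellular basis \eqref{basisofdjm} through $\mathcal G$ using Proposition \ref{pro:propG}, with the isomorphism statement following from the matching of bases. No substantive differences or gaps.
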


\begin{proof} [Proof of \cref{thm:SSTbasis,thm:isoDJM}]
We first prove \cref{thm:SSTbasis}. Parts (1) and (2) are clearly equivalent by definition. Applying \(\div\) to the equation in Proposition~\ref{pro:spancycdjm} results in a similar equation for \(\Hom_{\qSchu}(\mu, \lambda) + \qSch^{\rhd\lambda}(m)\). Combining this equation with the equation in Proposition~\ref{pro:spancycdjm} (for \(\mu = \nu\)), we obtain that \(\qSchu(m) 1_\lambda \qSchu(m) + \qSch^{\rhd\lambda}(m)\) is spanned by 
\[
\big\{[\bT] \circ [\mathbf{S}]^\div + \qSch^{\rhd\lambda}(m) \mid \bT \in \SST(\lambda,\nu), \mathbf{S} \in \SST(\lambda,\mu), \mu, \nu \in \Lambda_{\text{st}}^\ell(m)\big\}.
\]
Therefore, it follows by induction on \(\lambda\) that \(\qSchu(m)\) is spanned by the elements in \eqref{doubleSST}. By Proposition~\ref{pro:propG}, this spanning set \eqref{doubleSST} is mapped by \(\mathcal{G}: \qSchu(m) \to \Sc_{m,\bfu}\) to the cellular basis \eqref{basisofdjm} for \(\Sc_{m,\bfu}\) and hence must be linearly independent. This proves that \eqref{doubleSST} forms a basis for \(\qSchu(m)\).

Now we turn to the proof of \cref{thm:isoDJM}, where Part (1) follows from (2). Part (2) follows since the homomorphism \(\mathcal{G}: \qSchu(m) \to \Sc_{m,\bfu}\) matches the two bases. 
\end{proof}

Recall $\PMat_{\nu,\mu}^\flat$ from \eqref{PM0}.
 \begin{theorem}
     \label{thm:basisqSchu}
    Suppose that $\mu,\nu\in \Lambda_{\text{st}}^\ell(m)$. Then  $\PMat_{\nu,\mu}^\flat$ forms a basis for $\Hom_{\qSch_{\bfu}}(\mu,\nu)$.
\end{theorem}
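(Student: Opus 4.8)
The plan is to combine the spanning result of Proposition~\ref{pro:spanqSchu} with the double SST basis theorem (\cref{thm:SSTbasis}). By Proposition~\ref{pro:spanqSchu}, the Hom-space $\Hom_{\qSch_\bfu}(\mu,\nu)$ is spanned by (the images of) the elements of $\PMat_{\nu,\mu}^\flat$, so it remains only to establish linear independence, i.e.\ to show that $|\PMat_{\nu,\mu}^\flat|$ is at most the dimension of the Hom-space. First I would invoke \cref{thm:SSTbasis}(1): the dimension of $\Hom_{\qSch_\bfu}(\mu,\nu)$ equals the number of pairs $([\bT],[\bS])$ with $\bS\in\SST(\lambda,\mu)$, $\bT\in\SST(\lambda,\nu)$, $\lambda\in\Par^\ell(m)$; equivalently, via the isomorphism $\mathcal G$ of \cref{thm:isoDJM} and the cellular basis \eqref{basisofdjm}, it equals $\dim_\kk\Hom_{\Hu}(M^\mu,M^\nu)$.

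Second, the core of the argument is a bijection (or at least an equinumerosity) between $\PMat_{\nu,\mu}^\flat$ and the set of double-SST pairs $\bigcup_{\lambda\in\Par^\ell(m)}\SST(\lambda,\mu)\times\SST(\lambda,\nu)$. This is a purely combinatorial statement: a bounded-partition-enhanced block matrix $(A,P)\in\PMat_{\nu,\mu}^\flat$, with $A\in\Mat_{\bar\nu,\bar\mu}$ and each $\eta_{(p,i),(q,j)}\in\Par_{a_{(p,i),(q,j)}}$ subject to $l(\eta_{(p,i),(q,j)})\le\min\{p,q\}-1$, should be decoded into a triple $(\lambda,\bS,\bT)$. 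I expect this to follow the same pattern as the corresponding statement in the degenerate setting \cite{SW24Schur}: the matrix $A$ together with the partition decorations records how a reduced chicken-foot ribbon factors through some $\lambda\in\Par^\ell(m)$, with the length bound $\min\{p,q\}-1$ on each $\eta$ exactly matching the constraint that a semistandard tableau of multipartition shape in component $p$ can have entries from at most $p-1$ earlier components contributing to a given box-packet, so that the number of dot-packet configurations matches the number of semistandard fillings. I would carry this out by first checking the count in the case $\ell=1$ (where $\PMat_{\nu,\mu}^\flat$ reduces to $\Mat_{\nu,\mu}$ with no dots, matching the classical Schur-algebra dimension count of \cite{Bru24}), and then handling the general $\ell$ by the block decomposition, exactly paralleling \cite[\S5]{SW24Schur}.

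Finally, once the equinumerosity $|\PMat_{\nu,\mu}^\flat|=\dim_\kk\Hom_{\qSch_\bfu}(\mu,\nu)$ is in hand, the spanning set of Proposition~\ref{pro:spanqSchu} must be a basis, since a spanning set whose cardinality equals the dimension of a free module is automatically a basis. I would write the proof as: by Proposition~\ref{pro:spanqSchu} the set $\PMat_{\nu,\mu}^\flat$ spans $\Hom_{\qSch_\bfu}(\mu,\nu)$; by \cref{thm:SSTbasis}(1) this Hom-space is free of rank equal to the number of double-SST pairs of multipartition shape; a combinatorial bijection identifies $\PMat_{\nu,\mu}^\flat$ with that set of pairs; hence $\PMat_{\nu,\mu}^\flat$ is linearly independent and therefore a basis.

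The main obstacle will be the combinatorial bijection between $\PMat_{\nu,\mu}^\flat$ and the double-SST pairs — in particular verifying that the boundedness condition $l(\eta_{(p,i),(q,j)})\le\min\{p,q\}-1$ precisely accounts for the semistandardness constraints, so that no overcounting or undercounting occurs. This is the step that genuinely uses the cyclotomic relations (via the balloon relations of Lemma~\ref{lem:gru} and Lemma~\ref{lem:cycpolyvanish}, which force $l(\eta)\le p-1$ in the $p$-th block) rather than formal nonsense; everything else is bookkeeping already essentially carried out in the degenerate case \cite{SW24Schur}.
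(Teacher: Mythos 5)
Your proposal is correct and follows essentially the same route as the paper: the paper likewise combines the spanning statement of Proposition~\ref{pro:spanqSchu} with the double SST basis of \cref{thm:SSTbasis} and the equinumerosity $|\PMat^\flat_{\nu,\mu}|=|\SST^{\,2}_{\nu,\mu}|$, except that it simply cites the degenerate-case count (\cite[Theorem 5.6]{SW24Schur}) rather than re-deriving the bijection. One small correction: the cyclotomic/balloon relations are used only in the spanning argument (to force $l(\eta)\le\min\{p,q\}-1$), while the cardinality comparison itself is purely combinatorial and does not invoke them.
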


\begin{proof}
    By \cref{thm:SSTbasis}, $\Hom_{\qSchu}(\mu,\nu)$ has a basis given by \eqref{doubleSST Hom}, which is parametrized by $\SST^{\,2}_{\nu,\mu} := \bigcup_{\lambda\in \Par^\ell(m)}\{(\mathbf S, \mathbf T) \mid \mathbf S \in \SST(\lambda,\nu), \mathbf T\in \SST(\lambda,\mu) \}$. It is known (see \cite[Theorem 5.6]{SW24Schur}) that $\PMat^\flat_{\nu,\mu}$ and $\SST^{\,2}_{\nu,\mu}$ have the same cardinality. As  $\PMat^\flat_{\nu,\mu}$ spans $\Hom_{\qSchu}(\mu,\nu)$ (see Proposition~\ref{pro:spanqSchu}),  it must be a basis for it.
\end{proof}

\begin{rem}
We may also define the cyclotomic higher level affine Hecke category as the quotient of the higher level affine Hecke category in \S~\ref{subsec:iso-MS} by the relation \eqref{eq:unitlambda}. Then the path algebra of this quotient category is the same as the cyclotomic $\ell$-Hecke algebra in \cite[Definition 6.1]{MS21} (also see \cite{Web20}), which is defined to be the quotient of the higher level affine Hecke algebra by the ideal generated by all  $1_{\lambda}$ such that  $\lambda^{(0)}\neq \emptyset$. 
 
Maksimau and Stroppel defined a cyclotomic $q$-Schur algebra as the endomorphism ring of certain permutation modules of the cyclotomic $\ell$-Hecke algebra \cite[Definition~ 6.15]{MS21}, which is a quotient of the higher level affine Schur algebra. They also showed that their cyclotomic $q$-Schur algebra is isomorphic to Dipper-James-Mathas'  cyclotomic $q$-Schur algebra \cite[Remark 6.16]{MS21}, and hence the cyclotomic $q$-Schur algebra in \cite{MS21} is isomorphic to our $\qSchu(m)$  by Theorem \ref{thm:basisqSchu}(3).  
\end{rem}
\subsection{Bases for $\qW_\bfu$}
Recall the set
 $\PMat_{\lambda,\mu}$ in \eqref{euq:defofparm} which is a subset of  $\RPMat_{\lambda,\mu}$ \eqref{dottedreduced} and hence maybe viewed as   morphisms in $\qW_\bfu$. 
 For any matrix  $ P=(\nu_{ij})_{l(\lambda)\times l(\mu)}$ of partitions, denote 
\[
l(P)=\max\{l(\nu_{ij})\mid 1\le i\le l(\lambda), 1\le j\le l(\mu)\}.
\]
We define
\begin{equation}
\label{Def-spansetof-cycqweb}    
\PMat_{\lambda,\mu}^\ell:=\{(A,P)\in\PMat_{\lambda,\mu}\mid l(P)\le \ell-1\}.
\end{equation}

\begin{lemma}
\label{lem:cycqweb-span}
For any $\lambda,\mu \in\Lambda_{\text{st}}(m)$, $\Hom_{\qW_{\bfu}}(\mu,\lambda)$ is spanned  by $ \PMat_{\lambda,\mu}^\ell$.    
\end{lemma}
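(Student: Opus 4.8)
The plan is to prove \cref{lem:cycqweb-span} by transporting the spanning result from the affine $q$-web category $\qAW$ through the quotient functor $\qAW \to \qW_\bfu$. The starting point is \cref{thm:basisqAW} (or rather \cref{prop:spanqAW}): for $\lambda,\mu\in\Lambda_{\text{st}}(m)$, the Hom-space $\Hom_{\qAW}(\mu,\lambda)$ is spanned by the elementary ribbon diagrams $\RPMat_{\lambda,\mu}$, i.e.\ reduced dotted chicken foot ribbon diagrams where each leg of thickness $a$ carries an elementary dot packet $\omega_{a,\nu}$ with $\nu\in\RPar_a$. Since $\qW_\bfu$ is by definition the quotient of $\qAW$ by the right tensor ideal generated by $\prod_{1\le j\le\ell}g_r(u_j)$ for $r\ge1$, the images of $\RPMat_{\lambda,\mu}$ already span $\Hom_{\qW_\bfu}(\mu,\lambda)$. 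So the task reduces to showing that, modulo this ideal, every such elementary ribbon diagram can be rewritten as a $\kk$-linear combination of elements of $\PMat^\ell_{\lambda,\mu}$, namely those whose dot packets $\nu_{ij}$ are genuine partitions (so in $\Par_{a_{ij}}$, not merely $\RPar_{a_{ij}}$) and, moreover, have length $l(\nu_{ij})\le\ell-1$.

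The argument naturally splits into two reductions, each handled by an induction that keeps crossing degree and dot degree under control (so that the combinatorial data — the matrix $A\in\Mat_{\lambda,\mu}$ recording the underlying chicken foot diagram — is preserved modulo lower-order terms, exactly as in the proofs of \cref{prop:spanqAW} and \cref{pro:spanqSchu}). First I would show that negative-index dot packets can be eliminated: using \cref{lem:rightdot} and \cref{lem:dots2strands} (the relations \eqref{dots 2str} and \eqref{splitmerge}) together with \cref{lem:blamu}, any $\omega_{a,\nu}$ with $\nu\in\RPar_a$ can be expressed in terms of $\omega_{a,\eta}$ with $\eta\in\Par_a$ after merging adjacent legs; more directly, one pushes the offending strand along the merges/splits so that $\omega_{a,-s}$-type dots get absorbed — this is the mechanism already used in \cref{pro:spanqSchu} via \eqref{eq:omega2} and the argument around Claim $(\star)$ in \cref{lem:spanDa}. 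Second, and this is the genuinely new input for the cyclotomic quotient, I would use the balloon relation \cref{lem:gru}: the defining relation $\prod_{1\le j\le\ell}g_r(u_j)\equiv 0$ in $\qW_\bfu$, after expanding each $g_r(u_j)$ as in \eqref{def-gau} and clearing the $[r]!$ factor via \eqref{intergralballon}, yields a relation expressing a leading dot packet $\omega_{a,(1^\ell)}$-type term (the one with a column of length $\ell$) as a combination of dot packets with strictly fewer boxes in that direction — concretely, $\omega_{r}^{\,\ell}$ plus lower terms vanishes, which lets one reduce any $\omega_{a,\nu}$ with $l(\nu)\ge\ell$ to shorter ones. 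Induction on $l(\nu)$ (or on $\sum_{ij}l(\nu_{ij})$) then drives every dot packet into $\Par_a$ with length $\le\ell-1$.

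Let me say more precisely how the balloon relation feeds in, since that is where I expect the main obstacle to lie. The relation $\prod_{j}g_r(u_j)\equiv 0$ holds as an endomorphism of the thin-strand object, hence after bubbling it up to thickness $a$ via \eqref{intergralballon} it holds as a scalar multiple of an identity-with-dots on $\stra$; composing with merges and splits lets one insert it at the top of any leg of a chicken foot diagram. The subtlety is that $\prod_j g_r(u_j)$, when fully expanded, is not simply $\omega_r^\ell$ up to scalar — it is $\prod_j(\omega_r - u_j)$ in the commutative algebra $D_r$ of \cref{def:Da}, expanded in the $\omega_{r,h}$ basis. One must check that the top-degree term (with respect to the dot-length filtration $D_r^{+,k}$ of \cref{lem:blamu}) is exactly the length-$\ell$ column packet with a nonzero scalar coefficient, so that the relation can be solved for it; the $u_j\in\kk^*$ hypothesis ensures this leading coefficient (essentially $1$) does not degenerate. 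Granting this, the induction is routine: given a ribbon diagram with some $l(\nu_{ij})\ge\ell$, replace $\omega_{a_{ij},\nu_{ij}}$'s offending length-$\ell$ sub-packet using the solved balloon relation, producing diagrams with strictly smaller total dot-length, which by induction lie in $\PMat^\ell_{\lambda,\mu}$. I would also remark that this is precisely the $q$-analogue of the argument in \cite{SW24web} for the degenerate cyclotomic web category, so the combinatorial bookkeeping (that the underlying matrix $A$ is unchanged and only the partition decorations shrink) can be cited from there rather than redone in full. The hard part will be the careful verification of the leading term in the expansion of $\prod_j g_r(u_j)$ and the compatibility of the dot-length and crossing-degree filtrations under composition with merges and splits.
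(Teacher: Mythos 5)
Your overall two-step plan (first remove the negative parts of the rational partitions, then cut the partition lengths down to $\ell-1$ via the balloon relation, citing the degenerate case of \cite{SW24web} for the combinatorial bookkeeping) has the same shape as the paper's argument, and your second step is essentially what the paper does. The gap is in your first step. You propose to eliminate the negative-index dot packets using only relations that already hold in the affine category $\qAW$ (\cref{lem:rightdot}, \cref{lem:dots2strands}, \cref{lem:blamu}, and Claim~$(\star)$ of \cref{lem:spanDa}). This cannot work: by \cref{cor:End1a} the elements $\omega_{a,\lambda}$ for $\lambda\in\RPar_a$ form a \emph{basis} of $\End_{\qAW}(\stra)$ (it is the full ring of symmetric Laurent polynomials), so a packet with a negative part is linearly independent from all packets indexed by genuine partitions, and no affine relation can absorb it. Claim~$(\star)$ and \eqref{eq:omega2} only rewrite mixed products $\omega_{a,r}\omega_{a,-s}$ back inside the $\RPar_a$-span; they never take you from $\RPar_a$ into $\Par_a$. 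Likewise the red-strand mechanism of \cref{pro:spanqSchu} is unavailable here, since $\qW_\bfu$ has no red strands.

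The elimination of hollow dots is exactly where the cyclotomic relation and the hypothesis $\bfu\in(\kk^*)^\ell$ enter. Expanding \eqref{def-gau}, the relation $\prod_{1\le j\le\ell}g_a(u_j)1_*=0$ in $\qW_\bfu$ has \emph{constant} term $\prod_j(-u_j)^a\neq 0$; multiplying both sides by the hollow dot $\zdota\,1_*$ (after sliding it to the leftmost strand through the crossings via \eqref{eq:zdotmovecrossing} and \eqref{unfold}) lets one solve for $\zdota$ as a combination of diagrams with only solid dots, and an induction on the thickness $a$, using \eqref{eq:dotSM2} to handle the mixed terms $\omega_{a,r}\,\zdota$, finishes the reduction to $\Par_a$. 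Note also that you invoke $u_j\in\kk^*$ to guarantee that the coefficient of the length-$\ell$ packet $\omega_{a,(a^\ell)}$ in $\prod_j g_a(u_j)$ is nonzero, but that coefficient is $1$ for arbitrary $u_j$; what the invertibility of the $u_j$ actually buys is the nonvanishing of the constant term, i.e.\ precisely the white-dot elimination that your first step is missing.
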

\begin{proof}
    First we note that by \eqref{eq:zdotmovecrossing} and \eqref{unfold}, we have 
    \begin{equation}
\label{movewdottofirststrand}
  \begin{tikzpicture}[baseline = -1mm,scale=0.8,color=\clr]
	\draw[-,thick] (0.2,-.6) to (0.2,.6);
	\draw[-,thick] (-0.2,-.6) to (-0.2,.6);
        \node at (0.2,-.75) {$\scriptstyle b$};
        \node at (-0.2,-.75) {$\scriptstyle a$};
        \draw (.2, 0) \zdot;
\end{tikzpicture}
=
\begin{tikzpicture}[baseline = -1mm,scale=0.8,color=\clr]
	\draw[-,thick] (-0.28,0) to[out=90,in=-90] (0.28,.6);
    \draw[wipe]  (0.28,0) to[out=90,in=-90] (-0.28,.6);
	\draw[-,thick] (0.28,0) to[out=90,in=-90] (-0.28,.6);
    \draw[-,thick] (-0.28,-.6) to [out=90,in=-90] (0.28,0);
        \draw[wipe] (0.28,-.6) to[out=90,in=-90] (-0.28,0);
	\draw[-,thick] (0.28,-.6) to[out=90,in=-90] (-0.28,0);
        \node at (0.3,-.75) {$\scriptstyle b$};
        \draw (-.28, 0) \zdot;
        \node at (-0.3,-.75) {$\scriptstyle a$};
\end{tikzpicture}.
\end{equation}
We claim that each local component  $\zdota$ can be written as a linear combination of diagrams without white dots. 
In fact,
We may prove the claim by induction on $a$ using   \eqref{movewdottofirststrand}
and the fact $\Pi_{1\le j\le \ell }g_{a}(u_j)1_*=0$, for $a\in \Z_{\ge 1}$ in $\qW_{\bfu}$.
 Suppose $a=1$. if $\zdota$ is on  the left most of the diagram, then 
multiplying   $\zdota 1_*$ on both sides of 
$\Pi_{1\le j\le \ell }g_{1}(u_j)1_*=0 $ we see that the claim holds. 
If it is not on the leftmost, we may use   \eqref{movewdottofirststrand} to move it to the leftmost and the claim holds, too.

Suppose $a>1$. By \eqref{movewdottofirststrand} we may assume that   $\zdota$ is at the leftmost without loss of any generality.  Then  multiplying   $\zdota 1_*$ on both $\Pi_{1\le j\le \ell }g_{a}(u_j)1_*=0$ and use the fact that 
\[ \begin{tikzpicture}[baseline = -2.4mm,scale=1, color=\clr]
\draw[-,line width=1.2pt] (0.08,-.3) to (0.08,-.5);
\draw[-,line width=1.2pt] (0.08,.7) to (0.08,.3);
\draw (-.1,0) \bdot;
\draw (.07,0.5) \zdot;
\draw[-,line width=1pt](0.08,.32) to [out=left,in=left] (0.08, -.32);
\draw[-,line width=1pt](0.08,.32) to [out=right,in=right] (0.08, -.32);
\node at (-0.16,-.28) {$\scriptstyle r$};
\node at (0.08,-.59) {$\scriptstyle a$};
\end{tikzpicture}\overset{\eqref{eq:dotSM2}}=\begin{tikzpicture}[baseline = -2.4mm,scale=1, color=\clr]
\draw[-,line width=1.2pt] (0.08,-.3) to (0.08,-.5);
\draw[-,line width=1.2pt] (0.08,.7) to (0.08,.3);
\draw[-,line width=1pt](0.08,.32) to [out=left,in=left] (0.08, -.32);
\draw[-,line width=1pt](0.08,.32) to [out=right,in=right] (0.08, -.32);
\node at (-0.16,-.28) {$\scriptstyle r$};
\node at (0.08,-.59) {$\scriptstyle a$};
\draw (.25,0) \zdot;
\end{tikzpicture},  \]
we see that the claim holds by inductive hypothesis on $a-r$. This completes the proof of the claim.

The claim implies that   $\Hom_{\qW_{\bfu}}(\mu,\lambda)$ is spanned by diagrams without white dots. Then the remaining proof of the result is similar to \cite[Lemma 4.2]{SW24web}. We skip the details here.
\end{proof}

\begin{theorem}
\label{thm:basis-cyc-qweb} 
Suppose that ${\bfu} =(u_1, \ldots, u_\ell) \in(\kk^*)^\ell$. Then $\Hom_{\qW_{\bfu}}(\mu,\lambda)$ has a basis given by $\PMat_{\lambda,\mu}^\ell$, for any $\lambda,\mu \in\Lambda_{\text{st}}(m)$. Consequently, $\qW_\bfu$ is a full subcategory of $\qSchu$ with objects $\{(\emptyset^\ell,\mu)\mid \mu\in \Lambda_{\text{st}}\}$.   
\end{theorem}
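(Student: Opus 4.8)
The plan is to deduce the basis statement for $\qW_\bfu$ from the already-established spanning result (Lemma~\ref{lem:cycqweb-span}) together with the basis theorem for $\qSchu$ (\cref{thm:basisqSchu}). First I would set up the obvious comparison functor $\iota\colon\qW_\bfu\to\qSchu$, induced by the composition $\qAW\hookrightarrow\qASch\to\qSchu$ (using that $\qAW$ is a full subcategory of $\qASch$ by \cref{thm:basisqAW}), which sends the objects $\mu\in\Lambda_{\text{st}}(m)$ to $(\emptyset^\ell,\mu)\in\Lambda_{\text{st}}^{\emptyset,\ell}(m)$ and matches all generating morphisms with their counterparts. This functor is well-defined on the cyclotomic quotient $\qW_\bfu$: the relation $\prod_{1\le j\le\ell}g_r(u_j)=0$ that defines $\qW_\bfu$ must hold in $\qSchu$ as well, which is exactly the content of Lemma~\ref{lem:cycpolyvanish} (for $i=\ell$) combined with the vanishing relation \eqref{eq:unitlambda}. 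Concretely, $\prod_j g_r(u_j)$ viewed as an endomorphism of the object $(r)$ becomes, after slotting it between the red strands $\red{u_1}\ldots\red{u_\ell}$, a diagram that is killed because it passes through an object with nonempty $0$-th component; pushing this through the balloon relation of Lemma~\ref{lem:gru} and Lemma~\ref{lem:cycpolyvanish} shows the image of the tensor ideal generated by $\prod_j g_r(u_j)$ is zero.

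\textbf{Key steps.} Once $\iota\colon\qW_\bfu\to\qSchu$ is established, the argument is:
\begin{enumerate}
\item By Lemma~\ref{lem:cycqweb-span}, $\Hom_{\qW_\bfu}(\mu,\lambda)$ is spanned by $\PMat^\ell_{\lambda,\mu}$; so it suffices to prove linear independence of the images of these diagrams in $\Hom_{\qSchu}((\emptyset^\ell,\mu),(\emptyset^\ell,\lambda))$.
\item Identify the image under $\iota$ of a diagram $(A,P)\in\PMat^\ell_{\lambda,\mu}$: it is an elementary ribbon diagram in which the underlying chicken foot ribbon is the one encoded by $(A,P)$ (with dot packets $\omega_\nu$, $\nu\in\Par_a$, $l(\nu)\le\ell-1$), placed to the right of all red strands $\red{u_1},\ldots,\red{u_\ell}$ (since there are no red strands in $\qW_\bfu$, nothing crosses them). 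In the block-matrix language of \eqref{PM0}, the block structure is concentrated in the bottom-right corner, i.e.\ all nonzero entries of $A$ sit in block position $(\ell,\ell)$; such a diagram visibly lies in $\PMat^\flat_{(\emptyset^\ell,\lambda),(\emptyset^\ell,\mu)}$ precisely because the partition-length bound $l(\nu)\le\ell-1=\min\{\ell,\ell\}-1$ is exactly the defining condition of $\PMat^\flat$ in block position $(\ell,\ell)$.
\item Therefore $\iota$ sends the spanning set $\PMat^\ell_{\lambda,\mu}$ injectively into the basis $\PMat^\flat_{(\emptyset^\ell,\lambda),(\emptyset^\ell,\mu)}$ of $\Hom_{\qSchu}((\emptyset^\ell,\mu),(\emptyset^\ell,\lambda))$ furnished by \cref{thm:basisqSchu}. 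Injectivity on index sets is clear since distinct $(A,P)$ give distinct block matrices. Hence the images are linearly independent, so $\PMat^\ell_{\lambda,\mu}$ is linearly independent in $\Hom_{\qW_\bfu}(\mu,\lambda)$ and thus a basis.
\item The ``consequently'' clause follows immediately: $\iota$ is faithful (just shown) and full onto the full subcategory of $\qSchu$ on the objects $\{(\emptyset^\ell,\mu)\mid\mu\in\Lambda_{\text{st}}\}$, because $\PMat^\flat_{(\emptyset^\ell,\lambda),(\emptyset^\ell,\mu)}$ coincides with $\PMat^\ell_{\lambda,\mu}$ under the identification of step (2) --- in block position $(\ell,\ell)$ the two length conditions agree, and no other block can be nonzero when the source and target both have all mass in the $\ell$-th component.
\end{enumerate}

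\textbf{Main obstacle.} The routine parts are the spanning (already done) and the combinatorial bijection $\PMat^\ell_{\lambda,\mu}\cong\PMat^\flat_{(\emptyset^\ell,\lambda),(\emptyset^\ell,\mu)}$, which is essentially bookkeeping. The one genuinely delicate point is verifying that the functor $\iota$ is \emph{well-defined}, i.e.\ that the cyclotomic relations of $\qW_\bfu$ are consequences of the defining relations of $\qSchu$ after applying $\iota$ --- equivalently, that $\iota$ factors through the quotient $\qAW\twoheadrightarrow\qW_\bfu$. This requires carefully translating the balloon element $g_r(u)$ of Lemma~\ref{lem:gru} and the iterated product $g_{r,\ell}=\prod_{j}g_r(u_j)$ of \eqref{eq:gri} into the red-strand picture and invoking Lemma~\ref{lem:cycpolyvanish} together with \eqref{eq:unitlambda}; since the relevant lemmas are all in place, this is a short verification, but it is the step where one must be most careful about conventions (placement of red strands, direction of dot packets, and the identification $\mu\leftrightarrow(\emptyset^\ell,\mu)$). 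I expect no substantive difficulty beyond this, as the whole argument parallels the relationship between $\qAW$ and $\qASch$ established in \cref{thm:basisqAW}.
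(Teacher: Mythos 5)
Your proposal is correct and follows essentially the same route as the paper: both construct the comparison functor $\qW_\bfu\to\qSchu$ sending $\mu\mapsto(\emptyset^\ell,\mu)$ (well-defined by Lemma~\ref{lem:cycpolyvanish}), and then observe that it carries the spanning set $\PMat^\ell_{\lambda,\mu}$ of Lemma~\ref{lem:cycqweb-span} bijectively onto the basis $\PMat^\flat_{(\emptyset^\ell,\lambda),(\emptyset^\ell,\mu)}$ from \cref{thm:basisqSchu}, forcing linear independence and fullness. Your identification of the length bound $l(\nu)\le\ell-1=\min\{\ell,\ell\}-1$ in block position $(\ell,\ell)$ is exactly the bookkeeping the paper leaves implicit.
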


\begin{proof}
There is a natural functor $\mathcal F: \qAW\rightarrow \qASch_{\bfu}$ of $\kk$-linear categories, which sends an object $\lambda\in \Lambda_{\text{st}}$ to $(\emptyset^\ell,\lambda)\in \Lambda_{\text{st}}^{1+\ell}$ and sends any morphism $f$ to 
 $\begin{tikzpicture}[baseline = 10pt, scale=0.5, color=\clr]
\draw[-,line width =1pt,color=\cred] (-4.2,.2) to (-4.2,1.6);
\draw (-4.2,0) node{$\scriptstyle \red{u_1}$};    
\draw[-,line width =1pt,color=\cred] (-3.5,.2) to (-3.5,1.6);
\draw (-3.4,0) node{$\scriptstyle \red{u_2}$};
\draw(-2.6,.8) node{$\ldots$};
\draw[-,line width =1pt,color=\cred] (-2,.2) to (-2,1.6);
\draw (-1.9,0) node{$\scriptstyle \red{u_{\ell}}$};
\draw (-1.2,1) node{$f$};
\end{tikzpicture}
$. 
By Lemma~ \ref{lem:cycpolyvanish}, $\mathcal F$ factors through $\qW_\bfu$, resulting to a functor $\mathcal F: \qW_\bfu \rightarrow \qSch_\bfu$. Since $\mathcal F$ sends the spanning set $\PMat_{\lambda,\mu}^\ell$ for $\Hom_{\qW_{\bfu}}(\mu,\lambda)$ (see Lemma~ \ref{lem:cycqweb-span}) onto the basis $\PMat_{(\emptyset^{\ell},\lambda),(\emptyset^\ell,\mu)}^\flat$ of the corresponding Hom-space for $\qSch_\bfu$, $\PMat^\ell_{\lambda,\mu}$ must be a basis. The last statement follows from this.
\end{proof}

\bibliographystyle{alpha}
\bibliography{qWeb}

\end{document}